\documentclass[usebiblatex=true, pub=false, color=false, todo=false]{kulieee}
\configuretemplate{P. Coppens}{Provably stable learning control of linear dynamics with multiplicative noise}

\usepackage[mode=arxiv]{arxivix}

\setlength{\tikzmarginhspace}{0.4cm}
\addbibresource{bibliography.bib}
\graphicspath{{assets/}}

\usepackage{scalerel}
\usepackage{float}

\newenvironment{smallarray}[1]
 {\null\,\vcenter\bgroup\scriptsize
  \arraycolsep=.2em
  \hbox\bgroup$\array{@{}#1@{}}}
 {\endarray$\egroup\egroup\,\null}

\usetikzlibrary{
    patterns,
    decorations.pathmorphing,
    decorations.pathreplacing,
    positioning,
    quotes,
    arrows.meta,
    calc,
}

\newcommand{\smoment}{W}
\newcommand{\smomenth}{\hat{\smoment}}

\newcommand{\cmoment}{W}

\newcommand{\cmomenth}{\hat{\cmoment}}

\newcommand{\mean}{\mu}
\newcommand{\meanh}{\hat{\mean}}

\newcommand{\meas}{\nu}

\newcommand{\W}{\set{W}}
\newcommand{\nsample}{N}

\begin{document}

\title{Provably stable learning control of linear dynamics with multiplicative noise}
\author{Peter Coppens and Panagiotis Patrinos$^\dagger$
\thanks{$^\dagger$P. Coppens and P. Patrinos are with the Department of Electrical
Engineering (ESAT-STADIUS), KU Leuven, Kasteelpark Arenberg
10, 3001 Leuven, Belgium.
        {Email: \tt\footnotesize peter.coppens@kuleuven.be, panos.patrinos@kuleuven.be}}%
\thanks{This work was supported by: the Research Foundation
Flanders (FWO) PhD grant 11E5520N and research projects G086518N and G086318N;
Research Council KU Leuven C1 project No. C14/18/068; Fonds de la Recherche 
Scientifique -- FNRS and the Fonds Wetenschappelijk Onderzoek -- Vlaanderen under 
EOS project no 30468160 (SeLMA).}%
}%
 
\maketitle

\begin{abstract}
Control of linear dynamics with multiplicative noise naturally introduces robustness against dynamical uncertainty.
Moreover, many physical systems are subject to multiplicative disturbances. In this work we show how these dynamics 
can be identified from state trajectories. The least-squares scheme enables exploitation of prior information and comes with 
practical data-driven confidence bounds and sample complexity guarantees. We complement this scheme with an associated 
control synthesis procedure for LQR which robustifies against distributional uncertainty, 
guaranteeing stability with high probability and converging to the true optimum at a 
rate inversely proportional with the sample count. Throughout we exploit the underlying multi-linear problem 
structure through tensor algebra and completely positive operators. The scheme is validated through numerical experiments.
\end{abstract}

\begin{IEEEkeywords}
    Identification for control, Statistical learning, Stochastic optimal control, Uncertain systems
\end{IEEEkeywords}

\section{Introduction} \label{sec:introduction}
\IEEEPARstart{R}{ecently} the control community has gained renewed interest in learning control \cite{Recht2018}. 
This can be viewed as a reaction to the practical success of machine learning --- specifically 
\emph{Reinforcement Learning (RL)} --- when applied to control dynamical systems. An opportunity arises there in the lack of
theoretical guarantees, particularly related to reliability and safety. On that regard, control theory has much to offer 
\cite{Recht2018, Hewing2020d}. \update*{This work focuses on stability specifically \cite[\S2.1.3]{Brunke2022}.}

One approach towards learning with guarantees \shorten*{is to focus on} specific classes of dynamics.
The \emph{Linear Quadratic Regulator (LQR)} \cite[\S4.1]{Bertsekas2005V1} has proven a valuable subject for such 
investigations. We categorize two approaches: \update*{the \emph{model-free} approach} takes a RL algorithm and
applies it to LQR, proving convergence, stability, etc. \cite{Fazel2018, Bradtke1994, Lewis2009}; \update*{and the \emph{model-based} approach}
develops learning control schemes specialized to LQR with guarantees 
\cite{Dean2019,Mania2019,Abeille2020}. 

\update*{The latter, model-based approach} \cite[p. 360]{Astrom2008}, \cite[p. 537]{Ljung1999}, \cite[\S6.1]{Bertsekas2005V1},
\shorten*{involves first identifying the dynamics before synthesizing a controller.} 
As suggested in \cite{Recht2018} under the term \emph{coarse-id}
learning, \shorten*{integrating model-based approaches with statistical learning theory yields verifiably safe} 
controllers \cite{Dean2019}. We also present such a coarse-id method here. 

Specifically, \shorten*{we present a \emph{distributionally robust (DR)} generalization \cite{Rahimian2019} 
of this coarse-id framework for deterministic LQR, tailored to \emph{linear dynamics with multiplicative noise}.
It incorporates a least-squares estimator of the dynamics paired with tight error bounds and accompanied by a
control synthesis scheme.} \shorten*{In this manner, we expand upon our prior work} \cite{Coppens2019} and interpolate between 
robust and stochastic control, becoming less conservative as data is gathered while guaranteeing stability with high probability.

LQR under multiplicative noise was first considered by Wonham \cite{Wonham1967}, who developed a generalized \emph{Riccati equation}.
Similarly many recent developments in learning LQR are generalizable to multiplicative noise 
\cite{Wang2018,Gravell2019,Pang2021,Coppens2019}. This is interesting for two reasons. 

First, \shorten*{multiplicative noise affects numerous real-world systems.} 
These models occur frequently when disturbances enter a model through the parameters (e.g. through vibrations, 
nonlinear effects in spring systems, thermal and aerodynamic influences, etc.).
A list of examples with references is provided in \cite[\S1.9]{Damm2004} with concrete applications in
aerospace and vehicle control. Multiplicative noise has also been observed in biological applications like 
sensorimotor control \cite{Todorov2002,Todorov2005}, cell populations \cite{Russo2018, Mohler1980a}, 
population migration \cite{Mohler1980a} and immune systems \cite{Mohler1980b}. Other applications 
include nuclear fission \cite{Mohler1980a}, power grids \cite{Afshari2020}, 
communication channels \cite{Wang2002},
\update*{electrical networks \cite{Willsky1976b}, sampled data feedback \cite{Willsky1976b}} and 
climate models \cite{Sura2005}.

Second, the use of multiplicative noise induces robustness against parametric uncertainty. In \cite{Bernstein1987}
this is motivated by the lack of stability margins in output-feedback LQG \cite{Doyle1978}.
This same fact was argued in a historical review by J. Doyle \cite{Doyle1996} to have motivated the development of 
$\set{H}_{\infty}$ methods. In fact, as we formalize later, there is a strong relation between 
\emph{mean square stability (MSS)} for multiplicative noise systems and robust stability 
\cite{Bernstein1987,Gravell2020,Pascoe2019}. 

Next, we provide a detailed overview of related work.


\subsection{Related work}
\paragraph{Historical Overview} The history of multiplicative noise is extensive, and \shorten*{has 
been investigated under various names. As \emph{linear dynamics with state-dependent noise}, it was first considered by \cite{Wonham1967}, who developed an optimal control scheme.
This led to several other early works, outlined in \cite[p.6]{Damm2004}.} 
\update*{Notably, the renowned discovery of the \emph{uncertainty threshold principle} \cite{Athans1977}, thwarts system stabilization upon its violation.}
The term \emph{stochastic bilinear systems}
has also been used to describe \shorten*{multiplicative noise} systems \cite{Kubrusly1985,Mohler1980a}, \update*{\cite{Willsky1976b}}. 
Most recent developments 
\shorten*{primarily involve control synthesis with indefinite stage costs \cite{Rami2002d},
numerical solutions for generalized Riccati equations \cite{Damm2004}, \update*{output feedback \cite{Ding2019}} and application of $\set{H}_\infty$ methods to 
stochastic control \cite{Damm2004}, \update*{\cite{Gershon2006}}}. We similarly consider robustification 
against perturbations to the distribution of the disturbance (similar to \cite{ElGhaoui1995}),
but do so in a data-driven fashion.

\paragraph{Learning control for multiplicative noise}
In the top-down approach several results from LQR have been generalized to multiplicative noise already.
\update*{Policy iteration} has been investigated in \cite{Wang2018}, \update*{\cite{Pang2021, Coppens2023}} and policy gradient in \cite{Gravell2019,Pang2021}.

Developments in the bottom-up approach remain limited \shorten*{due to challenges in identifying multiplicative noise dynamics}.
\shorten*{Except for partial identification in some cases \cite{Kubrusly1981c}, comprehensive}
system identification has only been investigated recently. In \cite{Xing2020,Xing2021} 
a scheme based on deterministic linear system identification was developed by averaging
over trajectories. \shorten*{Unlike our method, this method lacks the ability to incorporate prior model structure and 
has loose confidence bounds.}
The same authors also considered an identification scheme where the mean dynamics are identified 
first and the noise is characterized using bootstrapping \cite{Gravell2020a}. Similarly, this scheme comes with few guarantees. 
Finally \cite{Di2021} considers a least-squares estimator similar to ours. 
They consider scalar measurements, thwarting complete identification.
Their confidence bounds are valid for single-trajectories (while we use multiple). 
We show experimentally that our method is also applicable in the single-trajectory case,
but leave a formal proof to further work.

\paragraph{Mathematical Tools}
Throughout this work we illustrate what mathematical tools are 
applicable in the analysis of linear systems with multiplicative noise. 

We \shorten*{express the underlying bilinear structure using tensor algebra} \cite{Kolda2006,Kolda2009}, \shorten*{facilitating}
the formulation of least squares estimators and studying their accuracy. 

\shorten*{Additionally, we leverage} \emph{completely positive (CP)} operators, \update*{classically} used to model quantum channels \cite{Pascoe2019}.
They were linked to multiplicative noise dynamics before in \cite{Damm2004}
and to stability of linear systems and Lyapunov operators in \cite{Bhatia2002,Damm2003,Lindblad1976}. The
\update*{relation} between MSS and robust stability too was developed originally for CP operators \cite{Pascoe2019}. 

Our estimator is similar to those considered in compressed covariance sensing \cite{Romero2013}
and those in bilinear estimation \cite{Kukush2003}. 

To develop our concentration inequalities we use modern tools from matrix concentration \cite{Tropp2015}. Our controller
synthesis uses results from robust optimization \cite{Ben-Tal2000}. Specifically we use 
\emph{distributionally robust} optimization \cite{Rahimian2019} to interpolate between robust and stochastic 
control synthesis.


\subsection{Contributions}
The main contributions of this work are as follows.
\begin{enumerate}[noitemsep, topsep=0pt, label=(\roman*)]
    \item \label{item:cp} We show how \emph{linear quadratic (LQ)} control of multiplicative noise is equivalent to LQ control of CP dynamics;
    \item \label{item:id} We consider an identification procedure that can both include prior knowledge about the dynamics and
    also performs well in the model-free setting. The optimal estimate is accompanied with a tight confidence set;
    \item \label{item:l4dc} We introduce a synthesis procedure for DR LQR;
    \item \label{item:cdc} We analyze the sample complexity of the scheme.
\end{enumerate}
As mentioned above, CP operators have been applied in multiplicative noise before \cite{Damm2003}. A
formal equivalence of LQ control has not been shown however. The other results all strongly rely on this equivalence. 

\update*{The system identification problem solved in \cref{item:id} was previously 
considered in \cite{Xing2020, Xing2021}. However, their confidence bound is too conservative for practical purposes 
and there is no possibility to include prior information about how the uncertainty enters into the dynamics. Particularly, 
when using longer trajectories in the data, their bound grows when more data is gathered,
which is opposite to the behavior observed in experiments (cf. \cref{fig:toy-rollout} and \cite[p.~15]{Xing2021}). Furthermore, unlike our bound in \cref{thm:error-bound-full},
their bound is not data-driven. It instead depends on unknown system constants." }
In \cite{Di2021} a partial
ambiguity set is derived, but the full system dynamics are never identified. We consider \cref{item:id} as the most 
important contribution.

We also generalize our previous work to the model-free setting: \cref{item:l4dc} extends upon \cite{Coppens2019} 
and \cref{item:cdc} extends upon \cite{Coppens2020}, and applies the result to the DR control synthesis procedure. 

\shorten*{Numerical experiments demonstrate practical applicability.}

The remainder of this paper describes the construction, solution and analysis of DR control for LQR under multiplicative noise.
We provide a problem statement in \cref{sec:problem} and preliminary results in \cref{sec:preliminary}.
We then consider the nominal case for LQR under multiplicative noise, when the dynamics are known.
Next, in \cref{sec:construction}, we describe an ambiguity set based on state measurements; in \cref{sec:synthesis} we solve the resulting DR problem; and
in \cref{sec:analysis} we study convergence of the solution to the nominal controller as more data is gathered. \ilarxiv{Then we show how
to add more prior information to the model --- related to a deterministic term --- in \cref{sec:structural}.}
We end the paper with numerical results in \cref{sec:numerical} and
a conclusion in \cref{sec:conclusion}.

\ilpub{This paper is accompanied by a technical report \citear{} including more detailed proofs, experiments and a section on including a deterministic term in 
the prior model structure.}

\subsection{Notation overview}
Below we provide an overview of the notation. 

Let $\N$ denote the integers and $\Re$ ($\eRe$) the (extended) reals. We introduce the shorthand $\N_{a:b} = \{a, \dots, b\}$ for $a, b \in \N$, with $a \leq b$.
For $x \in \Re^m$ let $\nrm{x}_2 = \sqrt{\trans{x}x}$ denote the Euclidean norm and $\ball{n} \subset \Re^{n}$ the unit-norm ball.
Assuming column vectors, let $(x, y) = [x; y]$ denote the vertical concatenation between vectors.

For a random vector $w$ let $\E_{\meas}[w]$ denote the expectation with respect to the measure $\meas$.
The subscript $\meas$ is omitted when the true distribution is implied.

\paragraph{Matrices}
For $Z \in \Re^{m \times n}$ let $\lambda(Z) = (\lambda_1, \dots, \lambda_m)$
(when $m = n$) denote the vector of eigenvalues in descending order. \update*{Let $\lambda_{\mathrm{max}}(Z) = \lambda_1$, $\lambda_{\mathrm{min}}(Z) = \lambda_n$ 
and $\rho(Z)$ be the spectral radius.}
Similarly $\sigma(Z) = (\sigma_1, \dots, \sigma_r)$ denotes the vector of non-zero 
singular values in descending order with $r = \rk(Z)$ the rank. Denote by $\pinv{Z}$ and $\trans{Z}$ the pseudo-inverse 
and the transpose respectively. Also let $\nrm{Z}_2$ ($\nrm{Z}_F$) be the spectral (Frobenius) norm. 
Let $\kron$ denote the Kronecker product and $\vec(X) \in \Re^{mn}$ the vectorization of $X$ resulting from stacking 
the columns, and $\unvec(\cdot)$ its reverse. When $m = n$ let $\tr(Z)$ denote the trace. 

For matrices $X, Y$ \update*{of conformable dimensions} we use $[X; Y]$ ($[X, Y]$) for vertical (horizontal) concatenation. \update*{Let $\blkdiag(X, Y)$ 
be the block diagonal concatenation of $X$ and $Y$} and let $I_d \in \Re^{d\times d}$ be the identity.
Elements of matrices $X \in \Re^{m \times n}$ (and vectors $x \in \Re^d$) are indexed using $[X]_{ij}$ ($[x]_i$) for 
$i \in \N_{1:m}$, $j \in \N_{1:n}$ ($i \in \N_{1:d}$). \update*{We can select columns (rows) using colon notation $[X]_{:j}$ ($[X]_{i:}$).}
Moreover, to take the first $d$ rows (columns) we write $[X]_{:d, j}$ ($[X]_{i, :d}$). 


We denote by $\sym{d}$ the set of symmetric $d$ by $d$ matrices and by $\sym{d}_{++}$ ($\sym{d}_{+}$) the
positive (semi)definite matrices. For $P, Q \in \sym{d}$, we write $P \sgt Q$ ($P \sgeq Q$)
to signify $P - Q \in \pd{d}$ ($P - Q \in \psd{d}$). \ilarxiv{Similarly $P \nsgeq 0$ implies $0 \neq P \sgeq 0$.}
Let $\sd{d} = (d+1)d/2$ denote the degrees of freedom of elements of $\sym{d}$ and $\svec(X) \in \Re^{\sd{d}}$ 
denotes the symmetrized vectorization (cf. \cref{def:svec}) such that $\trans{\svec(X)} \svec(X) = \nrm{X}_F^2$,
with $\unsvec(\cdot)$ its inverse. Let $\skron$ the symmetrized Kronecker product (cf. \cref{def:skron}).


\paragraph{Tensors}
We use bold calligraphic letters to denote (third-order) tensors $\ten{A} \in \Re^{p \times q \times r}$.
We follow the notation of \cite{Kolda2006} and provide standalone definitions in \cref{sec:tensors}. A tensor is indexed as 
$[\ten{A}]_{ijk}$ with $i \in \N_{1:p}$, $j\in \N_{1:q}$, $k \in \N_{1:r}$. \update*{We generalize the colon notation of 
taking rows and columns for matrices for selecting subtensors (e.g. $[\ten{A}]_{:ij}$).} We use ($\btimes{n}$) $\ttimes{n}$ to denote the \emph{n-mode (vector) product}
and $\ten{A}_{(n)}$ for the \emph{n-mode matricization}. Let $\tucker{\ten{A}; X_1, X_2, X_3} \dfn \ten{A} \ttimes{1} X_1 \ttimes{2} X_2 \ttimes{3} X_3$ 
denote the Tucker operator (cf. \cref{rem:tucker}).
\update*{%
\paragraph{Multiplicative Noise}
For convenience we illustrate the use of tensors for autonomous multiplicative noise dynamics given as $x_{t+1} = (\sum_{i=1}^{n_w} A_i [w_t]_i)x_t$,
with $x_t \in \Re^{n_x}$ and $w_t \in \Re^{n_w}$. The associated tensors is 
$\ten{A} \in \Re^{n_x \times n_x \times n_w}$ populated as $[\ten{A}]_{::i} = A_i$ for $i \in [n_w]$. The matricizations are 
\begin{align*}
    &\ten{A}_{(1)} = \begin{bmatrix}
        \trans{A}_1 & \trans{A}_2 & \trans{A}_{n_w}
    \end{bmatrix}, \, \ten{A}_{(2)} = \begin{bmatrix}
        {A}_1 & {A}_2 & {A}_{n_w}
    \end{bmatrix}\\
    &\quad\ten{A}_{(3)} = \trans{\begin{bmatrix}
        \vec(A_1) & \vec({A}_2) & \vec({A}_{n_w})
    \end{bmatrix}}.
\end{align*}
Using this notation $x_{t+1} = \tucker{\ten{A}; I_{n_x}, x_t, w_t} = \ten{A}_{(1)} (w_t \kron x_t) = \trans{x_t} \ten{A}_{(2)} (w_t \kron I_{n_x}) = \trans{w_t} \ten{A}_{(3)} (x_t \kron I_{n_x})$.%
}%

\section{Problem statement}  \label{sec:problem}
In this work we consider linear systems with input- and state-multiplicative noise given for all $t \in \N$ as
\begin{equation} \label{eq:dyn}
    x_{t+1} = A(v_t) x_t + B(v_t) u_t,
\end{equation}
where $x_t \in \Re^{n_x}$ is the state, $u_t \in \Re^{n_u}$ the input and $v_t \in \Re^{n_v}$ is the disturbance,
which follows an i.i.d. random process. The matrices are given by
$A(v) \dfn \sum_{i=1}^{n_v} [v]_i A_i$ and $B(v) \dfn \sum_{i=1}^{n_v} [v]_i B_i$.

The primary goal is to study solutions of the following stochastic LQR problem:
\begin{equation} \label{eq:slqr}
    \begin{aligned}
        &\minimize_{u_0, u_1, \dots} && \E\left[ \sum_{t=0}^\infty \trans{x}_t Q x_t + \trans{u}_t R u_t \right], 
    \end{aligned} \tag{$\mathcal{LQR}_\star$}
\end{equation}
subject to \eqref{eq:dyn}, for given $x_0$ and $Q \sgt 0$ and $R \sgt 0$\footnote{These assumptions are not necessary for solvability \cite{Rami2002d}, 
yet are added here for convenience.}. 

In practical applications the dynamics \eqref{eq:dyn} are often unknown. Hence we introduce a system identification 
scheme to estimate \eqref{eq:dyn}, supporting both the setting where the modes $A_i$, $B_i$ are unknown and the 
setting where they are (partially) known. This information will be encoded through the use of a model tensor $\ten{M}$
described in \cref{sec:multiplicative}.

Besides the modes, characterizing the distribution of $v_t$ is a more considerable challenge.
We however are aided by the following fact. By linearity, the cost of \eqref{eq:slqr} only involves second moments of the states $X_t = \E[x_t \trans{x_t}]$ 
and the inputs $U_t = \E[u_t \trans{u_t}]$. 
In this work we thus tackle LQR in terms of the moment dynamics:
\begin{equation} \label{eq:dynsm}
    X_{t+1} = \op{E}(Z_t), \quad \text{with } Z_t = \E[z_t \trans{z_t}],
\end{equation}
for augmented state $z_t = (x_t, u_t) \in \Re^{n_z}$. These dynamics are easier to estimate,
compared to \eqref{eq:dyn}, yet contain all information required to solve the stochastic LQR problem. We view \eqref{eq:dynsm} as an implicit
definition for the operator $\op{E}$ \update*{-- the explicit definition being \cref{eq:dynsm_} --} and argue in \cref{sec:multiplicative} how the fundamental operator $\op{E}$ \emph{(i)} is linear in $Z_t$,  
characterizing its matrix $\opm{E}$ in terms of the quantities \eqref{eq:dyn}, \emph{(ii)} is \emph{completely positive (CP)} \update*{(cf. \cref{def:cpop} and \cref{lem:tencp})}, \emph{(iii)} has 
a bilinear structure since $\opm{E}$ depends linearly on $V = \E[v_t \trans{v_t}]$. 
These properties serve as a central theme of this work. 

Since we will never get a completely accurate estimate of the distribution it makes sense in safety critical scenarios to use data to construct a
set of likely distributions $\amb$ containing the true distribution with high probability (see the formal setup in \cite{Coppens2021}).
Given $\amb$ we then solve the DR LQR problem:
\begin{equation} \label{eq:drlqr}
    \begin{alignedat}{3}
        &\minimize_{u_0, u_1, \dots} &\, & \maximize_{\meas \in \amb} &\,\,& \E_{\meas}\left[ \sum_{t=0}^\infty \trans{x}_t Q x_t + \trans{u}_t R u_t \right],
    \end{alignedat} \tag{$\bar{\mathcal{LQR}}$}
\end{equation}
which upper bounds the cost of \eqref{eq:slqr} with high probability. In this work, only the second moment $V = \E[v_t \trans{v_t}]$ 
affects the problem. So we formulate the ambiguity set over moments.

\section{Preliminary results} \label{sec:preliminary}
We introduce the main tools used throughout our work here.
Specifically this section presents operators
over the space of symmetric matrices in \cref{sec:operators} and tensor algebra 
in \cref{sec:tensors}. 
\subsection{Operators over symmetric matrices} \label{sec:operators}
As argued later, the second moment of the disturbance $v$ is identified by solving
a linear system of equations, where the variable of interest is a symmetric matrix. 

Useful tools when dealing with such problems are the Kronecker product $\otimes$ \cite[Def.~4.2.1]{Horn1991}, the 
vector operator $\vec(\cdot)$ \cite[Def.~4.2.9]{Horn1991} and its inverse $\unvec(\cdot)$. These are connected through the fundamental property \cite[Lem.~4.3.1]{Horn1991}
\begin{equation} \label{eq:kronfund}
    \vec(U X \trans{V}) = (V \otimes U) \vec(X), 
\end{equation}
for any $U \in \Re^{m \times n}, V \in \Re^{\ell \times k}$ and $X \in \Re^{n \times k}$. 

This property is often used to rewrite linear equations over matrices in terms of vectors. When $X \in \sym{d}$ however, we often run into non-invertible systems since
$\vec(X) \in \Re^{d^2}$ has unnecessary degrees of freedom. After all, a symmetric matrix is uniquely determined by $\sd{d} \dfn d(d+1)/2$ values. 

To exploit this fact, we will use \cite[Def.~E.1]{DeKlerk2002}:
\begin{definition} \label{def:svec}
    For any $X \in \sym{d}$, define $\svec \colon \sym{d} \to \Re^{\sd{d}}$,
    \begin{align*}
        \svec(X) \dfn (&[X]_{11}, \sqrt{2}[X]_{21}, \dots, \sqrt{2}[X]_{d1}, \\ &\quad [X]_{22}, \sqrt{2}[X]_{32}, \dots, \sqrt{2}[X]_{d2}, \dots, [X]_{dd} )
    \end{align*}
    and let $Q_d \in \Re^{\sd{d} \times d}$ be such that $\svec(X) = Q_d \vec(X)$.
\end{definition}
It is easy to verify that $Q_d$ is a unique matrix for which $Q_d \trans{Q_d} = I_{\sd{d}}$ \cite[\S{}E.2]{DeKlerk2002}.
We can then introduce the symmetrized Kronecker product \cite[Def.~E.3]{DeKlerk2002},
\begin{definition} \label{def:skron}
    For any $U, V \in \Re^{m \times n}$, let
    \begin{equation*}
        V \skron U = \frac{1}{2} Q_{m} (U \kron V + V \kron U) \trans{Q}_n. 
    \end{equation*}
\end{definition}
By definition, we have a generalization of \eqref{eq:kronfund} \cite[Def.~E.3]{DeKlerk2002}
\begin{equation} \label{eq:skronfund}
    \svec(Z X \trans{Z}) = (Z \skron Z) \svec(X).
\end{equation}


We will also consider more general linear operators on symmetric matrices, $\op{S} \colon \sym{n} \to \sym{m}$,
which can be expressed for any $X \in \sym{n}$ as
\begin{equation*}
    \op{S}(X) = \unsvec(\opm{S} \svec(X)),
\end{equation*}
for \update*{a unique} $\opm{S} \in \Re^{\sd{m} \times \sd{n}}$. The set of such operators is denoted $\slop{n}{m}$. The adjoint $\adj{\op{S}}$, defined to satisfy 
$\tr[\op{S}(X) Y] = \tr[X \adj{\op{S}}(Y)]$, has matrix $\trans{\opm{S}}$. 
Similar to \cite{Coppens2020} we will often need to bound the 
spectral norm of the image of such matrix operators. As such we introduce the following norm on $\slop{n}{m}$:
\[\nrm{\op{S}}_2 \dfn \max_{X \in \sym{d}} \{\nrm{\op{S}(X)}_2 \colon \nrm{X}_2 \leq 1\},\] for which we have
$\nrm{\op{S}(X)}_2 \leq \nrm{\op{S}}_2 \nrm{X}_2$ for any $X \in \sym{n}$. This norm is bounded in
terms of the spectral norm of $\opm{S}$:
\begin{lemma} \label{lem:sopbnd}
    Let $\op{S} \in \slop{n}{m}$ with matrix $\opm{S}$. Then,\todoright[text width=4cm]{tight $\opm{S} = \vec(I) \trans{\vec(I)}$ and $X = I$.}
    \[\nrm{\op{S}}_2 \leq \sqrt{m} \nrm{\opm{S}}_2.\]
\end{lemma}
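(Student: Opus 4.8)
The plan is to reduce the operator spectral norm $\nrm{\op{S}}_2$ to the ordinary matrix spectral norm $\nrm{\opm{S}}_2$ by routing everything through the Frobenius norm, on which $\opm{S}$ acts isometrically via $\svec$. The two ingredients I would use are the elementary comparisons between the spectral and Frobenius norms on symmetric matrices, namely $\nrm{Y}_2 \le \nrm{Y}_F$ and, conversely, $\nrm{Y}_F \le \sqrt{m}\,\nrm{Y}_2$, together with the isometry property $\trans{\svec(Y)}\svec(Y) = \nrm{Y}_F^2$ recorded after \cref{def:svec}, which gives both $\nrm{\svec(Y)}_2 = \nrm{Y}_F$ and $\nrm{\unsvec(y)}_F = \nrm{y}_2$.

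The argument is then a single chain of inequalities on an arbitrary $X$ with $\nrm{X}_2 \le 1$. First, since $\op{S}(X)$ is symmetric, bound $\nrm{\op{S}(X)}_2 \le \nrm{\op{S}(X)}_F$. Next, insert the definition $\op{S}(X) = \unsvec(\opm{S}\svec(X))$ and use the isometry of $\unsvec$ to rewrite $\nrm{\op{S}(X)}_F = \nrm{\opm{S}\svec(X)}_2$. At this point $\opm{S}$ is an ordinary matrix, so $\nrm{\opm{S}\svec(X)}_2 \le \nrm{\opm{S}}_2\,\nrm{\svec(X)}_2$, and the isometry of $\svec$ turns $\nrm{\svec(X)}_2$ back into $\nrm{X}_F$. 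Finally the comparison $\nrm{X}_F \le \sqrt{m}\,\nrm{X}_2$ for the symmetric argument collects the dimension factor. Chaining yields $\nrm{\op{S}(X)}_2 \le \sqrt{m}\,\nrm{\opm{S}}_2\,\nrm{X}_2$, and taking the maximum over $\nrm{X}_2 \le 1$ gives the claim.

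The only delicate point is the bookkeeping of the dimension factor: both steps invoke the Frobenius-versus-spectral ratio, but they must be taken in the \emph{free} direction at the output ($\nrm{\cdot}_2 \le \nrm{\cdot}_F$, no constant) and in the \emph{lossy} direction on the argument ($\nrm{\cdot}_F \le \sqrt{m}\,\nrm{\cdot}_2$), so that $\sqrt{m}$ is picked up exactly once. I expect no real obstacle beyond keeping these two comparisons straight and confirming that the isometry of $\svec$/$\unsvec$ is precisely what lets $\opm{S}$ enter with its plain spectral norm. The marginal tightness note is consistent with this, since equality forces the image $\op{S}(X)$ to be rank one (making $\nrm{\cdot}_2 = \nrm{\cdot}_F$ there) and the argument to have all singular values equal, e.g. $X = I$.
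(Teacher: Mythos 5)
Your proof is correct and takes essentially the same route as the paper's own: both bound the output through the free comparison $\nrm{\op{S}(X)}_2 \leq \nrm{\op{S}(X)}_F$, use the $\svec$/$\unsvec$ isometry to rewrite this as $\nrm{\opm{S}\svec(X)}_2 \leq \nrm{\opm{S}}_2\,\nrm{\svec(X)}_2 = \nrm{\opm{S}}_2\,\nrm{X}_F$, and collect the dimension factor exactly once via the lossy comparison $\nrm{X}_F \leq \sqrt{\cdot}\,\nrm{X}_2$ applied to the argument. The one caveat --- which you share with the paper, whose proof likewise maximizes over ``$X \in \sym{m}$'' --- is that since the factor is picked up on the argument side, and the argument of $\op{S} \in \slop{n}{m}$ lives in $\sym{n}$, the constant this chain actually delivers is $\sqrt{n}$ (the \emph{domain} dimension) rather than $\sqrt{m}$; the two agree only in the square case $n=m$, which happens to be the only case in which the paper ever invokes the lemma.
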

\begin{proof}
    \update*{Deferred to \ilpub{\cite[App.~A]{Arxiv}}\ilarxiv{Appendix~\ref{app:tensors}}.}
\end{proof}

We are especially interested in linear operators that preserve positive semidefiniteness of the argument. That is, a map 
$\op{S} \colon \sym{n} \to \sym{m}$ 
is \emph{positive} iff $\op{S}(X) \sgeq 0$ for all $X \sgeq 0$. More specifically we consider a specific type of positive map:
\begin{definition} \label{def:cpop}
    A $\op{S} \in \slop{n}{m}$ is \emph{completely positive (CP)}  \cite{Choi1975} if, for some $A_i \in \Re^{m \times n}$ with $i \in \N_{1:r}$,
    \begin{equation} \label{eq:cp-def}
        \op{S}(X) = \sum_{i=1}^r A_i X \trans{A_i}.
    \end{equation}
    The matrix of $\op{S}$ is $\opm{S} = \ssum_{i=1}^r (A_i \skron A_i)$. We write $\op{S} \in \cpop{n}{m}$.
\end{definition}

We specialize \cref{lem:sopbnd} for CP operators \cite[Eq.~2.1]{Bhatia2002}:
\begin{lemma} \label{lem:sopbnd-kron}
    Let $\op{S} \in \cpop{n}{m}$. Then, $\nrm{\op{S}}_2 = \nrm{\op{S}(I)}_2$. 
\end{lemma}

Like for linear dynamics, stability of CP operators can be defined 
in terms of the spectral radius. 
\begin{lemma} \label{lem:stabcp}
    For any $\op{S} \in \cpop{n}{m}$ the following assertions are equivalent:
    \begin{enumerateq}
        \item $\rho(\op{S}) \dfn \rho(\opm{S}) < 1$;
        \item $\nrm{\op{S}^t(X)} \to 0$ as $t \to \infty$, $\forall X \sgeq 0$.\footnote{The specific norm is left unspecified since the convergence would remain the same by equivalence of matrix norms \cite[Ex.~5.6.P23]{Horn2012}.}
    \end{enumerateq}
\end{lemma}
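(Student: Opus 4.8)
The plan is to reduce the claim to a standard fact about powers of a matrix by passing through the isometry $\svec$, and to notice that positive semidefiniteness enters only to bridge convergence on the cone $\psd{n}$ with convergence on all of $\sym{n}$. Since iterating $\op{S}$ presupposes a square operator, I read the statement with $m = n$, so that $\opm{S} \in \Re^{\sd{n} \times \sd{n}}$ and $\op{S} \in \cpop{n}{n}$.

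For (i) $\Rightarrow$ (ii) I would argue purely at the level of the matrix $\opm{S}$. The representation $\op{S}(X) = \unsvec(\opm{S}\svec(X))$ iterates to $\svec(\op{S}^t(X)) = \opm{S}^t \svec(X)$ for every $t$. The hypothesis $\rho(\opm{S}) < 1$ is exactly the condition for the matrix powers to vanish, $\opm{S}^t \to 0$ as $t \to \infty$ (e.g.\ via the Jordan form or Gelfand's formula). Since $\svec$ is an isometry from $(\sym{n}, \nrm{\cdot}_F)$ to $(\Re^{\sd{n}}, \nrm{\cdot}_2)$, we get $\nrm{\op{S}^t(X)}_F = \nrm{\opm{S}^t \svec(X)}_2 \to 0$ for every $X \in \sym{n}$, a fortiori for every $X \sgeq 0$; norm equivalence then yields the conclusion for any choice of norm, as noted in the footnote.

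For the converse (ii) $\Rightarrow$ (i), the key step is to upgrade convergence on the cone to convergence on the whole space. Given any $X \in \sym{n}$, I would split it through its spectral decomposition into $X = X_+ - X_-$ with $X_+, X_- \sgeq 0$ (collecting positive and negative eigenvalues). Linearity gives $\op{S}^t(X) = \op{S}^t(X_+) - \op{S}^t(X_-)$, so the hypothesis forces $\op{S}^t(X) \to 0$ for every $X \in \sym{n}$. Translating back through $\svec$, this says $\opm{S}^t y \to 0$ for all $y \in \Re^{\sd{n}}$; taking $y$ over the standard basis shows the columns of $\opm{S}^t$ vanish, hence $\opm{S}^t \to 0$, which is equivalent to $\rho(\opm{S}) < 1$.

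I expect no serious obstacle; the only point requiring care is that convergence is assumed only on $\psd{n}$, and the decomposition $X = X_+ - X_-$ is precisely what bridges that gap. It is worth remarking that complete positivity of $\op{S}$ is not actually used here --- the argument goes through for any linear $\op{S} \in \slop{n}{n}$, the essential fact being that $\psd{n}$ spans $\sym{n}$. An alternative route for the converse would invoke a Perron--Frobenius theory for positive maps, producing a PSD eigenmatrix that attains $\rho(\opm{S})$ and contradicts (ii) directly, but this is heavier machinery than the elementary decomposition.
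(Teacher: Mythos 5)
Your proof is correct. It is worth knowing, however, that the paper does not spell out an argument for this lemma at all: its ``proof'' is a single sentence deferring to \cite[Prop.~1]{Kubrusly1985}, a result on mean-square stability of stochastic bilinear (multiplicative-noise) systems, asserted to carry over to the CP setting. Your write-up is therefore a genuinely self-contained alternative. You reduce both directions to the standard matrix fact that $\rho(\opm{S})<1$ holds iff $\opm{S}^t\to 0$, using that $\svec$ is an isometry between $(\sym{n},\nrm{\cdot}_F)$ and $(\Re^{\sd{n}},\nrm{\cdot}_2)$, and you bridge the cone $\psd{n}$ with all of $\sym{n}$ via the spectral split $X=X_+-X_-$; the final passage from pointwise convergence $\opm{S}^t y\to 0$ for all $y\in\Re^{\sd{n}}$ to $\opm{S}^t\to 0$ (and hence to $\rho(\opm{S})<1$) is also sound in finite dimensions. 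Your two side remarks are accurate and worth keeping: the statement only makes sense for $m=n$ (the paper writes $\cpop{n}{m}$ yet iterates $\op{S}^t$), and complete positivity plays no role --- the equivalence holds for any $\op{S}\in\slop{n}{n}$, since all that is used is linearity and the fact that $\psd{n}$ spans $\sym{n}$. What the citation-based route buys the paper is brevity and a pointer to the multiplicative-noise literature, where assertion (ii) is precisely mean-square stability; what your route buys is a short, verifiable argument that exposes exactly which hypotheses matter and which do not.
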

\begin{proof}
    The proof is analogous to \cite[Prop.~1]{Kubrusly1985}.
\end{proof}

The spectral radius of a CP map \eqref{eq:cp-def} is related to the \emph{outer spectral radius} $\hat{\rho}(A_1, \cdots, A_r) \dfn \sqrt{\rho(\ssum_{i=1}^r A_i \kron A_i)}$
for square matrices $\{A_i\}_{i=1}^r$. This radius bounds the \emph{joint spectral radius} \[\rho(A_1, \cdots, A_r) \dfn \lim_{t\to \infty} \left\{\sup_{1 \leq i_1, \dots, i_t \leq r} \nrm{A_{i_1} \dots A_{i_t}}^{1/t}_2\right\}.\]
Specifically \cite[Cor.~1.5]{Pascoe2019}:
\begin{theorem} \label{thm:stab-eq}
    Consider the matrices $\{A_i\}_{i=1}^r$. Then \[\hat{\rho}(A_1, \dots, A_r)/\sqrt{r} \leq \rho(A_1, \dots, A_r) \leq \hat{\rho}(A_1, \dots, A_r).\]
\end{theorem}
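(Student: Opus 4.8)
The plan is to reduce the statement to a comparison of the growth rate of the powers of the single matrix $M \dfn \sum_{i=1}^r A_i \kron A_i$, whose spectral radius is $\hat{\rho}^2$ by definition. The structural fact I would lean on is the mixed-product property of the Kronecker product, which gives by induction on $t$
\[ M^t = \sum_{1 \leq i_1, \dots, i_t \leq r} (A_{i_1}\cdots A_{i_t}) \kron (A_{i_1}\cdots A_{i_t}). \]
By Gelfand's formula $\hat{\rho} = \rho(M)^{1/2} = \lim_{t\to\infty}\nrm{M^t}_2^{1/(2t)}$, whereas by definition $\rho(A_1,\dots,A_r) = \lim_{t\to\infty}\alpha_t^{1/t}$ with $\alpha_t \dfn \sup_{i_1,\dots,i_t}\nrm{A_{i_1}\cdots A_{i_t}}_2$. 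Thus both radii are controlled by how $\nrm{M^t}_2$ compares to $\alpha_t^2$, and it suffices to establish the sandwich $\alpha_t^2 \leq \nrm{M^t}_2 \leq r^t \alpha_t^2$ and take $2t$-th roots as $t\to\infty$.

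The inequality $\hat{\rho} \leq \sqrt{r}\,\rho(A_1,\dots,A_r)$ is the easy direction. Using $\nrm{Q \kron Q}_2 = \nrm{Q}_2^2$ for each product $Q = A_{i_1}\cdots A_{i_t}$, together with the triangle inequality over the $r^t$ summands above, gives $\nrm{M^t}_2 \leq r^t \alpha_t^2$, hence $\nrm{M^t}_2^{1/(2t)} \leq \sqrt{r}\,\alpha_t^{1/t}$; letting $t\to\infty$ yields the claim.

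For the reverse bound $\rho(A_1,\dots,A_r) \leq \hat{\rho}$ I would pass to the completely positive map $\op{S}(X) = \sum_i A_i X \trans{A_i}$ of \cref{def:cpop}, for which \eqref{eq:kronfund} gives $M^t \vec(X) = \vec(\op{S}^t(X))$ and therefore $\nrm{M^t}_2 \geq \nrm{\op{S}^t(X)}_F / \nrm{X}_F$ for any $X \neq 0$. Fixing a product $Q = A_{i_1}\cdots A_{i_t}$ and a unit right singular vector $u$ of $Q$, I would test on $X = u\trans{u}$: the key observation is that $\op{S}^t(u\trans{u}) = \sum_{Q'} (Q'u)\trans{(Q'u)}$ is a sum of positive semidefinite rank-one terms, so $\op{S}^t(u\trans{u}) \sgeq (Qu)\trans{(Qu)}$ and consequently $\nrm{\op{S}^t(u\trans{u})}_2 \geq \nrm{(Qu)\trans{(Qu)}}_2 = \nrm{Qu}_2^2 = \nrm{Q}_2^2$. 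Combining this with $\nrm{\cdot}_F \geq \nrm{\cdot}_2$ and $\nrm{u\trans{u}}_F = 1$ gives $\nrm{M^t}_2 \geq \nrm{Q}_2^2$ for every product, hence $\nrm{M^t}_2 \geq \alpha_t^2$; the $2t$-th roots then produce $\hat{\rho} \geq \rho(A_1,\dots,A_r)$.

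The main obstacle is precisely this reverse bound. Since $M^t$ is a sum of $r^t$ indefinite Kronecker blocks, a direct estimate cannot rule out cancellation, so one cannot simply read the largest product off $\nrm{M^t}_2$. Routing the lower bound through the positive map $\op{S}$ is what removes the cancellation: evaluating on a rank-one positive semidefinite matrix turns every summand into a positive semidefinite contribution, which can only increase the norm in the spectral (maximum-eigenvalue) sense. The remaining ingredients — the mixed-product identity, the norm identities $\nrm{Q \kron Q}_2 = \nrm{Q}_2^2$ and $\nrm{\vec(X)}_2 = \nrm{X}_F$, and the interchange of the limit with the $2t$-th root — are routine and I would not belabour them.
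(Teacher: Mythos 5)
Your proof is correct, and it takes a genuinely different route from the paper for a simple reason: the paper gives no proof of \cref{thm:stab-eq} at all --- the statement is imported verbatim from the cited result \cite[Cor.~1.5]{Pascoe2019}, and no argument for it appears in the body or the appendices. Your argument is a self-contained, elementary replacement. The mixed-product expansion $M^t = \sum_{i_1,\dots,i_t}(A_{i_1}\cdots A_{i_t})\kron(A_{i_1}\cdots A_{i_t})$ reduces everything to the two-sided estimate $\alpha_t^2 \leq \nrm{M^t}_2 \leq r^t\alpha_t^2$; the upper bound is the triangle inequality plus $\nrm{Q\kron Q}_2=\nrm{Q}_2^2$, and the lower bound --- the only delicate step, since $M^t$ is a sum of indefinite blocks that could in principle cancel --- is handled exactly right: you route it through the completely positive map $\op{S}(X)=\sum_i A_i X \trans{A_i}$ of \cref{def:cpop}, using $\vec(\op{S}^t(X)) = M^t\vec(X)$ from \eqref{eq:kronfund}, and test on $X = u\trans{u}$ with $u$ the right singular vector of a chosen product $Q$ corresponding to its top singular value (worth stating explicitly, since $\nrm{Qu}_2=\nrm{Q}_2$ requires it), so that every summand of $\op{S}^t(u\trans{u})$ is positive semidefinite and monotonicity of the largest eigenvalue under the $\sgeq$ ordering isolates $\nrm{Q}_2^2$. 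Gelfand's formula and the existence of the joint-spectral-radius limit then close the sandwich after taking $2t$-th roots, yielding both inequalities of the theorem. As for what each approach buys: the paper's citation keeps its preliminaries short, whereas your proof makes the link between mean square stability and switched-system stability self-contained using only tools the paper already develops (CP maps and vectorization), and it reproduces in elementary matrix language the positivity mechanism that drives the operator-theoretic proof in the cited reference.
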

Note that the joint spectral radius describes stability the dynamics of switched linear system. That is, dynamics $x_{t+1} = A_t x_t$  
are stable for any sequence of matrices $A_t$ from $\{A_i\}_{i=1}^r$ iff $\rho < 1$ \cite[Cor.~1.1]{Jungers2009}. 
Therefore \cref{thm:stab-eq} gives a strong relation between stability of such systems and stability of 
$\op{S}$ (and as we will see later mean square stability of linear dynamics with multiplicative noise).

We finally consider a Lyapunov criterion for stability:
\begin{proposition} \label{prop:lyapcp}
    Let $\op{S} \in \cpop{d}{d}$. Then
    \begin{enumerate}
        \item   If $\rho(\op{S}) < 1$, then $\forall H \sgt 0$
                \begin{equation} \label{eq:cp-lyap}
                    \exists P \sgeq 0 \colon P - \adj{\op{S}}(P) = H. 
                \end{equation}
                Moreover, $P$ is unique, positive definite and $\tr[PX] = \tr[H \sum_{t=0}^\infty \op{S}^t(X)]$, $\forall X \sgeq 0$. 
        \item   If \eqref{eq:cp-lyap} for some $H \sgt 0$, then $\rho(\op{S}) < 1$. 
    \end{enumerate}
\end{proposition}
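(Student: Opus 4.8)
The plan is to read the Lyapunov equation \eqref{eq:cp-lyap} as a linear system in $\svec$-coordinates and to exploit that the adjoint of a CP operator is again CP. Writing \eqref{eq:cp-lyap} in $\svec$-coordinates as $(I - \trans{\opm{S}})\svec(P) = \svec(H)$ (recall $\adj{\op{S}}$ has matrix $\trans{\opm{S}}$), and noting that transposition preserves eigenvalues so that $\rho(\adj{\op{S}}) = \rho(\trans{\opm{S}}) = \rho(\opm{S}) = \rho(\op{S})$, the hypothesis $\rho(\op{S}) < 1$ of claim~1 gives $\rho(\trans{\opm{S}}) < 1$. Hence $1$ is not an eigenvalue of $\trans{\opm{S}}$, the matrix $I - \trans{\opm{S}}$ is invertible, and existence and uniqueness of $P$ follow simultaneously, with the Neumann series $\svec(P) = \sum_{t=0}^\infty (\trans{\opm{S}})^t \svec(H)$, i.e.\ $P = \sum_{t=0}^\infty (\adj{\op{S}})^t(H)$, the series converging since $\rho(\trans{\opm{S}}) < 1$.

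Next I would establish positive definiteness and the trace identity. From the CP form \eqref{eq:cp-def}, if $\op{S}(X) = \sum_i A_i X \trans{A_i}$ then $\adj{\op{S}}(Y) = \sum_i \trans{A_i} Y A_i$, so $\adj{\op{S}}$ is itself CP by \cref{def:cpop} and in particular positive. Consequently every term $(\adj{\op{S}})^t(H)$ with $t \geq 1$ is $\sgeq 0$, while the $t = 0$ term equals $H \sgt 0$; summing gives $P \sgt 0$. For the trace formula I would apply the defining adjoint relation $\tr[\op{S}(X)Y] = \tr[X\adj{\op{S}}(Y)]$ iteratively to obtain $\tr[X (\adj{\op{S}})^t(H)] = \tr[\op{S}^t(X) H]$ for every $t$, so that $\tr[PX] = \sum_{t=0}^\infty \tr[H \op{S}^t(X)] = \tr[H \sum_{t=0}^\infty \op{S}^t(X)]$, where interchanging trace and sum is licensed by the convergence established above.

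For claim~2 I would run a Lyapunov-decrease argument. Fix any $X \sgeq 0$ and set $\phi_t \dfn \tr[P \op{S}^t(X)] \geq 0$, nonnegativity following from $P \sgeq 0$ and $\op{S}^t(X) \sgeq 0$ (positivity of $\op{S}$). Using \eqref{eq:cp-lyap} and the adjoint relation, $\tr[P\op{S}(\op{S}^t(X))] = \tr[\adj{\op{S}}(P)\, \op{S}^t(X)] = \tr[(P - H)\op{S}^t(X)]$, whence $\phi_{t+1} - \phi_t = -\tr[H \op{S}^t(X)] \leq 0$. Thus $\phi_t$ is nonincreasing and bounded below, hence convergent, forcing $\tr[H \op{S}^t(X)] \to 0$. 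Choosing $c > 0$ with $H \sgeq cI$ gives $0 \leq c\,\tr[\op{S}^t(X)] \leq \tr[H\op{S}^t(X)] \to 0$, and since $\op{S}^t(X) \sgeq 0$ its trace dominates its spectral norm, so $\op{S}^t(X) \to 0$. As $X \sgeq 0$ was arbitrary, \cref{lem:stabcp} yields $\rho(\op{S}) < 1$.

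The routine parts are the Neumann-series inversion and the adjoint bookkeeping; the step needing the most care is the converse, where one must correctly propagate positivity through the iterates and convert the vanishing of $\tr[H\op{S}^t(X)]$ into $\op{S}^t(X) \to 0$ via the coercivity $H \sgeq cI$. I would also record explicitly, as a small but load-bearing observation, that the adjoint of a CP operator is CP, since both the positivity of $P$ in claim~1 and the sign of the Lyapunov increment in claim~2 rest on it.
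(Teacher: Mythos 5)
Your proof is correct and follows essentially the same route as the paper's: claim~1 via inverting $(I - \trans{\opm{S}})$ in $\svec$-coordinates with the Neumann series $P = \sum_{t=0}^\infty (\adj{\op{S}})^t(H)$, and claim~2 via the Lyapunov decrease $\tr[P\op{S}^{t+1}(X)] - \tr[P\op{S}^t(X)] = -\tr[H\op{S}^t(X)]$ followed by \cref{lem:stabcp}. The only (harmless) variations are that you obtain $P \sgt 0$ directly from CP-ness of $\adj{\op{S}}$ rather than through the trace identity and \cref{lem:trivialinequality}, and that your passage from $\tr[H\op{S}^t(X)] \to 0$ to $\op{S}^t(X) \to 0$ via the coercivity $H \sgeq cI$ is, if anything, more explicit than the paper's.
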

\begin{proof}
    For linear systems with multiplicative noise this statement was shown in \cite[Lem.~1-2]{Kubrusly1985}. 
    The proof for CP operators is similar\ilarxiv{.}\ilpub{ and is included in \citear{}.}
    \begin{arxiv}
    \paragraph*{(i)} Note that \eqref{eq:cp-lyap} iff
    \begin{equation*}
        \exists P \sgeq 0 \colon (I - \trans{\opm{S}}) \svec(P) = \svec(H),
    \end{equation*}
    since the matrix of $\adj{\op{S}}$ is $\trans{\opm{S}}$. 
    When $\rho(\op{S}) = \rho(\opm{S}) < 1$, then $(I - \trans{\opm{S}})^{-1} = \sum_{t=0}^{\infty} (\trans{\opm{S}})^t = \sum_{t=0}^{\infty} \trans{(\opm{S}^t)}$ \cite[Cor.~5.6.16]{Horn2012}. 
    So, we take $P = \sum_{t=0}^\infty \adj{(\op{S}^t)}(H)$. Then, $\forall X \nsgeq 0$,
    \begin{equation*}
        \tr[PX] = \tr[\ssum_{t=0}^{\infty} \adj{(\op{S}^t)}(H) X] = \tr[H \ssum_{t=0}^\infty \op{S}^t(X)] > 0,
    \end{equation*}
    where the last inequality follows from $\ssum_{t=0}^\infty \op{S}^t(X) = X + \ssum_{t=1}^\infty \op{S}^t(X) \nsgeq 0$
    and $H \sgt 0$, which implies $\tr[H X] \sgt 0$ by \cref{lem:trivialinequality}. 
    Therefore we have shown that $\tr[PX] > 0$ for all $X \nsgeq 0$. So \cref{lem:trivialinequality} 
    also implies $P \sgt 0$. Uniqueness then follows by invertibility of $(I - \opm{S})$. 

    \paragraph*{(ii)} We assume \eqref{eq:cp-lyap} holds. So, $\forall X_t \sgeq 0$,
    \begin{align*}
        &\tr[X_t (P - \adj{\op{S}}(P))] = \tr[(X_t - \op{S}(X_t)) P] \\
        &\qquad \tr[X_t P] - \tr[X_{t+1} P] = \tr[X_t H],
    \end{align*}
    with $X_{t+1} = \op{S}(X_t)$. Telescoping over $t \in \N$ gives
    \begin{equation*}
        \tr[X_0 P] -\lim_{t\to \infty} \tr[X_t P] = \tr[H \ssum_{t=0}^\infty X_t], \, \forall X_0 \sgeq 0.
    \end{equation*}
    Since $X_t \sgeq 0$ and $P \sgeq 0$ we have $\lim_{t\to \infty} \tr[X_t P] \geq 0$. Hence $\infty > \tr[X_0 P] \geq \tr[H \ssum_{t=0}^{\infty} X_t]$.
    Therefore the terms $\tr[H X_t] \to 0$. By $H \sgt 0$ and \cref{lem:trivialinequality} this implies $X_t \to 0$.
    Thus $\rho(\op{S}) < 1$ by \cref{lem:stabcp}. 
    \end{arxiv}
\end{proof}



\subsection{Tensor algebra} \label{sec:tensors}
The dynamics in \eqref{eq:dyn} inherently have bilinear structure. This warrants the use of multi-linear structures and 
specifically third order tensors. In this section we introduce tensors and show 
how they are used to model CP operators.

Let $\ten{T} \in \Re^{q_1 \times q_2 \times q_3}$ denote a generic third order tensor. 
A good introduction is given in \cite{Kolda2006}, \cite[\S12.4]{Golub2013}. 
We restate some parts here for completeness.

We index tensors using $[\ten{T}]_{ijk} \in \Re$. Subtensors are selected with \emph{colon notation} \cite[\S12.4]{Golub2013}
(e.g. $[\ten{T}]_{:jk} \in \Re^{q_1}$, $[\ten{T}]_{i:k} \in \Re^{q_2}$ and $[\ten{T}]_{::k} \in \Re^{q_1 \times q_2}$).
We take a \emph{slice} along mode-$n$ for $n \in \{1, 2, 3\}$ using $[\ten{T}]_{n|i} \in \Re^{\times_{i \neq n} q_i}$ (e.g., $[\ten{T}]_{1|i} = [\ten{T}]_{i::}$).

Next consider \emph{n-mode matricizations} $\ten{T}_{(n)}\in \Re^{q_n \times (\Pi_{i\neq n} q_i)}$:
\begin{equation} \label{eq:1modeflat}
    \ten{T}_{(n)} \dfn \trans{[\vec([\ten{T}]_{n|1}), \dots, \vec([\ten{T}]_{n|q_n})]} .
\end{equation}

These matricizations extract the linearity of the multi-linear tensor operator in one of its arguments. This is illuminated 
by introducing the \emph{mode-$n$ product}. This operation multiplies $\ten{T} \in \Re^{q_1 \times q_2 \times q_3}$ along 
axis $n$ with matrix $X \in \Re^{p_n \times q_n}$ resulting in a tensor where $q_n$ is replaced by $p_n$. The 
definition can be given in terms of matricizations:
\begin{equation} \label{eq:modendef}
    (\ten{T} \ttimes{n} X)_{(n)} \dfn X \ten{T}_{(n)}.
\end{equation}
Since matricization is one-to-one, this uniquely defines the tensor $\ten{T} \ttimes{n} X$.
Alternatively, e.g. for $n=2$, we have $[\ten{T} \ttimes{2} X]_{ijk} = \sum_{\ell=1}^{q_2} [X]_{j\ell} [\ten{T}]_{i\ell k}$ with 
$i \in \N_{1:q_1}$, $j \in \N_{1:p_2}$ and $k \in \N_{1:q_3}$. 
The extraction of linearity in one of the arguments is most explicit in \cref{eq:modendef}. Similarly the \emph{mode-$n$ vector product} 
is $\ten{T} \btimes{n} x = \sum_{j=1}^{q_n} [x]_{j} [\ten{T}]_{n|j}$. So the order of the tensor is reduced to 
$(\ten{T} \btimes{n} x) \in \Re^{\times_{i\neq n} q_i}$. 

We can extend the single mode-$n$ product to the multi-linear one, similarly to \eqref{eq:modendef}.
\begin{definition}[Tucker product] \label{def:tucker}
    \update*{For $n \in \{1, 2, 3\}$, let $X_n \in \Re^{p_n \times q_n}$. We call $\tucker{\ten{T}; X_1, X_2, X_3}$ the \emph{Tucker product}
    with}
    \begin{align}
        \ten{Y} &\dfn ((\ten{T} \ttimes{3} X_3) \ttimes{2} X_2) \ttimes{1} X_1 \label{eq:multimodendef}\\
                &\nfd \tucker{\ten{T}; X_1, X_2, X_3} \in \Re^{p_1 \times p_2 \times p_3}. \nonumber
    \end{align}
    When $p_n = 1$ which is the case when $X_n = \trans{x_n}$ for (column) vector $x_n \in \Re^{q_n}$ then that axis is removed,
    reducing the order of $\ten{Y}$. That is, $\ttimes{n}$ is replaced by $\btimes{n}$. As common in literature 
    we omit the transposes in this case writing $\tucker{\ten{T}; x_1, x_2, x_3}$.
\end{definition}

\begin{remark} \label{rem:tucker}
    The order of operations in \eqref{eq:multimodendef} is important when acting on vectors.
    After all since $\ten{T} \btimes{2} x_2 \in \Re^{q_1 \times q_3}$ writing $(\ten{T} \btimes{2} x_2) \btimes{3} x_3$
    is not defined. \update*{Instead we would need to write $(\ten{T} \btimes{2} x_2) \btimes{2} x_3 = (\ten{T} \btimes{2} x_2) x_3$.} 
    The Tucker operator helps to avoid such complexities, since we assume the order is reduced 
    after \emph{all} mode-$n$ products have been completed. 
\end{remark}

We frequently use a generalization of \eqref{eq:modendef} \cite[Prop.~3.7]{Kolda2009}:
\begin{proposition}\label{prop:kronunfold}
    For $\ten{Y} \in \Re^{p_1 \times p_2 \times p_3}$ as in \cref{def:tucker}:
    \begin{equation} \label{eq:unfoldtucker}
        \ten{Y}_{(n)} = X_n \ten{T}_{(n)} \trans{(X_j \kron X_i)}, 
    \end{equation}        
    for $n, i, j \in \{1, 2, 3\}$ with $j > i$ and $n \neq i \neq j$. 
\end{proposition}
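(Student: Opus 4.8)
The plan is to reduce the claim to three ingredients: the defining relation \eqref{eq:modendef} for the matched mode, a single-mode rule describing how an \emph{off}-mode product acts on a fixed matricization, and the mixed-product property $(A \kron B)(C \kron D) = (AC) \kron (BD)$ of the Kronecker product \cite{Horn1991}. Fix $n$ and write $\{i,j\} = \{1,2,3\} \setminus \{n\}$ with $j > i$. Since mode products in distinct modes commute, I may arrange the matched-mode factor $X_n$ to act last, so that $\ten{Y} = (\ten{T} \ttimes{j} X_j \ttimes{i} X_i) \ttimes{n} X_n$; then \eqref{eq:modendef} gives at once
\[
    \ten{Y}_{(n)} = X_n \, (\ten{T} \ttimes{j} X_j \ttimes{i} X_i)_{(n)} .
\]
It remains to show that the two off-mode products turn $\ten{T}_{(n)}$ into $\ten{T}_{(n)} \trans{(X_j \kron X_i)}$.

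For this I would first establish the single-off-mode rule. Working directly from \eqref{eq:1modeflat}, the row of $\ten{T}_{(n)}$ indexed by the mode-$n$ coordinate is the column-major vectorization of the corresponding slice, so the columns of $\ten{T}_{(n)}$ are ordered with the smaller off-mode index $i$ varying fastest and the larger index $j$ slowest. Expanding the entrywise definition of $\ten{T} \ttimes{m} X$ for an off mode $m$ and matching column indices, one sees that the product acts on $\ten{T}_{(n)}$ by right multiplication with the transpose of a Kronecker factor carrying $X$ in the slot of mode $m$ and an identity in the slot of the remaining off mode. Concretely, because $i$ is the fast (right) index and $j$ the slow (left) index, the mode-$i$ product contributes $\trans{(I \kron X_i)}$ while the mode-$j$ product contributes $\trans{(X_j \kron I)}$, where each identity has the size of the untouched off mode \emph{at the moment that product is applied}.

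Applying this rule to both off-mode products, say mode-$j$ first and mode-$i$ second, and updating the identity attached to the first factor to the new dimension $p_j$ produced by the mode-$j$ product, I obtain
\[
    (\ten{T} \ttimes{j} X_j \ttimes{i} X_i)_{(n)} = \ten{T}_{(n)} \trans{(X_j \kron I)}\trans{(I \kron X_i)} = \ten{T}_{(n)} \trans{\big[(I \kron X_i)(X_j \kron I)\big]} .
\]
The mixed-product property collapses $(I \kron X_i)(X_j \kron I) = X_j \kron X_i$, and substituting back yields $\ten{Y}_{(n)} = X_n \ten{T}_{(n)} \trans{(X_j \kron X_i)}$, as claimed. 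The only genuinely delicate point is the index bookkeeping in the single-off-mode rule: I must verify that the fast/slow column ordering induced by the column-major $\vec$ in \eqref{eq:1modeflat} matches the block structure of the Kronecker product, so that the lower off-mode index $i$ lands in the right factor and $j$ in the left (giving $X_j \kron X_i$ rather than $X_i \kron X_j$), and that the intermediate identity is sized consistently across the two successive products. Everything else is routine.
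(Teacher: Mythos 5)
Your proof is correct. There is nothing internal to compare it against: the paper does not prove this proposition, it imports it directly from the literature (cited as Prop.~3.7 of Kolda's work), so what you have written is a self-contained replacement for that citation, and it is in fact the standard derivation. Your three ingredients are exactly what is needed: commutativity of mode products in distinct modes lets you peel off the matched mode via \eqref{eq:modendef}; the single-off-mode rule follows by the entrywise check against \eqref{eq:1modeflat} that you describe, e.g. $(\ten{T} \ttimes{i} X_i)_{(n)} = \ten{T}_{(n)} \trans{(I_{q_j} \kron X_i)}$ because the smaller off-mode index $i$ varies fastest in the column-major $\vec$ of each slice, while the mode-$j$ product contributes $\trans{(X_j \kron I_{q_i})}$; and the mixed-product property collapses $(I_{p_j} \kron X_i)(X_j \kron I_{q_i}) = X_j \kron X_i$, giving the stated $X_j \kron X_i$ ordering rather than $X_i \kron X_j$. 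One minor wording slip: when composing the two off-mode products (mode $j$ first, mode $i$ second), it is the identity inside the \emph{second} factor, $\trans{(I \kron X_i)}$, that must be sized $p_j$ — the off mode $j$ has already been transformed by the time the mode-$i$ product is applied — whereas the identity in $\trans{(X_j \kron I)}$ keeps the original size $q_i$; your text attributes the resizing to the "first factor." Since your displayed identity is only conformable with exactly those sizes, this is a description error, not a mathematical one, and the argument stands.
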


A graphical illustration of \eqref{eq:unfoldtucker} is given in \cref{fig:unfoldtucker}. Note how the values along the third axis 
are split up both by the matricization and the Kronecker product. 

\begin{figure}
    \centering
    \definecolor{top-color}{HTML}{99D5C9}
\definecolor{front-color}{HTML}{6C969D}  
\definecolor{right-color}{HTML}{645E9D} 

\tikzset{
    xy cuboid/.pic={
        \tikzset{%
            /cuboid/.cd,
            #1
        }
        
        \draw[pic actions] 
            (0, 0, 0)                           coordinate (\cubelabel-luf)
            -- ++(0.5*\cubescale*\cubex, 0, 0)  coordinate (\cubelabel-cuf)
            -- ++(0.5*\cubescale*\cubex, 0, 0)  coordinate (\cubelabel-ruf) 
            -- ++(0, -0.5*\cubescale*\cubey, 0) coordinate (\cubelabel-rcf)
            -- ++(0, -0.5*\cubescale*\cubey, 0) coordinate (\cubelabel-rdf)
            -- ++(-0.5*\cubescale*\cubex, 0, 0) coordinate (\cubelabel-cdf)
            -- ++(-0.5*\cubescale*\cubex, 0, 0) coordinate (\cubelabel-ldf)
            -- ++(0, 0.5*\cubescale*\cubey, 0)  coordinate (\cubelabel-lcf)
            -- cycle (\cubelabel-luf);

        \node[pic actions, fill=none, draw=none] (\cubelabel-ccf) at ($(\cubelabel-ruf) - (0.5*\cubescale*\cubex, 0.5*\cubescale*\cubey)$) {\tikzpictext};
    },
    xz cuboid/.pic={
        \tikzset{%
            /cuboid/.cd,
            #1
        }
        
        \draw[pic actions] 
            (1, 0, 0)                           coordinate (\cubelabel-ruf)
            -- ++(-0.5*\cubescale*\cubex, 0, 0) coordinate (\cubelabel-cuf)
            -- ++(-0.5*\cubescale*\cubex, 0, 0) coordinate (\cubelabel-luf)
            -- ++(0, 0, -0.5*\cubescale*\cubez) coordinate (\cubelabel-luc)
            -- ++(0, 0, -0.5*\cubescale*\cubez) coordinate (\cubelabel-lub)
            -- ++(0.5*\cubescale*\cubex, 0, 0)  coordinate (\cubelabel-cub)
            -- ++(0.5*\cubescale*\cubex, 0, 0)  coordinate (\cubelabel-rub)
            -- ++(0, 0, 0.5*\cubescale*\cubez)  coordinate (\cubelabel-ruc)
            -- cycle (\cubelabel-ruf);

        \node [sloped, xslant=0.5, pic actions, fill=none, draw=none] (\cubelabel-cuc) at ($(\cubelabel-ruf) - (0.5*\cubescale*\cubex, 0, 0.5*\cubescale*\cubez)$) {\tikzpictext};
    },
    yz cuboid/.pic={
        \tikzset{%
            /cuboid/.cd,
            #1
        }
        
        \draw[pic actions] 
            (1, 0, 0)                           coordinate (\cubelabel-ruf)
            -- ++(0, -0.5*\cubescale*\cubey, 0) coordinate (\cubelabel-rcf) 
            -- ++(0, -0.5*\cubescale*\cubey, 0) coordinate (\cubelabel-rdf) 
            -- ++(0, 0, -0.5*\cubescale*\cubez) coordinate (\cubelabel-rdc)
            -- ++(0, 0, -0.5*\cubescale*\cubez) coordinate (\cubelabel-rdb)
            -- ++(0, 0.5*\cubescale*\cubey, 0)  coordinate (\cubelabel-rcb)
            -- ++(0, 0.5*\cubescale*\cubey, 0)  coordinate (\cubelabel-rub)
            -- ++(0, 0, 0.5*\cubescale*\cubez)  coordinate (\cubelabel-ruc)
            -- cycle (\cubelabel-ruf);
            
        \node [sloped, yslant=0.5, pic actions, fill=none, draw=none] (\cubelabel-rcc) at ($(\cubelabel-ruf) - (0, 0.5*\cubescale*\cubey, 0.5*\cubescale*\cubez)$) {\tikzpictext};
    },
    /cuboid/.search also={/tikz},
    /cuboid/.cd,
    width/.store in=\cubex,
    height/.store in=\cubey,
    depth/.store in=\cubez,
    units/.store in=\cubeunits,
    scale/.store in=\cubescale,
    label/.store in=\cubelabel,
    width=1,
    height=1,
    depth=1,
    units=cm,
    scale=1,
    label=cube-node
}

\begin{tikzpicture}    
    \pgfmathsetmacro{\localscale}{0.5}
    \pgfmathsetmacro{\modex}{1}
    \pgfmathsetmacro{\modey}{1}
    \pgfmathsetmacro{\modez}{1}
    \pgfmathsetmacro{\offset}{4.5}
    \pgfmathsetmacro{\shift}{4.25}

    \foreach \x in {0,...,1} {
        \foreach \z in {0,...,1} {
            \pic [fill=top-color, scale=\localscale] at ($\localscale*(\x, 0, -\z)$) {xz cuboid={label=cube-\x-0-\z}};
        }
    }

    \foreach \y in {0,...,1} {
        \foreach \z in {0,...,1} {
            \pic [fill=right-color, scale=\localscale] at ($\localscale*(1, -\y, -\z)$) {yz cuboid={label=cube-1-\y-\z}};
        }
    }

    \foreach \x in {0,...,1} {
        \foreach \y in {0,...,1} {
            \pgfmathtruncatemacro{\label}{1+2*\x + \y}
            \pic [fill=front-color, "$\label$", scale=\localscale] at ($\localscale*(\x, -\y, 0)$) {xy cuboid={label=cube-\x-\y-0}};
        }
    }

    \node[anchor=south east] at (cube-0-0-1-lub) {$\ten{T}$};

    \foreach \x in {1,...,2} {
        \foreach \z in {1,...,\modez} {
            \pic [fill=top-color, scale=\localscale] at ($\localscale*(\x-1, 0, -\z+1)+\localscale*(0, -2, \modez+0.75)$) {xz cuboid={label=mode2-\x}};
        }
    }

    \node[anchor=north, sloped, xslant=0.5] at (mode2-1-ruf) {$X_{2}$};

    \foreach \x in {1,...,\modex} {
        \foreach \z in {1,...,2} {
            \pic [fill=right-color, scale=\localscale] at ($\localscale*(\x-1, 0, -\z+1)+\localscale*(2.5, -2, 0)$) {xz cuboid={label=mode3-\z}};
        }
    }

    \node[anchor=south, sloped, xslant=0.5] at (mode3-2-cub) {$X_{3}$};

    \foreach \x in {1,...,\modey} {
        \foreach \y in {1,...,2} {
            \pic [fill=front-color, scale=\localscale] at ($\localscale*(\x-1, -\y+1, 0)+\localscale*(-\modey-0.5, 0, 0)$) {xy cuboid={label=mode1-\y}};
        }
    }

    \node[anchor=south] at (mode1-1-cuf) {$X_{1}$};

    \coordinate (cur) at ($\localscale*(2, 0, 0) + \localscale*(\offset, 0, 0)$);

    \foreach \x in {1,...,\modey} {
        \foreach \y in {1,...,2} {
            \pic [fill=front-color, scale=\localscale] at ($(cur) + \localscale*(\x-1, -\y+1, 0)+\localscale*(-\modey-0.5, 0, 0)$) {xy cuboid={label=mode1-\y}};
        }
    }

    \node[anchor=south] at (mode1-1-cuf) {$X_{1}$};

    \foreach \s in {0,...,1} {
        \foreach \x in {0,...,1} {
            \foreach \z in {0,...,0} {
                \pic [fill=top-color, scale=\localscale] at ($(cur) + \localscale*(\shift*\s, 0, 0) + \localscale*(\x, 0, -\z)$) {xz cuboid={label={cube\s-\x-0-\z}}};
            }
        }

        \foreach \y in {0,...,1} {
            \foreach \z in {0,...,0} {
                \pic [fill=right-color, scale=\localscale] at ($(cur) + \localscale*(\shift*\s, 0, 0) + \localscale*(1, -\y, -\z)$) {yz cuboid={label={cube\s-1-\y-\z}}};
            }
        }

        \foreach \x in {0,...,1} {
            \foreach \y in {0,...,1} {
                \pic [fill=front-color, scale=\localscale] at ($(cur) + \localscale*(\shift*\s, 0, 0) + \localscale*(\x, -\y, 0)$) {xy cuboid={label={cube\s-\x-\y-0}}};
            }
        }

        \foreach \x in {1,...,2} {
            \foreach \z in {1,...,\modez} {
                \pic [fill=top-color, scale=\localscale] at ($(cur) + \localscale*(\shift*\s, 0, 0) + \localscale*(\x-1, 0, -\z+1)+\localscale*(0, -2, \modez+0.75)$) {xz cuboid={label=mode2-\s-\x}};
            }
        }

        \pic [fill=none, draw=none, scale=\localscale, "\Large$\cdot$"] at ($(cur) + \localscale*(\shift*\s, 0, 0) + \localscale*(1.5, 0, 0)+\localscale*(0, -2, 1.75)$) {xz cuboid={width=0.5}};
        \pic [fill=right-color, scale=\localscale] at ($(cur) + \localscale*(\shift*\s, 0, 0) + \localscale*(2.5, 0, 0)+\localscale*(0, -2, 1.75)$) {xz cuboid={label=mode3-\s}};
    }

    \foreach \x in {0,...,1} {
        \foreach \y in {0,...,1} {
            \pgfmathtruncatemacro{\label}{1+2*\x + \y}
            \pic [draw=none, fill=none, scale=\localscale, "$\label$"] at ($(cur) + \localscale*(\x, -\y, 0)$) {xy cuboid};
        }
    }

    \draw [decorate,decoration={brace,amplitude=5pt,mirror,raise=4pt},yshift=0pt]
        (mode2-0-1-luf) -- (mode3-1-ruf) node [midway, yshift=-0.6cm] {$X_{3} \otimes X_{2}$};
    
    \draw [decorate,decoration={brace,amplitude=5pt,raise=4pt},yshift=0pt]
        (cube0-0-0-0-lub) -- (cube1-1-0-0-rub) node [midway, yshift=0.6cm] {$\ten{T}_{(1)} = [[\ten{T}]_{::1}, [\ten{T}]_{::2}]$}; 
\end{tikzpicture}
\vspace{-1em}
    \caption{Visualization of $1$-mode matricization.} \label{fig:unfoldtucker}
    \vspace{-1em}
\end{figure}

We can describe CP maps using tensors:
\begin{lemma} \label{lem:tencp}
    Any $\op{S} \in \cpop{n}{m}$ can be expressed as:
    \begin{equation} \label{eq:cp-tensor-def}
        \op{S}(X) = \ten{A}_{(1)} (W \kron X) \trans{\ten{A}_{(1)}}
    \end{equation}
    for some $\ten{A} \in \Re^{m \times n \times r}$ and $W \in \psd{r}$. We will refer to $\ten{A}$ as the \emph{model tensor}
    and $W$ as the \emph{parameter} matrix.
\end{lemma}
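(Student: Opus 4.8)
The plan is to prove the lemma by an explicit construction, exhibiting a mode tensor $\ten{A}$ and a parameter matrix $W$ that realize $\op{S}$. By \cref{def:cpop} every $\op{S} \in \cpop{n}{m}$ admits a representation $\op{S}(X) = \sum_{i=1}^r A_i X \trans{A_i}$ with $A_i \in \Re^{m \times n}$. I would take exactly these matrices as the frontal slices of the tensor, setting $[\ten{A}]_{::i} = A_i$ so that $\ten{A} \in \Re^{m \times n \times r}$, and choose the simplest admissible parameter $W = I_r \in \psd{r}$. Since the lemma only asserts existence of such a pair $(\ten{A}, W)$, this trivial choice of $W$ suffices; the freedom of a general $W \in \psd{r}$ is what later lets the parameter matrix absorb the disturbance second moment.

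The key step is to identify the mode-$1$ matricization with a horizontal concatenation of the slices. From \eqref{eq:1modeflat}, the $i$-th row of $\ten{A}_{(1)}$ is $\vec([\ten{A}]_{i::})$, and under the column-stacking convention of $\vec(\cdot)$ its entry in column $(k-1)n + j$ equals $[\ten{A}]_{ijk} = [A_k]_{ij}$, for $j \in \N_{1:n}$ and $k \in \N_{1:r}$. Reading this off across all $i$ shows $\ten{A}_{(1)} = [A_1, A_2, \dots, A_r] \in \Re^{m \times nr}$. I would state and check this index bookkeeping carefully, as it is the one place where the conventions of the matricization and of $\vec$ must be matched.

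With this identity in hand the computation closes quickly. Because $W = I_r$, the Kronecker factor $W \kron X = I_r \kron X$ is block diagonal with $r$ copies of $X$ on the diagonal, so multiplying out the concatenation gives
\[
\ten{A}_{(1)} (I_r \kron X) \trans{\ten{A}_{(1)}} = [A_1, \dots, A_r] \diag(X, \dots, X) \trans{[A_1, \dots, A_r]} = \sum_{k=1}^r A_k X \trans{A_k} = \op{S}(X),
\]
which is precisely \eqref{eq:cp-tensor-def}.

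I do not expect a serious obstacle: the content is essentially a change of bookkeeping from the summation form of a CP map to a single tensor--matrix--tensor product. The only delicate point is the alignment of indices in the mode-$1$ matricization, i.e.\ confirming that column $(k-1)n + j$ corresponds to the $(j,k)$ entry of $[\ten{A}]_{i::}$, together with the matching block-diagonal structure of $I_r \kron X$; a reversed Kronecker ordering (such as $X \kron I_r$) would destroy the block-diagonal argument and must be ruled out explicitly.
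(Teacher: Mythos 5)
Your proof is correct: taking $[\ten{A}]_{::i} = A_i$ and $W = I_r$, checking that row $i$ of $\ten{A}_{(1)}$ is $\trans{\vec}([\ten{A}]_{i::})$ so that $\ten{A}_{(1)} = [A_1, \dots, A_r]$, and using the block-diagonal structure of $I_r \kron X$ indeed yields \eqref{eq:cp-tensor-def}; this is a complete proof of the statement as written.

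It is, however, not the argument the paper gives, and the difference is instructive. The paper's proof runs in the opposite direction: it starts from the representation \eqref{eq:cp-tensor-def} with an \emph{arbitrary} $W \in \psd{r}$, factors $W = C\trans{C}$, writes $W \kron X = (C \kron I)(I \kron X)(\trans{C} \kron I)$, absorbs $C$ into the tensor through a $3$-mode product $\ten{V} = \tucker{\ten{A}; I, I, \trans{C}}$ (so that $\ten{V}_{(1)} = \ten{A}_{(1)}(C \kron I)$ by \cref{prop:kronunfold}), and concludes
\begin{equation*}
    \op{S}(X) = \ten{V}_{(1)} (I \kron X) \trans{\ten{V}_{(1)}} = \sum_{i=1}^r [\ten{V}]_{::i} X \trans{[\ten{V}]_{::i}},
\end{equation*}
which is the sum form \eqref{eq:cp-def}. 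In other words, the paper proves that every operator of the form \eqref{eq:cp-tensor-def} with psd parameter matrix is completely positive, which is the containment actually invoked later (it is what guarantees that $\op{E}(W; \cdot)$ is CP for any second moment $W$, and it underlies \cref{cor:cpmon} and \cref{lem:sopbnd-kron}); your argument proves the containment literally asserted by the lemma, namely that every CP operator admits such a tensor representation. The two arguments share the same computational core — the identification of the mode-$1$ matricization with the horizontal concatenation of frontal slices — and yours is the easier direction, obtained by specializing to $W = I_r$; the paper's factorization step is what upgrades the statement from a mere existence claim to a genuine characterization, since together the two directions show that the operators of the form \eqref{eq:cp-tensor-def} are exactly the elements of $\cpop{n}{m}$.
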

\begin{proof}
    \update*{Deferred to \ilpub{\cite[App.~A]{Arxiv}}\ilarxiv{Appendix~\ref{app:tensors}}.}
\end{proof}

\begin{remark} \label{rem:parameter}
    Dependency of $\op{S}$ on $W$ in \cref{eq:cp-tensor-def} can be made explicit by writing $\op{S}(W; X)$.
    That is $\op{S}(W; \cdot) \in \slop{n}{m}$. We distinguish the parameters of 
    the map from its arguments using a semicolon. So $\adj{\op{S}}(W; P)$ 
    denotes the adjoint w.r.t. the argument, not the parameter $W$.
\end{remark}

We show that $\op{S}(W; X)$ is also CP in its parameter $W$. 
\begin{corollary} \label{cor:cpmon}
    Let $\op{S}(W; X)$ as in \cref{eq:cp-tensor-def}. Then 
    \begin{enumerate}
        \item $\forall X \sgeq 0$, $\op{S}(\cdot; X) \in \cpop{n}{m}$;
        \item $\op{S}(\bar{W}; X) \sgeq \op{S}(W; X)$ if $\bar{W} \sgeq W$. \label{cor:cpmon:b}
    \end{enumerate}
\end{corollary}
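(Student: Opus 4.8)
The plan is to prove the two assertions in turn, with the second following almost immediately from the first. For assertion~(i), fix $X \sgeq 0$ and produce an explicit representation of the map $W \mapsto \op{S}(W; X)$ in the completely positive form of \cref{def:cpop}. Since $X \sgeq 0$ I would first factor it into rank-one terms, writing $X = \sum_{j=1}^{s} \xi_j \trans{\xi_j}$ with $s = \rk(X)$ and $\xi_j \in \Re^{n}$ (e.g.\ from the eigendecomposition, absorbing the nonnegative eigenvalues into the $\xi_j$). Substituting into \cref{eq:cp-tensor-def} and using linearity of the Kronecker product in its second argument yields $\op{S}(W; X) = \sum_{j=1}^{s} \ten{A}_{(1)} \left(W \kron \xi_j \trans{\xi_j}\right) \trans{\ten{A}_{(1)}}$.

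The key algebraic step is the Kronecker identity $W \kron (\xi_j \trans{\xi_j}) = (I_r \kron \xi_j)\, W\, (I_r \kron \trans{\xi_j})$, which I would verify by two applications of the mixed-product rule $(A \kron B)(C \kron D) = (AC) \kron (BD)$ (treating $W$ as $W \kron 1$). With this in hand, defining $A_j \dfn \ten{A}_{(1)} (I_r \kron \xi_j) \in \Re^{m \times r}$ collapses the sum to $\op{S}(W; X) = \sum_{j=1}^{s} A_j\, W\, \trans{A_j}$, which is precisely the form of \cref{eq:cp-def}. Hence $\op{S}(\cdot; X)$ is CP (as a map $\sym{r} \to \sym{m}$).

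For assertion~(ii) I would reuse the first part together with linearity. Because $\op{S}(W; X) = \ten{A}_{(1)}(W \kron X)\trans{\ten{A}_{(1)}}$ is linear in $W$, I have $\op{S}(\bar{W}; X) - \op{S}(W; X) = \op{S}(\bar{W} - W; X)$. By hypothesis $\bar{W} - W \sgeq 0$, and since a CP map is in particular positive — each summand $A_j Y \trans{A_j} \sgeq 0$ whenever $Y \sgeq 0$ — assertion~(i) gives $\op{S}(\bar{W} - W; X) \sgeq 0$, i.e.\ $\op{S}(\bar{W}; X) \sgeq \op{S}(W; X)$. Note this argument inherits the standing assumption $X \sgeq 0$ used in part~(i).

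The main obstacle is not conceptual but bookkeeping: I must check that the factor ordering in $I_r \kron \xi_j$ is consistent with the $1$-mode matricization convention of \cref{eq:1modeflat} used to form $\ten{A}_{(1)}$. Concretely, the column index of $\ten{A}_{(1)}$ lists the $W$-mode ($r$) slowest and the $X$-mode ($n$) fastest, and $W \kron X$ orders its rows and columns the same way; only then are the products $A_j = \ten{A}_{(1)}(I_r \kron \xi_j)$ well defined and the Kronecker identity literally correct rather than correct up to a permutation of indices. Once that convention is pinned down, everything reduces to the direct substitution above.
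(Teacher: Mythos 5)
Your proof is correct, but it takes a genuinely different route from the paper's. The paper proves part~(i) by inserting the perfect shuffle (commutation) matrix $P$, $\trans{P}P = I$, to rewrite $\ten{A}_{(1)}(W \kron X)\trans{\ten{A}_{(1)}} = \ten{A}_{(1)}\trans{P}(X \kron W)P\trans{\ten{A}_{(1)}}$, which swaps the roles of $W$ and $X$ and puts the map back into the parameterized form of \cref{eq:cp-tensor-def} with the psd matrix $X$ now acting as the parameter; complete positivity in $W$ then follows by re-invoking \cref{lem:tencp}. You instead bypass the shuffle entirely: the rank-one decomposition $X = \sum_j \xi_j\trans{\xi_j}$ together with the identity $W \kron \xi_j\trans{\xi_j} = (I_r \kron \xi_j)\,W\,(I_r \kron \trans{\xi_j})$ produces explicit Kraus operators $A_j = \ten{A}_{(1)}(I_r \kron \xi_j)$, landing directly in Choi's form \eqref{eq:cp-def}. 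What each buys: the paper's argument is shorter because it reuses existing infrastructure (the commutation matrix and \cref{lem:tencp}), whereas yours is more self-contained and constructive, exhibiting the Kraus representation of $\op{S}(\cdot\,; X)$ explicitly; your attention to the matricization ordering (mode-$2$ fastest, mode-$3$ slowest in the columns of $\ten{A}_{(1)}$, matching the Kronecker convention in $W \kron X$) is exactly the check that makes the identity hold literally rather than up to a permutation. Part~(ii) is identical in both proofs — linearity in $W$ plus positivity of the CP map from part~(i) — and your observation that monotonicity inherits the hypothesis $X \sgeq 0$ is a point the paper's terse statement leaves implicit (indeed, your phrasing that the map lives in $\cpop{r}{m}$, i.e.\ acts on $\sym{r}$, is more precise than the corollary's stated index).
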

    
\begin{proof}
    \update*{Deferred to \ilpub{\cite[App.~A]{Arxiv}}\ilarxiv{Appendix~\ref{app:tensors}}.}
\end{proof}
We will see later in the setting of multiplicative noise dynamics that \cref{cor:cpmon:b} is useful for finding sufficient 
conditions for stability. Specifically, it allows constructing dynamics whose second moments dominate those of 
the true dynamics in the psd. sense.

There is a bilinear structure present in \eqref{eq:cp-tensor-def}. This becomes clear when considering $\tr[P \op{S}(W; X)]$,
which is linear in $P$, $X$ and $W$. 
Hence, there is a version of \cref{prop:kronunfold} for CP maps (cf. \ilpub{\cite[Prop.~A.1]{Arxiv}}\ilarxiv{\cref{prop:cpunfold}}), which allows different characterizations of $\op{S}$ and $\adj{\op{S}}$.

We define Kronecker products on $\ten{T} \in \Re^{q_1 \times q_2 \times q_3}$ \cite{Lee2014}:
\begin{equation} \label{eq:tensor-kron}
    [\ten{T} \kron \ten{T}]_{(i + (j-1)q_1)::} = [\ten{V}]_{j::} \kron [\ten{V}]_{i::} \text{ for } i, j \in \N_{1:q_1}.
\end{equation}
Also let $\ten{V} \skron \ten{V} = \tucker{\ten{V} \kron \ten{V}; Q_{q_1}, Q_{q_2}, Q_{q_3}}$. 

Next we state a generalization of a property for the Kronecker product 
between matrices, which is used to rewrite CP operators.

\begin{lemma} \label{lem:tensor-kron}
    Suppose $\ten{T} \in \Re^{q_1 \times q_2 \times q_3}$ and $X_i \in \Re^{p_i \times q_i}$ for $i \in \{1, 2, 3\}$
    and let $\ten{Y} = \tucker{\ten{T}; X_1, X_2, X_3}$. Then
    \begin{equation*}
        \ten{Y} \skron \ten{Y} = \tucker{\ten{T} \skron \ten{T}; X_1 \skron X_1, X_2 \skron X_2, X_3 \skron X_3}
    \end{equation*}
\end{lemma}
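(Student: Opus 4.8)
The plan is to prove the identity in two stages: first the un-symmetrized mixed-product law
\begin{equation*}
    \ten{Y} \kron \ten{Y} = \tucker{\ten{T} \kron \ten{T}; X_1 \kron X_1, X_2 \kron X_2, X_3 \kron X_3},
\end{equation*}
and then transfer it to the symmetrized product via the definitions $\ten{V} \skron \ten{V} = \tucker{\ten{V}\kron\ten{V}; Q_{q_1}, Q_{q_2}, Q_{q_3}}$ and $X_i \skron X_i = Q_{p_i}(X_i \kron X_i)\trans{Q}_{q_i}$ (the matrix definition with $U = V = X_i$). Throughout I rely on the composition law $\tucker{\tucker{\ten{T}; A_1, A_2, A_3}; B_1, B_2, B_3} = \tucker{\ten{T}; B_1 A_1, B_2 A_2, B_3 A_3}$, which follows from \eqref{eq:modendef} (matricizing along mode $n$ gives $(\ten{T}\ttimes{n}A)\ttimes{n}B = \ten{T}\ttimes{n}(BA)$) together with the commutativity of mode-$n$ products along distinct modes.

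For the first stage I would argue slice-by-slice along mode $1$. Writing $\ten{Y}=\tucker{\ten{T};X_1,X_2,X_3}$ componentwise yields $[\ten{Y}]_{a::} = \sum_{\ell}[X_1]_{a\ell}\,X_2 [\ten{T}]_{\ell::}\trans{X_3}$ for each $a \in \N_{1:p_1}$. Taking Kronecker products of two such slices, using bilinearity and the matrix mixed-product property $(X_2 M \trans{X_3}) \kron (X_2 M' \trans{X_3}) = (X_2 \kron X_2)(M \kron M')(\trans{X_3}\kron\trans{X_3})$, the defining slice relation \eqref{eq:tensor-kron} rewrites each $[\ten{T}]_{m::}\kron[\ten{T}]_{\ell::}$ as a mode-$1$ slice of $\ten{T}\kron\ten{T}$. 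Matching this against the slice expansion of the right-hand Tucker operator then reduces to verifying that the index encoding $s = a + (b-1)p_1$ of \eqref{eq:tensor-kron} is consistent with the row/column indexing of $X_1 \kron X_1$; this bookkeeping is the first place requiring care.

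For the second stage, combining stage one with the composition law gives $\ten{Y}\skron\ten{Y} = \tucker{\ten{T}\kron\ten{T}; Q_{p_1}(X_1\kron X_1), Q_{p_2}(X_2\kron X_2), Q_{p_3}(X_3\kron X_3)}$, while expanding the claimed right-hand side gives $\tucker{\ten{T}\kron\ten{T}; (X_1\skron X_1)Q_{q_1}, \dots} = \tucker{\ten{T}\kron\ten{T}; Q_{p_1}(X_1\kron X_1)\trans{Q}_{q_1}Q_{q_1}, \dots}$. The two coincide once I establish the cancellation identity
\begin{equation*}
    Q_{p}(X \kron X)\,\trans{Q}_{q}Q_{q} = Q_{p}(X \kron X), \qquad X \in \Re^{p \times q},
\end{equation*}
equivalently $Q_{p}(X \kron X)(I - \trans{Q}_q Q_q) = 0$. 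I expect this to be the conceptual crux. Since $Q_q \trans{Q}_q = I_{\sd{q}}$, the matrix $\trans{Q}_q Q_q$ is a symmetric idempotent, and the norm identity $\trans{\svec(Y)}\svec(Y) = \nrm{Y}_F^2$ shows it acts as the identity on vectorizations of symmetric matrices; hence it is the orthogonal projection onto the symmetric subspace $\{\vec(Y) : \trans{Y} = Y\}$, so $I - \trans{Q}_q Q_q$ projects onto the antisymmetric subspace.

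It then remains to observe that $Q_p(X\kron X)$ annihilates antisymmetric vectorizations: for $\trans{Y} = -Y$ the fundamental property \eqref{eq:kronfund} gives $(X\kron X)\vec(Y) = \vec(X Y \trans{X})$, and $X Y \trans{X}$ is again antisymmetric, so $Q_p$ (whose range-adjoint $\trans{Q}_p$ spans exactly the symmetric subspace, whence $\ker Q_p$ is the antisymmetric subspace) sends it to zero. This yields the cancellation identity and shows both sides equal $\tucker{\ten{T}\kron\ten{T}; Q_{p_1}(X_1\kron X_1), Q_{p_2}(X_2\kron X_2), Q_{p_3}(X_3\kron X_3)}$, completing the proof. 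The only genuine obstacles are the Kronecker index bookkeeping in stage one and, in stage two, correctly identifying that the spurious antisymmetric components introduced by $\trans{Q}_q Q_q \neq I$ are killed by $Q_p(X\kron X)$; everything else is a routine application of the composition and mixed-product laws.
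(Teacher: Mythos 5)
Your proposal is correct and follows essentially the same route as the paper's proof: the paper likewise starts from the unsymmetrized identity $\ten{Y} \kron \ten{Y} = \tucker{\ten{T} \kron \ten{T}; X_1 \kron X_1, X_2 \kron X_2, X_3 \kron X_3}$ (citing Lee's mixed-product result rather than re-deriving it slice-by-slice), then applies the composition law and reduces everything to the cancellation identity $Q_{p}(X \kron X)\trans{Q}_{q}Q_{q} = Q_{p}(X \kron X)$. The only difference is one of phrasing: the paper proves this cancellation by showing each row $\trans{\svec(H)} Q_{p}(X \kron X)$ is the vectorization of a symmetric matrix, on which $\trans{Q}_{q}Q_{q}$ acts as the identity, whereas you prove the dual statement that $Q_{p}(X \kron X)$ annihilates the antisymmetric subspace onto which $I - \trans{Q}_{q}Q_{q}$ projects---two equivalent formulations of the same fact.
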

\begin{proof}
    \update*{Deferred to \ilpub{\cite[App.~A]{Arxiv}}\ilarxiv{Appendix~\ref{app:tensors}}.}
\end{proof}

\begin{corollary} \label{cor:cpadj}
    Let $\op{S}(W; X)$ be CP and parameterized as in \cref{eq:cp-tensor-def} with some $\ten{V}$. 
    Then $\adj{\op{S}}(W; X) = \ten{V}_{(2)} (W \kron X) \trans{\ten{V}}_{(2)}$. Moreover its matrix $\opm{S} = (\ten{V} \skron \ten{V}) \btimes{3} \svec(W)$. 
\end{corollary}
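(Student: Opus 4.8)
The plan is to first reduce the tensor expression \eqref{eq:cp-tensor-def} to an explicit double sum over the mode-$3$ slices $V_k \dfn [\ten{V}]_{::k} \in \Re^{m \times n}$, $k \in \N_{1:r}$, and to then treat the adjoint and the matrix separately. From the mode-$1$ matricization \eqref{eq:1modeflat} one checks that $\ten{V}_{(1)} = [V_1, \dots, V_r]$ (horizontal concatenation), while the $(i,j)$-block of $W \kron X$ is $[W]_{ij} X$. Substituting into \eqref{eq:cp-tensor-def} and multiplying out the block product gives
\begin{equation*}
    \op{S}(W; X) = \ten{V}_{(1)} (W \kron X) \trans{\ten{V}_{(1)}} = \ssum_{i,j=1}^r [W]_{ij}\, V_i X \trans{V_j},
\end{equation*}
which reduces to the CP sum of \cref{def:cpop} when $W = I$. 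This identity is the workhorse for both assertions.

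For the adjoint I would apply the defining relation $\tr[\op{S}(W;X) Y] = \tr[X \adj{\op{S}}(W;Y)]$ termwise. The cyclic property of the trace gives $\tr[V_i X \trans{V_j} Y] = \tr[X \trans{V_j} Y V_i]$, so $\adj{\op{S}}(W; Y) = \ssum_{i,j} [W]_{ij} \trans{V_j} Y V_i$. Relabelling $i \leftrightarrow j$ and using symmetry of $W$ (recall $W \in \psd{r}$) turns this into $\ssum_{i,j} [W]_{ij} \trans{V_i} Y V_j$. Since the mode-$2$ matricization satisfies $\ten{V}_{(2)} = [\trans{V_1}, \dots, \trans{V_r}]$, this is exactly $\ten{V}_{(2)}(W \kron Y)\trans{\ten{V}_{(2)}}$; conceptually, the adjoint is the CP map whose mode tensor has the first two modes of $\ten{V}$ interchanged.

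For the matrix I would begin from the polarization identity $(U \skron V)\svec(X) = \frac{1}{2} \svec(U X \trans{V} + V X \trans{U})$, valid for symmetric $X$, which follows from \cref{def:skron}, \eqref{eq:kronfund} and $\trans{Q_n}\svec(X) = \vec(X)$. Applying $\svec$ to the double-sum form above, symmetrizing the summand over $i \leftrightarrow j$ (again using symmetry of $W$), and invoking this identity yields $\opm{S} = \ssum_{i,j} [W]_{ij} (V_i \skron V_j)$. It then remains to recognize this as $(\ten{V}\skron\ten{V})\btimes{3}\svec(W)$. I would push the mode-$3$ contraction through the Tucker operator in $\ten{V}\skron\ten{V} = \tucker{\ten{V}\kron\ten{V}; Q_m, Q_n, Q_r}$ using $(\ten{T}\ttimes{3}Q_r)\btimes{3}\svec(W) = \ten{T}\btimes{3}(\trans{Q_r}\svec(W))$ together with $\trans{Q_r}\svec(W) = \vec(W)$, reducing the goal to the matrix identity $(\ten{V}\kron\ten{V})\btimes{3}\vec(W) = \ssum_{i,j} [W]_{ij} (V_i \kron V_j)$, followed by pre- and post-multiplication with $Q_m$ and $\trans{Q_n}$.

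The delicate step is this last identity, which is pure index bookkeeping: I must combine the slice definition \eqref{eq:tensor-kron}, the entrywise form of the matrix Kronecker product, and the column-stacking convention of $\vec(\cdot)$ to verify that contracting the $r^2$-dimensional third mode of $\ten{V}\kron\ten{V}$ against $\vec(W)$ reproduces $\ssum_{i,j} [W]_{ij} V_i \kron V_j$ with indices aligned. Once that holds, applying $Q_m (\cdot) \trans{Q_n}$ and symmetrizing over $i \leftrightarrow j$ assembles each $Q_m(V_i \kron V_j)\trans{Q_n}$ into $V_i \skron V_j$ via \cref{def:skron}, closing the argument. I expect the bookkeeping around \eqref{eq:tensor-kron} and the two competing vectorization conventions to be the only genuinely subtle point; everything else is linear-algebraic rearrangement justified by results already in the excerpt.
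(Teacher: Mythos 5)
Your proof is correct, but it follows a genuinely different route from the paper's. The paper obtains both claims as immediate consequences of \cref{prop:cpunfold}: the trace identity in \cref{prop:cpunfold:a} is exactly the defining relation of the adjoint (the paper's text cites item (b), but item (a) is what does the work), and \cref{prop:cpunfold:c} writes $\tr[P\,\op{S}(W;X)]$ as the trilinear form $\tucker{\ten{V}\skron\ten{V};\svec(P),\svec(X),\svec(W)}$, from which $\opm{S}=(\ten{V}\skron\ten{V})\btimes{3}\svec(W)$ follows because a bilinear form in $(\svec(P),\svec(X))$ valid for all symmetric $P,X$ determines its matrix uniquely; all multilinear bookkeeping is thus concentrated in \cref{prop:cpunfold}, which the paper proves via rank-one eigendecompositions, \cref{prop:kronunfold} and \cref{lem:tensor-kron}. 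You bypass \cref{prop:cpunfold} entirely and argue from the slice expansion $\op{S}(W;X)=\ssum_{i,j}[W]_{ij}V_iX\trans{V_j}$, using the cyclic trace property for the adjoint and polarization plus index bookkeeping on \eqref{eq:tensor-kron} for the matrix. What your route buys is self-containedness and two explicit structural facts the paper never states: the formula $\opm{S}=\ssum_{i,j}[W]_{ij}(V_i\skron V_j)$, and the identification of the adjoint as the CP map whose mode tensor swaps modes $1$ and $2$ of $\ten{V}$. What it costs is redoing by hand the index gymnastics the paper has packaged once and reuses. Note also that the subtlety you flag is real: under the convention of \eqref{eq:tensor-kron} the raw mode-$3$ contraction comes out with the factors transposed, as $\ssum_{i,j}[W]_{ij}\,V_j\kron V_i$, so the symmetrization over $i\leftrightarrow j$ that you invoke (licensed by $W\in\psd{r}$, hence $W$ symmetric) is genuinely needed to close the argument rather than being a cosmetic relabeling.
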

\begin{proof}
    \update*{Deferred to \ilpub{\cite[App.~A]{Arxiv}}\ilarxiv{Appendix~\ref{app:tensors}}.}
\end{proof}
\section{Known distribution}
We consider the nominal setting for multiplicative noise, where the distribution is known.
The (CP) operator $\op{E}$ from \cref{eq:dynsm}, which fully describes the second moment dynamics of \cref{eq:dyn}, is examined.
We prove that it is \emph{(i)} linear in $Z_t$; \emph{(ii)} completely positive; and \emph{(iii)} is linear in $\E[v_t\trans{v_t}]$. 
Moreover, we argue how identifying $\op{E}$ is sufficient to describe both stability and quadratically optimal control. 

\subsection{The model tensor} \label{sec:multiplicative}
We first illustrate how the previous tools interface with multiplicative noise. 
Consider the true model tensor $\ten{V} \in \Re^{n_x \times n_z \times n_v}$, which is populated 
as $[\ten{V}]_{::i} = [A_i, B_i]$ for $i \in \N_{1:n_v}$ using the modes in \eqref{eq:dyn}. This splits up the 
dynamics into two unknowns: the tensor $\ten{V}$ and the distribution of $v_t$. 

To encode known information about $\ten{V}$ 
we introduce the model tensor $\ten{M} \in \Re^{n_x \times n_z \times n_w}$ 
and the auxiliary random vector $w_t \in \Re^{n_w}$. When the modes are known, we can simply pick $\ten{M} = \ten{V}$
and $w_t = v_t$. However, when $\ten{V}$ is only partially known we can still select a $\ten{M}$ that has sufficient 
modeling capacity. Specifically we would like that for any realization of the disturbance $v_t$ and hence of 
the random matrices $A(v_t)$ and $B(v_t)$, there exists a realization of $w_t$ producing the same matrices by the modes 
encoded in $\ten{M}$. 

A sufficient assumption for this property is:
\begin{assumption}[Model Equivalence] \label{asm:model}
    Consider $\ten{V} \in \Re^{n_x \times n_z \times n_v}$ with $n_z \dfn n_x + n_u$
    s.t. $[\ten{V}]_{::i} = [A_i, B_i]$ for $i \in \N_{1:n_v}$ and $\{A_i, B_i\}_{i=1}^{n_v}$ as in \cref{eq:dyn}. 

    Assume $\ten{M} \in \Re^{n_x \times n_z \times n_w}$ with $n_w \in \N$ is such that 
    \begin{equation*}
        \rk(\ten{M}_{(3)}) = \rk([\ten{M}_{(3)}; \ten{V}_{(3)}]) = n_w.
    \end{equation*}
    Here $\ten{M}_{(3)} \in \Re^{n_w \times n_x n_z}$ and $\ten{V}_{(3)} \in \Re^{n_v \times n_x n_z}$ 
    denote the $3$-mode matricizations as in \cref{eq:1modeflat}. 
\end{assumption}

Note that $\ten{M}$
can be partitioned into $\ten{A} \dfn [\ten{M}]_{:,1:n_x,:}$ 
and $\ten{B} \dfn [\ten{M}]_{:,n_x+1:n_z,:}$. \update*{These parts serve as an analogy of $A$ and $B$ for linear time invariant systems.}
When $\ten{M} = \ten{V}$ we have $[\ten{A}]_{::i} = A_i$ and $[\ten{B}]_{::i} = B_i$ for $i \in \N_{1:n_v}$. 

Given \cref{asm:model} and this partitioning, we can show that
\begin{align} 
    x_{t+1} &= \tucker{\ten{A}; I_{n_x}, x_t, {w}_t} + \tucker{\ten{B}; I_{n_x}, u_t, {w}_t} \nonumber \\
            &= \tucker{\ten{M}; I_{n_x}, z_t, {w}_t},\label{eq:dynten}
\end{align}
describes the same dynamics as \eqref{eq:dyn}, given a suitable choice of $w_t$.
We formalize equivalence of \eqref{eq:dynten} and \eqref{eq:dyn} below.

\begin{lemma} \label{lem:model-eq}
    When \cref{asm:model} holds and $w_t = \pinv{(\trans{\ten{M}_{(3)}})} \trans{\ten{V}_{(3)}} v_t$, $\forall t \in \N$, 
    then \eqref{eq:dynten} produces the same trajectories as \eqref{eq:dyn}.
\end{lemma}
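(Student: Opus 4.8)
The plan is to reduce the statement to an equality of one-step update maps and then invoke the rank condition through a projection argument. Since both recursions start from the same $x_0$, it suffices to show that, given a common augmented state $z_t = (x_t, u_t)$, \eqref{eq:dynten} and \eqref{eq:dyn} produce the same $x_{t+1}$; matching trajectories then follow by induction on $t$. First I would put the original dynamics in tensor form: because $[\ten{V}]_{::k} = [A_k, B_k]$, we have $A(v_t) x_t + B(v_t) u_t = \big(\sum_k [v_t]_k [\ten{V}]_{::k}\big) z_t = \tucker{\ten{V}; I_{n_x}, z_t, v_t}$, so the target becomes the tensor identity $\tucker{\ten{M}; I_{n_x}, z_t, w_t} = \tucker{\ten{V}; I_{n_x}, z_t, v_t}$ evaluated at the common $z_t$.

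Next I would collapse both sides into the \emph{effective matrices} $M(w) \dfn \sum_k [w]_k [\ten{M}]_{::k}$ and $V(v) \dfn \sum_k [v]_k [\ten{V}]_{::k}$, both in $\Re^{n_x \times n_z}$, since the two Tucker expressions equal $M(w_t) z_t$ and $V(v_t) z_t$. I will prove the stronger matrix identity $M(w_t) = V(v_t)$, which immediately makes the two updates agree at the shared $z_t$. Vectorizing, and using that by \eqref{eq:1modeflat} the $k$-th row of the mode-$3$ matricization is exactly $\trans{\vec([\ten{M}]_{::k})}$ (resp.\ $\trans{\vec([\ten{V}]_{::k})}$), the identity $M(w_t) = V(v_t)$ is equivalent to the vector equation $\trans{\ten{M}_{(3)}} w_t = \trans{\ten{V}_{(3)}} v_t$.

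Substituting the prescribed $w_t = \pinv{(\trans{\ten{M}_{(3)}})} \trans{\ten{V}_{(3)}} v_t$, the left-hand side becomes $\trans{\ten{M}_{(3)}} \pinv{(\trans{\ten{M}_{(3)}})}$ applied to $\trans{\ten{V}_{(3)}} v_t$. The key observation is the standard Moore--Penrose fact that $\trans{\ten{M}_{(3)}} \pinv{(\trans{\ten{M}_{(3)}})}$ is the orthogonal projector onto the column space of $\trans{\ten{M}_{(3)}}$, hence it acts as the identity precisely on vectors in that column space. This is where \cref{asm:model} enters: the condition $\rk(\ten{M}_{(3)}) = \rk([\ten{M}_{(3)}; \ten{V}_{(3)}]) = n_w$ states that stacking $\ten{V}_{(3)}$ below $\ten{M}_{(3)}$ leaves the rank unchanged, so the row space of $\ten{V}_{(3)}$ is contained in that of $\ten{M}_{(3)}$; transposing, the column space of $\trans{\ten{V}_{(3)}}$ lies inside the column space of $\trans{\ten{M}_{(3)}}$. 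In particular $\trans{\ten{V}_{(3)}} v_t$ lies in the range of the projector and is therefore fixed by it, yielding $\trans{\ten{M}_{(3)}} w_t = \trans{\ten{V}_{(3)}} v_t$, i.e.\ $M(w_t) = V(v_t)$, which closes the induction.

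I expect the main obstacle to be the matricization bookkeeping in the second step. The factor $z_t$ enters \eqref{eq:dynten} through the mode-$1$/mode-$2$ structure (via $\ten{M}_{(1)}(w_t \kron z_t)$, using \cref{prop:kronunfold}), whereas the rank hypothesis is stated for the mode-$3$ matricization, so one must carefully track the Kronecker and $\vec$ conventions that let the dependence on $z_t$ factor out cleanly and isolate the $w_t$-versus-$v_t$ comparison as the vector identity $\trans{\ten{M}_{(3)}} w_t = \trans{\ten{V}_{(3)}} v_t$. Once this effective-matrix reformulation is in place, the projector argument and the appeal to \cref{asm:model} are routine.
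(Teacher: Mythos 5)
Your proof is correct and follows essentially the same route as the paper's: both reduce the claim to the vector identity $\trans{\ten{M}_{(3)}} w_t = \trans{\ten{V}_{(3)}} v_t$ and then reshape it via the mode-$3$ product into $[A(v_t), B(v_t)] z_t$. The only difference is that you spell out the Moore--Penrose projector argument (range inclusion from the rank condition in \cref{asm:model}), which the paper asserts without elaboration --- a welcome addition, not a deviation.
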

\begin{proof}
    \update*{The statement in \cref{asm:model} implies that the columns of $\trans{\ten{V}_{(3)}}$ 
    are in the range of $\trans{\ten{M}_{(3)}}$. This implies that 
    $\trans{\ten{M}_{(3)}} w_t = \trans{\ten{M}_{(3)}} \pinv{(\trans{\ten{M}_{(3)}})} \trans{\ten{V}_{(3)}} w_t = \trans{\ten{V}_{(3)}} v_t$,
    since $\trans{\ten{M}_{(3)}} \pinv{(\trans{\ten{M}_{(3)}})}$ is an orthogonal projection onto the range of $\trans{\ten{M}_{(3)}}$.}

    \update*{Using \eqref{eq:modendef} on both sides of $\trans{w_t} \ten{M}_{(3)} =  \trans{v}_t \ten{V}_{(3)}$
    and reshaping gives $(\ten{M} \ttimes{3} \trans{w_t}) = (\ten{V} \ttimes{3} \trans{v_t})$. 
    Replacing $\ttimes{3}$ with $\btimes{3}$ only alters the dimensions, without changing the total number of elements. 
    So $(\ten{M} \btimes{3} w_t) = (\ten{V} \btimes{3} v_t)$. Then, by the definitions of $\btimes{3}$, $\ten{V}$ and $A(v)$, $B(v)$ we have respectively
    \begin{align*}
        \ten{V} \btimes{3} v &= \ssum_{i=1}^{n_v} [v]_i [\ten{V}]_{::i}\\
                                &= \ssum_{i=1}^{n_v} [v]_i [A_i, B_i] = [A(v), B(v)].
    \end{align*}
    Finally, by \cref{def:tucker}, $\tucker{\ten{M}; I_{n_x}, z_t, {w}_t} = (\ten{M} \btimes{3} w_t) \btimes{2} z_t = [A(v_t), B(v_t)] z_t = A(v_t) x_t + B(v_t) u_t$. 
    So the produced trajectories will be the same.}
\end{proof}

\begin{example} \label{ex:modeltensor}
    We illustrate the need for \cref{asm:model}. 
    Assume $n_x = 2$, $n_u = 1$ and $n_v = 3$ with true modes:
    \begin{equation*}
        A_1 = \mat{
            2 & 0 \\ 1 & 0
        }, \, A_2 = \mat{
            0 & 3 \\ 0 & 0
        }, \, A_3 = \mat{
            0 & 0 \\ 0 & 1
        }, \, B_3 = \mat{
            0 \\ 2
        },
    \end{equation*}
    and $B_1 = B_2 = 0$. In this case $\ten{V}_{(3)}$ is given as
    \begin{equation*}
        \left[
            \begin{smallarray}{cccc|cc}
                2 & 1 & 0 & 0 & 0 & 0 \\
                0 & 0 & 3 & 0 & 0 & 0 \\
                0 & 0 & 0 & 1 & 0 & 2
            \end{smallarray}
        \right] = \left[
            \begin{smallarray}{c|c}
                \trans{\vec(A_1)} & \trans{\vec(B_1)} \\
                \trans{\vec(A_2)} & \trans{\vec(B_2)} \\
                \trans{\vec(A_3)} & \trans{\vec(B_3)} \\
            \end{smallarray}
        \right].
    \end{equation*}
    Say a designer, through some erroneous prior analysis, assumed that 
    the top-left component in $A_1$ was equal to zero instead of two. This corresponds to the following model tensor:
    \begin{equation*}
        \ten{M}_{(3)} = \left[
            \begin{smallarray}{cccc|cc}
                \mathbf{0} & 1 & 0 & 0 & 0 & 0 \\
                0 & 0 & 3 & 0 & 0 & 0 \\
                0 & 0 & 0 & 1 & 0 & 2
            \end{smallarray}
        \right].
    \end{equation*}
    for this choice \cref{asm:model} fails and so will \cref{lem:model-eq}.
    To see this, take $v_t = (1, 0, 0)$. Then $\vec([A(v), B(v)]) = (2, 1, 0, 0, 0, 0)$. Observe
    that such a vector cannot be constructed by any linear combination of the rows of $\ten{M}_{(3)}$.
    So there is no distribution for $w_t$ that reproduces the true dynamics. 
\end{example}

Informally \cref{asm:model} requires that the span of $\trans{\ten{M}_{(3)}}$ should be sufficiently large 
to model the outputs of $A(v)$ and $B(v)$ given by realizations of $v \in \Re^{n_v}$. 

Note that this is always possible, by taking $n_w = n_x (n_x + n_u)$ and 
$\ten{M}_{(3)} = I_{n_w}$. We refer to this as the \emph{model-free basis}, since no prior structural assumptions
are imposed on the dynamics. If we know the \emph{true modes} $A_i$ and $B_i$, we can simply select $\ten{M} = \ten{V}$.
As such, $\ten{M}$ is used to introduce prior information into the dynamics.

Since the sequence $v_t$ is i.i.d., the sequence $w_t$ as defined in \cref{lem:model-eq} is i.i.d.\ too. We thus omit 
$t$ as the distribution remains constant. We can then use the transformation in \cref{lem:model-eq} to 
find the true $W = \E[w \trans{w}]$ in terms of $V = \E[v \trans{v}]$:
\begin{equation} \label{eq:true_moment}
    W = \pinv{(\trans{\ten{M}_{(3)}})} \trans{\ten{V}_{(3)}} V \ten{V}_{(3)} \pinv{\ten{M}_{(3)}}.
\end{equation}
This allows translating the \emph{ground truth} dynamics to another model tensor and will aid in
validation of our method during the experiments in \cref{sec:numerical}.



We next illustrate the connection with CP operators by considering the dynamics of $X_t = \E[x_t \trans{x_t}]$ as in \cref{eq:dynsm}. 
By \cref{prop:kronunfold}, $\tucker{\ten{M}; I, z_t, {w}_t} = \ten{M}_{(1)} ({w}_t \kron z_t)$. 
Taking the outer product, using $(w_t \kron z_t) \trans{(w_t \kron z_t)} = (w_t \trans{w_t} \kron z_t \trans{z_t})$ \cite[Thm.~E.1.3]{DeKlerk2002}
and taking the expectation allows us to derive:
\begin{equation} \label{eq:dynsm_}
    X_{t+1} = \op{E}(W; Z_t) \dfn {\ten{M}}_{(1)} (\smoment \otimes Z_t) \trans{{\ten{M}}_{(1)}},
\end{equation}
where $X_t = \E[x_t \trans{x_t}]$, $Z_t = \E[z_t \trans{z_t}]$ and $W = \E[w_t \trans{w_t}]$. 
We write $\op{E}(\cdot)$ when the specific value of $W$ is unimportant. 
The operator $\op{E} \colon \sym{n_z} \to \sym{n_x}$ is clearly CP by \cref{lem:tencp}. It is linear with its 
matrix characterized by \cref{cor:cpadj}. Moreover it has a bilinear structure, depending linearly on $\smoment$ (and by \eqref{eq:true_moment}
on $V$). So we confirmed the claims from the problem statement. \ilarxiv{We provide a brief illustration of how $\op{E}$ encodes
the dynamics and then }\ilpub{We next }discuss nominal stability and LQR, before introducing the system identification scheme.

\begin{arxiv}
    \begin{example} \label{ex:smdyn}
        Continuing on from \cref{ex:modeltensor}, we now assume $\ten{M} = \ten{V}$ and $W = I$. 
        We will examine $\op{E}$. First note that
        \begin{align*}
            \ten{M}_{(1)} &= \left[
                \begin{array}{ccc|ccc|ccc}
                    2 & 0 & 0 & 0 & 3 & 0 & 0 & 0 & 0 \\
                    1 & 0 & 0 & 0 & 0 & 0 & 0 & 1 & 2
                \end{array}
            \right]  \\ &= \left[
                \begin{array}{cc|cc|cc}
                    A_1 & B_1 & A_2 & B_2 & A_3 & B_3
                \end{array}
            \right].
        \end{align*}
    \end{example}
    If we partition $Z_t = [X_t, V_t; \trans{V_t}, U_t]$ with $X_t = \E[x_t \trans{x_t}]$, $V_t = \E[x_t \trans{u_t}]$ and $U_t = \E[u_t \trans{u_t}]$ then 
    \begin{align*}
        \op{E}(Z_t) &= \ssum_{i=1}^{3} \begin{bmatrix}
            A_i, \, B_i
        \end{bmatrix} Z_t \begin{bmatrix}
            \trans{A}_i; \, \trans{B}_i
        \end{bmatrix} \\
        &= \ssum_{i=1}^3 A_i X_t \trans{A}_i + A_i V_t \trans{B}_i + B_i \trans{V}_t \trans{A}_i + B_i U_t \trans{B}_i.
    \end{align*}
    For $u_t = 0$ this reduces to  $\sum_{i=1}^3 \E[(A_i x_t) \trans{(A_i x_t)}]$.

    The matrix associated with $\op{E}$ as described in \cref{cor:cpadj} is given as follows:
    \begin{equation*}
        \opm{E} = Q_2 \begin{bmatrix}
            4 & 0 & 0 & 0 & 9 & 0 & 0 & 0 & 0 \\ 
            2 & 0 & 0 & 0 & 0 & 0 & 0 & 0 & 0 \\
            2 & 0 & 0 & 0 & 0 & 0 & 0 & 0 & 0 \\
            1 & 0 & 0 & 0 & 1 & 2 & 0 & 2 & 4
        \end{bmatrix} \trans{Q_2}.
    \end{equation*}
    with 
    \begin{equation*}
        Q_2 = \begin{bmatrix}
            1 & 0 & 0 & 0 \\ 0 & 1/\sqrt{2} & 1/\sqrt{2} & 0 \\ 0 & 0 & 0 & 1
        \end{bmatrix},
    \end{equation*}
    which is added as prescribed in the definition of $\skron$.    
\end{arxiv}



\subsection{Stability}
We consider stability of the autonomous case of \eqref{eq:dynten}:
\begin{equation} \label{eq:aut-dyn}
    x_{t+1} = \tucker{\ten{A}; I_{n_x}, x_t, {w}_t}.
\end{equation}
Stability of stochastic systems is intimately related to 
convergence of random variables, which can be defined in several ways \cite[\S7.2]{Grimmett2001d}. We specifically consider convergence to zero
of the second moment \cite[Def.~1.a]{Grimmett2001d},
\begin{definition}
    Consider the autonomous system \eqref{eq:aut-dyn}. It is considered \emph{mean-square stable (MSS)} if
    \begin{equation*}
        \E[x_t \trans{x_t}] \to 0 \quad \text{as} \quad t \to \infty.
    \end{equation*}
\end{definition}

From the definition, MSS is equivalent to the stability of 
\begin{equation*}
    X_{t+1} = \op{F}(X_t) \dfn \ten{A}_{(1)} (W \kron X_t) \trans{\ten{A}_{(1)}}, 
\end{equation*}
where $X_t = \E[x_t \trans{x_t}]$ as before and $\op{F}$
is the autonomous version of $\op{E}$. Clearly this system is stable 
for all $X_0 \sgeq 0$ iff $\rho(\op{F}) < 1$ by \cref{lem:stabcp}. Therefore:
\begin{corollary}
    The system \eqref{eq:aut-dyn} is MSS iff $\rho(\op{F}) < 1$. 
\end{corollary}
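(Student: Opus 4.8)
The plan is to reduce the claim entirely to \cref{lem:stabcp} by exhibiting the deterministic linear recursion that the second moment $X_t = \E[x_t \trans{x_t}]$ obeys. First I would rewrite the autonomous map using \cref{prop:kronunfold}, exactly as in the paragraph preceding \eqref{eq:dynsm_}, to get $\tucker{\ten{A}; x_t, w_t} = \ten{A}_{(1)}(w_t \kron x_t)$. Forming the outer product and taking expectations,
\[
    X_{t+1} = \ten{A}_{(1)} \, \E\big[(w_t \kron x_t)\trans{(w_t \kron x_t)}\big] \, \trans{\ten{A}_{(1)}},
\]
and using the Kronecker identity $(w_t \kron x_t)\trans{(w_t \kron x_t)} = (w_t \trans{w_t}) \kron (x_t \trans{x_t})$ together with the independence of $w_t$ from $x_t$ (which is a function of $w_0,\dots,w_{t-1}$, while $w_t$ is iid), the expectation factors as $W \kron X_t$. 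This yields $X_{t+1} = \ten{A}_{(1)}(W \kron X_t)\trans{\ten{A}_{(1)}} = \op{F}(X_t)$, so that $X_t = \op{F}^t(X_0)$ and mss is precisely the statement $\op{F}^t(X_0) \to 0$.

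Since $\op{F}$ is the autonomous restriction of $\op{E}$ it is CP by \cref{lem:tencp}, so \cref{lem:stabcp} applies and characterizes $\rho(\op{F}) < 1$ by the condition $\op{F}^t(X) \to 0$ for every $X \sgeq 0$. The backward implication is then immediate: if $\rho(\op{F}) < 1$ then in particular $X_t = \op{F}^t(X_0) \to 0$ for the initial second moment $X_0 \sgeq 0$, i.e. the system is mss.

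The forward implication is where the quantifier over initial conditions must be handled, and I expect this to be the only genuine obstacle. \cref{lem:stabcp} demands convergence for all $X \sgeq 0$, whereas mss a priori delivers $\op{F}^t(X_0) \to 0$ only for the second moment $X_0$ of the chosen initial state. I would close the gap by reading mss (as is standard) relative to arbitrary initial states: letting $x_0$ range over deterministic vectors makes $X_0 = x_0 \trans{x_0}$ range over all rank-one psd matrices, and since any $X \sgeq 0$ admits a spectral decomposition $X = \sum_i x_i \trans{x_i}$ into finitely many such terms, linearity of $\op{F}$ gives $\op{F}^t(X) = \sum_i \op{F}^t(x_i \trans{x_i}) \to 0$ for every $X \sgeq 0$. \cref{lem:stabcp} then yields $\rho(\op{F}) < 1$, completing the equivalence.
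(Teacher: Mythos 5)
Your proof is correct and follows essentially the same route as the paper: the paper derives the deterministic second-moment recursion $X_{t+1} = \op{F}(X_t)$ exactly as you do (in the text preceding the corollary, mirroring the derivation of \eqref{eq:dynsm_}) and then invokes \cref{lem:stabcp} for the equivalence with $\rho(\op{F}) < 1$. The only difference is that you make explicit the quantifier step the paper dismisses as ``clearly''---passing from rank-one initial moments $x_0 \trans{x_0}$ to arbitrary $X \sgeq 0$ via spectral decomposition and linearity of $\op{F}$---which is a detail-filling refinement rather than a different approach.
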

We can verify stability with the Lyapunov condition \cref{prop:lyapcp} and, as mentioned earlier, stability of \eqref{eq:aut-dyn}
is \update*{related} with robust stability of the switching system with modes $\{A_i\}_{i=1}^{n_w}$ 
in the sense of \cref{thm:stab-eq}. This \update*{relation} was observed before in \cite{Gravell2020} and \cite{Bernstein1987}, 
yet not fully characterized.

\subsection{Linear Quadratic Regulation} \label{sec:lqr}
The goal of this section is twofold \emph{(i)} we argue how LQ control of CP dynamics is equivalent 
to \cref{eq:slqr}; and \emph{(ii)} we show how the optimal policy is determined by solving a SDP,
generalizing the result of \cite{Balakrishnan2003}.  

We start by stating the CP equivalent of \cref{eq:slqr}
\begin{equation} \label{eq:lqrcp}
    \begin{alignedat}{2}
        &\minimize_{Z_t} &\quad& \sum_{t=0}^\infty \tr[Z_t H] \\
                             &\stt && Z_t = \begin{bmatrix}
                                 X_t & V_t \\ \trans{V_t} & U_t
                             \end{bmatrix} \sgeq 0, \\&&&X_{t+1} = \op{E}(Z_t), \quad \forall t \in \N,
    \end{alignedat} \tag{$\mathcal{LQR}_{\mathrm{cp}}$}
\end{equation}
with $H \dfn \blkdiag(Q, R) \sgt 0$. The psd. constraint on $Z_t$ ensures that it acts 
like the second moment of a random vector. 

We show that \eqref{eq:slqr} is equivalent to \eqref{eq:lqrcp}
\begin{proposition} \label{thm:cpmulteq}
    Let $\mathrm{Val}(\cdot)$ denote the optimal value. Then 
    \begin{equation*}
        \mathrm{Val}\eqref{eq:slqr} = \mathrm{Val}\eqref{eq:lqrcp},
    \end{equation*}
    given \cref{asm:model}. Moreover sequence $(Z_t)_{t \in \N}$ is feasible for \eqref{eq:lqrcp} iff there is a feasible sequence
    in \eqref{eq:slqr} $u_t = K_t x_t + \delta_t$ s.t.
    $Z_t = \E[z_t \trans{z_t}]$ where $z_t = (x_t, u_t)$. 
    Here $K_t \in \Re^{n_x \times n_u}$
    and $\delta_t \in \Re^{n_u}$ is a random vector $\forall t \in \N$.
\end{proposition}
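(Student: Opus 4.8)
The plan is to prove the ``moreover'' equivalence of feasible sets first and then read off the value equality, since the two problems share the same per-step cost once moments are matched. First I would record the cost identity: with $H = \diag(Q, R)$ and $Z_t = \E[z_t \trans{z_t}]$ for $z_t = (x_t, u_t)$, the block structure gives $\tr[Z_t H] = \tr[X_t Q] + \tr[U_t R] = \E[\trans{x_t} Q x_t + \trans{u_t} R u_t]$, so the objectives of \eqref{eq:slqr} and \eqref{eq:lqrcp} agree term by term, and hence after summing over $t$ and passing to the limit. It therefore suffices to show that the feasible moment sequences coincide.

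For the easy inclusion (from \eqref{eq:slqr} to \eqref{eq:lqrcp}) I would take any non-anticipative control sequence $u_t$ feasible for \eqref{eq:slqr}, set $z_t = (x_t, u_t)$ and $Z_t = \E[z_t \trans{z_t}]$. Then $Z_t \sgeq 0$ automatically as a second-moment matrix, and since $w_t$ is i.i.d. and independent of $z_t$ (the control at time $t$ cannot see the disturbance driving $x_{t+1}$), the computation leading to \eqref{eq:dynsm_} --- using $(w_t \kron z_t)\trans{(w_t \kron z_t)} = (w_t \trans{w_t}) \kron (z_t \trans{z_t})$ together with independence --- yields $X_{t+1} = \op{E}(Z_t)$. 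Thus $(Z_t)$ is \eqref{eq:lqrcp}-feasible with the same cost, giving $\mathrm{Val}\eqref{eq:lqrcp} \le \mathrm{Val}\eqref{eq:slqr}$. Note that this direction does not require the policy to be affine.

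The hard direction, and the main obstacle, is realizing an abstract \eqref{eq:lqrcp}-feasible sequence $Z_t = [X_t, V_t; \trans{V_t}, U_t] \sgeq 0$ by a concrete stochastic policy. Here the key is the Schur-complement reading of $Z_t \sgeq 0$: it is equivalent to $X_t \sgeq 0$, $\mathrm{range}(V_t) \subseteq \mathrm{range}(X_t)$, and $U_t - \trans{V_t} \pinv{X_t} V_t \sgeq 0$. I would then set $K_t = \trans{V_t} \pinv{X_t}$ and let $u_t = K_t x_t + \delta_t$, where $\delta_t$ is drawn zero-mean, independent of $x_t$ and of every $w_s$, with covariance $U_t - \trans{V_t}\pinv{X_t} V_t \sgeq 0$ (feasible exactly by the Schur condition). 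Proceeding by induction over $t$ with initial condition $X_0 = x_0 \trans{x_0}$: assuming $\E[x_t \trans{x_t}] = X_t$, the range inclusion gives $X_t \pinv{X_t} V_t = V_t$, so $\E[u_t \trans{x_t}] = K_t X_t = \trans{V_t}$ and $\E[u_t \trans{u_t}] = K_t X_t \trans{K_t} + \E[\delta_t \trans{\delta_t}] = \trans{V_t}\pinv{X_t}V_t + (U_t - \trans{V_t}\pinv{X_t}V_t) = U_t$; hence $\E[z_t \trans{z_t}] = Z_t$, and the dynamics propagate $\E[x_{t+1}\trans{x_{t+1}}] = \op{E}(Z_t) = X_{t+1}$, closing the induction. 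This produces an affine \eqref{eq:slqr}-feasible policy with $Z_t = \E[z_t \trans{z_t}]$, so $\mathrm{Val}\eqref{eq:slqr} \le \mathrm{Val}\eqref{eq:lqrcp}$.

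Combining the two inequalities gives $\mathrm{Val}\eqref{eq:slqr} = \mathrm{Val}\eqref{eq:lqrcp}$, and the forward and backward constructions together establish the claimed feasibility equivalence. The delicate points to handle carefully are the causality/independence of $w_t$ from $z_t$ (needed for $X_{t+1} = \op{E}(Z_t)$), the use of the pseudo-inverse when $X_t$ is singular (legitimized by the range inclusion inherited from $Z_t \sgeq 0$), and fixing the moment initial condition $X_0 = x_0\trans{x_0}$ so that both problems start from the same state; since only second moments enter, the distribution of $\delta_t$ is otherwise free.
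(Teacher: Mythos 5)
Your proposal is correct and follows essentially the same route as the paper: the forward direction matches moments of any feasible trajectory and uses the per-step cost identity $\tr[Z_t H] = \E[\trans{x_t} Q x_t + \trans{u_t} R u_t]$, and the reverse direction realizes a feasible $(Z_t)$ by the Schur-complement partition $Z_t = [X_t, X_t\trans{K_t}; K_t X_t, K_t X_t \trans{K_t} + \Delta_t]$ (the paper's Lemma~\ref{lem:z-partition}, which you rederive inline) together with the affine-plus-noise policy $u_t = K_t x_t + \delta_t$ constructed inductively in $t$. Your extra care about the independence of $w_t$ from $z_t$ and the pseudo-inverse/range-inclusion issue makes explicit what the paper leaves implicit, but the argument is the same.
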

\begin{proof}
    Deferred to Appendix~\ref{app:lqr}.
\end{proof}

To find the exact solution of \eqref{eq:lqrcp}, similarly 
to usual LQR, we need to consider a Riccati equation:
\begin{equation} \label{eq:ric-def}
    \op{R}(P) \dfn Q + \adj{\op{F}}(P) - \trans{(\adj{\op{H}}(P))}(R + \adj{\op{G}}(P))^{-1}\adj{\op{H}}(P).
\end{equation}
where $\adj{\op{F}}(P) = \ten{A}_{(2)} (\smoment \kron P) \trans{\ten{A}_{(2)}}$, 
$\adj{\op{G}}(P) = \ten{B}_{(2)} (\smoment \kron P) \trans{\ten{B}_{(2)}}$ and 
$\adj{\op{H}}(P) = \ten{B}_{(2)}(\smoment \otimes P) \trans{\ten{A}_{(2)}}$ with tensors $\ten{A}$ and $\ten{B}$ 
as in \cref{eq:dynten}. \update*{We use adjoint CP operators to be consistent with the $2$-mode matricizations as they occur in \cref{cor:cpadj}.}

\begin{remark}
    The operators $\adj{\op{F}}$, $\adj{\op{H}}$ and $\adj{\op{G}}$ form
    the blocks of $\adj{\op{E}}$ when it is partitioned similarly to $Z_t$ (cf. \cref{lem:opadj}). 

    In the deterministic setting with $n_w = 1$ and $w_t = 1$ (so $\E[W] = 1$), we have $\adj{\op{F}}(P) = \trans{A}_1 P A_1$, $\adj{\op{H}}(P) = \trans{B}_1 P A_1$ and $\adj{G}(P) = \trans{B}_1 P B_1$.
    Hence we recover the classical Riccati operator as a special case. 
\end{remark}

We introduce the \emph{closed-loop operators} as
\begin{equation} \label{eq:clops}
    \Pi_K(X) \dfn [I; K] X [I, \trans{K}], \, \op{E}_K(X) \dfn \op{E}(\Pi_K(X)),
\end{equation}
The policy $Z_t = \Pi_K(X_t)$ is recovered when taking $u_t = Kx_t$ 
in the equivalent \eqref{eq:slqr} (cf. \cref{thm:cpmulteq}).

We can then state the following theorem.

\begin{theorem} \label{thm:cplqr}
    \shorten*{Assume there is some $K$ such that $\rho(\op{E}_K) < 1$ and that $H \dfn \blkdiag(Q, R) \sgt 0$.} Then 
    $Z_t = \Pi_{K_{\star}}(X_t)$ is \update*{an} optimal policy, 
        with $K_{\star} = -(R + \adj{\op{G}}(P_{\star}))^{-1} \adj{\op{H}}(P_{\star})$ and $P_{\star} \sgt 0$
        solves the Riccati equation $P_{\star} = \op{R}(P_\star)$. Moreover
    \begin{enumerate}
        \item \label{thm:cplqr:b} $\rho(\op{E}_{K_\star}) < 1$; 
        \item $\mathrm{Val}\eqref{eq:lqrcp} = \tr[P_{\star} X_0]$;
    \end{enumerate}
\end{theorem}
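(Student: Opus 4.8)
The plan is to encode \eqref{eq:lqrcp} through a single completion-of-squares identity for the Riccati operator $\op{R}$ and to obtain the stabilizing solution $P_\star$ by a monotone Riccati iteration. First I would fix any $P \sgeq 0$, write $G(P) \dfn R + \adj{\op{G}}(P)$ and $S(P) \dfn \adj{\op{H}}(P)$, and note $G(P) \sgt 0$ since $\adj{\op{G}}$ maps $\psd{n_x}$ into $\psd{n_u}$ and $R \sgt 0$. Using that $\adj{\op{F}}$, $\adj{\op{H}}$, $\adj{\op{G}}$ are the blocks of $\adj{\op{E}}$ (the remark after \eqref{eq:ric-def}) and that $H = \diag(Q, R)$ is block diagonal, a direct expansion of $\tr[\adj{\op{E}}(P) Z_t]$ gives, for every feasible $Z_t$ with top-left block $X_t$ and $X_{t+1} = \op{E}(Z_t)$,
\[
    \tr[Z_t H] + \tr[P X_{t+1}] = \tr[\op{R}(P) X_t] + \tr[M(P) Z_t],
\]
where $M(P) = \trans{L}\, G(P)^{-1} L \sgeq 0$ with $L \dfn [S(P),\, G(P)]$. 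Because $M(P) \sgeq 0$ and $Z_t \sgeq 0$, the cross term is nonnegative, and factoring $M(P)$ shows it vanishes exactly when $Z_t = \Pi_{K(P)}(X_t)$ with $K(P) = -G(P)^{-1} S(P)$; note $K_\star = K(P_\star)$.

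To construct $P_\star$ I would use the variational form $\op{R}(P) = \min_{K}\bigl[Q + \trans{K} R K + \adj{\op{E}_K}(P)\bigr]$, the minimum taken in the $\sgeq$-order and attained uniquely at $K(P)$ (the same square-completion, now in the gain). Since each map $P \mapsto Q + \trans{K} R K + \adj{\op{E}_K}(P)$ is $\sgeq$-monotone (as $\adj{\op{E}_K}$ is a CP adjoint, hence linear and positivity preserving), their pointwise minimum $\op{R}$ is monotone. The iteration $P_0 = 0$, $P_{k+1} = \op{R}(P_k)$ then satisfies $P_{k+1} \sgeq P_k$, starting from $P_1 = Q \sgeq 0$. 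For an upper bound I take the hypothesized $K$ with $\rho(\op{E}_K) < 1$; by \cref{prop:lyapcp} there is $P_K \sgeq 0$ with $P_K = Q + \trans{K} R K + \adj{\op{E}_K}(P_K)$, and because $Q + \trans{K} R K + \adj{\op{E}_K}(P) \sgeq \op{R}(P)$ for all $P$, induction yields $P_K \sgeq P_k$ for every $k$. The monotone bounded sequence converges to $P_\star = \op{R}(P_\star)$ by continuity of $\op{R}$, with $P_\star \sgeq P_1 = Q \sgt 0$, so $P_\star$ is positive definite. Evaluating the fixed point at the optimal gain gives $P_\star - \adj{\op{E}_{K_\star}}(P_\star) = Q + \trans{K_\star} R K_\star \sgt 0$, so the converse direction of \cref{prop:lyapcp}, applied to $\op{S} = \op{E}_{K_\star}$, yields $\rho(\op{E}_{K_\star}) < 1$, establishing assertion \ref{thm:cplqr:b}.

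For optimality I would specialize the identity to $P = P_\star$, where $\op{R}(P_\star) = P_\star$ turns it into
\[
    \tr[Z_t H] = \tr[P_\star X_t] - \tr[P_\star X_{t+1}] + \tr[M(P_\star) Z_t] \geq \tr[P_\star X_t] - \tr[P_\star X_{t+1}].
\]
Telescoping to horizon $T$ and using $P_\star \sgeq 0$ (so $\tr[P_\star X_{T+1}] \geq 0$) gives $\sum_{t=0}^T \tr[Z_t H] \geq \tr[P_\star X_0] - \tr[P_\star X_{T+1}]$. If the total cost is finite then $\tr[Q X_t] \to 0$, whence $X_t \to 0$ (as $Q \sgt 0$), the boundary term vanishes, and $\mathrm{Val}\eqref{eq:lqrcp} \geq \tr[P_\star X_0]$; the bound is trivial when the cost is infinite. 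Conversely $Z_t = \Pi_{K_\star}(X_t)$ is feasible for \eqref{eq:lqrcp} (by \cref{thm:cpmulteq} it corresponds to $u_t = K_\star x_t$), makes $\tr[M(P_\star) Z_t] = 0$, and, since $\rho(\op{E}_{K_\star}) < 1$ forces $X_t = \op{E}_{K_\star}^{t}(X_0) \to 0$ by \cref{lem:stabcp}, the telescoping collapses to $\tr[P_\star X_0]$. This proves optimality of $\Pi_{K_\star}$ and the value formula; uniqueness follows because any finite-cost optimizer must satisfy $\tr[M(P_\star) Z_t] = 0$, i.e.\ $Z_t = \Pi_{K_\star}(X_t)$, for all $t$.

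The completion of squares is mechanical once the block structure of $\adj{\op{E}}$ is in hand. The main obstacle is the existence argument: proving the monotone Riccati iterates are bounded above — precisely where the stabilizability hypothesis $\rho(\op{E}_K) < 1$ and \cref{prop:lyapcp} enter — and that their limit is a genuine fixed point of $\op{R}$. A secondary subtlety is justifying that the boundary term $\tr[P_\star X_{T+1}]$ vanishes along every finite-cost trajectory, which is exactly where the positive definiteness of $Q$ is used.
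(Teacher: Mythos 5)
Your proof is correct and establishes every claim of the theorem, but it takes a genuinely different route from the paper's. The paper argues via dynamic programming: \cref{prop:lqr-finite} identifies the finite-horizon optimal value with $\tr[\op{R}^N(P)X_0]$, carrying out the Bellman minimization over $Z \sgeq 0$ through the partition of \cref{lem:z-partition}; \cref{prop:lqr-infinite} then obtains $P_\star$ as the monotone limit of $\op{R}^k(0)$, bounded because the finite-horizon optimal values are dominated by the cost of the hypothesized stabilizing gain, and gets both $\rho(\op{E}_{K_\star})<1$ and $P_\star \sgt 0$ from the Lyapunov form \eqref{eq:ric-lyap} of the fixed-point equation together with \cref{prop:lyapcp}; the theorem follows by letting $N \to \infty$. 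You replace the DP machinery by a verification argument: the completion-of-squares identity $\tr[Z_t H] + \tr[P X_{t+1}] = \tr[\op{R}(P) X_t] + \tr[M(P) Z_t]$ with $M(P) \sgeq 0$, telescoped at the fixed point, yields the lower bound $\sum_t \tr[Z_t H] \geq \tr[P_\star X_0]$ along \emph{every} feasible sequence (your treatment of the boundary term on finite-cost trajectories via $Q \sgt 0$ is exactly right, and is more explicit than in the paper, where this direction is implicit in the DP limit), and achievability for $Z_t = \Pi_{K_\star}(X_t)$ using $\rho(\op{E}_{K_\star})<1$ and \cref{lem:stabcp}. Your existence step is also organized differently: you bound the monotone Riccati iterates above by the Lyapunov solution $P_K$ of \cref{prop:lyapcp} through an operator-inequality induction rather than through boundedness of finite-horizon values, and positive definiteness of $P_\star$ falls out of $P_\star \sgeq \op{R}(0) = Q \sgt 0$ instead of a second appeal to \cref{prop:lyapcp}; the closed-loop stability argument (the fixed point read as a Lyapunov equation with right-hand side $Q + \trans{K_\star}RK_\star \sgt 0$) coincides with the paper's. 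What your route buys is a self-contained optimality proof that also gives uniqueness of the optimal policy as a by-product; what the paper's route buys is uniqueness of the fixed point $P_\star$ among psd matrices (the sandwich argument in \cref{prop:lqr-infinite}), a fact your argument does not address but which the theorem as stated does not require.
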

\begin{proof}
    \ilarxiv{See Appendix~\ref{app:lqr} for the full proof.}\ilpub{This result was shown for multiplicative noise in \cite{Coppens2019, Morozan1983} 
    and generalized to the CP setting by \cref{thm:cpmulteq}. Nonetheless, a full proof is included in the technical report \citear{}.}
\end{proof}

\begin{prepupdate}
    \begin{remark} \label{rem:stabilizability}
        Here the existence of a $K$ such that $\rho(\op{E}_K(\cdot)) < 1$ 
        is referred to as \emph{stabilizability}. Among other things 
        it implies the system being below the \emph{uncertainty threshold} \cite{Athans1977}.
    \end{remark}
\end{prepupdate}

\begin{arxiv}
    \begin{example}
        Let us consider a deterministic system with $n_w = 1$ and
        \begin{equation*}
            A_1 = \begin{bmatrix}
                1 & 0.1 \\ 
                0 & 1
            \end{bmatrix}, \, B_1 = \begin{bmatrix}
                0 \\ 0.1
            \end{bmatrix}.
        \end{equation*}
        This will help us understand how \cref{eq:lqrcp} generalizes the usual notions of LQR. We pick $H = \diag(1, 1, 0.1)$. Then 
        the solution described by \cref{thm:cplqr} is approximately:
        \begin{equation*}
            K = [-1.75, -2.64].
        \end{equation*}
        One can easily verify that this is the usual LQR solution in the deterministic setting. Let's spend some time noting what happens when 
        we set $X_0 = x_0 \trans{x_0}$. In that case, for some $u_0$, let $Z_0 = (x_0, u_0) \trans{(x_0, u_0)}$. Then (similarly to \cref{ex:smdyn}):
        \begin{align*}
            \op{E}(Z_0) &= A_1 x_0 \trans{x_0} \trans{A}_1 + A_1 x_0 \trans{u_0} \trans{B_1} \\ &\qquad + B_1 u_0 \trans{x_0} \trans{A}_1 + B_1 u_0 \trans{u_0} \trans{B_1}.
        \end{align*}
        It is not difficult to verify that this equals $x_1 \trans{x_1}$ for $x_1 = A_1 x_0 + B_1 u_0$. We could repeat this for each time step 
        noting that $X_t = x_t \trans{x_t}$ and $Z_t = (x_t, u_t) \trans{(x_t, u_t)}$ in \cref{eq:lqrcp}. Trivially we have $Z_t \sgeq 0$,
        so the trajectory is feasible. Moreover the cost becomes $\tr[Z_t H] = \trans{x_t} I x_t + 0.1 \trans{u_t} I u_t$. So we can see 
        that the classical LQR controller is at least a feasible solution to \cref{eq:lqrcp} where we then have 
        \begin{align*}
            Z_t = \Pi_K(X_t) &= \begin{bmatrix}
                x_t \trans{x_t} & x_t \trans{x}_t \trans{K} \\ K x_t \trans{x_t} & K x_t \trans{x}_t \trans{K}
            \end{bmatrix} \\ &= (x_t, Kx_t) \trans{(x_t, Kx_t)}.
        \end{align*}
        Note that we have shown in \cref{thm:cplqr} that such policies are in fact optimal. In terms of \eqref{eq:slqr} this 
        implies that, a random policy $u_t = Kx_t + \delta_t$ for some random $\delta_t$ will not improve performance. 
    \end{example}
\end{arxiv}

Even though we directly tackled second moment dynamics in \cref{thm:cplqr}, 
the optimal policy $Z_t = \Pi_{K_{\star}}(X_t)$ 
is realizable for multiplicative noise dynamics using 
$u_t = K_{\star} x_t$. So this controller is also optimal 
for \eqref{eq:slqr} as is well known (cf. \cite[Prop.~3]{Coppens2019})
and achieves the equality in \cref{thm:cpmulteq}. 

Thus we can conclude that LQR of CP dynamics
is (i) identical to LQR of multiplicative noise dynamics; (ii) 
a natural generalization of classical LQR; and (iii) a relaxation for LQR of 
switching systems in the sense of \cref{thm:stab-eq}. 

The final question is how to find a $P_\star$ satisfying \eqref{eq:ric-def}. Numerical solution 
of such equations is considered in detail in \cite{Damm2003}. We consider a reformulation
as a SDP, akin to \cite{Balakrishnan2003}. 

\begin{theorem} \label{thm:ric-sdp-primal}
    \update*{For $\adj{\op{F}}$, $\adj{\op{H}}$ and $\adj{\op{G}}$ as in \eqref{eq:ric-def}, consider
    \begin{equation} \label{eq:ric-sdp-primal}
        \begin{alignedat}{2} 
            &\maximize_{P \sgeq 0} &\,\,& \tr[P X_0] \\
            &\stt && \begin{bmatrix}
                P - \adj{\op{F}}(P) & -\trans{(\adj{\op{H}}(P))} \\
                -\adj{\op{H}}(P) & -\adj{\op{G}}(P)
            \end{bmatrix} \sleq \begin{bmatrix}
                Q \\ & R
            \end{bmatrix},
        \end{alignedat}
    \end{equation}
    with $Q \sgt 0$ and $R \sgt 0$. Then 
    \begin{enumerate}
        \item \cref{eq:ric-sdp-primal} is bounded iff $\exists K \colon \rho(\op{E}_K) < 1$;
        \item For bounded \cref{eq:ric-sdp-primal}, the $P_\star$ that solves \cref{eq:ric-def} is an optimizer.
        Moreover, for $X_0 \sgt 0$, $P_\star$ is the unique solution.
    \end{enumerate}}
\end{theorem}
\begin{proof}
    The proof is analogous to the one in \cite{Balakrishnan2003} and \cite[Prop.~3]{Coppens2019}. \ilarxiv{The full proof is deferred to Appendix~\ref{app:lqr}.}\ilpub{A full proof in the CP case is deferred to \citear{}.}
\end{proof}

\update*{So the SDP in \cref{thm:ric-sdp-primal} both provides a way of solving the Riccati equation and a way to verify stabilizability.}

\section{Identifying the second moment} \label{sec:construction}

Our goal in this work is to generalize \cref{thm:cplqr} to the data-driven DR setting \eqref{eq:drlqr}, while retaining stability
for a linear controller. This problem was considered in \cite{Coppens2019}, for sub-Gaussian normalized disturbance $\xi$
(i.e. $\xi \dfn \V[w]^{-1/2}(w - \E[w])$). Therefore \cite{Coppens2019} supports among others Gaussian $w$ or bounded $\xi$. 
This setup had two main limitations: \emph{(i)} bounded $w$ does not imply bounded $\xi$,
so the link with classical robust control (as exploited for additive noise in \cite{Coppens2021}) is lost; 
\emph{(ii)} the setup is inapplicable when only state measurements are available. This section resolves 
these issues.


\subsection{Measurement model} \label{sec:measurement-model}
Considering the dynamics \eqref{eq:dynten}, the goal is to estimate the 
\emph{true}\footnote{We will henceforth
denote the \emph{true} values using a star subscript.} 
$\smoment_\star \dfn \E[w \trans{w}]$. \shorten*{This should be accompanied by a similar 
concentration inequality as in \cref{lem:dd-moment}} without assuming direct access to samples $(w_i)_{i=1}^N$. Instead
we measure a sequence of $N$ independent samples $(x_{i+1}, z_i)_{i=1}^N$, with $z_i = (x_i, u_i)$ the augmented state,
satisfying the measurement model:
\begin{equation} \label{eq:measurement-model}
    x_{i+1} = \tucker{\ten{M}; I_{n_x}, z_i, w_i},
\end{equation}
where $(w_i)_{i=1}^N$ denotes the noise sequence. 

There are two important properties that \eqref{eq:measurement-model} should satisfy:
\emph{(i)} the `span' of the \emph{(mixed) model tensor} $\ten{M}$ should describe the true dynamics (i.e. \cref{asm:model}); and \emph{(ii)}
the data should be sufficiently rich to render \update*{$\smoment_\star$ observable. This final property is 
akin to \emph{persistency of excitation}. We specifically assume that the support of $z_i$ is not degenerate
as is formalized in the following assumption.}
\begin{assumption}[Nondegenerate support] \label{asm:data}
    We assume that $\{z_i\}_{i\in\N_{1:\nsample}}$ is a sequence of i.i.d. copies of a random vector $z$,
    with a distribution that is dominated by the Lebesgue measure.
\end{assumption}
A direct consequence of \cref{asm:data} is that\ilpub{\footnote{More details on \cref{asm:data} are given in \citear{}.}}\ilarxiv{, by \cref{lem:rank-condition}}, if $N \geq n_z$,
\begin{equation} \label{eq:rank-condition}
    \prob[\rk([z_1, \dots, z_N]) = n_z] = 1.
\end{equation}

\begin{remark} \label{rem:asm}
    Independence is guaranteed for each $i$ by either \emph{(i)} taking a known random 
    $z_i = (x_0, u_0)$, updating \eqref{eq:dyn} once and taking $x_{i+1} = x_1$; or \emph{(ii)} 
    taking a known $x_0$, updating \eqref{eq:dyn} $T$ times for random $u_t$ and taking $x_{i+1} = x_T$ and 
    $z_i = (x_{T-1}, u_{T-1})$.\todo{ctrb and $\Xi$?} 
    We refer to \emph{(i)} as \emph{random initialization} and to 
    \emph{(ii)} as \emph{rollout} similar to \cite{Xing2020, Dean2019}.

    For rollout it is difficult to verify \eqref{eq:rank-condition}. Usually a notion of controllability and persistency of excitation 
    is employed. Full generalizations of these concepts for multiplicative noise do not exist to our knowledge. 
    Note however that the event in \eqref{eq:rank-condition} 
    is related to the zeros of a polynomial. So an argument similar to the one in \cref{prop:obsv} below is applicable. In practice one can sample $u_t$
    from a distribution dominated by the Lebesgue measure and test whether \eqref{eq:rank-condition} holds \update*{for $N = n_z$}. 
    \update*{When taking more samples, the validity of \cref{eq:rank-condition} will stay the same with probability one.}
\end{remark}

\subsection{Noise observability}
In some cases, the disturbances can be inferred exactly from state measurements. 
\update*{Specifically in \eqref{eq:measurement-model} we can expand the definition of the Tucker product as in \cref{rem:tucker} and plug in the definition of $\btimes{2}$ to get}:
\begin{equation} \label{eq:bilinear-form-alt}
    x_{i+1} = (\ten{M} \btimes{2} z_i) \btimes{2} w_i = (\ssum_{j=1}^{n_z} [z_i]_j [\ten{M}]_{:j:}) w_i.
\end{equation}
Thus if $(\ten{M} \btimes{2} z_i)$ is left invertible, we can uniquely identify $(w_i)_{i=1}^N$ 
from $(x_{i+1}, z_i)_{i=1}^N$. Invertibility of $(\ten{M} \btimes{2} z_i)$ however is analogous to invertibility of a 
linear subspace of matrices (or linear forms),
which is an unsolved problem in general (cf. \cite{Testa2018} for $3 \times 3$ matrices).
We have the following:
\begin{proposition} \label{prop:obsv}
    Let $z$ denote a random vector with non-degenerate support (cf. \cref{asm:data})
    and $n_x \geq n_w$. Then, $(\ten{M} \btimes{2} z)$ is left invertible either with probability one or zero. 
\end{proposition}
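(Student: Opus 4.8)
The plan is to recognize left invertibility as an algebraic (polynomial) condition on $z$ and then exploit absolute continuity of the law of $z$. First I would note that $M(z) \dfn (\ten{M} \btimes{2} z) \in \Re^{n_x \times n_w}$ has entries $[M(z)]_{ik} = \sum_{j=1}^{n_z} [z]_j [\ten{M}]_{ijk}$ that are linear in $z$. Since $n_x \geq n_w$, the matrix $M(z)$ is left invertible iff it has full column rank $n_w$, which holds iff at least one of its $n_w \times n_w$ minors is nonzero.

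Next I would collect these minors $q_1(z), \dots, q_m(z)$ --- each a polynomial in $z$ of degree $n_w$ in the entries of $z$ --- and package them into the single polynomial $p(z) \dfn \sum_{\ell=1}^m q_\ell(z)^2$. Then $M(z)$ is left invertible iff $p(z) \neq 0$, so the event of interest is $\{z : p(z) \neq 0\}$ and its complement is the zero set $\{z : p(z) = 0\}$.

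The dichotomy then comes from a case split on whether $p$ is the zero polynomial. If $p \equiv 0$, all minors vanish identically, $M(z)$ is never left invertible, and the probability is $0$. If $p \not\equiv 0$, then $\{z : p(z) = 0\}$ is a proper algebraic variety in $\Re^{n_z}$ and hence has Lebesgue measure zero; since by \cref{asm:data} the law of $z$ is dominated by the Lebesgue measure, this set carries zero probability, so $M(z)$ is left invertible with probability $1$.

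I expect the only nonroutine ingredient to be the standard fact that the zero set of a nonzero multivariate polynomial has Lebesgue measure zero (provable, e.g., by induction on the dimension via Fubini, since a nonzero univariate polynomial has finitely many roots). Everything else --- reducing left invertibility to a rank/minor condition and folding the minors into one polynomial --- is bookkeeping. The hypothesis $n_x \geq n_w$ is precisely what makes left invertibility equivalent to maximal (full column) rank; without it $M(z)$ could never be left invertible and the claim would hold only trivially.
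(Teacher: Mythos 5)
Your proof is correct, and it is essentially the argument the paper has in mind: the paper's own ``proof'' is a single citation to Lov\'asz, where exactly this genericity argument --- left invertibility of a matrix depending linearly on $z$ reduces to non-vanishing of polynomial minors, whose common zero set is either all of $\Re^{n_z}$ or Lebesgue-null --- is sketched at a high level for linear forms. Your write-up supplies the details the paper outsources to that reference (the minor characterization of full column rank under $n_x \geq n_w$, the packaging into $p(z) = \sum_{\ell} q_\ell(z)^2$, and the combination of the measure-zero property of proper algebraic varieties with the domination of the law of $z$ by Lebesgue measure from \cref{asm:data}), so it is, if anything, more self-contained than the original.
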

\begin{proof}
    A high-level proof is given in \cite[p.2]{Lovasz1989} in the setting of invertibility of linear forms.
\end{proof}
This suggests that it is often sufficient to sample $(\ten{M} \btimes{2} z)$ for a random 
$z$ and check its invertibility. \update*{The conclusion will then generalize to other realizations, except for a set of measure zero.}
\update*{When invertibility holds, \cref{eq:bilinear-form-alt} can be solved for ${w}_i$ exactly.
Then the following holds:}
\begin{lemma} \label{lem:dd-moment}
    Let $\W = \left\{ w \in \Re^{n_w} \colon \nrm*{w}_2 \leq r_w \right\}$. 
    Assume we have a set of i.i.d. samples $\{w_i\}_{i\in\N_{1:M}}$ of a random vector $w$ and let
    $\smomenth \dfn \ssum_{i=1}^{\nsample} w_i \trans{w_i}/\nsample$.    
    Then
    \begin{equation*}
            \prob[\nrm{\smomenth - \smoment_\star}_2 \leq \beta_{\cmoment}] \geq 1-\delta
    \end{equation*}
    when $\beta_{\smoment} = r^2_w \sqrt{2\ln(2n_w/\delta)/\nsample}$.
\end{lemma}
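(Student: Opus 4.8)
The plan is to recognize the estimation error as a sum of independent, mean-zero, bounded symmetric matrices and to invoke a matrix concentration inequality in the spirit of \cite{Tropp2015}. Setting $X_i \dfn \tfrac{1}{\nsample}(w_i \trans{w_i} - \smoment_\star)$, the $X_i$ are independent, each satisfies $\E[X_i] = 0$ because $\smoment_\star = \E[w\trans{w}]$, and $\smomenth - \smoment_\star = \sum_{i=1}^{\nsample} X_i$. The goal thus reduces to a tail bound on $\nrm{\sum_{i=1}^\nsample X_i}_2$, so I would first reduce everything to a deterministic per-summand estimate and then apply a matrix Hoeffding bound with the right constant.

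The first step is the deterministic bound. Since the samples lie in $\W$, we have $\nrm{w_i}_2 \leq r_w$ and hence $0 \sleq w_i \trans{w_i} \sleq r_w^2 I_{n_w}$; taking expectations and using that the semidefinite order is preserved gives $0 \sleq \smoment_\star \sleq r_w^2 I_{n_w}$. Both $w_i\trans{w_i}$ and $\smoment_\star$ therefore lie between $0$ and $r_w^2 I_{n_w}$, so their difference has eigenvalues in $[-r_w^2, r_w^2]$, i.e.\ $\nrm{w_i \trans{w_i} - \smoment_\star}_2 \leq r_w^2$. Consequently $\nrm{X_i}_2 \leq r_w^2/\nsample$ and, treating the bound as a scalar multiple of the identity, $X_i^2 \sleq (r_w^2/\nsample)^2 I_{n_w}$. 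This yields the variance proxy $\sigma^2 \dfn \nrm{\sum_{i=1}^{\nsample} (r_w^2/\nsample)^2 I_{n_w}}_2 = r_w^4/\nsample$.

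The final step applies the matrix Hoeffding bound. The key ingredient is the matrix moment-generating-function estimate $\E e^{\theta X_i} \sleq e^{\theta^2 (r_w^2/\nsample)^2 I_{n_w}/2}$, which follows from the scalar Hoeffding lemma applied to the range $-\nrm{X_i}_2 I_{n_w} \sleq X_i \sleq \nrm{X_i}_2 I_{n_w}$; combining this with subadditivity of the matrix cumulant (Lieb's inequality) and the Chernoff bound gives, for each $t \geq 0$, $\prob[\lambda_{\max}(\sum_{i=1}^\nsample X_i) \geq t] \leq n_w\, e^{-t^2/(2\sigma^2)}$. Applying the same estimate to $-\sum_{i=1}^\nsample X_i$ and taking a union bound controls $\nrm{\sum_{i=1}^\nsample X_i}_2 = \max\{\lambda_{\max}, -\lambda_{\min}\}$ at the cost of a factor two, so that $\prob[\nrm{\smomenth - \smoment_\star}_2 \geq t] \leq 2 n_w\, e^{-t^2 \nsample/(2 r_w^4)}$. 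Setting the right-hand side equal to $\delta$ and solving for $t$ gives exactly $t = r_w^2 \sqrt{2\ln(2n_w/\delta)/\nsample} = \beta_{\smoment}$, as claimed.

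The main obstacle is obtaining the \emph{sharp} constant. The textbook matrix Hoeffding inequality carries a factor $1/8$ in the exponent, whereas the stated $\beta_{\smoment}$ requires the factor $1/2$. Recovering it hinges on using the tight two-sided sandwich $-\nrm{X_i}_2 I_{n_w} \sleq X_i \sleq \nrm{X_i}_2 I_{n_w}$ directly in the Hoeffding lemma rather than a cruder symmetrization; the positive-semidefinite sandwiching of $w_i \trans{w_i} - \smoment_\star$ established in the second step is precisely what makes the per-summand range tight enough to deliver this constant, and hence the advertised dependence on $\delta$, $n_w$ and $\nsample$.
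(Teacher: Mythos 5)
Your proof is correct and takes essentially the same route as the paper: the paper bounds each summand with the same psd sandwich $-r_w^2 I \sleq W_\star - w_i \trans{w_i} \sleq r_w^2 I$ and then applies its \cref{lem:mathfd}, a matrix Hoeffding bound whose appendix proof is exactly your argument (convexity/cosh MGF estimate over the range $\pm\nrm{X_i}_2 I$, Tropp's master bound, and a union bound for the two-sided tail). The sharp factor $1/2$ you flag as the main obstacle is precisely what \cref{lem:mathfd} provides, so there is no gap.
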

\begin{proof}
    The result follows from a direct application of \cref{lem:mathfd} by noting that 
    the terms in the error satisfy $-r_w^2 I \sleq -w_i \trans{w_i} \sleq W_\star - w_i \trans{w_i} \sleq W_\star \sleq r_w^2I$. 
\end{proof}

We also want to estimate $\smoment_\star$ in the other case (i.e., when $n_x < n_w$). 
For example, for a model-free basis we have $n_w = n_x n_z > n_x$.  
To do so we design least squares estimators. 

\subsection{Least squares estimator}
To \update*{design} a \emph{least square (LS)} estimator, we need to convert \eqref{eq:measurement-model}
into an expression linear in $w_i \trans{w_i}$. We can do so by using \cref{lem:tensor-kron} on \cref{eq:measurement-model} \update*{and use $I_{n_x} \skron I_{n_x} = I_{\sd{n_x}}$}, which gives:
\begin{equation} \label{eq:smoment-model}
    x_{i+1} \skron x_{i+1} = \tucker{\ten{M} \skron \ten{M}; \update*{I_{\sd{n_x}},\,} z_i \skron z_i, w_i \skron w_i}.
\end{equation}
Note $\E[w_i \skron w_i] = \E[\svec(w_i \trans{w_i})] \nfd \svec(\smoment_\star)$. We can write this in terms of linear 
equations by expanding the Tucker operator \update*{(cf. \cref{def:tucker} and \cref{rem:tucker})}:
\begin{align*}
    x_{i+1} \skron x_{i+1} &= (\ten{W} \btimes{2} (z_i \skron z_i))  \svec(W_\star) \\ &\qquad + (\ten{W} \btimes{2} (z_i \skron z_i)) \eta_i,
\end{align*}
with $\ten{W} = \ten{M} \skron \ten{M}$ and noise $\eta_i = w_i \skron w_i - \svec(W_\star)$. 
 
As in usual least squares, stacking the equations gives:
\begin{equation} \label{eq:smoment-stacked}
    Y_N = \opm{Z}_N \svec(W_\star) + E_N,
\end{equation}
where we stack that data as follows%
\footnote{\update*{Applying properties of tensors and $\skron$ (cf. \cref{eq:bilinear-form-alt}) shows
\[\ten{W} \btimes{2} (z \skron z) = \left(\sum_{i=1}^{n_z} [\ten{M}]_{:i:} [z]_i\right) \skron \left(\sum_{i=1}^{n_z} [\ten{M}]_{:i:} [z]_i\right).\]
with $\ten{M}_{:i:} = [[A_1 \, B_1]_{:i}\, \dots \, [A_{n_w}\, B_{n_w}]_{:i}]$ for $i \in [n_z]$.}}
\begin{subequations}
    \begin{align}
        Y_N &= (x_{2} \skron x_2, \dots, x_{N+1} \skron x_{N+1}) \nonumber\\ 
        \opm{Z}_N &= [\ten{W} \btimes{2} (z_1 \skron z_1); \dots; \ten{W} \btimes{2} (z_N \skron z_N)] \label{eq:zmatdef}\\
        E_N &= (\tucker{\ten{W}; z_1 \skron z_1, \eta_1}, \dots,\tucker{\ten{W}; z_N \skron z_N, \eta_N}).\label{eq:ematdef}
    \end{align}
\end{subequations}

Note that $\E[\eta_i] = 0$ so $\E[E_N] = 0$, which suggests the LS estimate:
\begin{equation} \label{eq:ls-estimator}
    \svec(\smomenth) = \pinv{\opm{Z}_N} Y_N. 
\end{equation}
A similar estimator to $\cmomenth$ is used in compressed covariance sensing \cite{Romero2013}, where 
it is observed that it acts as a good heuristic for the \emph{maximum likelihood estimator} when $w$ is Gaussian. Noting that each element of the vector
equation \eqref{eq:smoment-model} constitutes a bilinear form of $z_i$, reveals a connection with bilinear estimation \cite{Kukush2003}. There, adjusted LS
estimators exist that are consistent even when $z_i$ is perturbed by random noise. For simplicity we consider exact state measurements here, 
which enables the use of ordinary LS, for which it is convenient to derive concentration inequalities. 

\subsection{Error analysis} \label{sec:error-analysis}
We analyze the error of the LS estimator \eqref{eq:ls-estimator} under \cref{asm:model}--\ref{asm:data}. 
The discussion \shorten*{differs somewhat} from the classical case \shorten*{due to the biased estimate \eqref{eq:ls-estimator}}. 
\shorten*{However, this bias is inconsequential as our focus is on} estimating the second moment dynamics. 
We demonstrate that the estimate $\op{E}(\smomenth; \cdot)$ \shorten*{captures the second moment dynamics
$\op{E}_\star \dfn \op{E}(\smoment_\star; \cdot)$ without bias.} This phenomenon is \shorten*{commonly observed} in system identification 
of multiplicative noise, \shorten*{as noted in \cite{Xing2021,Di2021}.}
\shorten*{Since this complicates the error analysis, we provide only the intuition here,
with formal statements deferred to Appendix~\ref{app:identification}.}

The overall estimation error is given as 
\begin{equation} \label{eq:error-model-full}
    \svec(\smomenth - \smoment_\star) = (I - \pinv{\opm{Z}_N} \opm{Z}_N) \svec(\smoment_\star) + \pinv{\opm{Z}}_N E_N. 
\end{equation}

We first discuss the nuances associated with the first term on a high level, before formally stating the error bound. 

\paragraph*{Estimator bias}
Noting that the second term is zero mean, the first term in \cref{eq:error-model-full} constitutes
the bias in the estimate. This bias is characterized exactly in \cref{lem:kernel-zn} and is often non-zero. 
The reason for this can be given in terms of the matrix associated with $\op{E}(\smoment; \cdot)$ denoted as 
$\opm{E} \in \Re^{\sd{n_x} \times \sd{n_z}}$, which uniquely determines the second moment dynamics. Our procedure parametrized $\opm{E}$ in terms of $W \in \sym{n_w}$. 
In other words, we use $\sd{n_w}$ parameters to describe $\sd{n_x} \sd{n_z}$ degrees of freedom. 
In the model-free case (i.e. when $n_w = n_z n_x$) we would always over parametrize $\opm{E}$ for $n_x > 1$. 



One might therefore argue that $\opm{E}$ should be parametrized directly. We instead use $\smoment$
for three reasons: \emph{(i)} often a description like \eqref{eq:dyn} with a bound on $\nrm{w}_2$ is more 
natural in practical applications and such bounds do not translate well into bounds on $\opm{E}$; \emph{(ii)} 
control synthesis using a confidence set over $\smoment$ is more tractable; and \emph{(iii)} the bias
is inconsequential for the LQR cost and stability analysis. 

Here \emph{(iii)} is caused by the fact that the bias always lies in the kernel of the second moment dynamics (cf. \eqref{eq:dynsm_})
$\op{E}(W; Z)$ w.r.t. $W$. This kernel in fact is also the cause of the bias in the first place as formalized in \cref{lem:kernel-sm}. 


\paragraph{Data-driven error}
We are now ready to state a data-driven bound, suitable for control synthesis. 
To do so we first consider some auxiliary operators. Letting $\ten{M}$ denote the model tensor (cf. \cref{asm:model}) and
$\ten{W} \dfn \ten{M} \skron \ten{M}$ as before, we introduce\update*{
\begin{align*}
    \op{W}(z\trans{z}) &\dfn \trans{(\ten{W} \ttimes{2} (z \skron z))} (\ten{W} \ttimes{2} (z \skron z)) \\
              &= \ten{W}_{(3)} ( (z\trans{z} \skron z\trans{z}) \kron I_{\sd{n_x}}) \trans{\ten{W}_{(3)}},
\end{align*}
where the equality is shown in \cref{lem:wop:a}.} Moreover, let
\begin{equation} \label{eq:hopmat}
    \opm{H}_i \dfn \pinv{\left[ \ssum_{j=1}^N \op{W}\left( z_j \trans{z_j} \right) \right]} \op{W}\left(  z_i \trans{z_i} \right),
\end{equation}
where $\op{H}_i \in \slop{n_z}{n_w}$, the operator associated with $\opm{H}_i$ has norm $\nrm{\op{H}_i}_2 \leq \sqrt{n_w} \nrm{\opm{H}_i}_2$ (cf. \cref{lem:sopbnd}). 
One can show that (cf. \cref{cor:error-model-wop})
the second term in \eqref{eq:error-model-full} equals
\begin{equation*}
    \svec\left(\ssum_{i=1}^N \op{H}_i(w_i \trans{w_i} - W_\star)\right),
\end{equation*}
which is an i.i.d. sum of bounded random matrices. Hence a matrix Hoeffding bound is applicable:

\begin{theorem} \label{thm:error-bound-full}
    Let $\op{E}_\star$ describe the true moment dynamics as in \cref{eq:dynsm_}. Assume access to samples $(x_{i+1}, z_i)_{i=1}^N$
    satisfying \eqref{eq:measurement-model} and \cref{asm:data} and a model tensor $\ten{M}$ satisfying \cref{asm:model}. 

    If $\smomenth$ is the LS estimate \eqref{eq:ls-estimator} and $N \sgeq \sd{n_z}$. Then
    \begin{equation*}
        \prob[\op{E}(\ubar{W}; Z) \sleq \op{E}_\star(Z) \sleq \op{E}(\bar{W}; Z)] \geq 1-\delta
    \end{equation*}
    for all $Z \in \sym{n_z}$ with $\ubar{W} \dfn \smomenth - \beta_W I_{n_w}$ and $\bar{W} \dfn \smomenth + \beta_W I_{n_w}$ 
    where $\beta_W = r_w^2 \zeta_W \sqrt{2 \log(2n_w/\delta)}$ and $\zeta_W^2 \dfn \ssum_{i=1}^N \nrm{\op{H}_i}_2^2$,
    where $\op{H}_i$ denote the operators associated with $\opm{H}_i$ in \eqref{eq:hopmat}. 
\end{theorem}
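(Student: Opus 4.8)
The plan is to separate the claim into a deterministic comparison argument and a probabilistic concentration argument, joined through the single scalar event $\{\nrm{\Delta}_2 \leq \beta_W\}$, where $\Delta \dfn \unsvec(\pinv{\opm{Z}}_N E_N)$ denotes the zero-mean part of the estimation error \eqref{eq:error-model-full}. Writing the error at the matrix level as $\smomenth - W_\star = B + \Delta$, with $B \dfn \unsvec((I - \pinv{\opm{Z}_N}\opm{Z}_N)\svec(W_\star))$ the bias, I would first establish the operator sandwich pointwise on this event. Linearity of $\op{E}(\cdot\,; Z)$ in its parameter (cf. \eqref{eq:dynsm_}) gives $\op{E}(\bar W; Z) - \op{E}_\star(Z) = \op{E}(\bar W - W_\star; Z)$ with $\bar W - W_\star = \beta_W I_{n_w} + B + \Delta$. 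The bias $B$ lies in $\ker \opm{Z}_N$, and by \cref{lem:kernel-sm}--\cref{lem:kernel-zn} this kernel is annihilated by the parameter-to-dynamics map, so $\op{E}(B; Z) = 0$ and therefore $\op{E}(\bar W; Z) - \op{E}_\star(Z) = \op{E}(\beta_W I_{n_w} + \Delta; Z)$.

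At this point I would invoke monotonicity: by \cref{cor:cpmon} the map $W \mapsto \op{E}(W; Z)$ is CP, hence order-preserving, for $Z \sgeq 0$, so $\op{E}(\beta_W I_{n_w} + \Delta; Z) \sgeq 0$ as soon as $\beta_W I_{n_w} + \Delta \sgeq 0$; the lower inequality is symmetric and needs $\beta_W I_{n_w} - \Delta \sgeq 0$. Both hold exactly when $\nrm{\Delta}_2 \leq \beta_W$, and since this event does not depend on $Z$, the double inequality holds simultaneously for all $Z \sgeq 0$ on it. It then remains to show $\prob[\nrm{\Delta}_2 > \beta_W] \leq \delta$.

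For the concentration step, I would use \cref{cor:error-model-wop} to write $\Delta = \sum_{i=1}^N \op{H}_i(w_i\trans{w_i} - W_\star)$, an independent sum of symmetric matrices that is zero-mean because $\E[w_i\trans{w_i}] = W_\star$. Each summand is deterministically bounded: from $0 \sleq w_i\trans{w_i} \sleq r_w^2 I$ and $0 \sleq W_\star \sleq r_w^2 I$ we get $\nrm{w_i\trans{w_i} - W_\star}_2 \leq r_w^2$, hence $-r_w^2 \nrm{\op{H}_i}_2 I \sleq \op{H}_i(w_i\trans{w_i} - W_\star) \sleq r_w^2 \nrm{\op{H}_i}_2 I$ by the operator-norm bound $\nrm{\op{H}_i(\cdot)}_2 \leq \nrm{\op{H}_i}_2 \nrm{\cdot}_2$ (the matrix-to-operator translation of \cref{lem:sopbnd} being already folded into $\nrm{\op{H}_i}_2$). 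Applying the matrix Hoeffding inequality \cref{lem:mathfd}, exactly as in \cref{lem:dd-moment}, to $\lambda_{\max}(\Delta)$ and $\lambda_{\max}(-\Delta)$ with variance proxy $\nrm{\sum_i r_w^4 \nrm{\op{H}_i}_2^2 I}_2 = r_w^4 \zeta_W^2$ and a union bound over the two tails (the source of the factor $2$ in $2n_w$) yields tail probability $\delta$ at the threshold $r_w^2 \zeta_W \sqrt{2\log(2n_w/\delta)} = \beta_W$.

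The hard part is the first step's dependence on the bias vanishing under the dynamics map: $B$ is an $O(1)$ quantity that does not shrink with $N$, so if it did not lie in $\ker\opm{Z}_N$, which is in turn annihilated by the parameter-to-dynamics map, the error $\smomenth - W_\star$ would not concentrate and the sandwich would be impossible. The crux is therefore the correct use of \cref{lem:kernel-sm}--\cref{lem:kernel-zn}, which is precisely what makes parametrizing the dynamics through $W$ (rather than through $\opm{E}$ directly) harmless despite the estimator being biased. A secondary subtlety is bookkeeping the constants in \cref{lem:mathfd} so that the operator norms enter through $\zeta_W^2 = \sum_i \nrm{\op{H}_i}_2^2$ rather than through $\nrm{\opm{H}_i}_2$.
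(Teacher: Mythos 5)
Your proposal is correct and follows essentially the same route as the paper's proof: the same bias/noise decomposition \eqref{eq:error-model-full}, the same use of \cref{lem:kernel-zn} and \cref{lem:kernel-sm} to annihilate the bias under $\op{E}(\cdot\,;Z)$, and the same reduction of the noise term via \cref{cor:error-model-wop} to a bounded i.i.d. matrix sum handled by the matrix Hoeffding bound \cref{lem:mathfd}. The only difference is that you make explicit the closing sandwich step---psd ordering via \cref{cor:cpmon} on the event that the noise term has spectral norm at most $\beta_W$, which in fact restricts the conclusion to $Z \sgeq 0$---a step the paper compresses into its final sentence.
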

\begin{proof}
    Deferred to Appendix~\ref{app:identification}.
\end{proof}

\paragraph{Sample complexity} Due to independence one may expect the matrices $\op{W}(z_i \trans{z_i})$ to behave 
similarly. Hence, one can expect $\nrm{\op{H}_i}_2$ to decrease with $1/N$. So $\zeta_W^2 = {\ssum_{i=1}^N \nrm{\op{H}_i}_2^2}$
would decrease with $1/N$ too. We formalize this intuition in a simplified setting below.
\begin{lemma} \label{lem:moment-sample-complexity}
    Following the setting of \cref{thm:error-bound-full}
    and assuming additionally that $\nrm{z_i}_2 \leq r_z$ a.s. for all $i \in \N_{1:N}$. Then, if $N \geq (\nrm{\ten{W}}_2^2 \tau_{\cmoment}/ \gamma_{\cmoment})^2$,
    \begin{equation*}
        \prob\left[\zeta_{\cmoment} \leq \frac{\update*{\sqrt{n_w}} \nrm{\ten{W}}_2^2}{\sqrt{N} \gamma_{\cmoment} - \nrm{\ten{W}}_2^2 \tau_{\cmoment}}\right] \leq \delta,
    \end{equation*}    
    with $d_{\cmoment} = \rk(\ten{W}_{(3)})$, $\tau_{\cmoment} \dfn \sqrt{2\ln(2 d_{\cmoment}/\delta)}$
    and \[\gamma_{\cmoment} \dfn \lambda_{d_{\cmoment}}\left[\ten{W}_{(3)} \left({\E[z \trans{z} \otimes z \trans{z}]}/{r_z^4} \otimes I_{\update*{\sd{n_x}}}\right) \trans{\ten{W}}_{(3)}\right],\]
    with $\ten{W} = \ten{M} \skron \ten{M}$.
\end{lemma}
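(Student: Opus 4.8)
The plan is to reduce the whole statement to a lower bound on the smallest nonzero eigenvalue of the empirical matrix $S_N \dfn \ssum_{i=1}^N \op{W}(z_i \trans{z_i})$, since by \eqref{eq:hopmat} we have $\opm{H}_i = \pinv{S_N}\,\op{W}(z_i \trans{z_i})$. First I would record the uniform per-sample estimate. Writing $A_i \dfn \op{W}(z_i\trans{z_i})$, each $A_i \sgeq 0$ because $\op{W}(Z)$ is a $\ten{W}_{(3)}$-sandwich of $(Z\skron Z)\kron I_{\sd{n_x}}$; moreover $\nrm{A_i}_2 \le \nrm{\ten{W}}_2^2\,\nrm{z_i}_2^4 \le \nrm{\ten{W}}_2^2 r_z^4$ using $\nrm{z_i}_2 \le r_z$. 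All $A_i$, and hence $S_N$, have range contained in the column space $\set{R}$ of $\ten{W}_{(3)}$, which has dimension $d_{\cmoment} = \rk(\ten{W}_{(3)})$; by \cref{asm:data} and $N \geq \sd{n_z}$ this range is attained exactly (cf. \eqref{eq:rank-condition}). Combining $\opm{H}_i = \pinv{S_N} A_i$ with \cref{lem:sopbnd} then controls each $\nrm{\op{H}_i}_2$ in terms of $\nrm{\pinv{S_N}}_2\,\nrm{A_i}_2 = \nrm{A_i}_2/\lambda_{d_{\cmoment}}(S_N)$, where $\lambda_{d_{\cmoment}}(S_N)$ is the smallest nonzero eigenvalue (the dimensional constant of \cref{lem:sopbnd} is suppressed in this simplified setting).

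The core step is to lower bound $\lambda_{d_{\cmoment}}(S_N)$ by matrix concentration. Normalizing $B_i \dfn A_i/r_z^4$, each $B_i$ is positive semidefinite with $\nrm{B_i}_2 \le \nrm{\ten{W}}_2^2$, and by the i.i.d.\ assumption $\E[B_i] = \ten{W}_{(3)}\big(\E[z\trans{z}\skron z\trans{z}]/r_z^4 \kron I\big)\trans{\ten{W}_{(3)}}$, whose $d_{\cmoment}$-th eigenvalue is precisely $\gamma_{\cmoment}$ once the symmetrized and ordinary Kronecker factors are identified through $\ten{W} = \ten{M}\skron\ten{M}$; non-degeneracy of the support guarantees $\gamma_{\cmoment} > 0$ and that $\E[B_i]$ has range exactly $\set{R}$. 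Restricting to $\set{R}$ (a $d_{\cmoment}$-dimensional space) and applying the matrix Hoeffding inequality \cref{lem:mathfd}, exactly as in \cref{lem:dd-moment} but with ambient dimension $d_{\cmoment}$, gives with probability at least $1-\delta$ that $\nrm{\tfrac1N\ssum_i B_i - \E[B_1]}_2 \le \nrm{\ten{W}}_2^2\sqrt{2\ln(2d_{\cmoment}/\delta)/N} = \nrm{\ten{W}}_2^2\tau_{\cmoment}/\sqrt{N}$. Weyl's inequality then yields $\lambda_{d_{\cmoment}}(\tfrac1N\ssum_i B_i) \ge \gamma_{\cmoment} - \nrm{\ten{W}}_2^2\tau_{\cmoment}/\sqrt{N}$, whence $\lambda_{d_{\cmoment}}(S_N) \ge r_z^4\big(N\gamma_{\cmoment} - \sqrt{N}\,\nrm{\ten{W}}_2^2\tau_{\cmoment}\big)$.

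Finally I would substitute this eigenvalue bound into the per-sample estimate. The hypothesis $N \ge (\nrm{\ten{W}}_2^2\tau_{\cmoment}/\gamma_{\cmoment})^2$ is exactly what makes the right-hand side strictly positive, so that on the Hoeffding event $\nrm{\op{H}_i}_2 \le \nrm{\ten{W}}_2^2/\big(\sqrt{N}(\sqrt{N}\gamma_{\cmoment} - \nrm{\ten{W}}_2^2\tau_{\cmoment})\big)$ uniformly in $i$ (the $r_z^4$ from $\nrm{A_i}_2$ cancels that in the eigenvalue bound). Summing the $N$ identical squared estimates collapses the $1/N$ prefactor, and taking the square root gives $\zeta_{\cmoment} = \sqrt{\ssum_{i=1}^N\nrm{\op{H}_i}_2^2} \le \nrm{\ten{W}}_2^2/(\sqrt{N}\gamma_{\cmoment} - \nrm{\ten{W}}_2^2\tau_{\cmoment})$, which is the claim; it fails only on the complement of the Hoeffding event, of probability at most $\delta$. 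The main obstacle is the concentration step: establishing that $S_N$ and $\E[\op{W}(z\trans{z})]$ share the common range $\set{R}$ so that the pseudoinverse is well behaved, reconciling the $\skron$/$\kron$ factors and the $r_z^4$ normalization so that $\lambda_{d_{\cmoment}}(\E[B_i]) = \gamma_{\cmoment}$, and invoking the matrix Hoeffding bound with the correct reduced dimension $d_{\cmoment}$ rather than the full ambient dimension.
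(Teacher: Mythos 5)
Your proposal is correct and follows essentially the same route as the paper: both reduce the claim to a lower bound on the smallest nonzero eigenvalue of the Gram matrix $\trans{\opm{Z}_N}\opm{Z}_N=\ssum_{i=1}^N\op{W}(z_i\trans{z_i})$ restricted to $\img(\ten{W}_{(3)})$ (the paper does this via a semi-orthogonal $U$, you by working on $\set{R}$ directly), apply the matrix Hoeffding bound \cref{lem:mathfd} in the reduced dimension $d_{\cmoment}$ together with the per-sample bound $\nrm{\op{W}(z_i\trans{z_i})}_2\leq r_z^4\nrm{\ten{W}}_2^2$ from \cref{lem:wop}, and then assemble $\zeta_{\cmoment}^2\leq N\max_i\nrm{\op{H}_i}_2^2$; your use of a two-sided deviation plus Weyl's inequality is equivalent to the paper's one-sided eigenvalue bound. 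Even your suppression of the $\sqrt{n_w}$ constant from \cref{lem:sopbnd} mirrors the paper's own bookkeeping, which carries an $n_w$ factor partway through and then drops it to match the stated bound.
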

\begin{proof}
    Deferred to Appendix~\ref{app:identification}.
\end{proof}

\begin{remark}
    Details on bounding the \emph{tensor spectral norm} $\nrm{\ten{W}}_2$ are given in \cite{Chen2020a}.
\end{remark}

\begin{remark} \label{rem:mixed}
    Note that convergence of $\zeta_W$ depends on the kurtosis of $z$ (i.e. the fourth order moment) through $\gamma_W$, which can be difficult to quantify in practice.
    We observe that the value of $\gamma_W$ is invariant to scaling of $z$ and is larger 
    when $z \skron z$ is spread out close to the boundary of its support, which will imply faster convergence. 
\end{remark}

\section{Distributionally robust control synthesis} \label{sec:solution}
In this section we design approximate reformulations of \eqref{eq:drlqr}. 
The goal is to preserve the properties of \cref{thm:cplqr}.
Specifically we want to synthesize a distributionally robustly stabilizing controller, by solving 
a SDP as in \cref{thm:ric-sdp-primal}.
The synthesis procedure is described in \cref{sec:synthesis}. We also \shorten*{evaluate} the
sample complexity in \cref{sec:analysis}. 

\subsection{Synthesis} \label{sec:synthesis}
In this section, we use our confidence bound in \cref{thm:error-bound-full}
to synthesize a controller that stabilizes the true system with high probability. 
Similarly, to \cref{sec:lqr} we do so by investigating \eqref{eq:lqrcp}. Specifically
consider the CP version of \eqref{eq:drlqr}:
\begin{equation} \label{eq:drlqrcp}
    \begin{alignedat}{2}
        &\minimize_{Z_t \sgeq 0} \,\, \maximize_{\ubar{W} \sleq W \sleq \bar{W}} &\quad& \ssum_{t=0}^\infty \tr[Z_t H].
    \end{alignedat} \tag{$\bar{\mathcal{LQR}}_{\mathrm{cp}}$}
\end{equation}
subject to $Z_t = [X_t, V_t; \trans{V_t}, U_t] \sgeq 0$, $X_{t+1} = {\op{E}}(W; Z_t)$ and
with $\ubar{W}$ and $\bar{W}$ as in \cref{thm:error-bound-full}. The constraints on $W$ define the equivalent of the 
ambiguity set in \cref{eq:drlqr}.

Next we solve \eqref{eq:drlqr} by generalizing \eqref{thm:cplqr}. 
We will use the closed-loop policy $\Pi_K$ as defined in \cref{eq:clops} 
and:
\begin{equation*}
    \op{E}_K(W; \cdot) = \op{E}(W; \Pi_K(\cdot)). 
\end{equation*}

We have the following:
\begin{theorem} \label{thm:drlqrcp}
        Assume $H = \blkdiag(Q, R) \sgt 0$ and 
        \begin{equation*}
            \exists K: \rho({\op{E}}_K(W; \cdot)) < 1, \,\forall W \colon \ubar{W} \sleq W \sleq \bar{W}.
        \end{equation*}\
        Then the policy $Z_t = \Pi_{\bar{K}}(X_t)$ 
        is optimal for \eqref{eq:drlqrcp}, with $(\bar{P}, \bar{K})$ the optimal solution 
        as described in \cref{thm:cplqr} where we assume $\op{E} = \op{E}(\bar{W}, \cdot)$.  
        Moreover
        \begin{enumerate}
            \item \label{thm:drlqrcp:a} $\rho(\op{E}_K(W; \cdot)) < 1$, $\forall W \colon \ubar{W} \sleq W \sleq \bar{W}$;
            \item $\mathrm{Val}\eqref{eq:drlqrcp} = \tr[\bar{P} X_0]$. 
        \end{enumerate}
\end{theorem}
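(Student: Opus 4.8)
The plan is to reduce the distributionally robust problem \eqref{eq:drlqrcp} to the \emph{nominal} problem \eqref{eq:lqrcp} evaluated at the upper endpoint $\bar{W}$, for which \cref{thm:cplqr} already supplies the answer. The engine driving this reduction is the monotonicity of $\op{E}$ in its parameter established in \cref{cor:cpmon:b}: since the closed-loop map $\Pi_K$ of \eqref{eq:clops} is positive and $\op{E}(\cdot; X)$ is CP (hence monotone) in $W$ for every $X \sgeq 0$, the operator $\op{E}_K(W; \cdot)$ is nondecreasing in $W$ in the positive-semidefinite order. Consequently $\bar{W}$ is the worst-case admissible parameter, and this is the structural fact I would exploit throughout. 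As a first step I would invoke \cref{thm:cplqr} at $W = \bar{W}$: the hypothesis supplies a $K$ with $\rho(\op{E}_K(\bar{W}; \cdot)) < 1$ (the uniform stabilizer, specialized to $\bar W$), so \cref{thm:cplqr} produces $(\bar{P}, \bar{K})$ with $\bar{P} \sgt 0$ solving the Riccati equation for $\op{E}(\bar{W}; \cdot)$, the property $\rho(\op{E}_{\bar{K}}(\bar{W}; \cdot)) < 1$, and the nominal value $\tr[\bar{P} X_0]$.

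Next I would prove assertion \cref{thm:drlqrcp:a}, that $\bar{K}$ stabilizes the \emph{entire} ambiguity set. Starting from $\rho(\op{E}_{\bar{K}}(\bar{W}; \cdot)) < 1$, the Lyapunov characterization \cref{prop:lyapcp} furnishes a $P \sgt 0$ and an $H' \sgt 0$ with $P - \adj{\op{E}_{\bar{K}}}(\bar{W}; P) = H'$. Monotonicity in $W$ transfers to the adjoint: pairing $\op{E}_{\bar{K}}(W; X) \sleq \op{E}_{\bar{K}}(\bar{W}; X)$ against an arbitrary $Y \sgeq 0$ and invoking \cref{lem:trivialinequality} yields $\adj{\op{E}_{\bar{K}}}(W; P) \sleq \adj{\op{E}_{\bar{K}}}(\bar{W}; P)$ for every $W \sleq \bar{W}$. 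Hence $P - \adj{\op{E}_{\bar{K}}}(W; P) \sgeq H' \sgt 0$, and the converse part of \cref{prop:lyapcp} gives $\rho(\op{E}_{\bar{K}}(W; \cdot)) < 1$ uniformly over the interval, so $\bar K$ is admissible against every choice of nature.

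Finally I would establish the value identity and optimality by squeezing the two sides. For the lower bound, any policy feasible for \eqref{eq:drlqrcp} is in particular feasible for the nominal problem at $W = \bar{W}$; since $\bar{W}$ is an admissible move for the inner maximizer, that policy's worst-case cost is at least its cost at $\bar{W}$, which by the optimality half of \cref{thm:cplqr} is at least $\tr[\bar{P} X_0]$, whence $\mathrm{Val}\eqref{eq:drlqrcp} \geq \tr[\bar{P} X_0]$. For the upper bound I would substitute the concrete policy $Z_t = \Pi_{\bar{K}}(X_t)$, admissible by \cref{thm:drlqrcp:a}. An induction on $t$ using positivity of $\op{E}_{\bar{K}}(\bar{W}; \cdot)$ and \cref{cor:cpmon:b} shows the recursion $X_{t+1} = \op{E}_{\bar{K}}(W; X_t)$ is nondecreasing in $W$; together with $H \sgt 0$ and positivity of $\Pi_{\bar{K}}$ this makes the cost $\sum_t \tr[\Pi_{\bar{K}}(X_t) H]$ maximal at $W = \bar{W}$, where it equals $\tr[\bar{P} X_0]$. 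The two bounds coincide, proving $Z_t = \Pi_{\bar{K}}(X_t)$ optimal and $\mathrm{Val}\eqref{eq:drlqrcp} = \tr[\bar{P} X_0]$.

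The main obstacle is \cref{thm:drlqrcp:a}: one must transfer the \emph{strict} stability certificate $\rho < 1$ uniformly across the whole set $[\ubar{W}, \bar{W}]$, which I handle through monotonicity of the adjoint in $W$ combined with the Lyapunov inequality \cref{prop:lyapcp}, rather than by a direct spectral-radius comparison. Everything else is bookkeeping built on the worst-case principle ``nature plays $\bar{W}$'' that monotonicity hands us for free.
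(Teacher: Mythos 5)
Your proof is correct, and it rests on the same pivot as the paper's --- the parameter monotonicity of \cref{cor:cpmon}, which makes $\bar{W}$ nature's dominant choice --- but it takes a genuinely different route. The paper argues at the level of dynamic programming: it writes the distributionally robust Bellman operator $\min_{Z \sgeq 0}\, \max_{\ubar{W} \sleq W \sleq \bar{W}} \{\tr[ZH] + \tr[P_k \op{E}(W;Z)]\}$, observes that the inner maximum is attained at $\bar{W}$ for every $Z \sgeq 0$ (since $P_k \sgeq 0$ and $\op{E}(W;Z) \sleq \op{E}(\bar{W};Z)$), concludes that DR value iteration coincides with nominal value iteration at $\bar{W}$, and then reruns the remainder of the proof of \cref{thm:cplqr} with $\op{E}(\bar{W};\cdot)$ in place of $\op{E}$. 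You instead squeeze the infinite-horizon value directly --- any policy's worst case is at least its cost at $\bar{W}$, hence at least $\tr[\bar{P}X_0]$, while the worst case of $\Pi_{\bar{K}}$ equals $\tr[\bar{P}X_0]$ by your trajectory-wise induction in $W$ --- and you prove item \cref{thm:drlqrcp:a} separately by transferring the Lyapunov certificate: from $P - \adj{\op{E}_{\bar{K}}}(\bar{W};P) = H' \sgt 0$ and monotonicity of the adjoint in $W$ you get $P - \adj{\op{E}_{\bar{K}}}(W;P) \sgeq H' \sgt 0$, and the converse direction of \cref{prop:lyapcp} yields $\rho(\op{E}_{\bar{K}}(W;\cdot)) < 1$ uniformly. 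This is a real merit of your write-up: uniform stability over the whole interval is exactly what the paper's terse proof leaves implicit (from its reduction it follows only via monotonicity of the spectral radius of positive maps, a fact the paper never states, or via an unstated finite-cost argument), whereas your certificate transfer uses only tools the paper provides. Conversely, the paper's Bellman-operator reduction is cleaner about the game-theoretic reading of the min--max (nature may effectively re-pick $W$ at each stage, and stage-wise dominance settles this in one stroke) and avoids the induction over trajectories. One cosmetic nit: the fact you need --- $\tr[AY] \geq 0$ for all $Y \sgeq 0$ implies $A \sgeq 0$ --- is the non-strict cousin of \cref{lem:trivialinequality} rather than that lemma itself, though the same one-line argument proves it.
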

\begin{proof}
    \begin{pub}%
    The proof is similar to that of \cite[Prop.~9]{Coppens2019} for multiplicative noise,
    and is based on showing that the Bellman operator is of the same structure as in \cref{thm:cplqr}
    by using \cref{cor:cpmon}. A full proof for the CP case is given in \citear{}.%
    \end{pub}%
    \begin{arxiv}%
        We do not repeat the full proof of \cref{thm:cplqr}. Instead we consider only the Bellman operator and 
        show that it has a similar structure to the one of \eqref{eq:lqrcp}. 

        The analogous Bellman operator to \eqref{eq:bellman} is given as 
        \begin{equation} \label{eq:drbellman}
            \minimize_{Z \sgeq 0} \, \maximize_{\ubar{W} \sleq W \sleq \bar{W}} \, \{
                \tr[Z H] + \tr[P_{k} {\op{E}}(W; Z)]\}.
        \end{equation}
        Observing that (cf. \cref{cor:cpmon}) ${\op{E}}(Z) = \op{E}(W; Z) \sleq \op{E}(\bar{W}; Z)$
        if $W \sleq \bar{W}$
        shows that $\op{E}(\bar{W}; Z)$ achieves the maximum in \eqref{eq:drbellman}
        $\forall Z$. Therefore, value iteration becomes:
        \begin{equation*}
            \tr[P_{k+1} X] = \min_{Z \sgeq 0} \{
                \tr[Z H] + \tr[P_{k} \op{E}(\bar{W}; Z)]\}.
        \end{equation*}
        The proof is then completed by following the remainder of the proof of \cref{thm:cplqr},
        for $\op{E}(\bar{W}; \cdot)$ instead of $\op{E}(\cdot)$.%
    \end{arxiv}%
\end{proof}

\update*{Stabilizability is replaced by a robust equivalent. When violated, \cref{eq:drlqrcp} is infeasible 
and the SDP in \cref{thm:ric-sdp-primal} is unbounded. 
If this occurs the user should gather more data or verify if the nominal system is stabilizable using 
first principles (e.g. whether the uncertainty threshold is exceeded \cite{Athans1977}). When 
$\op{E}(\ubar{W}; \cdot)$ is not stabilizable (i.e. \cref{eq:ric-sdp-primal} is unbounded), then the true system will not be either with high probability.
In that case, not much can be done from a control perspective besides re-designing actuators.}

\cref{thm:error-bound-full} implies the true $\op{E}_\star$ satisfies $\op{E}(\ubar{W}; \cdot) \sleq \op{E}_\star \sleq \op{E}(\bar{W}; \cdot)$
with probability at least $1 - \delta$. So we have:
\begin{corollary} \label{cor:stab-generalization}
    Let $\bar{K}$ denote the DR policy as in \cref{thm:drlqrcp}
    and assume the setup of \cref{thm:error-bound-full} holds. Moreover,
    let $\op{E}_\star$ denote the true moment dynamics.
    
    Then with probability at least $1-\delta$,
    \begin{enumerate}
        \item $\rho(\op{E}_{\star}(\Pi_K(\cdot))) < 1$;
        \item $\mathrm{Val}\eqref{eq:drlqrcp} \geq \mathrm{Val}\eqref{eq:lqrcp} = \mathrm{Val}\eqref{eq:slqr}$. 
    \end{enumerate}
\end{corollary}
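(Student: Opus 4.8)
The plan is to condition on the high-probability event furnished by Theorem~\ref{thm:error-bound-full}, namely that $\op{E}(\ubar{W}; \cdot) \sleq \op{E}_\star \sleq \op{E}(\bar{W}; \cdot)$ holds in the operator order ($\op{E}_\star(Z) \sleq \op{E}(\bar{W}; Z)$ for all $Z \sgeq 0$), which occurs with probability at least $1-\delta$. Both assertions then follow deterministically on this event and so inherit the same probability bound. Throughout I exploit two elementary facts: every CP operator is monotone on the positive semidefinite cone, since $A \sgeq B$ gives $\op{S}(A) - \op{S}(B) = \op{S}(A-B) \sgeq 0$ by linearity and positivity; and $\Pi_{\bar{K}}$ maps the positive semidefinite cone into itself by its definition in \eqref{eq:clops}.

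For the first assertion I would transfer the operator bound to the closed loop by evaluating it at $Z = \Pi_{\bar{K}}(X)$ and using $\op{E}_{\bar{K}}(W; X) = \op{E}(W; \Pi_{\bar{K}}(X))$, which yields $\op{E}_\star(\Pi_{\bar{K}}(\cdot)) \sleq \op{E}_{\bar{K}}(\bar{W}; \cdot)$ in the operator order. The key auxiliary fact is that this order is compatible with the spectral radius: if $\op{S}_1 \sleq \op{S}_2$ are CP and $\rho(\op{S}_2) < 1$, then $\rho(\op{S}_1) < 1$. I would prove this by induction, combining monotonicity of $\op{S}_1$ with the operator bound to obtain $\op{S}_1^t(X) \sleq \op{S}_2^t(X)$ for every $t$ and every $X \sgeq 0$; the partial sums $\ssum_{t=0}^T \op{S}_1^t(X)$ are then monotone and dominated in trace by $\ssum_t \op{S}_2^t(X)$, which converges by Proposition~\ref{prop:lyapcp} when $\rho(\op{S}_2) < 1$, forcing $\op{S}_1^t(X) \to 0$ and hence $\rho(\op{S}_1) < 1$ by Lemma~\ref{lem:stabcp}. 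Since Theorem~\ref{thm:drlqrcp}\ref{thm:drlqrcp:a} gives $\rho(\op{E}_{\bar{K}}(\bar{W}; \cdot)) < 1$ (take $W = \bar{W}$), applying this with $\op{S}_2 = \op{E}_{\bar{K}}(\bar{W}; \cdot)$ and $\op{S}_1 = \op{E}_\star(\Pi_{\bar{K}}(\cdot))$ yields the first claim.

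For the second assertion the equality $\mathrm{Val}\eqref{eq:lqrcp} = \mathrm{Val}\eqref{eq:slqr}$ is exactly Proposition~\ref{thm:cpmulteq}, so only $\mathrm{Val}\eqref{eq:drlqrcp} \geq \mathrm{Val}\eqref{eq:lqrcp}$ remains, and I would establish it by running the DR policy $\bar{K}$ on the true dynamics and sandwiching its cost. Propagating $\bar{K}$ on $\op{E}_\star$ and on the worst case $\op{E}(\bar{W}; \cdot)$ from the common $X_0$ produces trajectories $X_t^\star$ and $X_t^{\bar{W}}$; the bound $\op{E}_{\bar{K}}(W_\star; \cdot) \sleq \op{E}_{\bar{K}}(\bar{W}; \cdot)$ together with monotonicity of $\op{E}_{\bar{K}}(\bar{W}; \cdot)$ gives, by induction, $X_t^\star \sleq X_t^{\bar{W}}$ for all $t$. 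As $\Pi_{\bar{K}}$ is monotone and $H \sgt 0$, the stage costs obey $\tr[\Pi_{\bar{K}}(X_t^\star) H] \leq \tr[\Pi_{\bar{K}}(X_t^{\bar{W}}) H]$, so summation bounds the true closed-loop cost of $\bar{K}$ by $\mathrm{Val}\eqref{eq:drlqrcp} = \tr[\bar{P} X_0]$. By the first claim this cost is finite, so $Z_t = \Pi_{\bar{K}}(X_t^\star)$ is feasible for \eqref{eq:lqrcp} and $\mathrm{Val}\eqref{eq:lqrcp}$ is no larger; chaining the inequalities gives the result.

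The main obstacle I anticipate is the auxiliary spectral-radius-monotonicity fact, $\op{S}_1 \sleq \op{S}_2 \Rightarrow \rho(\op{S}_1) \leq \rho(\op{S}_2)$: I would prefer to route it through the iterate comparison and the summability criterion of Proposition~\ref{prop:lyapcp} rather than a Perron--Frobenius eigenvalue argument, taking care that the positive semidefinite inequalities propagate correctly through the closed-loop operator $\op{E}_{\bar{K}}$ and that the induction uses monotonicity of $\op{E}_{\bar{K}}(\bar{W}; \cdot)$ in the argument, not the parameter.
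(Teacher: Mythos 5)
Your proposal is correct, but it takes a genuinely different route from the paper. The paper's proof is a two-line reduction: on the event of \cref{thm:error-bound-full} it asserts the existence of a matrix $W$ with $\ubar{W} \sleq W \sleq \bar{W}$ such that $\op{E}(W;\cdot) = \op{E}_\star(\cdot)$, whence (i) is immediate from \cref{thm:drlqrcp}\ref{thm:drlqrcp:a} (the DR controller stabilizes \emph{every} parametric model in the interval, in particular the true one) and (ii) follows because the inner maximization in \eqref{eq:drlqrcp} dominates the value at that true $W$, with the equality supplied by \cref{thm:cpmulteq}. You never invoke this representability; you work only with the operator sandwich $\op{E}_\star \sleq \op{E}(\bar{W};\cdot)$ on the psd cone, proving en route a CP comparison lemma (domination of CP operators implies inheritance of $\rho < 1$, via the iterate comparison $\op{S}_1^t(X) \sleq \op{S}_2^t(X)$, trace summability from \cref{prop:lyapcp}, and \cref{lem:stabcp}), and for (ii) you run $\bar{K}$ on the true dynamics and sandwich its realized cost between $\mathrm{Val}\eqref{eq:lqrcp}$ and $\mathrm{Val}\eqref{eq:drlqrcp}$. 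The trade-off is instructive: the paper's argument is much shorter, but its existence claim is strictly a property of the \emph{proof} of \cref{thm:error-bound-full} (where the error is exhibited as $\op{E}(\Delta;\cdot)$ with $\nrm{\Delta}_2 \leq \beta_W$), not a consequence of the displayed operator inequality alone, since $W \mapsto \op{E}(W;\cdot)$ need not map the matrix interval onto the operator interval; your argument needs only the stated inequality and is therefore self-contained relative to the theorem as written. It also proves something slightly stronger and practically relevant: the cost the deployed controller $\bar{K}$ actually incurs on the true system is upper bounded by $\mathrm{Val}\eqref{eq:drlqrcp}$, not merely that the abstract optimal values are ordered. The price is the extra machinery (comparison lemma plus trajectory induction), all of which you execute correctly.
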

\begin{proof}
    By \cref{thm:error-bound-full}, $\prob[\op{E}(\ubar{W}; \cdot) \sleq \op{E}_\star \sleq \op{E}(\bar{W}; \cdot)] \geq 1-\delta$. 
    Thus, $\exists W \colon \ubar{W} \sleq W \sleq \bar{W}$ such that $\op{E}(W; \cdot) = \op{E}_\star(\cdot)$. 
    So stability \emph{(i)} follows from \cref{thm:drlqrcp:a} and
    \emph{(ii)} follows by maximization over $W$ in \eqref{eq:drlqrcp} for the inequality and
    \cref{thm:cpmulteq} for the equality.
\end{proof}

\begin{remark}
    As in \cref{thm:cpmulteq}, we have $\mathrm{Val}\eqref{eq:drlqrcp} = \mathrm{Val}\eqref{eq:drlqr}$. 
    To save space we omit a proof and instead claim only $\mathrm{Val}\eqref{eq:drlqrcp} \geq \mathrm{Val}\eqref{eq:slqr}$.
    This is sufficient for safety critical applications where an upper bound is sufficient for stability. We show 
    consistency of \eqref{eq:drlqrcp} later to further strengthen the result.
\end{remark}

\subsection{Sample complexity} \label{sec:analysis}
As shown in the previous section, the solution of the DR problem 
\cref{sec:construction} is described by a perturbed Riccati equation. As such, we use 
perturbation analysis to bound the sub-optimality of the DR controller when applied to 
the true dynamics. We leverage a previous result by the authors in \cite{Coppens2020} to do so. 

The final complexity bound is as follows:
\begin{theorem} \label{thm:sample-complexity}
    \shorten*{Assume \cref{asm:model}--\ref{asm:data} \update*{(for $\nrm{z_i}_2$ bounded a.s.)} and let
    $\bar{K}$ be the optimal controller in \cref{thm:drlqrcp}.}
    Then, for sufficient samples $N$ and with probability at least $1-\delta$,
    \begin{equation*}
        \tr\left[\sum_{t=0}^\infty Z_t H\right] - \mathrm{Val}\eqref{eq:lqrcp} = \op{O}\left( \frac{1}{N} \right),
    \end{equation*}
    where the first term is the cost of \cref{eq:lqrcp} achieved for $Z_t = \Pi_{\bar{K}}(X_t)$
    with true moment dynamics $X_{t+1} = \op{E}_\star(Z_t)$. 
    
    Moreover by \cref{thm:cpmulteq}, for sufficient samples $N$ and with probability at least $1-\delta$,
    \begin{equation*}
        \E\left[ \sum_{t=0}^\infty \trans{x_t} Q x_t + u_t R u_t \right] - \mathrm{Val}\eqref{eq:slqr} = \op{O}\left( \frac{1}{N} \right),
    \end{equation*}
    where $x_{t+1} = A(v_t) x_t + B(v_t) u_t$ as in \cref{eq:dyn}, $u_t = \bar{K} x_t$. So the first term 
    is the cost achieved when applying $\bar{K}$ to the true multiplicative noise dynamics \cref{eq:dyn}.  
\end{theorem}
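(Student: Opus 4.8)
The plan is to show that the ambiguity set over $\smoment$ contracts at rate $1/\sqrt{N}$, to translate this into a perturbation of the moment dynamics $\op{E}$ of the same order, and then to exploit first-order optimality of the generalized Riccati solution so that the resulting \emph{controller} perturbation $\op{O}(1/\sqrt{N})$ produces only a quadratic, i.e. $\op{O}(1/N)$, gap in \emph{cost}.

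First I would combine \cref{thm:error-bound-full} and \cref{lem:moment-sample-complexity}: splitting the failure probability and taking a union bound, with probability at least $1-\delta$ both $\op{E}(\ubar{W}; \cdot) \sleq \op{E}_\star \sleq \op{E}(\bar{W}; \cdot)$ holds and the data-dependent quantity satisfies $\zeta_{\smoment} = \op{O}(1/\sqrt{N})$, valid once $N$ exceeds the thresholds of both results. Since $\delta$ is fixed and $\beta_{\smoment} = r_w^2 \zeta_{\smoment}\sqrt{2\log(2n_w/\delta)}$, this gives $\beta_{\smoment} = \op{O}(1/\sqrt{N})$. From $\ubar{W} \sleq \smoment_\star \sleq \bar{W}$ with $\bar{W} - \ubar{W} = 2\beta_{\smoment} I_{n_w}$ we obtain $0 \sleq \bar{W} - \smoment_\star \sleq 2\beta_{\smoment} I_{n_w}$, hence $\nrm{\bar{W} - \smoment_\star}_2 \leq 2\beta_{\smoment}$. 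As $\op{E}(W;\cdot)$ is linear in $W$ (cf. \cref{cor:cpadj}), \cref{lem:sopbnd} yields $\nrm{\op{E}(\bar{W};\cdot) - \op{E}_\star}_2 = \op{O}(\beta_{\smoment}) = \op{O}(1/\sqrt{N})$.

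Next I would invoke the perturbation analysis of \cite{Coppens2020}. The gain $\bar{K}$ solves the Riccati equation \eqref{eq:ric-def} for $\op{E}(\bar{W};\cdot)$ and $K_\star$ solves it for $\op{E}_\star$; under the stabilizability hypothesis of \cref{thm:drlqrcp}, which by \cref{cor:stab-generalization} also makes $\bar{K}$ stabilize $\op{E}_\star$, the Riccati map is locally Lipschitz, so $\nrm{\bar{P} - P_\star} = \op{O}(\beta_{\smoment})$ and $\nrm{\bar{K} - K_\star} = \op{O}(\beta_{\smoment})$. Writing $J(K) \dfn \tr[P_K X_0]$ for the closed-loop cost of a stabilizing $K$ on $\op{E}_\star$, where $P_K$ solves $P_K - \adj{\op{E}_K}(P_K) = \adj{\Pi_K}(H)$, the map $J$ is twice continuously differentiable and $K_\star$ is its stationary minimizer. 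A second-order Taylor expansion about $K_\star$ (vanishing first-order term) then gives
\[
    \tr\Big[\sum_{t=0}^\infty Z_t H\Big] - \mathrm{Val}\eqref{eq:lqrcp} = J(\bar{K}) - J(K_\star) = \op{O}(\nrm{\bar{K} - K_\star}^2) = \op{O}(\beta_{\smoment}^2) = \op{O}(1/N),
\]
which is the first claim. The second claim follows by transfer: by \cref{thm:cpmulteq} and the discussion after \cref{thm:cplqr}, applying $u_t = \bar{K} x_t$ to \eqref{eq:dyn} realizes exactly $Z_t = \Pi_{\bar{K}}(X_t)$ with $X_{t+1} = \op{E}_\star(Z_t)$ at cost $\tr[\sum_{t} Z_t H]$, while $\mathrm{Val}\eqref{eq:lqrcp} = \mathrm{Val}\eqref{eq:slqr}$, so the same bound transfers.

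I expect the crux to be the quadratic --- rather than merely linear --- dependence on $\beta_{\smoment}$ in the third step. This rests on (i) differentiability and uniform local Lipschitz continuity of the generalized Riccati solution in $W$ over the ambiguity set, which requires controlling the conditioning of the closed-loop operator uniformly (the stabilizability margin from \cref{thm:drlqrcp} and \cref{cor:stab-generalization} is what makes this possible), and (ii) the vanishing of the first-order term, i.e. stationarity of $K_\star$ for $J$. These are precisely the ingredients packaged by \cite{Coppens2020}, so the remaining work is checking that its smoothness and stability hypotheses hold in the present CP setting.
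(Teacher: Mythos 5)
Your overall strategy coincides with the paper's: both arguments reduce the claim to the perturbation analysis of \cite[Cor.~III.4]{Coppens2020}, fed with the radius $\beta_W = \op{O}(1/\sqrt{N})$ obtained by combining \cref{thm:error-bound-full} with \cref{lem:moment-sample-complexity}, and both bridge between \eqref{eq:lqrcp} and \eqref{eq:slqr} via \cref{thm:cpmulteq}; your unpacking of the quadratic mechanism (stationarity of the optimal gain, so the cost gap is second order in the gain error) is exactly what that cited corollary packages. The genuine gap sits at the point you defer as ``checking that its smoothness and stability hypotheses hold'': that check \emph{fails}. \cite[Asm.~II.2]{Coppens2020} requires a \emph{relative} bound of the form $\ubar{\alpha} W_\star \sleq \hat{W} - W_\star \sleq \bar{\alpha} W_\star$ with $\bar{\alpha} = \op{O}(1/\sqrt{N})$, and this does not follow from the absolute bound $0 \sleq \bar{W} - W_\star \sleq 2\beta_W I$ available here unless $W_\star \sgt 0$, with a constant degrading like $1/\lambda_{\min}(W_\star)$ even then --- unacceptable when the disturbance moment is degenerate or, in the model-free parametrization, effectively rank deficient. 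The paper's proof consists precisely of repairing this mismatch: it replaces \cite[Lem.~IV.3]{Coppens2020} by the bound $\nrm{\ten{A}_{(2)} ((\bar{W} - W_\star) \kron P) \trans{\ten{A}_{(2)}}}_2 \leq 2 \beta_W \nrm{\ten{A}_{(2)} (I \kron P) \trans{\ten{A}_{(2)}}}_2$, proved via \cref{cor:cpmon}, and then observes that only the constants in \cite[Table~1]{Coppens2020} change, not the $\op{O}(1/N)$ rate. Without this step your invocation of the cited perturbation machinery has no hypothesis to stand on.

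A second, smaller gap: your starting inequality $\ubar{W} \sleq W_\star \sleq \bar{W}$ is not what \cref{thm:error-bound-full} states. Because the LS estimate is biased, the theorem only gives the operator sandwich $\op{E}(\ubar{W};\cdot) \sleq \op{E}_\star \sleq \op{E}(\bar{W};\cdot)$; the bias component of $\hat{W} - W_\star$ need not be small, it merely lies in the kernel of $W \mapsto \op{E}(W;\cdot)$. The paper handles this by invoking \cref{lem:kernel-sm}: one replaces $W_\star$ by the representative of its equivalence class modulo that kernel, which generates the identical operator $\op{E}_\star$ and \emph{does} satisfy the matrix sandwich, after which all Riccati quantities (hence $K_\star$, $P_\star$ and the cost) are unchanged. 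Your argument needs the same substitution, since otherwise $\nrm{\bar{W} - W_\star}_2$ need not be $\op{O}(1/\sqrt{N})$ and the Lipschitz step from moment error to gain error has nothing to act on.
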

\begin{proof}
    We suggest some modifications of the proof of \cite[Cor.~III.4]{Coppens2020} to show the second result.
    The first result then follows directly from \cref{thm:cpmulteq}.

    Modifications are required as \cite[Asm.~II.2]{Coppens2020} assumes 
    a relative bound like $\ubar{\alpha} W_\star \sleq \hat{W} - W_\star \sleq \bar{\alpha} W_\star$,
    with $\ubar{\alpha} \leq \bar{\alpha} = \mathcal{O}(1/\sqrt{N})$ with high probability, $W_\star$ the true 
    value of $\E[w \trans{w}]$ and $\hat{W}$ some estimate.   
    Instead we have $0 \sleq \bar{W} - W_\star \sleq 2\beta_W I$ with $\beta_W = \mathcal{O}(1/\sqrt{N})$ by 
    \cref{thm:error-bound-full} and \cref{lem:moment-sample-complexity} (where we ignore the effect of the bias,
    without loss of generality due to \cref{lem:kernel-sm}). So the proofs need to be modified. Specifically we replace \cite[Lem.~IV.3]{Coppens2020}
    with a bound like 
    \begin{equation*}
        \nrm{\ten{A}_{(2)} (\bar{W} - W_\star \kron P) \trans{\ten{A}_{(2)}}}_2 \leq 2 \beta_W \nrm{\ten{A}_{(2)} (I \kron P) \trans{\ten{A}_{(2)}}}_2.
    \end{equation*}
    The proof is analogous and is based on \cref{cor:cpmon}. Following the proof of \cite[Lem~IV.3]{Coppens2020} and its dependencies, 
    clearly only the constants in \cite[Table.~1]{Coppens2020} are affected. The rate 
    shown in \cite[Cor.~III.4]{Coppens2020} remains the same. \update*{\ilarxiv{The full proof with explicit constants is given in \cref{app:sample-complexity}.}\ilpub{The full proof with explicit constants is given in \cite[App.~F]{Arxiv}.}}
\end{proof}

\begin{remark}
    Since the estimation error bounds are of the same order, we get the same sample complexity 
    in the certainty equivalent setting \cite[\S3.1]{Astrom2008}, where we use $\hat{W}$ instead of $\bar{W}$. The proof is analogous. 

    \update*{In both cases the rate in terms of $N$ 
    is the same as in the additive noise case \cite{Mania2019,Simchowitz2020}. This implies that, qualitatively in terms of samples, 
    learning additive noise or multiplicative noise is similar. The dimensional dependency of the error however is $n_x^2 (n_x + n_u)$ for $n_x \geq n_u$ and omitting logarithmic terms. 
    This is worse compared to additive noise \cite{Mania2019,Simchowitz2020}, where a lower bound is established of $\sqrt{n_x^2 n_u}$. 
    Note that our bound is not a lower bound, so further research might lead to a tighter dimensional dependency. Exploiting $\ubar{W}$ in \cref{thm:error-bound-full} could be a first step.}
\end{remark}

\begin{arxiv}
\section{Imposing prior structure} \label{sec:structural}
The tools we introduce for identification are also applicable when prior structure is imposed on the dynamics
\eqref{eq:dyn}. Let
\begin{equation} \label{eq:dyn-structured}
    x_{t+1} = \left(\sum_{i=1}^{n_v} A_{i+1} [v_t]_i + A_1\right) x_t + \left(\sum_{i=1}^{n_v} B_{i+1} [v_t]_i + B_1\right) u_t,
\end{equation}
where $A_i$ and $B_i$ are assumed known for $i\in\N_{1:n_v+1}$. We were able to handle the
terms linear in $v_t$ in the previous sections by adequate selection of $\ten{M}$ in \cref{asm:model}. 
However, the introduction of the known terms $A_1$ and $B_1$ are not supported\footnote{They can be supported by assuming $A_1$ and $B_1$ 
are multiplied by an unknown random value. However, by doing so, structural information is lost.}. In this section 
we illustrate how this prior information can be exploited in a DR control scheme. 

In this setting we slightly alter \cref{asm:model}:
\begin{assumption} \label{asm:model-structured}
    Consider $\ten{V} \in \Re^{n_x \times n_z \times n_v + 1}$
    s.t. $[\ten{V}]_{::i} = [A_i, B_i]$ for $i \in \N_{1:n_v+1}$ and $\{A_i, B_i\}_{i=1}^{n_v+1}$ as in \cref{eq:dyn-structured}. 

    Assume $\ten{M} \in \Re^{n_x \times n_z \times n_w}$ is s.t. $[\ten{M}]_{::1} = [\ten{V}]_{::1}$ and 
    \begin{align*}
        &\rk(\widetilde{\ten{M}}_{(3)}) = \rk([\widetilde{\ten{M}}_{(3)}; \widetilde{\ten{V}}_{(3)}]) = n_w,
    \end{align*}
    with $\widetilde{\ten{M}} \dfn [\ten{M}]_{:,:,2:n_w}$ (similarly for $\widetilde{\ten{V}}$) the non-structured part
    of the model tensor.
\end{assumption}
We can also partition $\ten{M}$ similarly as before in $\ten{A}$ and $\ten{B}$ and 
write the dynamics
\begin{align} \label{eq:dynten-structured}
    x_{t+1} &= \tucker{\ten{A}; x_t, w_t} + \tucker{\ten{B}; u_t, w_t} \nonumber \\
    &= \tucker{\ten{M}; z_t, w_t}.
\end{align}

Then, \cref{lem:model-eq} generalizes to
\begin{lemma} \label{lem:model-eq-structured}
    When \cref{asm:model} holds and $w_t = (1, \widetilde{w}_t)$ with
    $\widetilde{w}_t = \pinv{(\trans{\widetilde{\ten{M}}_{(3)}})} \trans{\widetilde{\ten{V}}_{(3)}} v_t$, $\forall t \in \N$.
    Then \eqref{eq:dynten-structured} produce the same trajectories as \eqref{eq:dyn-structured}.
\end{lemma}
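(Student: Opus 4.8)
The plan is to follow the proof of \cref{lem:model-eq} almost verbatim on the non-structured part, after first peeling off the deterministic slice. By \cref{asm:model-structured} we have $[\ten{M}]_{::1} = [\ten{V}]_{::1} = [A_1, B_1]$, and the choice $w_t = (1, \tilde{w}_t)$ pins $[w_t]_1 = 1$. Hence the mode-$3$ vector product splits cleanly as
\begin{equation*}
    \ten{M} \btimes{3} w_t = [w_t]_1 [\ten{M}]_{::1} + \tilde{\ten{M}} \btimes{3} \tilde{w}_t = [A_1, B_1] + \tilde{\ten{M}} \btimes{3} \tilde{w}_t,
\end{equation*}
so the constant term is reproduced exactly and it only remains to match $\tilde{\ten{M}} \btimes{3} \tilde{w}_t$ against $\tilde{\ten{V}} \btimes{3} v_t = \sum_{i=2}^{n_v+1} [v_t]_i [A_i, B_i]$.

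First I would establish the identity $\trans{\tilde{\ten{M}}_{(3)}} \tilde{w}_t = \trans{\tilde{\ten{V}}_{(3)}} v_t$. Substituting $\tilde{w}_t = \pinv{(\trans{\tilde{\ten{M}}_{(3)}})} \trans{\tilde{\ten{V}}_{(3)}} v_t$ turns the left-hand side into $\trans{\tilde{\ten{M}}_{(3)}} \pinv{(\trans{\tilde{\ten{M}}_{(3)}})} \trans{\tilde{\ten{V}}_{(3)}} v_t$, where $\trans{\tilde{\ten{M}}_{(3)}} \pinv{(\trans{\tilde{\ten{M}}_{(3)}})}$ is the orthogonal projector onto the column space of $\trans{\tilde{\ten{M}}_{(3)}}$. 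The rank condition $\rk(\tilde{\ten{M}}_{(3)}) = \rk([\tilde{\ten{M}}_{(3)}; \tilde{\ten{V}}_{(3)}])$ forces every row of $\tilde{\ten{V}}_{(3)}$ into the row space of $\tilde{\ten{M}}_{(3)}$, equivalently every column of $\trans{\tilde{\ten{V}}_{(3)}}$ into the column space of $\trans{\tilde{\ten{M}}_{(3)}}$; hence the projector fixes $\trans{\tilde{\ten{V}}_{(3)}} v_t$ and the identity holds. Applying \eqref{eq:modendef} to both sides and reshaping (exactly as in \cref{lem:model-eq}) then yields $\tilde{\ten{M}} \btimes{3} \tilde{w}_t = \tilde{\ten{V}} \btimes{3} v_t$.

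Combining the two displays with the definitions of $\btimes{3}$ and $\ten{V}$ gives
\begin{equation*}
    \ten{M} \btimes{3} w_t = [A_1, B_1] + \sum_{i=2}^{n_v+1} [v_t]_i [A_i, B_i] = \left[A_1 + \sum_{i=2}^{n_v+1} [v_t]_i A_i, \; B_1 + \sum_{i=2}^{n_v+1} [v_t]_i B_i\right],
\end{equation*}
and contracting with $z_t = (x_t, u_t)$ along mode $2$, as in \cref{lem:model-eq}, produces
\begin{equation*}
    \tucker{\ten{M}; z_t, w_t} = (\ten{M} \btimes{3} w_t) \btimes{2} z_t = \left(A_1 + \sum_{i=2}^{n_v+1} [v_t]_i A_i\right) x_t + \left(B_1 + \sum_{i=2}^{n_v+1} [v_t]_i B_i\right) u_t,
\end{equation*}
which is precisely the right-hand side of \eqref{eq:dyn-structured}, so the trajectories coincide.

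The main obstacle is the range-inclusion step: one must translate the rank equality into the assertion that $\trans{\tilde{\ten{V}}_{(3)}} v_t$ \emph{already} lies in the column space of $\trans{\tilde{\ten{M}}_{(3)}}$, so that the pseudo-inverse reconstruction is exact rather than a least-squares approximation. Everything else is bookkeeping, the only genuinely new ingredient relative to \cref{lem:model-eq} being the clean separation of the deterministic slice, which works precisely because $[w_t]_1$ is fixed to $1$ and $[\ten{M}]_{::1}$ is assumed equal to $[\ten{V}]_{::1}$.
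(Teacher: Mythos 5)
Your proof is correct and is essentially the paper's own argument: the paper disposes of this lemma by declaring it ``analogous to \cref{lem:model-eq}'', and what you have written is precisely that analogy carried out — the same pseudo-inverse/range-inclusion identity $\trans{\tilde{\ten{M}}_{(3)}} \tilde{w}_t = \trans{\tilde{\ten{V}}_{(3)}} v_t$, the same reshaping via \eqref{eq:modendef}, and the same mode-$2$ contraction with $z_t$, with the deterministic slice peeled off using $[w_t]_1 = 1$ and $[\ten{M}]_{::1} = [\ten{V}]_{::1}$. Your explicit projector justification of the range-inclusion step is a detail the paper leaves implicit in \cref{lem:model-eq}, but it is the intended reasoning, so there is nothing to correct.
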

\begin{proof}
    The proof is analogous to that of \cref{lem:model-eq}. 
\end{proof}

The second moment dynamics for \eqref{eq:dyn-structured} are then:
\begin{equation} \label{eq:dynsm-structured}
    X_{t+1} = \ten{M}_{(1)} (W \skron Z_t) \trans{\ten{M}}_{(1)} \nfd \op{E}(W; Z_t).
\end{equation}
where $X_t = \E[x_t\trans{x_t}]$ and $Z_t = \E[z_t \trans{z_t}]$ with
\begin{equation*}
    W \dfn \blkdiag(0, \Sigma) + (1, \mu) \trans{(1, \mu)},
\end{equation*}
with $\widetilde{W} \dfn \E[\widetilde{w}_t \trans{\widetilde{w}}_t]$, $\Sigma \dfn \widetilde{W} - \mu \trans{\mu}$
and $\widetilde{\mu} \dfn \E[\widetilde{w}_t]$. Similar to before we just write 
$\op{E}(\cdot) = \op{E}(W; \cdot)$ when the specific value of $W$ is unimportant.

When estimating the second moment dynamics, estimating $\widetilde{W}$ is similar to before.
However we now also estimate the mean of ${\mu}$. We provide details on estimation in \cref{sec:mean-id}. 
Then we design a DR controller in \cref{sec:str-synthesis}. 

\subsection{Mean identification} \label{sec:mean-id}
The measurement model \eqref{eq:measurement-model}, under \cref{asm:model-structured} is:
\begin{equation} \label{eq:measurement-model-structured}
    x_{i+1} = \tucker{\ten{M}; z_i, (1, \widetilde{w}_i)}, \quad \forall i \in \N_{1:N}.
\end{equation}
By expanding the Tucker product, \eqref{eq:measurement-model-structured} is equivalent to
$y_{i+1} \dfn x_{i+1} - [A_1, B_1] z_i = \tucker{\widetilde{\ten{M}}; I_{n_x}, z_i, w_i}$.
This is identical to the original measurement model \eqref{eq:measurement-model}, but with a perturbed 
value of $x_{i+1}$ and truncated tensor $\widetilde{\ten{M}}$. 
Nonetheless, we can still identify $\widetilde{W}$ using \eqref{eq:ls-estimator}.

Moreover, identification of the mean is also possible:
\begin{equation*}
    y_{i+1} = (\widetilde{\ten{M}} \btimes{2} z_i) ({\mu}_\star + \epsilon_i),
\end{equation*}
with $\epsilon_i = \widetilde{w}_i - {\mu}_\star$ a zero-mean error term
and with ${\mu}_\star$ the true value of the mean ${\mu}$. 

As such, introducing the stacked operators:
\begin{align*}
    Y_N^\mu &= (y_{2}, \dots, y_{N+1}), \, \opm{Z}_N^\mu &= [\widetilde{\ten{M}} \btimes{2} z_1; \dots; \widetilde{\ten{M}} \btimes{2} z_N] 
\end{align*}
The LS estimator then becomes:
\begin{equation} \label{eq:mean-estimator}
    \hat{{\mu}} = \pinv{(\opm{Z}_N^\mu)} Y_N^\mu. 
\end{equation}

\paragraph*{Data-driven bound}
Compared to identification of $\widetilde{W}$, developing a data-driven bound for the mean is easier
as the estimator is unbiased.
We have the following error model:
\begin{equation} \label{eq:error-model-mean}
    {\mean}_\star - \hat{{\mu}} = \ssum_{i=1}^{\nsample} G_i \epsilon_i,
\end{equation}
where we introduce $G_i \dfn \pinv{(\ssum_{j=1}^{\nsample} \op{M}(z_j \trans{z_j}))} \op{M}(z_i \trans{z_i})$
and $\op{M}(Z) \dfn \widetilde{\ten{M}}_{(3)} (Z \kron I_{n_x}) \trans{\widetilde{\ten{M}}_{(3)}}$. 

So, under \cref{asm:data}, the error is a sum of norm-bounded zero-mean i.i.d. random vectors. 
Hence \cref{lem:vechfd} is applicable:
\begin{lemma} \label{lem:mean-dd}
    Following \cref{asm:data}--\ref{asm:model-structured} with $N \geq n_z$. 
    Let $\meanh = \pinv{(\opm{Z}_N^\mu)} Y_N^\mu$ and $\zeta^2_{\mean} \dfn \ssum_{i=1}^N \nrm{G_i}_2^2$. Then,
    \begin{equation*}
        \prob[\nrm{{\mean}_\star - \hat{{\mu}}}_2 \leq \beta_{\mean}] \geq 1-\delta,
    \end{equation*}
    with $\beta_{\mean} = r_w \zeta_{\mean}  (2 + \sqrt{2 \ln(1/\delta)})$.
\end{lemma}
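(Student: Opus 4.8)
The plan is to take the error model \eqref{eq:error-model-mean} as the starting point: since $\mean_\star - \meanh = \ssum_{i=1}^{\nsample} G_i \epsilon_i$ is already isolated there, it remains only to exhibit the right-hand side as a sum of bounded, independent, zero-mean random vectors and to invoke the vector Hoeffding inequality \cref{lem:vechfd}. The one genuine subtlety is that the weight $G_i = \pinv{(\ssum_{j=1}^{\nsample}\op{M}(z_j\trans{z_j}))}\op{M}(z_i\trans{z_i})$ depends on the entire data record $\{z_j\}_{j=1}^{\nsample}$, so the summands $G_i\epsilon_i$ are \emph{not} independent unconditionally. I would therefore carry out the argument conditionally on $\{z_j\}_{j=1}^{\nsample}$.

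Before conditioning I would dispatch well-posedness: under \cref{asm:data} with $\nsample \geq n_z$, the identity \eqref{eq:rank-condition} gives $\rk([z_1,\dots,z_{\nsample}]) = n_z$ almost surely, which makes the estimator well defined a.s., so that $\ssum_{j}\op{M}(z_j\trans{z_j})$ is invertible and each $G_i$ (hence $\zeta_{\mean}$) makes sense. Fixing a realization of $\{z_j\}_{j=1}^{\nsample}$, the matrices $G_i$ become deterministic. Because the multiplicative noise is drawn independently of the states and inputs across the (independent) samples, the centered noises $\epsilon_i = \tilde{w}_i - \mean_\star$ stay independent and zero mean after conditioning; consequently the summands $Y_i \dfn G_i\epsilon_i$ are conditionally independent and zero mean.

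Next I would bound each summand. Using the a.s.\ bound $\nrm{\epsilon_i}_2 \leq r_w$ afforded by the support constraint $\nrm{w}_2 \leq r_w$ defining $\W$, submultiplicativity gives $\nrm{Y_i}_2 \leq \nrm{G_i}_2\nrm{\epsilon_i}_2 \leq r_w\nrm{G_i}_2$, so I may take $c_i = r_w\nrm{G_i}_2$ as the per-term bound, for which $\sqrt{\ssum_{i=1}^{\nsample} c_i^2} = r_w\zeta_{\mean}$ by definition of $\zeta_{\mean}$. Applying \cref{lem:vechfd} to the conditionally independent, zero-mean, norm-bounded family $\{Y_i\}$ then yields the conditional statement $\prob[\,\nrm{\mean_\star - \meanh}_2 \leq \beta_{\mean} \mid \{z_j\}\,] \geq 1-\delta$ with $\beta_{\mean} = r_w\zeta_{\mean}(2 + \sqrt{2\ln(1/\delta)})$, the constants $2$ and $\sqrt{2\ln(1/\delta)}$ being exactly those supplied by \cref{lem:vechfd}.

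Finally, since this bound holds for (almost) every realization of $\{z_j\}_{j=1}^{\nsample}$ --- with $\beta_{\mean}$ itself a measurable function of that realization through $\zeta_{\mean}$ --- the tower property $\prob[\,\cdot\,] = \E[\prob[\,\cdot \mid \{z_j\}\,]]$ removes the conditioning and delivers the stated unconditional guarantee. I expect the conditioning to be the only real obstacle: one must justify that $\epsilon_i$ is independent of the data-dependent weight $G_i$ and that allowing the threshold $\beta_{\mean}$ to be random (through $\zeta_{\mean}$) is harmless; the norm bound and the appeal to \cref{lem:vechfd} are then routine.
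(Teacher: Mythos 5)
Your overall route --- reduce to the error model \eqref{eq:error-model-mean}, exhibit the summands as bounded zero-mean vectors, and invoke \cref{lem:vechfd} --- is the same as the paper's, and your conditioning step is in fact a more careful treatment than the paper's own wording: unconditionally the weights $G_i$ all depend on the full record $\{z_j\}_{j=1}^N$, so the summands are only \emph{conditionally} independent, and your tower-property argument (with the threshold random through $\zeta_{\mean}$) is the right way to make rigorous the paper's loose phrase that ``$G_i \tilde{w}_i$ is a sequence of i.i.d.\ random vectors.'' Your well-posedness remark (invertibility of $\sum_{j=1}^N \op{M}(z_j \trans{z_j})$ a.s.) also covers the step the paper spends on showing the bias term of the least-squares error vanishes.

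The genuine gap is the norm bound on the summands. You claim $\nrm{\epsilon_i}_2 \leq r_w$, but $\epsilon_i = \tilde{w}_i - \mean_\star$ is a \emph{centered} vector: the support constraint gives $\nrm{\tilde{w}_i}_2 \leq r_w$ and, by Jensen, $\nrm{\mean_\star}_2 = \nrm{\E[\tilde{w}]}_2 \leq r_w$, hence only $\nrm{\epsilon_i}_2 \leq 2 r_w$ in general; the factor $2$ cannot be removed (take scalar $\tilde{w}$ equal to $r_w$ with probability $0.9$ and $-r_w$ with probability $0.1$, so that $|\epsilon|$ reaches $1.8\, r_w$). With the corrected bound $\nrm{G_i \epsilon_i}_2 \leq 2 r_w \nrm{G_i}_2$, your application of \cref{lem:vechfd} only yields the radius $2 r_w \zeta_{\mean} (2 + \sqrt{2 \ln(1/\delta)}) = 2 \beta_{\mean}$, twice the radius claimed in the lemma, so the proof as written does not establish the statement. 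The fix --- and what the paper does --- is to \emph{not} pre-center: apply \cref{lem:vechfd} to the uncentered summands $x_i \dfn G_i \tilde{w}_i$, which satisfy $\nrm{x_i}_2 \leq r_w \nrm{G_i}_2$. Since $\sum_{i=1}^N G_i = I$ (by the invertibility you already established), one has $\mean_\star - \meanh = \sum_{i=1}^N \left( \E[x_i \mid \{z_j\}] - x_i \right)$, and \cref{lem:vechfd} --- whose additive term $2\nrm{\gamma}_2$ exists precisely to absorb the cost of centering --- then delivers exactly $\beta_{\mean} = r_w \zeta_{\mean} (2 + \sqrt{2 \ln(1/\delta)})$.
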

\begin{proof}
    A more formal proof is provided in Appendix~\ref{app:identification}. 
\end{proof}

\paragraph*{Moment ambiguity} 
We will synthesize a controller that is robust for all $\op{E}$ in the following ambiguity set
\begin{definition} \label{def:structured-ambiguity}
    Consider the structured ambiguity set:
    \begin{equation} \label{eq:structured-ambiguity}
        \mmathcal{W} \dfn \left\{ W \in \sym{n_w} \colon
            \begin{aligned}
                &W = \blkdiag(0, {\Sigma}) + (1, {\mu}) \trans{(1, {\mu})}, \\
                &\nrm{\hat{{\mu}} - {\mu}}_2 \leq \beta_\mean, \, \nrm{\hat{{\Sigma}} - {\Sigma}}_2 \leq \beta_\Sigma
            \end{aligned} \right\},
    \end{equation}
    with $\beta_{\mean} = r_w \zeta_{\mean}  (2 + \sqrt{2 \ln(1/\delta_\mean)})$, $\beta_\Sigma = \beta_W + \beta_\mean (\beta_\mean + 2 \nrm{\meanh}_2)$ 
    and $\beta_W =  r_w^2 \zeta_W \sqrt{2 \log(2n_w/\delta_W)}$ as in \cref{thm:error-bound-full} for probability $\delta_W$. Moreover $\meanh$ is as in \cref{eq:mean-estimator}
    and $\hat{\Sigma} \dfn \hat{\widetilde{W}} - \hat{\mean} \trans{\hat{\mean}}$ for $\hat{\widetilde{W}}$ as in \cref{eq:ls-estimator},
    but with the truncated model tensor and $y_{i+1}$ instead of $x_{i+1}$.     
\end{definition}

The ambiguity set is valid in the following sense:
\begin{theorem} \label{thm:structured-ambigutiy}
    Let $\op{E}_\star$ denote the true moment dynamics
    and let \cref{asm:data}--\ref{asm:model-structured} hold. Then with probability at least $1 - \delta_{\mu} - \delta_W$:
    \begin{equation*}
        \exists W \in \mmathcal{W} \colon \op{E}_{\star}(Z) = \op{E}(W; Z), \forall Z \in \sym{n_z}
    \end{equation*}
    with $\mmathcal{W}$ as in \cref{def:structured-ambiguity}. 
\end{theorem}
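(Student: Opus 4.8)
\emph{Proof plan.} The plan is to exhibit an explicit $W \in \mmathcal{W}$ reproducing $\op{E}_\star$, rather than to argue that the true parameters $(\mean_\star, \covar_\star)$ themselves lie in $\mmathcal{W}$ --- which need not hold, because the least-squares estimate of the second moment is biased. First I would reduce the structured operator to the unstructured one. Since $[\ten{M}]_{::1} = [\ten{V}]_{::1} = [A_1, B_1]$ is known and $w = (1, \tilde{w})$, linearity of $\op{E}(\cdot; Z)$ in $W$ together with the block form $W = \diag(0, \covar) + (1, \mean)\trans{(1, \mean)}$ gives, with $M_i \dfn [\ten{M}]_{::i}$, $M(\mean) \dfn \ssum_{i \geq 2}[\mean]_{i-1} M_i$ and $\tilde{W} \dfn \covar + \mean\trans{\mean}$, the decomposition
\begin{equation*}
    \op{E}(W; Z) = M_1 Z \trans{M_1} + M_1 Z \trans{M(\mean)} + M(\mean) Z \trans{M_1} + \tilde{\op{E}}(\tilde{W}; Z),
\end{equation*}
where $\tilde{\op{E}}(\tilde{W}; Z) \dfn \tilde{\ten{M}}_{(1)}(\tilde{W} \kron Z)\trans{\tilde{\ten{M}}}_{(1)}$ is exactly the unstructured moment operator of the truncated model $y_{i+1} = \tucker{\tilde{\ten{M}}; z_i, \tilde{w}_i}$. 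The key consequence is that two admissible $W, W'$ sharing the same mean $\mean$ satisfy $\op{E}(W; \cdot) - \op{E}(W'; \cdot) = \tilde{\op{E}}(\tilde{W} - \tilde{W}'; \cdot)$, so they induce the same dynamics precisely when $\tilde{W} - \tilde{W}'$ lies in $\ker \tilde{\op{E}}$.

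Next I would apply the identification machinery of \cref{sec:construction} to the truncated model. The bias analysis (the bias always lies in $\ker$ of $\tilde{W} \mapsto \tilde{\op{E}}(\tilde{W}; \cdot)$, cf.\ \cref{lem:kernel-sm}) together with the i.i.d.\ sum representation \cref{cor:error-model-wop} and the matrix-Hoeffding bound used to prove \cref{thm:error-bound-full} yields a debiased target $\tilde{W}' \dfn \tilde{W}_\star + b$ with $b \in \ker \tilde{\op{E}}$, around which $\hat{\tilde{W}}$ concentrates: $\nrm{\hat{\tilde{W}} - \tilde{W}'}_2 \leq \beta_W$ with probability at least $1 - \delta_W$. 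Simultaneously \cref{lem:mean-dd} gives $\nrm{\meanh - \mean_\star}_2 \leq \beta_\mean$ with probability at least $1 - \delta_\mean$. I then set
\begin{equation*}
    W' \dfn \diag(0, \covar') + (1, \mean_\star)\trans{(1, \mean_\star)}, \quad \covar' \dfn \tilde{W}' - \mean_\star\trans{\mean_\star}.
\end{equation*}
Since $W'$ and $W_\star$ share the mean $\mean_\star$ and differ only through $\tilde{W}' - \tilde{W}_\star = b \in \ker \tilde{\op{E}}$, the decomposition above gives $\op{E}(W'; \cdot) = \op{E}(W_\star; \cdot) = \op{E}_\star$ as operators, hence equality for all $Z \in \sym{n_z}$.

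It then remains to verify $W' \in \mmathcal{W}$. The mean constraint holds by \cref{lem:mean-dd}. For the covariance constraint, writing $e \dfn \meanh - \mean_\star$ and expanding $\meanh\trans{\meanh} - \mean_\star\trans{\mean_\star} = \meanh\trans{e} + e\trans{\meanh} - e\trans{e}$,
\begin{equation*}
    \covarh - \covar' = (\hat{\tilde{W}} - \tilde{W}') - (\meanh\trans{\meanh} - \mean_\star\trans{\mean_\star}),
\end{equation*}
so that $\nrm{\covarh - \covar'}_2 \leq \beta_W + \beta_\mean(\beta_\mean + 2\nrm{\meanh}_2) = \beta_\Sigma$. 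Thus $W' \in \mmathcal{W}$ on the intersection of the two concentration events, which by a union bound occurs with probability at least $1 - \delta_\mean - \delta_W$.

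The hard part is the bias bookkeeping: the claim cannot be established by showing $W_\star \in \mmathcal{W}$, and the crux is verifying that the kernel in which the truncated-model bias lives is genuinely annihilated by the \emph{full} structured operator $\op{E}(\cdot; Z)$ --- which is what the decomposition secures via $\op{E}(W;\cdot) - \op{E}(W';\cdot) = \tilde{\op{E}}(\tilde{W} - \tilde{W}';\cdot)$ for fixed mean. This lets me replace $\tilde{W}_\star$ by the debiased $\tilde{W}'$ without disturbing $\op{E}_\star$, so the concentration radius $\beta_W$ (rather than some uncontrolled bias) governs the covariance error, and the remaining work is the routine rank-one expansion of $\meanh\trans{\meanh} - \mean_\star\trans{\mean_\star}$ that produces the cross-term contribution in $\beta_\Sigma$.
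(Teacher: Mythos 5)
Your proposal is correct and takes essentially the same route as the paper's proof: the same splitting of the structured operator into the truncated operator $\tilde{\op{E}}$ plus the known mean/cross terms, the same use of \cref{lem:kernel-zn} and \cref{lem:kernel-sm} to show the least-squares bias is annihilated by the dynamics, the same concentration inputs (\cref{lem:mean-dd} for the mean and the stochastic-term bound from the proof of \cref{thm:error-bound-full} for $\hat{\tilde{W}}$), a union bound, and the same rank-one expansion of $\meanh\trans{\meanh}-\mean_\star\trans{\mean_\star}$ producing $\beta_\Sigma = \beta_W + \beta_\mean(\beta_\mean + 2\nrm{\meanh}_2)$. The only difference is presentational: you exhibit the debiased witness $W'$ explicitly, whereas the paper reaches the same point by arguing ``without loss of generality $\Delta W_b = 0$.''
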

\begin{proof}
    The full proof deferred to Appendix~\ref{app:identification}. It combines 
    \cref{thm:error-bound-full} and \cref{lem:mean-dd} with some triangle inequalities. 
\end{proof}

\subsection{Control synthesis} \label{sec:str-synthesis}

Using the learned ambiguity set we can then proceed with control synthesis. 
Unlike in \cref{thm:drlqrcp} we cannot find the exact solution. Instead 
we solve a relaxed SDP by leveraging a result from robust optimization \cite{Ben-Tal2000}. 

In the non-structured setting, we used \cref{thm:ric-sdp-primal} to synthesize 
a controller by solving a SDP. There is however no guarantee that a feasible solution to that SDP
produces a stabilizing controller. This is only guaranteed for the optimum. 

So instead we will consider:
\begin{equation} \label{eq:sdp-stab}
    \begin{alignedat}{2}
        &\minimize_{(P, K)} &\qquad& \tr[P X_0] \\ 
        &\stt && P \sgeq 0, \, P - \adj{\op{E}_{K}}(P) \sgeq Q + \trans{K} R K.
    \end{alignedat}
\end{equation}
The second constraint is a Lyapunov inequality for $\op{E}_{K}(P)$,
which is defined as $\op{E}(\Pi_K(\cdot))$ as in \cref{eq:clops}. Hence any feasible $K$ is stabilizing by \cref{lem:stabcp}. 

\begin{theorem} \label{thm:sdp-stab}
    The problem \eqref{eq:sdp-stab} satisfies:
    \begin{enumerate}
        \item \label{thm:sdp-stab:a} $\rho(\op{E}_{K}) < 1$ for any feasible $K$;
        \item \label{thm:sdp-stab:b} $(P_\star, K_\star)$ is optimal and uniquely so if $X_0 \sgt 0$.
    \end{enumerate}
    Here $P_\star$ and $K_\star$ those of \cref{thm:cplqr}.
\end{theorem}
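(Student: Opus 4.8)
The plan is to reduce the semidefinite program \eqref{eq:sdp-stab} to the Riccati picture of \cref{thm:cplqr}, by rewriting its Lyapunov inequality through the closed-loop adjoint and completing the square in $K$. First I would record the closed-loop adjoint. Since $\op{E}$ is CP and $\Pi_K(X) = [I;K]X[I,\trans{K}]$ is CP, the composition $\op{E}_K = \op{E}\circ\Pi_K$ is CP, so \cref{lem:stabcp} and \cref{prop:lyapcp} apply to it. Writing $\adj{\op{E}}(P)$ in the block form of the remark below \eqref{eq:ric-def} (cf. \cref{lem:opadj}), I obtain $\adj{\op{E}_K}(P) = [I,\trans{K}]\adj{\op{E}}(P)[I;K] = \adj{\op{F}}(P) + \trans{K}\adj{\op{H}}(P) + \trans{(\adj{\op{H}})}(P)K + \trans{K}\adj{\op{G}}(P)K$. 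For claim \ref{thm:sdp-stab:a}, any feasible $(P,K)$ has $P\sgeq 0$ and $P - \adj{\op{E}_K}(P) \sgeq Q + \trans{K}RK \sgeq Q \sgt 0$, so $H \dfn P - \adj{\op{E}_K}(P) \sgt 0$ and the converse direction of \cref{prop:lyapcp} yields $\rho(\op{E}_K) < 1$.

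For claim \ref{thm:sdp-stab:b}, I would set $G(P) \dfn R + \adj{\op{G}}(P) \sgt 0$ and $K^\ast(P) \dfn -G(P)^{-1}\adj{\op{H}}(P)$. Completing the square in $K$ gives the operator identity $Q + \trans{K}RK + \adj{\op{E}_K}(P) = \op{R}(P) + \trans{(K - K^\ast(P))}G(P)(K - K^\ast(P))$, so this quantity is $\sgeq \op{R}(P)$ with equality iff $K = K^\ast(P)$. Since $P_\star = \op{R}(P_\star)$ and $K_\star = K^\ast(P_\star)$, the constraint holds with equality for $(P_\star,K_\star)$; together with $P_\star \sgt 0$ this shows $(P_\star,K_\star)$ is feasible.

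It then remains to show $P \sgeq P_\star$ for every feasible $(P,K)$. Feasibility and the identity give $P \sgeq Q + \trans{K}RK + \adj{\op{E}_K}(P)$. Let $P_K$ be the unique solution of $P_K - \adj{\op{E}_K}(P_K) = Q + \trans{K}RK$, which exists and is positive definite by \cref{prop:lyapcp} as $\rho(\op{E}_K)<1$. Writing the feasibility slack $\Delta \dfn (P - \adj{\op{E}_K}(P)) - (Q + \trans{K}RK) \sgeq 0$ and subtracting the defining equation of $P_K$ gives $(P-P_K) - \adj{\op{E}_K}(P-P_K) = \Delta$, whence $P - P_K = \ssum_{t=0}^\infty \adj{(\op{E}_K^t)}(\Delta) \sgeq 0$ using $\rho(\op{E}_K)<1$, i.e. $P \sgeq P_K$. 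Because $[I,\trans{K}]H[I;K] = Q + \trans{K}RK$, the formula in \cref{prop:lyapcp} identifies $\tr[P_K X_0]$ with the cost of the feasible policy $Z_t = \Pi_K(X_t)$ in \eqref{eq:lqrcp}, so optimality of $K_\star$ in \cref{thm:cplqr} gives $\tr[P_K X_0] \geq \tr[P_\star X_0]$ for all $X_0 \sgeq 0$, i.e. $P_K \sgeq P_\star$. Hence $P \sgeq P_\star$ and $\tr[P X_0] \geq \tr[P_\star X_0]$, proving optimality. When $X_0 \sgt 0$, equality of the objective together with $P \sgeq P_\star$ forces $P = P_\star$ by \cref{lem:trivialinequality}, and then the equality case of the completed square forces $K = K_\star$, yielding uniqueness.

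The main obstacle is the comparison $P \sgeq P_\star$ for arbitrary feasible $P$: it couples the closed-loop Lyapunov monotonicity step (passing from the Riccati inequality to $P \sgeq P_K$) with the optimality statement of \cref{thm:cplqr} (to get $P_K \sgeq P_\star$), and it relies on keeping the block partition of $\adj{\op{E}}$ and the operator-level completion of squares exact throughout.
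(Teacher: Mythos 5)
Your proof is correct and reaches the paper's conclusions along the same skeleton --- stability of the CP map $\op{E}_K$ via the Lyapunov criterion of \cref{prop:lyapcp} for part (i), feasibility of $(P_\star,K_\star)$ via the Riccati-as-Lyapunov identity $P_\star - \adj{\op{E}}_{K_\star}(P_\star) = \adj{\Pi}_{K_\star}(H)$, domination of $P_\star$ by every feasible $P$, and a trace argument with \cref{lem:trivialinequality} for uniqueness --- but the technical engine differs. The paper leans on its Bellman-operator corollaries, namely \cref{cor:ric-prop:a} (monotonicity of $\op{R}$) and \cref{cor:ric-prop:b} (strict suboptimality of any gain other than $-(R+\adj{\op{G}}(P))^{-1}\adj{\op{H}}(P)$, proven through the first-order conditions of the Bellman recursion), and for uniqueness it iterates $P' \sgeq \op{R}(P') \sgeq \op{R}^2(P') \sgeq \cdots \to P_\star$ using \cref{prop:lqr-infinite}. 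You replace both with an explicit operator-level completion of squares, $\adj{\Pi}_K\bigl(H + \adj{\op{E}}(P)\bigr) = \op{R}(P) + \trans{(K-K^\ast(P))}\bigl(R + \adj{\op{G}}(P)\bigr)(K-K^\ast(P))$, together with a closed-loop policy-evaluation comparison: the feasibility slack $\Delta \sgeq 0$ yields $P - P_K = \ssum_{t=0}^{\infty}\adj{(\op{E}_K^t)}(\Delta) \sgeq 0$, and $P_K \sgeq P_\star$ follows from the optimality statement of \cref{thm:cplqr}. Your route buys the matrix-level bound $P \sgeq P_K \sgeq P_\star$ for every feasible pair up front (the paper works at trace level, $\tr[PX_0] \geq \op{J}^\star(X_0)$, and only extracts $P' \sgeq P_\star$ inside the uniqueness step via the Riccati iteration), and it makes the $K$-uniqueness step transparent: once $P = P_\star$, the square term $\trans{(K-K_\star)}(R+\adj{\op{G}}(P_\star))(K-K_\star)$ must vanish, and $R+\adj{\op{G}}(P_\star) \sgt 0$ forces $K = K_\star$, which is precisely the content that \cref{cor:ric-prop:b} encodes. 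The paper's proof is shorter in context because those corollaries were already on hand from the proof of \cref{thm:cplqr}; yours is more self-contained and elementary, needing only \cref{prop:lyapcp}, the block adjoint of \cref{lem:opadj}, and the conclusions of \cref{thm:cplqr}.
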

\begin{proof}
    Deferred to Appendix~\ref{app:lqr}. 
\end{proof}

We can now robustify \cref{eq:sdp-stab}. Specifically, we introduce the robust constraint 
\begin{equation} \label{eq:rob-lmi-stab}
    P - \adj{\op{E}_{K}}(W; P) \sgeq Q + \trans{K} R K, \quad \forall W \in \mmathcal{W}
\end{equation}
with $\mmathcal{W}$ as in \cref{def:structured-ambiguity}
and $\adj{\op{E}}_K(W; P) = \adj{\Pi_K}(\adj{\op{E}}(W; P))$ (similar to \cref{lem:opadj}). The non-linear SDP \eqref{eq:sdp-stab}
with this robust constraint is relaxed as stated below. 

\begin{theorem}
    Following the notation of \cref{def:structured-ambiguity} and letting $\{(A_i, B_i)\}_{i=1}^{n_w}$ 
    denote the modes in \eqref{eq:dyn-structured}, we introduce $\Theta \in \sym{n_x}$, 
    $\Gamma \in \Re^{n_u \times n_x}$, $\bar{\Sigma} \dfn \hat{\Sigma} + \beta_\Sigma I$ and
    $F_i = A_i \Theta + B_i \Gamma$ for $i \in \N_{1:n_w}$. Moreover, let $\opm{F} \dfn [F_2; \dots; F_{n_w}]$ 
    and $\hat{F} = F_1 + \ssum_{i=2}^{n_w} [\hat{\mean}]_i F_i$. Let $X_0 = C_0 \trans{C_0}$ denote the Cholesky 
    factorization of $X_0 \sgeq 0$. 

    Consider the SDP:
    \begin{equation*}
        \begin{alignedat}{2}
            &\minimize_{\Theta, \Gamma, \Phi, \Psi, \Lambda}&\qquad& \tr[\Phi] \\
            &\stt&& \Theta \sgt 0, \Phi \sgt 0, \Psi \sgt 0, \Lambda \sgt 0, \\
            &&& \begin{bmatrix}
                \Phi & \trans{U_0} \\ U_0 & \Theta
            \end{bmatrix} \sgeq 0, \, \begin{bmatrix}
                \Lambda & \beta_\mu \trans{\opm{F}} \\ \beta_\mu \opm{F} & I \kron \Psi
            \end{bmatrix} \sgeq 0, \\
            &&& T \sgeq \blkdiag(\Lambda, 0, \Psi, 0, 0),
        \end{alignedat}
    \end{equation*}
    where we use the linear map:
    \begin{equation*}
        T \dfn \begin{bmatrix}
            \Theta & \trans{\opm{F}} & \trans{\hat{F}} & \Theta & \trans{\Gamma} \\ 
            \opm{F} & \bar{\Sigma}^{-1} \kron \Theta \\
            \hat{F} && \Theta \\
            \Theta &&& Q^{-1} \\ 
            \Gamma &&&& R^{-1}
        \end{bmatrix}.
    \end{equation*}

    Assume \cref{asm:data}--\ref{asm:model-structured}, let $\op{E}_{\star}$ denote the 
    true second moment dynamics as in \cref{eq:dynsm-structured} and $\Pi_K$ the closed-loop policy as in \cref{eq:clops}.
    Then, given a feasible $(\Theta, \Gamma)$, the pair $(P, K)$ with $P = \Theta^{-1}$ 
    and $K = \Gamma \Theta^{-1}$ satisfies
    \begingroup
    \begin{enumerate}
        \item \label{thm:sdp-stab-reform:a} $\rho(\op{E}_{\star}(\Pi_K(\cdot))) < 1$; and
        \item \label{thm:sdp-stab-reform:b} $\tr[\Phi] \geq \tr[P X_0] \geq \mathrm{val}\eqref{eq:lqrcp} = \mathrm{val}\eqref{eq:slqr}$\todo{first ineq. is eq.},
    \end{enumerate}
    \endgroup
    with probability at least $1 - \delta_W - \delta_\mu$. 
\end{theorem}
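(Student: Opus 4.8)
The plan is to separate a deterministic, algebraic reduction from the probabilistic guarantee. First I would establish the purely deterministic implication that any feasible tuple $(\Theta, \Gamma, \Phi, \Psi, \Lambda)$ of the displayed SDP yields, for $P = \Theta^{-1}$ and $K = \Gamma\Theta^{-1}$, a certificate for the robust Lyapunov inequality \eqref{eq:rob-lmi-stab}, i.e.\ $P - \adj{\op{E}_K}(W; P) \sgeq Q + \trans{K} R K$ for every $W \in \mmathcal{W}$. Granting this, the probabilistic conclusions follow quickly. By \cref{thm:structured-ambigutiy} the true second moment lies in $\mmathcal{W}$ (there is $W \in \mmathcal{W}$ with $\op{E}(W; \cdot) = \op{E}_\star(\cdot)$) with probability at least $1 - \delta_W - \delta_\mu$, so on that event the robust inequality specializes to $P - \adj{\op{E}_K}(W; P) \sgeq Q + \trans{K} R K$ with $\op{E}(W; \cdot) = \op{E}_\star(\cdot)$. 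Claim \ref{thm:sdp-stab-reform:a} is then the first item of \cref{thm:sdp-stab} applied to $\op{E}_\star$, giving $\rho(\op{E}_\star(\Pi_K(\cdot))) < 1$. For the chain in \ref{thm:sdp-stab-reform:b}, feasibility of $(P, K)$ for the nominal problem \eqref{eq:sdp-stab} with true dynamics and the second item of \cref{thm:sdp-stab} give $\tr[P X_0] \geq \tr[P_\star X_0]$, while $\tr[P_\star X_0] = \mathrm{val}\eqref{eq:lqrcp}$ by \cref{thm:cplqr} and $\mathrm{val}\eqref{eq:lqrcp} = \mathrm{val}\eqref{eq:slqr}$ by \cref{thm:cpmulteq}.

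For the algebraic core I would first expand the closed-loop adjoint. Writing $\adj{\op{E}_K}(W; P) = \ssum_{i,j} [W]_{ij} \trans{(A_i + B_i K)} P (A_j + B_j K)$, the closed-loop specialization of \cref{cor:cpadj}, and inserting the parametrization $W = \diag(0, \Sigma) + (1, \mu)\trans{(1, \mu)}$ from \cref{def:structured-ambiguity} splits this into a rank-one mean contribution $\trans{G(v)} P G(v)$, with $v = (1, \mu)$ and $G(v) = \ssum_i [v]_i (A_i + B_i K)$, and a covariance contribution that is completely positive and monotone in $\Sigma$. Since every admissible $\Sigma$ obeys $\Sigma \sleq \hat{\Sigma} + \beta_\Sigma I = \bar{\Sigma}$, \cref{cor:cpmon} shows that the (subtracted) covariance term is maximized at $\bar{\Sigma}$; hence it suffices to impose the inequality at $\Sigma = \bar{\Sigma}$ and for all $\mu$ with $\nrm{\mu - \hat{\mu}}_2 \leq \beta_\mu$, collapsing the covariance uncertainty to a single worst case.

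Next I would apply the congruence $X \mapsto \Theta X \Theta$ and substitute $F_i = (A_i + B_i K)\Theta$ and $\Gamma = K\Theta$, turning the remaining inequality into $\Theta - \trans{F(v)}\Theta^{-1} F(v) - \trans{\opm{F}}(\bar{\Sigma} \kron \Theta^{-1})\opm{F} - \Theta Q \Theta - \trans{\Gamma} R \Gamma \sgeq 0$, where $F(v) = \hat{F} + (\trans{\delta} \kron I)\opm{F}$ is affine in $\delta = \mu - \hat{\mu}$. The four quadratic terms are precisely the ones linearized by Schur complements against the blocks $\bar{\Sigma}^{-1} \kron \Theta$, $\Theta$, $Q^{-1}$ and $R^{-1}$ of the map $T$, so the nominal ($\delta = 0$) inequality is equivalent to positive semidefiniteness of $T$. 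The only residual dependence on the uncertainty is through $F(v)$, which enters as a structured norm-bounded perturbation $\delta \kron I$ of size $\beta_\mu$; applying the robust-counterpart (S-procedure / Petersen-type) result of \cite{Ben-Tal2000} with the block scaling $\Psi$ produces exactly the two multiplier constraints of the SDP (the one relating $\Lambda$, $\opm{F}$ and $I \kron \Psi$, and $T \sgeq \diag(\Lambda, 0, \Psi, 0, 0)$), whose $-\Lambda$ shift of the $\Theta$ block and replacement of $\Theta$ by $\Theta - \Psi$ in the $\hat{F}$ Schur complement certify robustness over the ball. Finally, the Schur complement of the objective block yields $\Phi \sgeq \trans{C_0} P C_0$, hence $\tr[\Phi] \geq \tr[P C_0 \trans{C_0}] = \tr[P X_0]$, completing \ref{thm:sdp-stab-reform:b}.

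The main obstacle is the mean robustification. Unlike the covariance, the dependence on $\mu$ is quadratic, through the rank-one term $\trans{G(v)} P G(v)$, so no single worst-case $\mu$ can be substituted and a genuine S-procedure is needed. The delicate points are to recognize the perturbation as the structured uncertainty $\delta \kron I$ acting on $\opm{F}$, to introduce the matrix scaling $\Psi$ rather than a scalar multiplier, and to track the $\beta_\mu$-weighting so that the resulting inequalities match the stated $T$ and multiplier blocks exactly. Crucially, only soundness is required for the safety guarantee — feasibility of the SDP must \emph{imply} the robust constraint, not conversely — so the sufficiency direction of the Petersen/S-procedure argument suffices, and any conservatism merely widens the gap between $\tr[\Phi]$ and the true optimum, consistent with the inequality (rather than equality) asserted in \ref{thm:sdp-stab-reform:b}.
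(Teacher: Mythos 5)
Your proposal is correct and follows essentially the same route as the paper's own proof: split $W$ into covariance and mean parts, eliminate the covariance uncertainty at $\bar{\Sigma}$ via the CP-monotonicity of \cref{cor:cpmon}, apply the congruence with $\Theta$ and Schur complements to obtain $T$, invoke the Ben-Tal--Nemirovski robust-counterpart lemma (\cref{lem:rob-sdp}) with matrix multipliers $(\Lambda,\Psi)$ for the mean ball, and then combine \cref{thm:structured-ambigutiy} with \cref{thm:sdp-stab} for the stability and cost claims. The only differences are presentational (you expand the closed-loop adjoint directly in the modes rather than via \cref{lem:opadj} and the $2$-mode matricization), and your observation that only the sufficiency direction of the robust lemma is needed is exactly how the paper uses it.
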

\begin{proof}
    The proof is similar to that of \cite[Thm.~1]{Coppens2019}.  
    Note that $\adj{\op{E}}_K(P) = \adj{\Pi_K}(\adj{\op{E}}(P))$ by \cref{lem:opadj}. 
    So \eqref{eq:rob-lmi-stab} becomes
    \begin{equation} \label{eq:rob-lmi-0}
        P - \adj{\Pi_K}(\adj{\op{E}}(W; P)) \sgeq \adj{\Pi_K}(H),
    \end{equation}
    with $H = \blkdiag(Q, R)$. We then plug in the specific form of $W$ as described in \cref{def:structured-ambiguity}.
    That is $W = \blkdiag(0, \Sigma) + (1, \mu) \trans{(1, \mu)}$. Thus the left-hand side of \eqref{eq:rob-lmi-0} becomes
    \begin{equation} \label{eq:rob-lmi-1}
        P - \adj{\Pi_K}(\adj{\op{E}}(\blkdiag(0, \Sigma; P))) - \adj{\Pi_K}(\adj{\op{E}}(\mu_0 \trans{\mu_0}; P)),
    \end{equation}
    where we introduced $\mu_0 = (1, \mu)$. By \cref{eq:dynsm}, for all $Z$,
    \begin{equation*}
        \op{E}\left( \blkdiag(0, \Sigma); Z \right) = \ten{M}_{(3)} (\blkdiag(0, \Sigma), Z) \trans{\ten{M}}_{(3)}.
    \end{equation*}
    We have $\ten{M}_{(3)} = [[\ten{M}]_{::1}, \widetilde{\ten{M}}_{(3)}]$. Thus
    \begin{equation} \label{eq:struct-smop}
        \op{E}(\blkdiag(0, \Sigma); Z) = \widetilde{\op{E}}(\Sigma; Z) \dfn \widetilde{\ten{M}}_{(3)} (\Sigma \kron Z) \trans{\widetilde{\ten{M}}}_{(3)}.
    \end{equation}
    This enables us to further simplify \cref{eq:rob-lmi-1} to show that
    \begin{equation*}
        P - \adj{\Pi_K}(\adj{\widetilde{\op{E}}}(\Sigma; P)) - \adj{\Pi_K}(\adj{\op{E}}(\mu_0 \trans{\mu_0}; P)) \sgeq \adj{\Pi_K}(H)      
    \end{equation*}
    holding for all $\Sigma$ s.t. $\nrm{\Sigma - \hat{\Sigma}}_2 \leq \beta_\Sigma$ and for all $\mu \in (\{\hat{\mu}\} + r_{\mu} \ball{n_w})$,
    where $\ball{n_w}$ denotes the unit ball implies \eqref{eq:rob-lmi-stab}. 

    Clearly, $\adj{\widetilde{\op{E}}}(\bar{\Sigma}; P) \sgeq \adj{\widetilde{\op{E}}}(\Sigma; P)$ (cf. \cref{cor:cpmon})
    for all $\Sigma$ s.t. $\nrm{\Sigma - \hat{\Sigma}}_2 \leq \beta_\Sigma$. Hence \eqref{eq:rob-lmi-stab} holds iff:
    \begin{equation} \label{eq:rob-lmi-stab:b}
        P - \adj{\Pi_K}(\adj{\op{E}}(\mu_0 \trans{\mu_0}; P))  - \adj{\Pi_K}(\adj{\widetilde{\op{E}}}(\bar{\Sigma}; P)) \sgeq \adj{\Pi}(H),
    \end{equation}
    for all $\mu \in (\{\hat{\mu}\} + r_{\mu} \ball{n_w})$. By \cref{lem:opadj},
    \begin{equation*}
        \adj{\Pi_K}(\adj{\op{E}}(W; P)) = [I, \trans{K}] \ten{M}_{(2)} (W \kron P) \trans{\ten{M}_{(2)}} [I; K] .
    \end{equation*}
    We can expand the definition of the $2$-mode matricization \cref{eq:1modeflat}:
    \begin{equation*}
        \ten{M}_{(2)} = \begin{bmatrix}
            \trans{A_1} & \dots & \trans{A_{n_w}} \\ 
            \trans{B_1} & \dots & \trans{B_{n_w}}
        \end{bmatrix}.
    \end{equation*} 
    Let $\Theta = P^{-1}$ and $\Gamma = K \Theta$. Note that $\mu_0 \trans{\mu_0} \kron P = (\mu_0 \kron I) P (\trans{\mu_0} \kron I)$ \cite[Thm.~E.1.3]{DeKlerk2002}.
    Then, using these facts and the definition of $\opm{F}$ and $F_1$, after multiplying \cref{eq:rob-lmi-stab:b} on the left and right 
    side with $\Theta$ results in:
    \begin{align*}
        W   &- \trans{[F_1; \opm{F}]}(\mu_0 \kron I) P (\trans{\mu_0} \kron I) [F_1; \opm{F}] \\
            &-\trans{\opm{F}} (\bar{\Sigma} \otimes I) \opm{F} \sgeq \Theta Q \Theta + \trans{\Gamma} R \Gamma,
    \end{align*}
    for all $(\mu \in \{\hat{\mu}\} + r_\mu \ball{n_w})$. Applying Schur complements and introducing $\theta = (\hat{\mu} - \mu)/r_\mu$,
    allows us to rewrite the final display as an inequality of the type $\mathcal{U}[\beta_\mu] \sgeq 0$
    in \cref{lem:rob-sdp}, with $T$ as in the theorem above. Applying the Lemma gives the second and third LMI. 

    So by \cref{lem:rob-sdp}, a feasible pair $(\Theta, \Gamma)$ produces a 
    pair $(P, K)$ that is feasible for the robust constraint \eqref{eq:rob-lmi-stab}. Since \cref{eq:rob-lmi-stab} 
    holds for all $W \in \mmathcal{W}$ it holds for $\op{E}_{\star}$ provided that $\exists W \in \mmathcal{W}$
    such that $\op{E}_\star(Z) = \op{E}(W; Z)$ for all $Z$, 
    which is true with probability at least $1 - \delta_\mu - \delta_W$. Hence, by \cref{thm:sdp-stab:a} (i) follows.

    The first constraint in the final SDP 
    is equivalent to
    \begin{equation*}
        \Phi \sgeq \trans{U_0} \Theta^{-1} \trans{U_0},
    \end{equation*}
    by a Schur complement. So $\tr[\Phi] \geq \tr[\trans{U_0} \Theta^{-1} \trans{U_0}]= \tr[X_0 P]$. Combined 
    with the previous argument about feasibility and \cref{thm:sdp-stab:b}, we prove (ii). 
\end{proof}
\end{arxiv}

\section{Numerical experiments} \label{sec:numerical}
In this section we numerically investigate the methods developed in this paper. We begin with the simple case 
of \emph{repeated \update*{initialization}} (cf. \cref{rem:asm}), comparing both the case 
where the modes are known and where they are not, i.e. the model-free setting. We show how, in the first 
setting, our bounds are sufficiently tight to enable control synthesis. Next we use data generated using \emph{rollout}
to produce similar estimates and we highlight the differences and challenges associated with doing so. We also 
show that experimentally our method also works when using only a single trajectory, although theoretical guarantees
are not yet available in this setting. \ilarxiv{We also compared to the averaging rollout approach in \cite{Xing2020,Xing2021}.
Finally, we show how structural information can be exploited to get tighter estimates of the uncertainty.}\ilpub{\update*{
In the technical report \cite{Arxiv} we compare with the averaging rollout approach in \cite{Xing2020,Xing2021} and show 
how structural information can be exploited to get tighter estimates of the uncertainty.}}

\subsection{Accuracy of the estimate and bounds} \label{sec:toy}
We consider the case $n_x = 2$ and $n_u = 1$ with modes:
\begin{equation*}
    A_1 = \begin{bmatrix}
        1 & 0 \\ 0 & 0
    \end{bmatrix}, \, A_2 = \begin{bmatrix}
        0 & 1 \\ 0 & 0
    \end{bmatrix}, \, A_3 = \begin{bmatrix}
        0 & 0 \\ 0 & 1
    \end{bmatrix}, \, B_3 = \begin{bmatrix}
        0 \\ 1
    \end{bmatrix},
\end{equation*}
and $B_1 = B_2 = 0$. The disturbance is sampled uniformly from the ball $\{w \in \Re^3 \colon \nrm{w - \mu}_2 \leq 0.25\}$ 
with $\mu = (0, 0, 0.1)$. 

\paragraph*{Repeated initialization}
To generate measurements we sample a $z_0 = (x_0, u_0)$ uniformly from an Euclidean ball of radius one and then propagate
the dynamics by one step to get a measurement $x_1$, i.e. using \emph{repeated \update*{initialization}} as in \cref{rem:asm}. 
From this data, an estimate $\hat{W}$ and associated radius 
$\beta_W$ are determined as prescribed in \cref{thm:error-bound-full}. 

To empirically quantify the accuracy of the scheme, the procedure above is repeated $M = 100$\todo{correct?} times for 
each sample count $N$. For each estimate we compute $\nrm{\hat{W} - W_\star}_2$ and $\beta_W$. A confidence 
plot is provided in \cref{fig:toy}. The middle plot depicts the result when $\ten{M}$
is selected, based on true mode info (i.e. $[\ten{M}]_{::i} = [A_i, B_i]$ for $i=1,2,3$); and the right-most plot 
uses the model-free case (i.e. $\ten{M}_{(3)} = I$). 

We determined $W_\star$ in the first setting by noting that $\E[w\trans{w}] = r^2_w I_{n_w} / (n_w + 2)$ 
when $w$ is sampled uniformly from $\{w \in \Re^{n_w} \colon \nrm{w}_2 \leq r_w\}$ ---
which can be verified by symmetry and solving a simple integral --- and applying the appropriate
transformation\footnote{To shift the ball we shift $\widetilde{w} = w + \mu$. Hence $\E[\widetilde{w} \trans{\widetilde{w}}] = W_\star + \mu \trans{\mu}$.}. Similarly, $W_\star$ for the model-free case is then recovered by applying the transformation in 
\eqref{eq:true_moment}. 

The left-most plot depicts the error when $\hat{W}$ is estimated directly from measurements of $w$ 
and the bound is as in \cref{lem:dd-moment}. It is clear from the figure that the empirical error does not 
increase much when we do not directly observe the disturbance --- at least for this simple model. Instead 
the main loss is in the bound, which is about an order of magnitude looser than the direct sample case. This is to be 
expected however and, as we confirm later using DR synthesis, the bounds are still practical in low-dimensional settings.

\begin{figure}
    \includegraphics{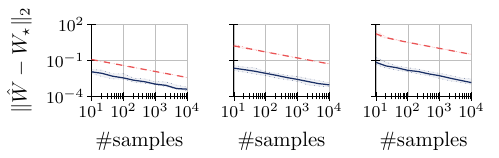}
    \caption{Error in moment estimation using repeated initialization: (left) when directly observing the disturbance; (mid) 
    when using the true modes in $\ten{M}$; (right) when using no mode information, i.e. model-free. 
    The dashed line depicts the radius predicted by \cref{thm:error-bound-full}. Each colored area is a $0.1$ confidence 
    interval surrounding the median.} \label{fig:toy}
\end{figure}

The middle and right-most plot depict the use of prior information and the model-free case respectively.
The fact that $W_\star$ is not the same for both cases implies that the absolute errors 
are dissimilar. For a fair comparison, we additionally use the estimated second moment dynamics 
$\op{E}$ as in \cref{eq:dynsm_}. Specifically we plot $\nrm{\hat{\opm{E}} - \opm{E}_\star}_2/\nrm{\opm{E}_\star}_2$ 
where $\hat{\opm{E}}$ is the matrix associated with the estimated dynamics and $\opm{E}_\star$ its true value. 
These are computed using \cref{cor:cpadj}. Note that $\opm{E}_{\star}$
is the same independent of the selected $\ten{M}$. \cref{fig:toy-operator} shows the result.

\begin{figure}
    \includegraphics{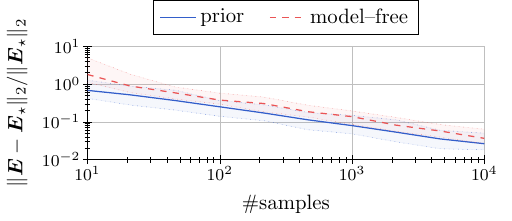}
    \caption{Estimation accuracy of matrix of second order dynamics and comparison between model-free and true mode based 
    estimation. Each colored area is a $0.1$ confidence interval surrounding the median.} \label{fig:toy-operator}
\end{figure}

It is clear that, for the low-dimensional example considered here, exploiting prior mode information has no significant 
advantage when it comes to estimation accuracy. Instead the main advantage is a smaller ambiguity 
set. When using the model-free approach, the control synthesis problem 
remains infeasible for any tested sample count. That is, the ambiguity set exceeds the minimum size that can be stabilized 
by a single controller. When exploiting the prior information encapsulated in the modes, this is not the case. 

\paragraph*{Control synthesis}
We can quantify the size of the ambiguity set through controller synthesis as in \cref{thm:drlqrcp}. 
The performance of a gain $\hat{K}$ is measured through the infinite horizon cost 
achieved on the true system. This cost equals the optimum of the following SDP, by \cref{prop:lyapcp} and \cref{lem:opadj},
\begin{equation*}
    \min_{P} \, \{\tr[P X_0] \colon P - \adj{\Pi_{\hat{K}}}(\adj{\op{E}_{\star}}(P)) \sgeq \blkdiag(Q, R), \, P \sgeq 0 \}.
\end{equation*}
We select $X_0 = I$ here and in \cref{thm:ric-sdp-primal} for control synthesis. Also let $Q = I$ and $R = 10$.
The relative error with the true optimum of \cref{eq:slqr} is then a metric 
for the accuracy of the ambiguity set. The result is depicted in \cref{fig:toy-cost}. Note that the 
rate is as predicted in \cref{thm:sample-complexity}.

\begin{figure}
    \includegraphics{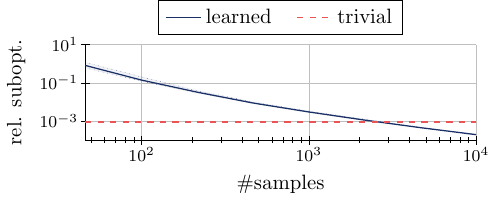}
    \caption{Evaluation of suboptimality of DR control synthesis. The colored area is a $0.1$ confidence interval surrounding the median.} \label{fig:toy-cost}
\end{figure}

The dashed line in \cref{fig:toy-cost} depicts the cost for the \emph{trivial ambiguity}, 
which only uses $\nrm{w}_2 \leq r_w$ implying $\nrm{W_\star}_2 \leq r_w^2$. 
This constraint is also used to synthesize a controller with \cref{thm:drlqrcp}. 
We say that the learned ambiguity set is \emph{informative}
when the performance of the associated controller improves upon the trivial one. 

\paragraph*{Rollout}
Repeated initialization is not realistic in practice, since it assumes we can directly select all states of 
the system. Rollout instead initializes the system at some --- easily realizable --- initial state and then 
applies a sequence of control actions to excite the system. We use the state $x_0 = (1, 1)$
and use the control law $u_t = [-0.5, -0.2] x_t + \delta_t$ with $\delta_t$ sampled uniformly from 
a Euclidian ball of radius $35$ at each time step to integrate the dynamics for $T = 25$ time steps. 
Then $z_i = (x_{T-1}, u_{T-1})$ and $x_{i+1} = x_T$ are used as data points (cf. \cref{rem:asm}) for $i=1, \dots, N$. 
Here $N$ is the amount of rollouts, which we also refer to as the sample count. 

Similarly to before we can evaluate the accuracy of the estimate empirically, by resampling data sets $M = 100$ times 
and computing the errors. The result is depicted in \cref{fig:toy-rollout}. Comparing with \cref{fig:toy} we see,
as expected, that estimation is more challenging when using rollout data. This is likely explained by the distribution of $z_i$
being less suitable for identification as discussed in \cref{rem:mixed}

\begin{figure}
    \includegraphics{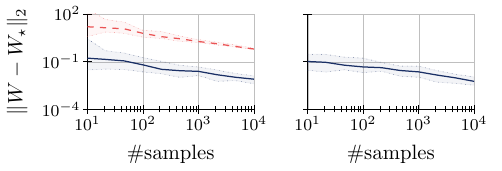}
    \caption{Empirical result for toy problem using rollout data: (left) using the tail of $\#$nsamples rollouts of length $25$,
    with predicted radius as a dashed line; (right) using one rollout of length $\#$nsamples.
    Each colored area is a $0.1$ confidence interval surrounding the median.} \label{fig:toy-rollout}
\end{figure}

\paragraph*{Single trajectory}
The right plot in \cref{fig:toy-rollout} also depicts the empirical error when we simply use one rollout of length $T = N$.
So the data is then $(x_i, u_i)$ and $x_{i+1}$ for $i\in\N_{0:N-1}$. It is clear that the estimate becomes more accurate 
when the trajectory length is increased, showing --- at least empirically --- that our approach can also work for single trajectory
identification. This was also observed for a similar setup in \cite{Di2021}, yet a convergence proof 
and associated conditions on the exciting inputs are still unavailable (cf. \cref{rem:asm}).

\ilarxiv{\subsection{Averaged rollouts approach}}

\ilarxiv{\begin{figure}
    \includegraphics{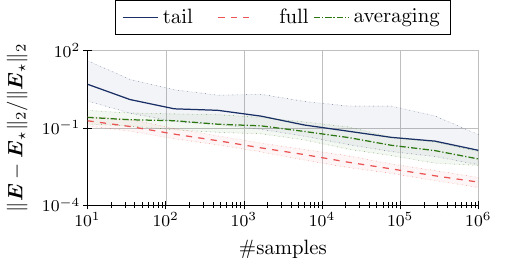}
    {\caption{Comparison of three system identification approaches:
    (tail) using only the tail of each rollout; (full) using the full rollout; (averaging) when trajectories are averaged over rollouts. 
    Each colored area is a $0.1$ confidence interval surrounding the median.} \label{fig:tsummers}}
\end{figure}}

\begin{arxiv}
In this part, we compare our approach with that of \cite{Xing2020,Xing2021}. We will refer to this approach 
as the \emph{averaging rollout approach} (contrasting our \emph{least-squares approach}). In essence, this approach
identifies the second moment dynamics \eqref{eq:dynsm_} directly by using a sample average to construct a single 
moment trajectory $(Z_t, X_t)$. The second moment dynamics are then estimated using standard identification techniques 
from LTI identification. 

We consider a dynamical system similar\footnote{The dynamics in \cite{Xing2021} are defined differently. Constructing an exact equivalent that also satisfies \cref{asm:model} 
is challenging. So instead the simpler system we consider is only approximately equivalent.} to the one considered in \cite[\S{}IV.A]{Xing2021}. Specifically
\begin{align*}
    A_1 &= \begin{bmatrix}
       -0.2 & 0.3 \\ -0.4 & 0.8 
    \end{bmatrix}, A_2 = \begin{bmatrix}
        0.2 & -0.15 \\ -1 & 0
    \end{bmatrix},  A_3 = \begin{bmatrix}
        1 & 0.1 \\ 0.2 & 0
    \end{bmatrix}, \\ A_4 &= \begin{bmatrix}
        -0.05 & 1 \\ -0.15 & 0
    \end{bmatrix},  A_5 = \begin{bmatrix}
        0 & 0 \\ 0 & 1
    \end{bmatrix}, \\ \trans{B_1} &= {\begin{bmatrix}
        -1.8 \, -0.8
    \end{bmatrix}}, \, \trans{B_6} = {\begin{bmatrix}
        -1 \,  -0.15
    \end{bmatrix}}, \, \trans{B_7} = {\begin{bmatrix}
        0.15 \, -1
    \end{bmatrix}}.
\end{align*}
The matrices $A_i$ and $B_i$ left unspecified are assumed zero. We then sample the disturbance uniformly
from a degenerate ellipse such that $W_\star = \diag(1, 0.15, 0.08, 0.02, 0.08, 0.05, 0.2)$
and $[w_t]_1 = 1$ for all $t \in \N$. 

We then generate rollouts as in \cite{Xing2021} and again evaluate the estimation accuracy in terms of $\opm{E}$.
\cref{fig:tsummers} shows the result. The rollout length was selected as $T = 12$ and the horizontal axis tracks
the amount of rollouts. In the averaging approach, the inputs should not only be random, but their distributions as well.
This explains the high variance in the estimates.

The worst error is achieved when we use only the tail of each rollout in our least-squares scheme, which makes
sure our convergence guarantees still hold. The averaging approach performs marginally better, and the 
asymptotic rates are the same (as confirmed by the theory in \cite{Xing2021}).
One can expect that accuracy improves further when the rollout length 
is increased\footnote{The analysis performed in \cite{Xing2021} however confirms an opposite result. 
This is likely due to conservativeness in their analysis.}. Note however that the approach of \cite{Xing2021} does not come with 
a tight error analysis. If theoretical guarantees are not critical, we can similarly to the single trajectory case considered above,
also add other states in the rollout and use the full data set. The resulting estimate clearly
outperforms the averaging result. Intuitively this is explained by the latter losing information by averaging.

It is also important to note that the scheme, as introduced in \cite{Xing2021} does not support correlation between 
$A(w)$ and $B(w)$ (as in our first example). Extending the averaging scheme to support this is relatively trivial. 
\end{arxiv}

\ilarxiv{\subsection{Exploiting prior structure}}
\begin{arxiv}
We consider the structured dynamics \eqref{eq:dyn-structured} with
\begin{align*}
    A_1 &= \begin{bmatrix}
        1 & 0.02 \\ 0 & 0.992
    \end{bmatrix}, \, B_1 = \begin{bmatrix}
        0 \\ 0.02
    \end{bmatrix} \\
    A_2 & = \begin{bmatrix}
        0 & 0 \\ 0 & -0.03
    \end{bmatrix}, \, A_3 = \begin{bmatrix}
        0 & -0.03 \\ 0 & 0
    \end{bmatrix}, \, B_4 = \begin{bmatrix}
        0 \\ 0.01
    \end{bmatrix}.
\end{align*}
The matrices $A_i$ and $B_i$ left unspecified are assumed zero. The disturbance is distributed 
uniformly on $\{w \in \Re^3 \colon \nrm{w}_2 \leq 0.05\}$ and data is generated using \emph{repeated initialization},
analogously to the procedure used in \cref{sec:toy}. We then estimate $\hat{W}$ and $\hat{\mu}$ using the procedure in 
\cref{sec:structural}. The empirical error and the predicted radii are then predicted in \cref{fig:struct-est}. 
\end{arxiv}

\begin{arxiv}
\begin{figure}
    \includegraphics{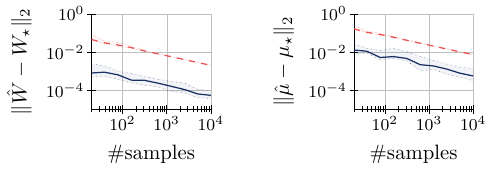}
    {\caption{Error in moment estimation using repeated initialization and prior structure: (left) the second moment estimation error; (right) the mean estimation error. 
    The dashed lines depict the predicted radius. Each colored area is a $0.1$ confidence interval surrounding the median.} \label{fig:struct-est}}
\end{figure}
\end{arxiv}

\begin{arxiv}
Finally, we also evaluate the size of the ambiguity set by using the control synthesis procedure of \cref{sec:str-synthesis}. 
The LQR problem is tuned identically to before with $X_0 = I$, $Q = I$ and $R = 10$. We again compare with the 
true optimum and the trivial controller recovered when using only $\nrm{\mu}_2 \leq r_w$ and $\nrm{W_\star}_2 \leq r_w^2 I$ 
(the transformation to an ambiguity set like \cref{def:structured-ambiguity} is analogous to the data-driven case). 
The resulting suboptimality plot is depicted in \cref{fig:struct-cost}. Note that, the horizontal axis starts at $N = 20$.
For fewer samples, the synthesis problem is infeasible. It is immediately clear how the structured prior information 
is exploited by our scheme by noting how little samples are required before the scheme improves upon the trivial controller.
Also note that, even though we only proved the $1/N$ rate for the non-structured case, the same decrease is observed 
here. The same observation was made in \cite{Coppens2019}. 
\end{arxiv}

\begin{arxiv}
\begin{figure}
    \includegraphics{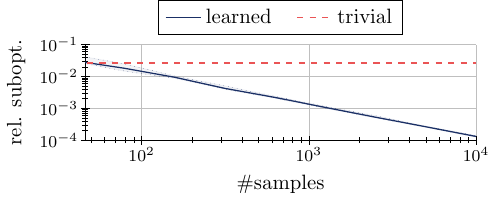}
    {\caption{Evaluation of suboptimality of DR control synthesis with structured prior information. 
    The colored area is a $0.1$ confidence interval surrounding the median.} \label{fig:struct-cost}}
\end{figure}
\end{arxiv}

\section{Conclusion} \label{sec:conclusion}
We developed a novel system identification scheme for linear dynamics with state- and input-multiplicative noise. 
The resulting estimators are shown to converge at a $1/\sqrt{N}$ rate, with $N$ the number of samples, when the 
data is generated either using rollout or through repeated initialization of the dynamics. We illustrated empirically 
that --- for simple dynamics --- the constants in our bounds are practical, i.e. they can be used for DR control synthesis. 
Moreover, the tightness of the bound is similar to that of a usual matrix Hoeffding bound. Also the DR control synthesis 
problem was shown to converge at a $1/N$ rate to the true optimum. 

For more complex dynamics it is likely that the bounds are not sufficiently tight to enable DR control synthesis. 
In that case either bootstrapping can be exploited as in \cite{Dean2019,Delage2010} or the DR scheme can be used 
to identify the robustness of the certainty equivalent controller as in \cite{Gravell2020}.

\begin{arxiv}
We additionally illustrated how knowledge about deterministic terms can be exploited to accelerate the identification
process and tighten the associated bounds (referred to as \emph{structured information}). An additional DR control 
synthesis scheme was also presented, which exploits the tighter bounds. The scheme was empirically shown to also 
converge to the true optimum at a $1/N$ rate.

Moreover experiments illustrated that the identification scheme functions well when data is gathered from a 
single trajectory and compared the results with the averaging rollout identification scheme of \cite{Xing2020,Xing2021}. 
\end{arxiv}

From these tests it is clear that correlation between errors does not accumulate when using single trajectory 
data. A formal proof of this fact and a sample complexity analysis however proved challenging and is considered an open
problem for future work. 

\printbibliography%

@article{Arxiv,
    title={{Provably stable learning control of linear dynamics with multiplicative noise}},
    author={Coppens, Peter and Patrinos, Panagiotis},
    year={2023},
    eprint={2207.06062},
    archivePrefix={arXiv},
    oprimaryClass={math.OC},
    ojournal = {\textnormal{arXiv}}
}

@article{Coppens2020-TR,
  title         = {{Sample complexity of data-driven stochastic LQR with multiplicative uncertainty}},
  author        = {Coppens, Peter and Patrinos, Panagiotis},
  year          = {2020},
  eprint        = {2005.12167},
  archiveprefix = {arXiv},
  oprimaryclass = {eess.SY},
  ojournal      = {\textnormal{arXiv}}
}

@book{Boyd2004,
  title={Convex optimization},
  author={Boyd, Stephen and Vandenberghe, Lieven},
  year={2004},
  publisher={Cambridge university press}
}

@inproceedings{Mania2019,
  author    = {Mania, Horia and Tu, Stephen and Recht, Benjamin},
  booktitle = {Advances in Neural Information Processing Systems},
  obooktitle = {NeurIPS},
  oeditor    = {H. Wallach and H. Larochelle and A. Beygelzimer and F. d\textquotesingle Alch\'{e}-Buc and E. Fox and R. Garnett},
  pages     = {},
  publisher = {Curran Associates, Inc.},
  title     = {Certainty Equivalence is Efficient for Linear Quadratic Control},
  url       = {https://proceedings.neurips.cc/paper/2019/file/5dbc8390f17e019d300d5a162c3ce3bc-Paper.pdf},
  volume    = {32},
  year      = {2019}
}

@article{Dean2019,
  abstract        = {This paper addresses the optimal control problem known as the Linear Quadratic Regulator in the case when the dynamics are unknown. We propose a multi-stage procedure, called Coarse-ID control, that estimates a model from a few experimental trials, estimates the error in that model with respect to the truth, and then designs a controller using both the model and uncertainty estimate. Our technique uses contemporary tools from random matrix theory to bound the error in the estimation procedure. We also employ a recently developed approach to control synthesis called System Level Synthesis that enables robust control design by solving a convex optimization problem. We provide end-to-end bounds on the relative error in control cost that are nearly optimal in the number of parameters and that highlight salient properties of the system to be controlled such as closed-loop sensitivity and optimal control magnitude. We show experimentally that the Coarse-ID approach enables efficient computation of a stabilizing controller in regimes where simple control schemes that do not take the model uncertainty into account fail to stabilize the true system.},
  oarchiveprefix   = {arXiv},
  oarxivid         = {1710.01688},
  author          = {Dean, Sarah and Mania, Horia and Matni, Nikolai and Recht, Benjamin and Tu, Stephen},
  doi             = {10.1007/s10208-019-09426-y},
  oeprint          = {1710.01688},
  file            = {:C\:/Users/pcoppens/Workspace/references/papers/Dean2017.pdf:pdf},
  issn            = {1615-3375},
  journal         = {Foundations of Computational Mathematics},
  ojournal         = {FoCM},
  keywords        = {High-dimensional statistics,SLS},
  mendeley-groups = {Workspace/Modules/Multiplicative/Core},
  mendeley-tags   = {High-dimensional statistics,SLS},
  omonth           = {aug},
  opages           = {1--43},
  title           = {{On the Sample Complexity of the Linear Quadratic Regulator}},
  url             = {http://link.springer.com/10.1007/s10208-019-09426-y},
  year            = {2019}
}

@article{Delage2010,
abstract = {We propose a robust optimization approach to address a multiperiod inventory control problem under ambiguous demands,that is, only limited information of the demand distributions such as mean, support, and some measures of deviations. Our framework extends to correlated demands and is developed around a factor-based model, which has the ability to incorporate business factors as well as time-series forecast effects of trend, seasonality, and cyclic variations. We can obtain the parameters of the replenishment policies by solving a tractable deterministic optimization problem in the form of a second-order cone optimization problem (SOCP), with solution time; unlike dynamic programming approaches, it is polynomial and independent on parameters such as replenishment lead time, demand variability, and correlations. Theproposed truncated linear replenishment policy (TLRP), which is piecewise linear with respect to demand history, improves upon static and linear policies, and achieves objective values that are reasonably close to optimal. {\textcopyright} 2010 INFORMS.},
author = {Delage, Erick and Ye, Yinyu},
doi = {10.1287/opre.1090.0741},
file = {:C$\backslash$:/Users/pcoppens/Workspace/references/papers/Delage2010.pdf:pdf},
issn = {0030-364X},
journal = {Operations Research},
keywords = {Distributionally Robust,Moment},
mendeley-groups = {Workspace/Modules/Multiplicative/Core},
mendeley-tags = {Distributionally Robust,Moment},
omonth = {jun},
number = {3},
opages = {595--612},
title = {{Distributionally Robust Optimization Under Moment Uncertainty with Application to Data-Driven Problems}},
url = {http://pubsonline.informs.org/doi/abs/10.1287/opre.1090.0741},
volume = {58},
year = {2010}
}

@article{Rahimian2019,
    author = {Hamed Rahimian and Sanjay Mehrotra},
    title = {Frameworks and {Results} in {Distributionally} {Robust} {Optimization}},
    journal = {Open Journal of Mathematical Optimization},
    eid = {4},
    publisher = {Universit\'e de Montpellier},
    volume = {3},
    year = {2022},
    doi = {10.5802/ojmo.15},
    url = {https://ojmo.centre-mersenne.org/articles/10.5802/ojmo.15/}
}

@InProceedings{Coppens2019, 
  title={Data-driven distributionally robust {LQR} with multiplicative noise}, 
  author={Coppens, Peter and Schuurmans, Mathijs and Patrinos, Panagiotis},
  booktitle={L4DC}, 
  opages={521--530}, 
  year={2020}, 
  oeditor={Alexandre M. Bayen and Ali Jadbabaie and George Pappas and Pablo A. Parrilo and Benjamin Recht and Claire Tomlin and Melanie Zeilinger}, 
  volume={120}, 
  oseries={Proceedings of Machine Learning Research}, 
  oaddress={The Cloud}, 
  eprint={1912.09990},
  archivePrefix={arXiv},
  publisher={PMLR}, 
  pdf={http://proceedings.mlr.press/v120/coppens20a/coppens20a.pdf}, 
  url={http://proceedings.mlr.press/v120/coppens20a.html}, 
  abstract={We present a data-driven method for solving the linear quadratic regulator problem for systems with multiplicative disturbances, the distribution of which is only known through sample estimates. We adopt a distributionally robust approach to cast the controller synthesis problem as semidefinite programs. Using results from high dimensional statistics, the proposed methodology ensures that their solution provides mean-square stabilizing controllers with high probability even for low sample sizes. As sample size increases the closed-loop cost approaches that of the optimal controller produced when the distribution is known. We demonstrate the practical applicability and performance of the method through a numerical experiment.} 
}

@article{Tropp2015,
url = {http://dx.doi.org/10.1561/2200000048},
year = {2015},
volume = {8},
journal = {Foundations and Trends® in Machine Learning},
title = {An Introduction to Matrix Concentration Inequalities},
doi = {10.1561/2200000048},
issn = {1935-8237},
number = {1-2},
pages = {1-230},
author = {Joel A. Tropp}
}

@inproceedings{Coppens2020,
  author={Coppens, Peter and Patrinos, Panagiotis},
  booktitle={2020 59th IEEE Conference on Decision and Control (CDC)}, 
  obooktitle={CDC},
  publisher = {IEEE},
  title={{Sample Complexity of Data-Driven Stochastic LQR with Multiplicative Uncertainty}},
  year={2020},
  volume={},
  number={},
  opages={6210-6215},
  doi={10.1109/CDC42340.2020.9303905}
}

@inproceedings{Xing2020,
  author={Y. {Xing} and B. {Gravell} and X. {He} and K. H. {Johansson} and T. {Summers}},
  booktitle={2020 American Control Conference (ACC)}, 
  obooktitle={ACC},
  title={Linear System Identification Under Multiplicative Noise from Multiple Trajectory Data}, 
  year={2020},
  volume={},
  number={},
  opages={5157-5261},
  doi={10.23919/ACC45564.2020.9147756}
}

@book{Horn1991,
author = {Horn, Roger A. and Johnson, Charles R.},
doi = {10.1017/CBO9780511840371},
file = {:C\:/Users/pcoppens/Workspace/references/books/Horn&Johnson(1994)- Topics in Matrix Analysis.pdf:pdf},
isbn = {9780521305877},
mendeley-groups = {Workspace/Modules/State Samples/Unprocessed},
omonth = {apr},
publisher = {Cambridge University Press},
title = {{Topics in Matrix Analysis}},
url = {https://www.cambridge.org/core/product/identifier/9780511840371/type/book},
year = {1991}
}

@book{Golub2013,
author = {Golub, Gene H. and Loan, Charles F. Van},
edition = {4th},
file = {:C\:/Users/pcoppens/Workspace/references/books/Golub&VanLoan(2013) - Matrix Computations.pdf:pdf},
isbn = {978-0-8018-5414-9},
mendeley-groups = {Workspace/Modules/State Samples/Unprocessed},
opages = {784},
publisher = {JHU Press},
title = {{Matrix Computations}},
year = {2013}
}

@book{DeKlerk2002,
oaddress = {Boston, MA},
author = {de Klerk, Etienne},
doi = {10.1007/b105286},
file = {:C\:/Users/pcoppens/Workspace/references/books/Klerk(2011) - Aspects of Semidefinite Programming.pdf:pdf},
isbn = {978-1-4020-0547-3},
mendeley-groups = {Workspace/Modules/State Samples},
publisher = {Springer US},
oseries = {Applied Optimization},
title = {{Aspects of Semidefinite Programming}},
url = {http://link.springer.com/10.1007/b105286},
volume = {65},
year = {2002}
}

@article{Coppens2021,
  author  = {Coppens, Peter and Patrinos, Panagiotis},
  journal = {Control Syst. Lett.},
  title   = {Data-Driven Distributionally Robust {MPC} for Constrained Stochastic Systems},
  year    = {2022},
  volume  = {6},
  number  = {},
  publisher = {IEEE},
  opages   = {1274-1279},
  doi     = {10.1109/LCSYS.2021.3091628}
}

@article{Kolda2009,
abstract = {This survey provides an overview of higher-order tensor decompositions, their applications, and available software. A tensor is a multidimensional or N-way array. Decompositions of higher-order tensors (i.e., N-way arrays with N ≥ 3) have applications in psychometrics, chemometrics, signal processing, numerical linear algebra, computer vision, numerical analysis, data mining, neuroscience, graph analysis, and elsewhere. Two particular tensor decompositions can be considered to be higher-order extensions of the matrix singular value decomposition: CANDECOMP/PARAFAC (CP) decomposes a tensor as a sum of rank-one tensors, and the Tucker decomposition is a higher-order form of principal component analysis. There are many other tensor decompositions, including INDSCAL, PARAFAC2, CANDELINC, DEDICOM, and PARATUCK2 as well as nonnegative variants of all of the above. The N-way Toolbox, Tensor Toolbox, and Multilinear Engine are examples of software packages for working with tensors. {\textcopyright} 2009 Society for Industrial and Applied Mathematics.},
author = {Kolda, Tamara G. and Bader, Brett W.},
doi = {10.1137/07070111X},
file = {:C\:/Users/pcoppens/Workspace/references/papers/Kolda2009.pdf:pdf},
issn = {0036-1445},
journal = {SIAM Review},
keywords = {Canonical decomposition (CANDECOMP),Higher-order principal components analysis (Tucker,Higher-order singular value decomposition (HOSVD),Multilinear algebra,Multiway arrays,Parallel factors (PARAFAC),Tensor decompositions,Tensors},
mendeley-groups = {Workspace/General},
mendeley-tags = {Tensors},
omonth = {aug},
number = {3},
opages = {455--500},
title = {{Tensor Decompositions and Applications}},
url = {http://epubs.siam.org/doi/10.1137/07070111X},
volume = {51},
year = {2009}
}

@techreport{Kolda2006,
oaddress = {Albuquerque, NM, and Livermore, CA},
author = {Kolda, Tamara Gibson},
doi = {10.2172/923081},
file = {:C\:/Users/pcoppens/Workspace/references/papers/Kolda2006.pdf:pdf},
institution = {Sandia National Laboratories (SNL)},
keywords = {CANDECOMP,HOSVD,Khatri-Rao product,PARAFAC,Tucker,higher-order factor analysis,multilinear algebra},
mendeley-groups = {Workspace/Modules/Multiplicative},
omonth = {apr},
number = {No. SAND2006-2081},
title = {{Multilinear operators for higher-order decompositions.}},
url = {http://www.osti.gov/servlets/purl/923081-u0xXJa/},
year = {2006}
}

@article{Morozan1983,
abstract = {In the first part of the Paper linear-quadratic control problem under independent random perturbations is considered. Under a controllability assumption it is shown that every linear admissible feedback control stabilizes the control system. Using this fact, a procedure to construct a monotone decreasing sequence of quadratic forms which converges to the optimal value of the control problem is discussed. Similar results for the linear-quadratic control problem under jump Markov perturbations are also obtained. In the last part of the paper, some necessary conditions for the stabilizabilitv of a class of linear discrete-time control systems with indenendent random perturbations are.derived. {\textcopyright} 1983, Taylor {\&} Francis Group, LLC. All rights reserved.},
author = {Morozan, Toader},
odoi = {10.1080/07362998308809005},
oissn = {15329356},
journal = {Stochastic Analysis and Applications},
ojournal = {Stoch. Anal. Appl.},
mendeley-groups = {Workspace},
omonth = {jan},
number = {1},
opages = {89--116},
title = {{Stabilization of some stochastic discrete-time control systems}},
volume = {1},
year = {1983}
}

@article{Gravell2020,
abstract = {Robust stability and stochastic stability have separately seen intense study in control theory for many decades. In this work we establish relations between these properties for discrete-time systems and employ them for robust control design. Specifically, we examine a multiplicative noise framework which models the inherent uncertainty and variation in the system dynamics which arise in model-based learning control methods such as adaptive control and reinforcement learning. We provide results which guarantee robustness margins in terms of perturbations on the nominal dynamics as well as algorithms which generate maximally robust controllers.},
archivePrefix = {arXiv},
arxivId = {2004.08019},
author = {Gravell, Benjamin and Esfahani, Peyman Mohajerin and Summers, Tyler},
eprint = {2004.08019},
file = {:C\:/Users/pcoppens/Workspace/references/papers/Gravell2020.pdf:pdf},
ojournal = {arXiv preprint arXiv:2004.08019},
keywords = {Multiplicative,Robust,Stochastic},
mendeley-groups = {Workspace/Literature Study/Robustness},
mendeley-tags = {Multiplicative,Robust,Stochastic},
opages = {1--14},
title = {{Robust Control Design for Linear Systems via Multiplicative Noise}},
url = {http://arxiv.org/abs/2004.08019},
year = {2020}
}

@inproceedings{Gravell2020a,
  title     = {Robust Learning-Based Control via Bootstrapped Multiplicative Noise},
  author    = {Gravell, Benjamin and Summers, Tyler},
  obooktitle={L4DC}, 
  booktitle = {Proceedings of the 2nd Conference on Learning for Dynamics and Control},
  opages     = {599--607},
  year      = {2020},
  editor    = {Bayen, Alexandre M. and Jadbabaie, Ali and Pappas, George and Parrilo, Pablo A. and Recht, Benjamin and Tomlin, Claire and Zeilinger, Melanie},
  volume    = {120},
  oseries    = {Proceedings of Machine Learning Research},
  omonth     = {10--11 Jun},
  publisher = {PMLR},
  pdf       = {http://proceedings.mlr.press/v120/gravell20a/gravell20a.pdf},
  url       = {https://proceedings.mlr.press/v120/gravell20a.html},
  abstract  = {Despite decades of research and recent progress in adaptive control and reinforcement learning, there remains a fundamental lack of understanding in designing controllers that provide robustness to inherent non-asymptotic uncertainties arising from models estimated with finite, noisy data. We propose a robust adaptive control algorithm that explicitly incorporates such non-asymptotic uncertainties into the control design. The algorithm has three components: (1) a least-squares nominal model estimator; (2) a bootstrap resampling method that quantifies non-asymptotic variance of the nominal model estimate; and (3) a non-conventional robust control design method using an optimal linear quadratic regulator (LQR) with multiplicative noise. A key advantage of the proposed approach is that the system identification and robust control design procedures both use stochastic uncertainty representations, so that the actual inherent statistical estimation uncertainty directly aligns with the uncertainty the robust controller is being designed against. We show through numerical experiments that the proposed robust adaptive controller can significantly outperform the certainty equivalent controller on both expected regret and measures of regret risk.}
}

@article{Bernstein1987,
abstract = {Three parallel gaps in robust feedback control theory are examined: sufficiency versus necessity, deterministic versus stochastic uncertainty modeling, and stability versus performance. Deterministic and stochastic output-feedback control problems are considered with both static and dynamic controllers. The static and dynamic robust stabilization problems involve deterministically modeled bounded but unknown measurable time-varying parameter variations, while the static and dynamic stochastic optimal control problems feature state-, control-, and measurement-dependent white noise. General sufficiency conditions for the deterministic problems are obtained using Lyapunov's direct method, while necessary conditions for the stochastic problems are derived as a consequence of minimizing a quadratic performance criterion. The sufficiency tests are then applied to the necessary conditions to determine when solutions of the stochastic optimization problems also solve the deterministic robust stability problems. As an additional application of the deterministic result, the modified Riccati equation approach of Petersen and Hollot is generalized in the static case and extended to dynamic compensation. Copyright {\textcopyright} 1987 by The Institute of Electrical and Electronics Engineers, Inc.},
author = {Bernstein, D.},
doi = {10.1109/TAC.1987.1104517},
file = {:C\:/Users/pcoppens/Workspace/references/papers/Bernstein1987.pdf:pdf},
issn = {0018-9286},
journal = {IEEE Transactions on Automatic Control},
ojournal = {IEEE Trans. Autom. Control},
keywords = {Multiplicative,Robust,Stochastic},
mendeley-groups = {Workspace/Modules/Multiplicative/Historical,Workspace/Literature Study/Robustness},
mendeley-tags = {Multiplicative,Robust,Stochastic},
omonth = {dec},
number = {12},
opages = {1076--1084},
title = {{Robust static and dynamic output-feedback stabilization: Deterministic and stochastic perspectives}},
url = {http://ieeexplore.ieee.org/document/1104517/},
volume = {32},
year = {1987}
}

@article{Romero2013,
abstract = {Wideband spectrum sensing (WSS) encompasses a collection of techniques intended to estimate or to decide over the occupancy parameters of a wide frequency band. However, broad bands require expensive acquisition systems, thus motivating the use of compressive schemes. In this context, previous works in compressive WSS have already realized that great compression rates can be achieved if only second-order statistics are of interest in spectrum sensing. In this paper, we go a step further by exploiting spectral prior information that is typically available in practice in order to reduce the sampling rate even more. The signal model assumes that the acquisition is done by means of an analog-to-information converter (A2I). The input signal is the linear combination of a number of signals whose second-order statistics are known and the goal is to estimate/decide over the coefficients of this combination. The problem is thus a particular instance of the well-known structured covariance estimation problem. Unfortunately, the algorithms used in this area are extremely complex for inexpensive spectrum sensors so that alternative techniques need to be devised. Exploiting the fact that the basis matrices are Toeplitz, we use the asymptotic theory of circulant matrices to propose a dimensionality reduction technique that simplifies existing structured covariance estimation algorithms, achieving a similar performance at a much lower computational cost. {\textcopyright} 2013 IEEE.},
author = {Romero, Daniel and Leus, Geert},
doi = {10.1109/TSP.2013.2283473},
file = {:C\:/Users/pcoppens/Workspace/references/papers/Romero2013.pdf:pdf},
issn = {1053-587X},
journal = {IEEE Transactions on Signal Processing},
ojournal = {IEEE Trans. Signal Process.},
keywords = {Analog-to-information converters,Compressive Sensing,Covariance,compressed sensing,covariance matching,wideband spectrum sensing},
mendeley-groups = {Workspace/Modules/State Samples/Unprocessed},
mendeley-tags = {Compressive Sensing,Covariance},
omonth = {dec},
number = {24},
opages = {6232--6246},
publisher = {IEEE},
title = {{Wideband Spectrum Sensing From Compressed Measurements Using Spectral Prior Information}},
url = {http://ieeexplore.ieee.org/document/6609101/},
volume = {61},
year = {2013}
}

@article{Lovasz1989,
abstract = {We study linear spaces of n×n matrices in which every matrix is singular. Examples are given to illustrate that a characterization of such subspaces would solve various open problems in combinatorics and in computational algebra. Several important special cases of the problem are solved, although often in disguise. {\textcopyright} 1989 Sociedade Brasilaira de Matem{\'{a}}tica.},
author = {Lov{\'{a}}sz, L{\'{a}}szl{\'{o}}},
doi = {10.1007/BF02585470},
file = {:C\:/Users/pcoppens/Workspace/references/papers/Lovasz1989.pdf:pdf},
issn = {01003569},
journal = {Bol. Soc. Bras. Mat.},
keywords = {Determinantal Variety,Subspace of matrices},
mendeley-groups = {Workspace/Modules/State Samples/Unprocessed},
mendeley-tags = {Determinantal Variety,Subspace of matrices},
number = {1},
opages = {87--99},
title = {{Singular spaces of matrices and their application in combinatorics}},
volume = {20},
year = {1989}
}

@article{Testa2018,
archivePrefix = {arXiv},
arxivId = {1701.06864v1},
author = {Testa, Damiano},
eprint = {1701.06864v1},
file = {:C\:/Users/pcoppens/Workspace/references/papers/Testa2017.pdf:pdf},
keywords = {Determinantal Variety,Subspace of matrices},
mendeley-groups = {Workspace/Modules/State Samples/Unprocessed},
mendeley-tags = {Determinantal Variety,Subspace of matrices},
opages = {1--9},
title = {3 × 3 singular matrices of linear forms},
year = {2018}
}

@book{Ledoux1991,
oaddress = {Berlin, Heidelberg},
author = {Ledoux, Michel and Talagrand, Michel},
doi = {10.1007/978-3-642-20212-4},
file = {:C$\backslash$:/Users/pcoppens/Workspace/references/books/Ledoux{\&}Talagrand(1991) - Probability in Banach Spaces.pdf:pdf},
isbn = {978-3-642-20211-7},
mendeley-groups = {Workspace/Modules/Concentration},
publisher = {Springer Berlin Heidelberg},
title = {{Probability in Banach Spaces}},
url = {http://link.springer.com/10.1007/978-3-642-20212-4},
year = {1991}
}

@book{Boucheron2013,
abstract = {This monograph presents a mathematical theory of concentration inequalities for functions of independent random variables. The basic phenomenon under investigation is that if a function of many independent random variables does not depend too much on any of them then it is concentrated around its expected value. This book offers a host of inequalities to quantify this statement. The authors describe the interplay between the probabilistic structure (independence) and a variety of tools ranging from functional inequalities, transportation arguments, to information theory. Applications to the study of empirical processes, random projections, random matrix theory, and threshold phenomena are presented. The book offers a self-contained introduction to concentration inequalities, including a survey of concentration of sums of independent random variables, variance bounds, the entropy method, and the transportation method. Deep connections with isoperimetric problems are revealed. Special attention is paid to applications to the supremum of empirical processes.},
author = {Boucheron, St{\'{e}}phane and Lugosi, G{\'{a}}bor and Massart, Pascal},
booktitle = {Concentration Inequalities},
doi = {10.1093/acprof:oso/9780199535255.001.0001},
file = {:C$\backslash$:/Users/pcoppens/Workspace/references/books/Boucheron(2012) - Concentration inequalities.pdf:pdf},
isbn = {9780199535255},
mendeley-groups = {Workspace/Literature Study/Concentration},
omonth = {feb},
publisher = {Oxford University Press},
title = {{Concentration Inequalities}},
url = {https://oxford.universitypressscholarship.com/view/10.1093/acprof:oso/9780199535255.001.0001/acprof-9780199535255},
year = {2013}
}

@book{Grimmett2001d,
author = {Grimmett, Geoffrey R and Stirzaker, David},
file = {:C\:/Users/pcoppens/AppData/Local/Mendeley Ltd./Mendeley Desktop/Downloaded/Grimmett, Stirzaker - 2001 - Probability and random processes.pdf:pdf},
keywords = {Book,Stochastic Stability},
mendeley-groups = {Workspace/Modules/Switching,Workspace/General,Workspace/Modules/Switching/Core},
mendeley-tags = {Book,Stochastic Stability},
title = {{Probability and random processes}},
year = {2001}
}

@book{Horn2012,
author = {Horn, Roger A. and Johnson, Charles R.},
doi = {10.5555/2422911},
edition = {2},
file = {:C\:/Users/pcoppens/Workspace/references/books/Horn&Johnson(2012) - Matrix Analysis.pdf:pdf},
isbn = {978-0-521-54823-6},
mendeley-groups = {Workspace/Modules/State Samples/Unprocessed,Workspace/General},
omonth = {dec},
publisher = {Cambridge University Press},
title = {{Matrix Analysis}},
url = {https://dl.acm.org/doi/book/10.5555/2422911},
year = {2012}
}

@article{Kubrusly1985,
abstract = {Necessary and sufficient conditions for mean square stability are proved for the following class of nonlinear dynamical systems: finite-dimensional bilinear models, evolving in discrete-time, and driven by random sequences. The stochastic environment under consideration is characterized only by independence, wide sense stationarity, and second-order properties. Thus, we do not assume random sequences to be Gaussian, zero-mean, or ergodic. The probability distributions involved are allowed to be arbitrary and unknown. Limiting state moments are given in terms of the model parameters and disturbances moments. {\textcopyright} 1985 IEEE},
author = {Kubrusly, C. and Costa, O.},
doi = {10.1109/TAC.1985.1103840},
file = {:C\:/Users/pcoppens/AppData/Local/Mendeley Ltd./Mendeley Desktop/Downloaded/Kubrusly, Costa - 1985 - Mean Square Stability Conditions for Discrete Stochastic Bilinear Systems.pdf:pdf},
issn = {0018-9286},
journal = {IEEE Transactions on Automatic Control},
ojournal = {IEEE Trans. Autom. Control},
keywords = {Lyapunov,Multiplicative,Stochastic Stability},
mendeley-groups = {Workspace/Modules/Multiplicative/Historical},
mendeley-tags = {Lyapunov,Multiplicative,Stochastic Stability},
omonth = {nov},
number = {11},
opages = {1082--1087},
title = {{Mean square stability conditions for discrete stochastic bilinear systems}},
url = {http://ieeexplore.ieee.org/document/1103840/},
volume = {30},
year = {1985}
}

@incollection{Ben-Tal2000,
author = {Ben-Tal, Aharon and {El Ghaoui}, Laurent and Nemirovski, Arkadi},
booktitle = {Handbook of Semidefinite Programming},
ochapter = {6},
doi = {10.1007/978-1-4615-4381-7_6},
file = {:C\:/Users/pcoppens/Workspace/references/papers/Ben-Tal1998.pdf:pdf;:C\:/Users/pcoppens/Workspace/references/books/Wolkowicz et. al. (2000) - Handbook of Semidefinite Programming.pdf:pdf},
mendeley-groups = {Workspace/Modules/Multiplicative/Core},
opages = {139--162},
title = {{Robustness}},
url = {http://link.springer.com/10.1007/978-1-4615-4381-7_6},
year = {2000}
}

@article{Kukush2003,
author = {Kukush, A and Markovsky, I and Huffel, S. Van},
doi = {10.1007/s001840200217},
file = {:C\:/Users/pcoppens/Workspace/references/papers/Kukush2003.pdf:pdf},
issn = {0026-1335},
journal = {Metrika},
keywords = {adjusted least squares,asymptotic normality,bilinear multivariate measurement error,consistency,errors-in-,models,small sample modification,variables models},
mendeley-groups = {Workspace/Modules/State Samples},
omonth = {jul},
number = {3},
opages = {253--285},
title = {{Consistent estimation in the bilinear multivariate errors-in-variables model}},
url = {http://link.springer.com/10.1007/s001840200217},
volume = {57},
year = {2003}
}

@article{Eaton1973,
author = {Eaton, Morris L. and Perlman, Michael D.},
file = {:C\:/Users/pcoppens/Workspace/references/papers/Eaton1973.pdf:pdf},
journal = {The Annals of Statistics},
mendeley-groups = {Workspace/Modules/State Samples},
number = {4},
opages = {710--717},
title = {{The Non-Singularity of Generalized Sample Covariance Matrices}},
url = {https://www.jstor.org/stable/2958314},
volume = {1},
year = {1973}
}

@article{Choi1975,
abstract = {A linear map $\Phi$ from n to m is completely positive iff it admits an expression where Vi are n×m matrices.},
author = {Choi, Man-Duen},
file = {:C\:/Users/pcoppens/Workspace/references/papers/Choi1975.pdf:pdf},
isbn = {0024-3795},
issn = {00243795},
journal = {Linear Algebra and its Applications},
mendeley-groups = {Workspace/Literature Study/Completely Positive Maps},
number = {3},
pages = {285--290},
title = {{Positive Linear Maps on Complex Matrices}},
url = {http://www.sciencedirect.com/science/article/pii/0024379575900750#},
volume = {10},
year = {1975}
}

@article{Pascoe2019,
abstract = {We examine a special case of an approximation of the joint spectral radius given by Blondel and Nesterov, which we call the outer spectral radius. The outer spectral radius is given by the square root of the ordinary spectral radius of the $n^2$ by $n^2$ matrix $\sum{\overline{X_i}}\otimes{X_i}.$ We give an analogue of the spectral radius formula for the outer spectral radius which can be used to quickly obtain the error bounds in methods based on the work of Blondel and Nesterov. The outer spectral radius is used to analyze the iterates of a completely postive map, including the special case of quantum channels. The average of the iterates of a completely positive map approach to a completely positive map where the Kraus operators span an ideal in the algebra generated by the Kraus operators of the original completely positive map. We also give an elementary treatment of Popescu's theorems on similarity to row contractions in the matrix case, describe connections to the Parrilo-Jadbabaie relaxation, and give a detailed analysis of the maximal spectrum of a completely positive map.},
archivePrefix = {arXiv},
arxivId = {1905.09895},
author = {Pascoe, J. E.},
eprint = {1905.09895},
file = {:C\:/Users/pcoppens/Workspace/references/papers/Pascoe2019.pdf:pdf},
mendeley-groups = {Workspace/Literature Study/Completely Positive Maps},
opages = {1--22},
title = {{The outer spectral radius and dynamics of completely positive maps}},
url = {http://arxiv.org/abs/1905.09895},
year = {2019}
}

@book{Damm2004,
oaddress = {Berlin, Heidelberg},
author = {Damm, Tobias},
doi = {10.1007/b10906},
edition = {1},
file = {:C\:/Users/pcoppens/Workspace/references/books/Damm(2003) - Rational Matrix Equations in Stochastic Control.pdf:pdf},
isbn = {978-3-540-20516-6},
mendeley-groups = {Workspace/Modules/Multiplicative},
publisher = {Springer Berlin Heidelberg},
series = {Lecture Notes in Control and Information Sciences},
title = {{Rational Matrix Equations in Stochastic Control}},
url = {http://link.springer.com/10.1007/b10906},
year = {2004}
}

@InProceedings{Bhatia2002,
author={Bhatia, Rajendra and Elsner, Ludwig},
oeditor={Gohberg, I. and Langer, H.},
title={Positive Linear Maps and the Lyapunov Equation},
booktitle={Linear Operators and Matrices},
year={2002},
publisher={Birkh{\"a}user Basel},
oaddress={Basel},
opages={107--120},
abstract={It is well-known that positivity plays an important role in the study of the discrete time and the continuous time Lyapunov equations. We show how general theorems on positive linear maps on matrices may be used in this context. Our method leads to several old, recent, and new bounds on the sensitivity of these equations. Further, it can be applied to related problems and to other matrix equations as well.},
isbn={978-3-0348-8181-4}
}

@InProceedings{Damm2003,
author={Damm, Tobias},
oeditor={Benvenuti, Luca and De Santis, Alberto and Farina, Lorenzo},
title={Stability of Linear Systems and Positive Semigroups of Symmetric Matrices},
booktitle={Positive Systems},
year={2003},
publisher={Springer Berlin Heidelberg},
oaddress={Berlin, Heidelberg},
opages={207--214},
abstract={The role of Lyapunov operators in stability theory is well-known. In this paper we present an interesting characterization of Lyapunov operators. We show that an operator generates a positive group on the real space of real or complex Hermitian matrices, if and only if it is a Lyapunov operator.},
isbn={978-3-540-44928-7}
}

@article{Lindblad1976,
abstract = {The notion of a quantum dynamical semigroup is defined using the concept of a completely positive map. An explicit form of a bounded generator of such a semigroup on B(ℋ) is derived. This is a quantum analogue of the L{\'{e}}vy-Khinchin formula. As a result the general form of a large class of Markovian quantum-mechanical master equations is obtained. {\textcopyright} 1976 Springer-Verlag.},
author = {Lindblad, G.},
doi = {10.1007/BF01608499},
file = {:C\:/Users/pcoppens/Workspace/references/papers/Lindblad1976.pdf:pdf},
issn = {0010-3616},
journal = {Communications in Mathematical Physics},
ojournal = {Commun. Math. Phys.},
mendeley-groups = {Workspace/Literature Study/Completely Positive Maps},
omonth = {jun},
number = {2},
opages = {119--130},
title = {{On the generators of quantum dynamical semigroups}},
url = {http://link.springer.com/10.1007/BF01608499},
volume = {48},
year = {1976}
}

@book{Bertsekas2005V1,
author = {Bertsekas, Dimitri P.},
edition = {3rd},
file = {:C\:/Users/pcoppens/AppData/Local/Mendeley Ltd./Mendeley Desktop/Downloaded/Bertsekas - 2005 - Dynamic Programming and Optimal Control, Volume I.pdf:pdf},
keywords = {Dynamic Programming},
mendeley-groups = {Workspace/General},
mendeley-tags = {Dynamic Programming},
publisher = {Athena scientific},
title = {{Dynamic Programming and Optimal Control -- Vol I}},
year = {2005}
}

@book{Bertsekas2005V2,
author = {Bertsekas, Dimitri P.},
edition = {3rd},
file = {:C\:/Users/pcoppens/AppData/Local/Mendeley Ltd./Mendeley Desktop/Downloaded/Bertsekas - 2005 - Dynamic Programming and Optimal Control, Volume II.pdf:pdf},
keywords = {Dynamic Programming},
mendeley-groups = {Workspace/General},
mendeley-tags = {Dynamic Programming},
publisher = {Athena scientific},
title = {{Dynamic Programming and Optimal Control -- Vol II}},
year = {2005}
}

@article{Balakrishnan2003,
abstract = {Several important problems in control theory can be reformulated as semidefinite programming problems, i.e., minimization of a linear objective subject to linear matrix inequality (LMI) constraints. From convex optimization duality theory, conditions for infeasibility of the LMIs, as well as dual optimization problems, can be formulated. These can in turn be reinterpreted in control or system theoretic terms, often yielding new results or new proofs for existing results from control theory. We explore such connections for a few problems associated with linear time-in-variant systems.},
author = {Balakrishnan, Venkataramanan and Vandenberghe, Lieven},
doi = {10.1109/TAC.2002.806652},
file = {:C\:/Users/pcoppens/Workspace/references/papers/Balakrishnan2003.pdf:pdf},
issn = {00189286},
journal = {IEEE Transactions on Automatic Control},
ojournal = {IEEE Trans. Autom. Control},
keywords = {Convex duality,H2,Linear matrix inequality (LMI),Linear time-invariant (LTI) systems,Semidefinite Program,Semidefinite programming},
mendeley-groups = {Workspace/Modules/Multiplicative/Core},
mendeley-tags = {H2,Semidefinite Program},
omonth = {jan},
number = {1},
opages = {30--41},
title = {{Semidefinite programming duality and linear time-invariant systems}},
volume = {48},
year = {2003}
}

@article{Chen2020a,
abstract = {This paper presents a generalization of the spectral norm and the nuclear norm of a tensor via arbitrary tensor partitions, a much richer concept than block tensors. We show that the spectral p -norm and the nuclear p -norm of a tensor can be lower and upper bounded by manipulating the spectral p -norms and the nuclear p -norms of subtensors in an arbitrary partition of the tensor for $$1\le p\le \infty$$ 1 ≤ p ≤ ∞ . Hence, it generalizes and answers affirmatively the conjecture proposed by Li (SIAM J Matrix Anal Appl 37:1440–1452, 2016) for a tensor partition and $$p=2$$ p = 2 . We study the relations of the norms of a tensor, the norms of matrix unfoldings of the tensor, and the bounds via the norms of matrix slices of the tensor. Various bounds of the tensor spectral and nuclear norms in the literature are implied by our results.},
author = {Chen, Bilian and Li, Zhening},
doi = {10.1007/s10589-020-00177-z},
file = {:C\:/Users/pcoppens/Workspace/references/papers/Chen2020b.pdf:pdf},
isbn = {1058902000177},
issn = {0926-6003},
journal = {Computational Optimization and Applications},
ojournal = {Comput Optim Appl},
keywords = {Block tensor,Nuclear norm,Spectral norm,Tensor norm bound,Tensor partition},
mendeley-groups = {Workspace/Modules/State Samples},
omonth = {apr},
number = {3},
opages = {609--628},
publisher = {Springer US},
title = {{On the tensor spectral p-norm and its dual norm via partitions}},
url = {https://doi.org/10.1007/s10589-020-00177-z http://link.springer.com/10.1007/s10589-020-00177-z},
volume = {75},
year = {2020}
}

@article{Lee2014,
  abstract = {We discuss extended definitions of linear and multilinear operations such as Kronecker, Hadamard, and contracted products, and establish links between them for tensor calculus. Then we introduce effective low-rank tensor approximation techniques including Candecomp/Parafac (CP), Tucker, and tensor train (TT) decompositions with a number of mathematical and graphical representations. We also provide a brief review of mathematical properties of the TT decomposition as a low-rank approximation technique. With the aim of breaking the curse-of-dimensionality in large-scale numerical analysis, we describe basic operations on large-scale vectors, matrices, and high-order tensors represented by TT decomposition. The proposed representations can be used for describing numerical methods based on TT decomposition for solving large-scale optimization problems such as systems of linear equations and symmetric eigenvalue problems.},
  archivePrefix = {arXiv},
  arxivId = {1405.7786},
  author = {Lee, Namgil and Cichocki, Andrzej},
  eprint = {1405.7786},
  file = {:C\:/Users/pcoppens/Workspace/references/papers/Lee2016b.pdf:pdf},
  keywords = {contracted product,generalized tucker model,matrix product operator,matrix product state,multilinear operator,numerical analysis,strong kronecker product,tensor calculus,tensor networks,tensor train},
  mendeley-groups = {Workspace/Modules/State Samples},
  opages = {1--36},
  title = {{Fundamental Tensor Operations for Large-Scale Data Analysis in Tensor Train Formats}},
  url = {http://arxiv.org/abs/1405.7786},
  year = {2014}
}

@article{Xing2021,
abstract = {We study identification of linear systems with multiplicative noise from multiple trajectory data. A least-squares algorithm, based on exploratory inputs, is proposed to simultaneously estimate the parameters of the nominal system and the covariance matrix of the multiplicative noise. The algorithm does not need prior knowledge of the noise or stability of the system, but requires mild conditions of inputs and relatively small length for each trajectory. Identifiability of the noise covariance matrix is studied, showing that there exists an equivalent class of matrices that generate the same second-moment dynamic of system states. It is demonstrated how to obtain the equivalent class based on estimates of the noise covariance. Asymptotic consistency of the algorithm is verified under sufficiently exciting inputs and system controllability conditions. Non-asymptotic estimation performance is also analyzed under the assumption that system states and noise are bounded, providing vanishing high-probability bounds as the number of trajectories grows to infinity. The results are illustrated by numerical simulations.},
archivePrefix = {arXiv},
arxivId = {2106.16078},
author = {Xing, Yu and Gravell, Benjamin and He, Xingkang and Johansson, Karl Henrik and Summers, Tyler},
eprint = {2106.16078},
file = {:C\:/Users/pcoppens/Workspace/references/papers/Xing2021.pdf:pdf},
keywords = {linear system identification,multiple trajectories,multiplicative noise,non-asymptotic results},
mendeley-groups = {Workspace/Modules/State Samples},
omonth = {jun},
opages = {1--50},
title = {{Identification of Linear Systems with Multiplicative Noise from Multiple Trajectory Data}},
url = {http://arxiv.org/abs/2106.16078},
year = {2021}
}

@article{Recht2018,
  abstract        = {This article surveys reinforcement learning from the perspective of optimization and control, with a focus on continuous control applications. It reviews the general formulation, terminology, and typical experimental implementations of reinforcement learning as well as competing solution paradigms. In order to compare the relative merits of various techniques, it presents a case study of the linear quadratic regulator (LQR) with unknown dynamics, perhaps the simplest and best-studied problem in optimal control. It also describes how merging techniques from learning theory and control can provide nonasymptotic characterizations of LQR performance and shows that these characterizations tend to match experimental behavior. In turn, when revisiting more complex applications, many of the observed phenomena in LQR persist. In particular, theory and experiment demonstrate the role and importance of models and the cost of generality in reinforcement learning algorithms. The article concludes with a discussion of some of the challenges in designing learning systems that safely and reliably interact with complex and uncertain environments and how tools from reinforcement learning and control might be combined to approach these challenges.},
  oarchiveprefix   = {arXiv},
  oarxivid         = {1806.09460},
  author          = {Recht, Benjamin},
  doi             = {10.1146/annurev-control-053018-023825},
  oeprint          = {1806.09460},
  file            = {:C\:/Users/pcoppens/Workspace/references/papers/Recht2019.pdf:pdf},
  issn            = {2573-5144},
  journal         = {Annual Review of Control, Robotics, and Autonomous Systems},
  ojournal         = {Annu. rev. control robot. auton. syst.},
  keywords        = {control theory,machine learning,optimization,reinforcement learning},
  mendeley-groups = {Workspace/Literature Study/Reinforcement,Workspace/Modules/State Samples/History},
  omonth           = {may},
  number          = {1},
  opages           = {253--279},
  title           = {{A Tour of Reinforcement Learning: The View from Continuous Control}},
  url             = {https://www.annualreviews.org/doi/10.1146/annurev-control-053018-023825},
  volume          = {2},
  year            = {2019}
}

@article{Hewing2020d,
  abstract        = {Recent successes in the field of machine learning, as well as the availability of increased sensing and computational capabilities in modern control systems, have led to a growing interest in learning and data-driven control techniques. Model predictive control (MPC), as the prime methodology for constrained control, offers a significant opportunity to exploit the abundance of data in a reliable manner, particularly while taking safety constraints into account. This review aims at summarizing and categorizing previous research on learning-based MPC, i.e., the integration or combination of MPC with learning methods, for which we consider three main categories. Most of the research addresses learning for automatic improvement of the prediction model from recorded data. There is, however, also an increasing interest in techniques to infer the parameterization of the MPC controller, i.e., the cost and constraints, that lead to the best closed-loop performance. Finally, we discuss concepts that leverage MPC to augment learning-based controllers with constraint satisfaction properties.},
  author          = {Hewing, Lukas and Wabersich, Kim P. and Menner, Marcel and Zeilinger, Melanie N.},
  doi             = {10.1146/annurev-control-090419-075625},
  file            = {:C\:/Users/pcoppens/Workspace/references/papers/Zeilinger2019.pdf:pdf},
  issn            = {2573-5144},
  journal         = {Annual Review of Control, Robotics, and Autonomous Systems},
  ojournal         = {Annu. rev. control robot. auton. syst.},
  keywords        = {Constrained,Gaussian Process,adaptive,autonomous systems,control,learning-based control,model predictive control,safe learning},
  mendeley-groups = {Workspace/Literature Study/Gaussian Processes,Workspace/Modules/State Samples/History},
  mendeley-tags   = {Constrained,Gaussian Process},
  number          = {1},
  opages           = {269--296},
  title           = {{Learning-Based Model Predictive Control: Toward Safe Learning in Control}},
  volume          = {3},
  year            = {2020}
}

@article{Fazel2018,
  abstract        = {Direct policy gradient methods for reinforcement learning and continuous control problems are a popular approach for a variety of reasons: 1) they are easy to implement without explicit knowledge of the underlying model 2) they are an "end-to-end" approach, directly optimizing the performance metric of interest 3) they inherently allow for richly parameterized policies. A notable drawback is that even in the most basic continuous control problem (that of linear quadratic regulators), these methods must solve a non-convex optimization problem, where little is understood about their efficiency from both computational and statistical perspectives. In contrast, system identification and model based planning in optimal control theory have a much more solid theoretical footing, where much is known with regards to their computational and statistical properties. This work bridges this gap showing that (model free) policy gradient methods globally converge to the optimal solution and are efficient (polynomially so in relevant problem dependent quantities) with regards to their sample and computational complexities.},
  oarchiveprefix   = {arXiv},
  oarxivid         = {1801.05039},
  author          = {Fazel, Maryam and Ge, Rong and Kakade, Sham M. and Mesbahi, Mehran},
  oeprint          = {1801.05039},
  file            = {:C\:/Users/pcoppens/Workspace/references/papers/Fazel2019.pdf:pdf},
  isbn            = {9781510867963},
  journal         = {35th International Conference on Machine Learning, ICML},
  ojournal        = {35th ICML},
  keywords        = {LQR,Policy Gradient},
  mendeley-groups = {Workspace/Unprocessed,Workspace/Modules/Sample Complexity/Alternatives},
  mendeley-tags   = {LQR,Policy Gradient},
  omonth           = {jan},
  opages           = {2385--2413},
  title           = {{Global Convergence of Policy Gradient Methods for the Linear Quadratic Regulator}},
  url             = {http://arxiv.org/abs/1801.05039},
  volume          = {4},
  year            = {2018}
}

@inproceedings{Bradtke1994,
  abstract        = {In this paper we present stability and convergence results for Dynamic Programming-based reinforcement learning applied to Linear Quadratic Regulation (LQR). The specific algorithm we analyze is based on Q-learning and it is proven to converge to the optimal controller provided that the underlying system is controllable and a particular signal vector is persistently excited. This is the first convergence result for DP-based reinforcement learning algorithms for a continuous problem.},
  author          = {Bradtke, S.J. and Ydstie, B.E. and Barto, A.G.},
  booktitle={Proceedings of 1994 American Control Conference, ACC}, 
  doi             = {10.1109/ACC.1994.735224},
  file            = {:C\:/Users/pcoppens/Workspace/references/papers/Bradtke1994.pdf:pdf},
  isbn            = {0-7803-1783-1},
  issn            = {07431619},
  mendeley-groups = {Workspace/Literature Study/Reinforcement},
  number          = {2},
  opages           = {3475--3479},
  publisher       = {IEEE},
  title           = {{Adaptive linear quadratic control using policy iteration}},
  url             = {http://ieeexplore.ieee.org/document/735224/},
  volume          = {3},
  year            = {1994}
}

@article{Lewis2009,
  abstract        = {Living organisms learn by acting on their environment, observing the resulting reward stimulus, and adjusting their actions accordingly to improve the reward. This action-based or Reinforcement Learning can capture notions of optimal behavior occurring in natural systems. We describe mathematical formulations for Reinforcement Learning and a practical implementation method known as Adaptive Dynamic Programming. These give us insight into the design of controllers for man-made engineered systems that both learn and exhibit optimal behavior. Relations are show between ADP and adaptive control. {\textcopyright}2009 ACA.},
  author          = {Lewis, Frank L. and Vrabie, Draguna},
  doi             = {10.1109/MCAS.2009.933854},
  file            = {:C\:/Users/pcoppens/Workspace/references/papers/Lewis2009.pdf:pdf},
  isbn            = {9788995605691},
  issn            = {1531-636X},
  journal         = {IEEE Circuits and Systems Magazine},
  mendeley-groups = {Workspace/Literature Study/Reinforcement},
  number          = {3},
  opages           = {32--50},
  title           = {{Reinforcement learning and adaptive dynamic programming for feedback control}},
  url             = {https://ieeexplore.ieee.org/document/5227780/},
  volume          = {9},
  year            = {2009}
}

@article{Abeille2020,
  abstract        = {We study the exploration-exploitation dilemma in the linear quadratic regulator (LQR) setting. Inspired by the extended value iteration algorithm used in optimistic algorithms for finite MDPs, we propose to relax the optimistic optimization of OFU-LQ and cast it into a constrained extended LQR problem, where an additional control variable implicitly selects the system dynamics within a confidence interval. We then move to the corresponding Lagrangian formulation for which we prove strong duality. As a result, we show that an-optimistic controller can be computed efficiently by solving at most O log(1=) Riccati equations. Finally, we prove that relaxing the original OFU problem does not impact the learning performance, thus recovering the eO (pT) regret of OFU-LQ. To the best of our knowledge, this is the first computationally efficient confidencebased algorithm for LQR with worst-case optimal regret guarantees.},
  oarchiveprefix   = {arXiv},
  oarxivid         = {2007.06482},
  author          = {Abeille, Marc and Lazaric, Alessandro},
  oeprint          = {2007.06482},
  file            = {:C\:/Users/pcoppens/Workspace/references/papers/Abeille2020.pdf:pdf},
  isbn            = {9781713821120},
  journal         = {37th International Conference on Machine Learning, ICML},
  ojournal         = {ICML},
  mendeley-groups = {Workspace/Literature Study/Linear Adaptive Control,Workspace/Modules/State Samples/History},
  opages           = {11--19},
  title           = {{Efficient optimistic exploration in linear-quadratic regulators via lagrangian relaxation}},
  pvolume          = {PartF16814},
  year            = {2020}
}

@article{Wonham1967,
  author          = {Wonham, W. M.},
  doi             = {10.1137/0305028},
  file            = {:C\:/Users/pcoppens/AppData/Local/Mendeley Ltd./Mendeley Desktop/Downloaded/Wonham - 1967 - Optimal Stationary Control of a Linear System with State-Dependent Noise.pdf:pdf},
  issn            = {0036-1402},
  journal         = {SIAM Journal on Control},
  keywords        = {Multiplicative,Riccati},
  mendeley-groups = {Workspace/Modules/Multiplicative/Historical,Workspace/Modules/State Samples/History},
  mendeley-tags   = {Multiplicative,Riccati},
  omonth           = {aug},
  number          = {3},
  opages           = {486--500},
  publisher       = {Society for Industrial & Applied Mathematics (SIAM)},
  title           = {{Optimal Stationary Control of a Linear System with State-Dependent Noise}},
  volume          = {5},
  year            = {1967}
}

@article{Gravell2019,
  abstract        = {The linear quadratic regulator (LQR) problem has reemerged as an important theoretical benchmark for reinforcement learning-based control of complex dynamical systems with continuous state and action spaces. In contrast with nearly all recent work in this area, we consider multiplicative noise models, which are increasingly relevant because they explicitly incorporate inherent uncertainty and variation in the system dynamics and thereby improve robustness properties of the controller. Robustness is a critical and poorly understood issue in reinforcement learning; existing methods which do not account for uncertainty can converge to fragile policies or fail to converge at all. Additionally, intentional injection of multiplicative noise into learning algorithms can enhance robustness of policies, as observed in ad hoc work on domain randomization. Although policy gradient algorithms require optimization of a non-convex cost function, we show that the multiplicative noise LQR cost has a special property called gradient domination, which is exploited to prove global convergence of policy gradient algorithms to the globally optimum control policy with polynomial dependence on problem parameters. Results are provided both in the model-known and model-unknown settings where samples of system trajectories are used to estimate policy gradients.},
  archiveprefix   = {arXiv},
  arxivid         = {1905.13547},
  author          = {Gravell, Benjamin and Esfahani, Peyman Mohajerin and Summers, Tyler},
  eprint          = {1905.13547},
  file            = {:C\:/Users/pcoppens/AppData/Local/Mendeley Ltd./Mendeley Desktop/Downloaded/Gravell, Esfahani, Summers - 2019 - Learning robust control for LQR systems with multiplicative noise via policy gradient.pdf:pdf},
  mendeley-groups = {Workspace/Modules/Multiplicative/Alternatives,Workspace/Modules/State Samples/History},
  omonth           = {may},
  title           = {{Learning robust control for LQR systems with multiplicative noise via policy gradient}},
  url             = {http://arxiv.org/abs/1905.13547},
  year            = {2019}
}

@article{Wang2018,
  abstract        = {Solving the stochastic linear quadratic (SLQ) optimal control problem generally needs full information about system dynamics. In this paper, a Q-learning iteration algorithm is adopted to solve the control problem for model-free discrete-time systems. Firstly, the condition of the well-posedness for the SLQ problem is given. In order to solve the SLQ problem, the stochastic problem is transformed into the deterministic one. Secondly, in the iteration process of Q-learning algorithm, the H matrix sequence and control gain matrix sequence are obtained without the knowledge of system parameters, and the convergence proof of two sequences is also given. Lastly, two simulation examples are supplied to explain the effectiveness of the Q-learning algorithm.},
  author          = {Wang, Tao and Zhang, Huaguang and Luo, Yanhong},
  doi             = {10.1016/j.neucom.2018.04.018},
  file            = {:C\:/Users/pcoppens/Workspace/references/papers/Wang2018.pdf:pdf},
  issn            = {09252312},
  journal         = {Neurocomputing},
  keywords        = {H matrix,Q-learning,Stochastic algebra equation,Stochastic linear quadratic optimal control,Well-posedness},
  mendeley-groups = {Workspace/Modules/State Samples/History},
  omonth           = {oct},
  opages           = {1--8},
  publisher       = {Elsevier B.V.},
  title           = {{Stochastic linear quadratic optimal control for model-free discrete-time systems based on Q-learning algorithm}},
  url             = {https://linkinghub.elsevier.com/retrieve/pii/S0925231218304387},
  volume          = {312},
  year            = {2018}
}

@article{Pang2021,
  abstract        = {This paper studies the adaptive optimal stationary control of continuous-time linear stochastic systems with both additive and multiplicative noises, using reinforcement learning techniques. Based on policy iteration, a novel off-policy reinforcement learning algorithm, named optimistic least-squares-based policy iteration, is proposed which is able to iteratively find near-optimal policies of the adaptive optimal stationary control problem directly from input/state data without explicitly identifying any system matrices, starting from an initial admissible control policy. The solutions given by the proposed optimistic least-squares-based policy iteration are proved to converge to a small neighborhood of the optimal solution with probability one, under mild conditions. The application of the proposed algorithm to a triple inverted pendulum example validates its feasibility and effectiveness.},
  archiveprefix   = {arXiv},
  arxivid         = {2107.07788},
  author          = {Pang, Bo and Jiang, Zhong-Ping},
  eprint          = {2107.07788},
  file            = {:C\:/Users/pcoppens/Workspace/references/papers/Pang2021.pdf:pdf},
  mendeley-groups = {Workspace/Modules/State Samples,Workspace/Modules/State Samples/History},
  omonth           = {jul},
  opages           = {1--9},
  title           = {{Reinforcement Learning for Adaptive Optimal Stationary Control of Linear Stochastic Systems}},
  url             = {http://arxiv.org/abs/2107.07788},
  year            = {2021}
}

@inproceedings{Di2021,
  author    = {Di, Bolei and Lamperski, Andrew},
  booktitle={2021 American Control Conference (ACC)}, 
  title     = {Confidence Bounds on Identification of Linear Systems with Multiplicative Noise},
  year      = {2021},
  volume    = {},
  number    = {},
  opages     = {2212-2217},
  doi       = {10.23919/ACC50511.2021.9482968}
}

@article{Todorov2002,
  author          = {Todorov, Emanuel and Jordan, Michael},
  doi             = {10.1038/nn963},
  file            = {:C\:/Users/pcoppens/OneDrive/Documents/tac2021/apps/Todorov2002.pdf:pdf},
  issn            = {1097-6256},
  journal         = {Nature Neuroscience},
  ijournal         = {Nat. Neurosci},
  mendeley-groups = {Workspace/Modules/State Samples/Applications},
  omonth           = {nov},
  number          = {11},
  opages           = {1226--1235},
  title           = {{Optimal feedback control as a theory of motor coordination}},
  url             = {http://www.nature.com/articles/nn963},
  volume          = {5},
  year            = {2002}
}

@article{Todorov2005,
  abstract        = {Optimality principles of biological movement are conceptually appealing and straightforward to formulate. Testing them empirically, however, requires the solution to stochastic optimal control and estimation problems for reasonably realistic models of the motor task and the sensorimotor periphery. Recent studies have highlighted the importance of incorporating biologically plausible noise into such models. Here we extend the linear-quadratic-gaussian framework—currently the only framework where such problems can be solved efficiently—to include control-dependent, state-dependent, and internal noise. Under this extended noise model, we derive a coordinate-descent algorithm guaranteed to converge to a feedback control law and a nonadaptive linear estimator optimal with respect to each other. Numerical simulations indicate that convergence is exponential, local minima do not exist, and the restriction to nonadaptive linear estimators has negligible effects in the control problems of interest. The application of the algorithm is illustrated in the context of reaching movements. A Matlab implementation is available at www.cogsci.ucsd.edu/∼todorov .},
  author          = {Todorov, Emanuel},
  doi             = {10.1162/0899766053491887},
  file            = {:C\:/Users/pcoppens/Workspace/references/papers/Todorov2005.pdf:pdf},
  issn            = {0899-7667},
  journal         = {Neural Computation},
  keywords        = {Multiplicative},
  mendeley-groups = {Workspace,Workspace/Literature Study/Applications,Workspace/Modules/State Samples/Applications},
  mendeley-tags   = {Multiplicative},
  omonth           = {may},
  number          = {5},
  opages           = {1084--1108},
  title           = {{Stochastic Optimal Control and Estimation Methods Adapted to the Noise Characteristics of the Sensorimotor System}},
  url             = {https://direct.mit.edu/neco/article/17/5/1084-1108/6949},
  volume          = {17},
  year            = {2005}
}

@article{Russo2018,
  abstract        = {This paper is concerned with the study of common noise-induced synchronization phenomena in complex networks of diffusively coupled nonlinear systems. We consider the case where common noise propagation depends on the network state and, as a result, the noise diffusion process at the nodes depends on the state of the network. For such networks, we present an algebraic sufficient condition for the onset of synchronization, which depends on the network topology, the dynamics at the nodes, the coupling strength and the noise diffusion. Our result explicitly shows that certain noise diffusion processes can drive an unsynchronized network towards synchronization. In order to illustrate the effectiveness of our result, we consider two applications: collective decision processes and synchronization of chaotic systems. We explicitly show that, in the former application, a sufficiently large noise can drive a population towards a common decision, while, in the latter, we show how common noise can synchronize a network of Lorentz chaotic systems.},
  author          = {Russo, Giovanni and Shorten, Robert},
  doi             = {10.1016/j.physd.2018.01.003},
  file            = {:C\:/Users/pcoppens/OneDrive/Documents/tac2021/apps/Russo2018.pdf:pdf},
  issn            = {01672789},
  journal         = {Physica D: Nonlinear Phenomena},
  keywords        = {Complex networks,Noise-induced phenomena,Synchronization},
  mendeley-groups = {Workspace/Modules/State Samples/Applications},
  opages           = {47--54},
  publisher       = {Elsevier B.V.},
  title           = {{On common noise-induced synchronization in complex networks with state-dependent noise diffusion processes}},
  url             = {https://doi.org/10.1016/j.physd.2018.01.003},
  volume          = {369},
  year            = {2018}
}

@article{Mohler1980a,
  abstract        = {Various scattered results have been published on stochastic bilinear systems of very specific structures which are amenable to the methods used. An attempt is made here to integrate these results into a systematic survey and thereby provide a foundation for continued development of a mathematically interesting and a physically relevant area of research. Optimal control, stability, and filtering results are presented. A new result is presented which includes state estimation and optimal control for a class of coupled bilinear systems.},
  author          = {Mohler, R. R. and Kolodziej, W. J.},
  doi             = {10.1109/TSMC.1980.4308421},
  file            = {:C\:/Users/pcoppens/OneDrive/Documents/tac2021/apps/Mohler1980.pdf:pdf},
  issn            = {0018-9472},
  journal         = {IEEE Transactions on Systems, Man, and Cybernetics},
  mendeley-groups = {Workspace/Modules/State Samples/Applications},
  number          = {12},
  opages           = {913--918},
  title           = {{An Overview of Stochastic Bilinear Control Processes}},
  url             = {http://ieeexplore.ieee.org/document/4308421/},
  volume          = {10},
  year            = {1980}
}

@article{Mohler1980b,
  abstract        = {A tutorial overview of system analysis in the rapidly changing field of immunology is presented here. Many concepts which are well entrenched in engineering practice are shown to be relevant to immunology. For example, state-space analysis and a network topology arise naturally for the generation of lymphocytes (a class of white-blood cells) and in turn antibodies according to the clonal selection theory. Engineers embroiled in large-scale network problems may well appreciate immunological complexes whereby the humoral subsystem alone includes some 1012 lymphocytes and 1020 antibody molecules of 107 different chemical affinities in the immunologically unstimulated human body. Unfortunately, linear system theory, so significant in electrical networks, is shown to have limited application to immunology. On the other hand, coupled bilinear control systems and rate-limited processes arise due to the nature of cell division, differentiation, and chemical binding. Similarly, a chemical cascade of nonlinear amplifiers is generated as the complement subsystem for such purposes as tumor cell lysis. This controlled use of instability by means of parametric manipulation, which is so effective in disease control, may very well provide a clue for new design methodologies in engineering practice. Copyright {\textcopyright} 1980 by The Institute of Electrical and Electronics Engineers, Inc.},
  author          = {Mohler, R.R. and Bruni, Carlo and Gandolfi, Alberto},
  doi             = {10.1109/PROC.1980.11775},
  file            = {:C\:/Users/pcoppens/OneDrive/Documents/tac2021/apps/Mohler1980b.pdf:pdf},
  issn            = {0018-9219},
  journal         = {Proceedings of the IEEE},
  mendeley-groups = {Workspace/Modules/State Samples/Applications},
  number          = {8},
  opages           = {964--990},
  title           = {{A systems approach to immunology}},
  url             = {http://ieeexplore.ieee.org/document/1456044/},
  volume          = {68},
  year            = {1980}
}

@article{Afshari2020,
  abstract        = {In this study, a resilient distributed control algorithm is proposed for the secondary voltage and frequency restoration of an autonomous inverter-based microgrid considering simultaneous relative state-dependent noises and communication time-delays (CTDs). The proposed algorithm is robust against potential time-varying stochastic noises and CTDs, which corrupt the data exchanges in the secondary control layer. Additionally, this study considers the contribution of both distributed generation and distributed energy storage (DES) units in islanded AC microgrids. The presence of DES creates the need for an extra control algorithm to provide state-of-charge (SoC) balancing for these units and having precise active power-sharing. The theoretical concepts of the proposed control algorithm, including the mathematical modelling of microgrid, basic lemma, and controller design procedure, are outlined. The performance assessment of the presented control algorithm is evaluated through simulations on a test microgrid in MATLAB/Simulink software. Then, some previously-reported algorithms are selected to compare the proposed control algorithm with them. The obtained results show the effectiveness and robustness of the presented algorithm in regulating the voltage and frequency, matching SoCs, and as a result, having precise active power-sharing.},
  author          = {Afshari, Amir and Karrari, Mehdi and Baghaee, Hamid Reza and Gharehpetian, Gevork B.},
  doi             = {10.1049/iet-rpg.2019.1180},
  file            = {:C\:/Users/pcoppens/OneDrive/Documents/tac2021/apps/Afshari2019.pdf:pdf},
  issn            = {1752-1416},
  journal         = {IET Renewable Power Generation},
  ojournal         = {IET Renew. Power Gener.},
  mendeley-groups = {Workspace/Modules/State Samples/Applications},
  omonth           = {jun},
  number          = {8},
  opages           = {1321--1331},
  title           = {{Resilient cooperative control of AC microgrids considering relative state‐dependent noises and communication time‐delays}},
  url             = {https://onlinelibrary.wiley.com/doi/10.1049/iet-rpg.2019.1180},
  volume          = {14},
  year            = {2020}
}

@article{Wang2002,
  abstract        = {We present a robust recursive Kalman filtering algorithm that addresses estimation problems that arise in linear time-varying systems with stochastic parametric uncertainties. The filter has a one-step predictor-corrector structure and minimizes an upper bound of the mean square estimation error at each step, with the minimization reduced to a convex optimization problem based on linear matrix inequalities, The algorithm is shown to converge when the system is mean square stable and the state space matrices are time invariant. A numerical example consisting of equalizer design for a communication channel demonstrates that our algorithm offers considerable improvement in performance when compared with conventional Kalman filtering techniques.},
  author          = {Wang, Fan and Balakrishnan, Venkataramanan},
  doi             = {10.1109/78.992124},
  file            = {:C\:/Users/pcoppens/OneDrive/Documents/tac2021/apps/Wang2002.pdf:pdf},
  issn            = {1053587X},
  journal         = {IEEE Transactions on Signal Processing},
  ojournal         = {IEEE Trans. Signal Process.},
  keywords        = {Linear matrix inequality,Linear time-varying systems,Robust Kalman filters,Stochastic parametric uncertainty},
  mendeley-groups = {Workspace/Modules/State Samples/Applications},
  omonth           = {apr},
  number          = {4},
  opages           = {803--813},
  title           = {{Robust Kalman filters for linear time-varying systems with stochastic parametric uncertainties}},
  url             = {http://ieeexplore.ieee.org/document/992124/},
  volume          = {50},
  year            = {2002}
}

@article{Sura2005,
  abstract        = {Atmospheric circulation statistics are not strictly Gaussian. Small bumps and other deviations from Gaussian probability distributions are often interpreted as implying the existence of distinct and persistent nonlinear circulation regimes associated with higher-than-average levels of predictability. In this paper it is shown that such deviations from Gaussianity can, however, also result from linear stochastically perturbed dynamics with multiplicative noise statistics. Such systems can be associated with much lower levels of predictability. Multiplicative noise is often identified with state-dependent variations of stochastic feedbacks from unresolved system components, and may be treated as stochastic perturbations of system parameters. It is shown that including such perturbations in the damping of large-scale linear Rossby waves can lead to deviations from Gaussianity very similar to those observed in the joint probability distribution of the first two principal components (PCs) of weekly averaged 750-hPa streamfunction data for the past 52 winters. A closer examination of the Fokker Planck probability budget in the plane spanned by these two PCs shows that the observed deviations from Gaussianity can be modeled with multiplicative noise, and are unlikely the results of slow nonlinear interactions of the two PCs. It is concluded that the observed non-Gaussian probability distributions do not necessarily imply the existence of persistent nonlinear circulation regimes, and are consistent with those expected for a linear system perturbed by multiplicative noise. {\textcopyright} 2005 American Meteorological Society.},
  author          = {Sura, Philip and Newman, Matthew and Penland, C{\'{e}}cile and Sardeshmukh, Prashant},
  doi             = {10.1175/JAS3408.1},
  file            = {:C\:/Users/pcoppens/OneDrive/Documents/tac2021/apps/Sura2005.pdf:pdf},
  issn            = {00224928},
  journal         = {Journal of the Atmospheric Sciences},
  ojournal         = {J Atmos Sci},
  mendeley-groups = {Workspace/Modules/State Samples/Applications},
  number          = {5},
  opages           = {1391--1409},
  title           = {{Multiplicative noise and non-Gaussianity: A paradigm for atmospheric regimes?}},
  volume          = {62},
  year            = {2005}
}

@article{Doyle1978,
  abstract        = {There are none. Copyright {\textcopyright} 1978 by The Institute of Electrical and Electronics Engineers, Inc.},
  author          = {Doyle, J.},
  doi             = {10.1109/TAC.1978.1101812},
  file            = {:C\:/Users/pcoppens/Workspace/references/papers/Doyle1978.pdf:pdf},
  issn            = {0018-9286},
  journal        = {IEEE Transactions on Automatic Control},
  ojournal         = {IEEE Trans. Autom. Control},
  keywords        = {Hinf,LQR,Stability Margin},
  mendeley-groups = {Workspace/Modules/Sample Complexity/Historical,Workspace/Modules/State Samples/History},
  mendeley-tags   = {Hinf,LQR,Stability Margin},
  omonth           = {aug},
  number          = {4},
  opages           = {756--757},
  title           = {{Guaranteed margins for LQG regulators}},
  url             = {http://ieeexplore.ieee.org/document/1101812/},
  volume          = {23},
  year            = {1978}
}

@inproceedings{Doyle1996,
  author          = {Doyle, J.},
  booktitle      = {Proceedings of 35th IEEE Conference on Decision and Control},
  obooktitle       = {CDC},
  doi             = {10.1109/CDC.1996.572756},
  file            = {:C\:/Users/pcoppens/OneDrive/Documents/tac2021/lit/Doyle1996.pdf:pdf},
  isbn            = {0-7803-3590-2},
  mendeley-groups = {Workspace/Modules/State Samples/History},
  opages           = {1595--1598},
  publisher       = {IEEE},
  title           = {{Robust and optimal control}},
  url             = {http://ieeexplore.ieee.org/document/572756/},
  volume          = {2},
  year            = {1996}
}

@article{Rami2002d,
  abstract        = {This paper deals with the discrete-time stochastic LQ problem involving state and control dependent noises, whereas the weighting matrices in the cost function are allowed to be indefinite. In this general setting, it is shown that the well-posedness and the attainability of the LQ problem are equivalent. Moreover, a generalized difference Riccati equation is introduced and it is proved that its solvability is necessary and sufficient for the existence of an optimal control which can be either of state feedback or open-loop form. Furthermore, the set of all optimal controls is identified in terms of the solution to the proposed difference Riccati equation.},
  author          = {Rami, M. Ait and Chen, X. and Zhou, X.Y.},
  doi             = {10.1023/A:1016578629272},
  file            = {:C\:/Users/pcoppens/AppData/Local/Mendeley Ltd./Mendeley Desktop/Downloaded/Rami, Chen, Zhou - 2002 - Discrete-time Indefinite LQ Control with State and Control Dependent Noises.pdf:pdf},
  issn            = {09255001},
  journal         = {Journal of Global Optimization},
  ojournal         = {J Glob Optim},
  keywords        = {Computer Science,Operations Research/Decision Theory,Optimization,Real Functions,general},
  mendeley-groups = {Workspace/Modules/Multiplicative/Alternatives,Workspace/Modules/State Samples/History},
  number          = {3/4},
  opages           = {245--265},
  publisher       = {Springer},
  title           = {{Discrete-time Indefinite LQ Control with State and Control Dependent Noises}},
  url             = {http://link.springer.com/10.1023/A:1016578629272},
  volume          = {23},
  year            = {2002}
}

@article{ElGhaoui1995,
  abstract        = {We consider LTI systems perturbed by parametric uncertainties, modeled as white noise disturbances. We show how to maximize, via state-feedback control, the smallest norm of the noise intensity vector producing instability in the mean square sense, using convex optimization over linear matrix inequalities. We also show how to maximize performance robustness, where performance is measured by expected output energy, with either bounded initial conditions and zero inputs (classical LQG cost), or zero initial conditions and deterministic inputs of bounded energy (a generalization of the H ∞ norm). {\textcopyright} 1995.},
  author          = {{El Ghaoui}, Laurent},
  doi             = {10.1016/0167-6911(94)00045-W},
  file            = {:C\:/Users/pcoppens/AppData/Local/Mendeley Ltd./Mendeley Desktop/Downloaded/El Ghaoui - 1995 - State-feedback control of systems with multiplicative noise via linear matrix inequalities.pdf:pdf},
  issn            = {01676911},
  journal         = {Systems and Control Letters},
  ojournal         = {Control Syst. Lett.},
  publisher       = {IEEE},
  keywords        = {Covariance,L 2 gain,Linear matrix inequalities,Multiplicative,Robust,Stochastic robustness,Systems with multiplicative noise},
  mendeley-groups = {Workspace/Modules/Multiplicative/Alternatives},
  mendeley-tags   = {Covariance,Multiplicative,Robust},
  omonth           = {feb},
  number          = {3},
  opages           = {223--228},
  title           = {{State-feedback control of systems with multiplicative noise via linear matrix inequalities}},
  volume          = {24},
  year            = {1995}
}

@article{Kubrusly1981c,
  abstract        = {This paper considers the problem of identifying unknown parameters appearing in discrete time-varying bilinear models. The systcm is assumed to operate in it stochastic environment, where the input disturbance and the noise observation have unknown probability distribution. Identifiability conditions arc investigutcd in the light of a state covariance analysis for stochastic bilinear systems. By assuming that the only accessible process is supplied by the noisy observur.ion sequence, it recursive identification procedure is proposed which is suitable for on-line applications. Steebest. ic approxirnat.ion algorithms are used for identifying time-invariant bilinear systems. {\textcopyright} 1981 Taylor and Francis Group, LLC.},
  author          = {Kubrusly, C. S.},
  doi             = {10.1080/00207178108922924},
  file            = {:C\:/Users/pcoppens/Workspace/references/papers/Kubrusly1981.pdf:pdf},
  issn            = {0020-7179},
  journal         = {International Journal of Control},
  ojournal         = {Int. J. Control},
  keywords        = {Multiplicative,System Identification},
  mendeley-groups = {Workspace/Modules/State Samples/Unprocessed},
  mendeley-tags   = {Multiplicative,System Identification},
  omonth           = {feb},
  number          = {2},
  opages           = {291--309},
  title           = {{Identification of discrete-time stochastic bilinear systems}},
  url             = {http://www.tandfonline.com/doi/abs/10.1080/00207178108922924},
  volume          = {33},
  year            = {1981}
}

@article{Greville1966,
  author          = {Greville, T. N. E.},
  doi             = {10.1137/1008107},
  file            = {:C\:/Users/pcoppens/Workspace/references/papers/Greville1966.pdf:pdf},
  issn            = {0036-1445},
  journal         = {SIAM Review},
  mendeley-groups = {Workspace/General},
  omonth           = {oct},
  number          = {4},
  opages           = {518--521},
  title           = {{Note on the Generalized Inverse of a Matrix Product}},
  url             = {http://epubs.siam.org/doi/10.1137/1008107},
  volume          = {8},
  year            = {1966}
}

@book{Astrom2008,
  author          = {Astrom, Karl J. and Wittenmark, Bjorn},
  edition         = {2nd},
  file            = {:C\:/Users/pcoppens/Workspace/references/books/Angstrom&Wittenmark(1991) - Adaptive Control.pdf:pdf},
  mendeley-groups = {Workspace/Publications/TAC2022},
  publisher       = {Dover Publications, Inc.},
  title           = {{Adaptive Control}},
  year            = {2008}
}

@book{Ljung1999,
  author          = {Ljung, Lennart},
  file            = {:C\:/Users/pcoppens/Workspace/references/books/Ljung(1999) - System Identification.pdf:pdf},
  mendeley-groups = {Workspace/Publications/TAC2022},
  publisher       = {Prentice Hall PTR},
  title           = {{System Identification: Theory for the User}},
  year            = {1999}
}

@book{Jungers2009,
  address         = {Berlin, Heidelberg},
  author          = {Jungers, Rapha{\"{e}}l},
  doi             = {10.1007/978-3-540-95980-9},
  file            = {:C\:/Users/pcoppens/Workspace/references/books/Jungers (2009) - The Joint Spectral Radius.pdf:pdf},
  isbn            = {978-3-540-95979-3},
  mendeley-groups = {Workspace/Publications/TAC2022},
  publisher       = {Springer Berlin Heidelberg},
  series          = {Lecture Notes in Control and Information Sciences},
  title           = {{The Joint Spectral Radius}},
  url             = {http://link.springer.com/10.1007/978-3-540-95980-9},
  volume          = {385},
  year            = {2009}
}

@book{Magnus2019,
  author          = {Magnus, Jan R. and Neudecker, Heinz},
  doi             = {10.1002/9781119541219},
  file            = {:C\:/Users/pcoppens/Workspace/references/books/Magnus&Neudecker(2019) - Matrix Differential Calculus.pdf:pdf},
  isbn            = {9781119541202},
  mendeley-groups = {Workspace/Publications/TAC2022},
  omonth           = {feb},
  publisher       = {Wiley},
  series          = {Wiley Series in Probability and Statistics},
  title           = {{Matrix Differential Calculus with Applications in Statistics and Econometrics}},
  url             = {https://onlinelibrary.wiley.com/doi/book/10.1002/9781119541219},
  year            = {2019}
}

@article{Brunke2022,
  title     = {Safe learning in robotics: From learning-based control to safe reinforcement learning},
  author    = {Brunke, Lukas and Greeff, Melissa and Hall, Adam W and Yuan, Zhaocong and Zhou, Siqi and Panerati, Jacopo and Schoellig, Angela P},
  journal   = {Annual Review of Control, Robotics, and Autonomous Systems},
  volume    = {5},
  pages     = {411--444},
  year      = {2022},
  publisher = {Annual Reviews}
}

@inproceedings{Simchowitz2020,
  title     = {Naive Exploration is Optimal for Online {LQR}},
  author    = {Simchowitz, Max and Foster, Dylan},
  booktitle = {Proceedings of the 37th International Conference on Machine Learning},
  pages     = {8937--8948},
  year      = {2020},
  editor    = {III, Hal Daumé and Singh, Aarti},
  volume    = {119},
  oseries    = {Proceedings of Machine Learning Research},
  month     = {13--18 Jul},
  publisher = {PMLR},
  pdf       = {http://proceedings.mlr.press/v119/simchowitz20a/simchowitz20a.pdf},
  url       = {https://proceedings.mlr.press/v119/simchowitz20a.html},
  abstract  = {We consider the problem of online adaptive control of the linear quadratic regulator, where the true system parameters are unknown. We prove new upper and lower bounds demonstrating that the optimal regret scales as $\widetilde{\Theta} (\sqrt{d_{\mathbf{u}}^2 d_{\mathbf{x}} T})$, where $T$ is the number of time steps, $d_{\mathbf{u}}$ is the dimension of the input space, and $d_{\mathbf{x}}$ is the dimension of the system state. Notably, our lower bounds rule out the possibility of a $\mathrm{poly}(\log{T})$-regret algorithm, which had been conjectured due to the apparent strong convexity of the problem. Our upper bound is attained by a simple variant of certainty equivalent control, where the learner selects control inputs according to the optimal controller for their estimate of the system while injecting exploratory random noise. While this approach was shown to achieve $\sqrt{T}$ regret by Mania et al. (2019), we show that if the learner continually refines their estimates of the system matrices, the method attains optimal dimension dependence as well. Central to our upper and lower bounds is a new approach for controlling perturbations of Riccati equations called the self-bounding ODE method, which we use to derive suboptimality bounds for the certainty equivalent controller synthesized from estimated system dynamics. This in turn enables regret upper bounds which hold for any stabilizable instance and scale with natural control-theoretic quantities.}
}

@article{Gershon2006,
  abstract        = {A parameter dependent approach for designing static output-feedback controller for linear time-invariant systems with state-multiplicative noise is introduced which achieves a minimum bound on either the stochastic H2 or the H∞ performance levels. A solution is obtained also for the case where, in addition to the stochastic parameters, the system matrices reside in a given polytope. In this case, a parameter dependent Lyapunov function is described which enables the derivation of the required constant feedback gain via a solution of a set of linear matrix inequalities that correspond to the vertices of the uncertainty polytope. The stochastic parameters appear in both the dynamics and the input matrices of the state space model of the system. The problems are solved using the expected value of the standard performance indices over the stochastic parameters. The theory developed is demonstrated by a simple example. {\textcopyright} 2005 Elsevier B.V. All rights reserved.},
  author          = {Gershon, E. and Shaked, U.},
  doi             = {10.1016/j.sysconle.2005.07.010},
  file            = {:C\:/Users/pcoppens/OneDrive/Documents/active/Athans1977/Gershon2006.pdf:pdf},
  issn            = {01676911},
  journal         = {Systems and Control Letters},
  keywords        = {Polytopic uncertainty,Static output-feedback,Stochastic H∞ control},
  mendeley-groups = {Workspace/Publications/TAC2022/Rebuttal},
  number          = {3},
  pages           = {232--239},
  title           = {{Static $H_2$ and $H_\infty$ output-feedback of discrete-time LTI systems with state multiplicative noise}},
  volume          = {55},
  year            = {2006}
}

@article{Willsky1976b,
  abstract        = {There are a number of applications in which linear noise models are inappropriate. In this paper, the use of bilinear noise models in circuits and devices is considered. Several physical problems are studied in this framework. These include circuits involving varying parameters (such as variable resistance circuits constructed using field-effect transistors), the effect of switching jitter on sampled data system performance and communication systems involving voltage-controlled oscillators and phase-lock loops. In addition, several types of analytical techniques for stochastic bilinear systems are considered. Specifically, the moment equations of Brockett for bilinear systems driven by white noise are discussed, and closed-form expressions for certain bilinear systems (those that evolve on abelian or solvable Lie groups) driven by white or colored noise are derived. In addition, an approximate statistical technique involving the use of harmonic expansions is described. {\textcopyright} 1976.},
  author          = {Willsky, Alan S. and Marcus, Steven I.},
  doi             = {10.1016/0016-0032(76)90135-6},
  file            = {:C\:/Users/pcoppens/OneDrive/Documents/active/Athans1977/Willsky1976.pdf:pdf},
  issn            = {00160032},
  journal         = {Journal of the Franklin Institute},
  mendeley-groups = {Workspace/Publications/TAC2022/Rebuttal},
  month           = {jan},
  number          = {1-2},
  pages           = {103--122},
  title           = {{Analysis of bilinear noise models in circuits and devices}},
  url             = {https://linkinghub.elsevier.com/retrieve/pii/0016003276901356},
  volume          = {301},
  year            = {1976}
}

@article{Athans1977,
  author          = {Athans, M. and Ku, R. and Gershwin, S.},
  doi             = {10.1109/TAC.1977.1101526},
  file            = {:C\:/Users/pcoppens/OneDrive/Documents/active/Athans1977/Athans1977.pdf:pdf},
  issn            = {0018-9286},
  journal         = {IEEE Transactions on Automatic Control},
  mendeley-groups = {Workspace/Publications/TAC2022/Rebuttal},
  month           = {jun},
  number          = {3},
  pages           = {491--495},
  title           = {{The uncertainty threshold principle: Some fundamental limitations of optimal decision making under dynamic uncertainty}},
  url             = {http://ieeexplore.ieee.org/document/1101526/},
  volume          = {22},
  year            = {1977}
}

@article{Ding2019,
  author          = {Ding, Jian and Peres, Yuval and Ranade, Gireeja and Zhai, Alex},
  file            = {:C\:/Users/pcoppens/OneDrive/Documents/active/Athans1977/Ding2019.pdf:pdf},
  journal         = {The Annals of Applied Probability},
  mendeley-groups = {Workspace/Publications/TAC2022/Rebuttal},
  number          = {4},
  pages           = {1963--1992},
  title           = {When multiplicative noise stymies control},
  url             = {https://www.jstor.org/stable/26754147},
  volume          = {29},
  year            = {2019}
}

@misc{Coppens2023,
  title         = {Policy iteration using {Q-functions}: Linear dynamics with multiplicative noise},
  author        = {Peter Coppens and Panagiotis Patrinos},
  year          = {2022},
  eprint        = {2212.01192},
  archiveprefix = {arXiv},
  primaryclass  = {math.OC}
}
%
\begin{pub}%
%
\begin{IEEEbiography}[{\includegraphics[width=1in,height=1.25in,clip,keepaspectratio]{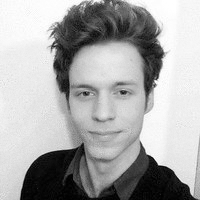}}]{Peter Coppens}%
    is a PhD researcher at the Department of Electrical Engineering (ESAT) of KU Leuven, Belgium and a recipient 
    of a \emph{FWO fundamental research fellowship}. He received his M.Eng. in Mathematical Engineering from the KU Leuven
    in 2019. His current research interests lie in learning control, enabled by distributionally robust optimization. 
    Specifically its use in model predictive control an safety critical applications. 
\end{IEEEbiography}
%
\begin{IEEEbiography}[{\includegraphics[width=1in,height=1.25in,clip,keepaspectratio]{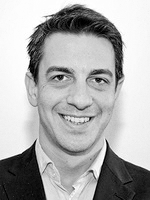}}]{Panagiotis (Panos) Patrinos}%
    is associate professor at the Department of Electrical Engineering (ESAT) of KU Leuven, Belgium. 
    In 2014 he was a visiting professor at Stanford University. He received his PhD in Control and Optimization, 
    M.S. in Applied Mathematics and M.Eng. in Chemical Engineering from the National Technical University of Athens 
    in 2010, 2005 and 2003, respectively. After his PhD he held postdoc positions at the University of Trento and 
    IMT Lucca, Italy, where he became an assistant professor in 2012. His current research interests lie in the 
    intersection of optimization, control and learning. In particular he is interested in the theory and algorithms 
    for structured nonconvex optimization as well as learning-based, model predictive control with a wide range of 
    applications including autonomous vehicles, machine learning and signal processing.  
\end{IEEEbiography}%
\end{pub}

\appendices\begin{arxiv}\section{Proofs for preliminaries} \label{app:tensors}
\paragraph{Proof of \cref{lem:sopbnd}}    
We begin by noting that
\begin{align*}
    \nrm{\op{S}}_2^2 &\leq \max_{X \in \sym{m}} \{\nrm{\op{S}(X)}_F^2 \colon \nrm{X}_F \leq \sqrt{m}\} \\
                          &\leq m \max_{x \in \Re^{\sd{m}}} \{ \trans{x} \trans{\opm{S}} \opm{S} x \colon \nrm{x}_2 \leq 1\},
\end{align*}
where we used $\nrm{A}_2 \leq \nrm{A}_F \leq \sqrt{\rk(A)} \nrm{A}_2$ for any $A \in \sym{d}$
for the first inequality and $\nrm{X}_F = \nrm{\svec(X)}_2$ for the second. The proof is 
then concluded by noting that the final expression equals $m \lambda_1(\trans{\opm{S}} \opm{S}) = m \nrm{\opm{S}}_2^2$. \qed{}

\paragraph{Proof of \cref{lem:tencp}}
Since $W \sgeq 0$, there is some $C$ s.t. $W = C \trans{C}$. Then,
\begin{equation*}
    \op{S}(X) = \ten{A}_{(1)} (C \kron I)(I \kron X) (\trans{C} \kron I) \trans{\ten{A}_{(1)}}.
\end{equation*}
Letting $\ten{V} = \ten{A} \ttimes{1} \trans{C}$ and applying \cref{prop:kronunfold},
implies $I \ten{A}_{(1)} (C \kron I) = (\tucker{\ten{A}; I, I, \trans{C}})_{(3)} = \ten{V}_{(3)}$. 
Therefore 
\begin{align*}
    \op{S}(X) &= \ten{V}_{(1)} (I \kron X) \trans{\ten{V}_{(1)}} \\
              &= \begin{bmatrix} \ten{V}_{::1} & \dots & \ten{V}_{::r} \end{bmatrix}
              (I \kron X) \trans{\begin{bmatrix}
                \ten{V}_{::1} & \dots & \ten{V}_{::r}
              \end{bmatrix}} \\
              &=\sum_{i=1}^r [\ten{V}]_{::i} X \trans{[\ten{V}]_{::i}},
\end{align*}
as in \eqref{eq:cp-def}. \qed{}

\paragraph{Proof of \cref{cor:cpmon}}
Let $P$ denote the \emph{perfect shuffle} \cite[Thm.~4.3.8]{Horn1991} with $\trans{P} P = I$. Then 
\begin{align*}
    \op{S}(W; X) &= \ten{A}_{(1)} \trans{P} P (W \kron X) \trans{P} P \trans{\ten{A}_{(1)}} \\
                 &= \ten{A}_{(1)} \trans{P} (X \kron W) P \trans{\ten{A}_{(1)}},
\end{align*}
by \cite[Cor.~4.3.10]{Horn1991}, which is of the form \eqref{eq:cp-tensor-def}. So the conclusion follows by \cref{lem:tencp}. 
The second result follows from $\op{S}(W; X)$ being CP and linearity. \qed{}

\paragraph{Proof of \cref{lem:tensor-kron}}
    From \cite[Prop.~2.2]{Lee2014} we have 
    \begin{equation} \label{eq:tensor-kron:a}
        \ten{Y} \kron \ten{Y} = \tucker{\ten{T} \kron \ten{T}; X_1 \kron X_1, X_2 \kron X_2, X_3 \kron X_3}.
    \end{equation}
    For the proof we focus on one argument 
    for conciseness. So let $\ten{Y} = \tucker{\ten{T}; X_1; \dots}$. Then
    \begin{align*}
        \ten{Y} \skron \ten{Y}  &\labelrel={step:skron} \tucker{\ten{Y} \kron \ten{Y}; Q_{p_1}, \dots} \\
                                &\labelrel={step:resi} \tucker{\tucker{\ten{T} \kron \ten{T}; X_1 \kron X_1, \dots}; Q_{p_1}, \dots} \\
                                &\labelrel={step:assoc} \tucker{\ten{T} \kron \ten{T}; Q_{p_1} (X_1 \kron X_1), \dots}. 
    \end{align*}
    Here \ref{step:skron} follows by definition of $\skron$, \ref{step:resi} from \cref{eq:tensor-kron:a} and 
    \ref{step:assoc} follows by noting $(\ten{Y} \ttimes{1} A) \ttimes{1} B = \ten{Y} \ttimes{1} BA$ \cite[\S2.5]{Kolda2009}.
    Next for every $H \in \sym{p_1}$ we have $\trans{\svec}(H) Q_{p_1} (X_1 \kron X_1) = \trans{\vec}(X_1 H \trans{X_1})$,
    by \cref{def:svec} and \cref{eq:kronfund}. The result is a vectorization of a symmetric matrix in $\sym{q_1}$. Hence,
    since $\trans{Q}_{q_1} Q_{q_1}$ acts like the identity on such vectors, we have 
    $\trans{h} Q_{p_1} (X_1 \kron X_1) = \trans{h} Q_{p_1} (X_1 \kron X_1) \trans{Q}_{q_1} Q_{q_1}$
    for all $h \in \Re^{\sd{{q_1}}}$. Hence, reversing the steps of the earlier display, gives
    \begin{align*}
        \ten{Y} \skron \ten{Y}  &= \tucker{\ten{T} \kron \ten{T}; Q_{p_1} (X_1 \kron X_1) \trans{Q_{q_1}} Q_{q_1}, \dots} \\
                                &= \tucker{\tucker{\ten{T} \kron \ten{T}; Q_{q_1}, \dots}; Q_{p_1} (X_1 \kron X_1) \trans{Q_{q_1}}, \dots} \\
                                &= \tucker{\ten{T} \skron \ten{T}; X_1 \skron X_1, \dots}.
    \end{align*}
    Thus we have shown the required result. \qed{}

We can use \cref{lem:tensor-kron} to show 
\begin{proposition} \label{prop:cpunfold}
    Let $\op{S}(W; X)$ be a CP operator parameterized as in \cref{eq:cp-tensor-def} with some $\ten{V}$. 
    Then $\tr[P \op{S}(W; X)]$ equals:
    \begin{enumerate}
        \item \label{prop:cpunfold:a} $\begin{aligned}[t]&\tr[P \ten{V}_{(1)} (W \kron X) \trans{\ten{V}_{(1)}}] = \tr[X \ten{V}_{(2)} (W \kron P) \trans{\ten{V}_{(2)}}] \\&\quad= \tr[W \ten{V}_{(3)} (P \kron X) \trans{\ten{V}_{(3)}}]\end{aligned}$;
        \item \label{prop:cpunfold:b} $\tucker{\ten{V} \kron \ten{V}; \vec(P), \vec(X), \vec(W)}$;
        \item \label{prop:cpunfold:c} $\tucker{\ten{V} \skron \ten{V}; \svec(P), \svec(X), \svec(W)}$;
    \end{enumerate}
    for all $P \in \sym{n_p}, X \in \sym{n_x}$ and $W \in \sym{n_w}$.
\end{proposition}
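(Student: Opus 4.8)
The plan is to reduce everything to rank-one arguments by exploiting multilinearity. Each of the three expressions is separately linear in $P$, in $X$ and in $W$: the traces on the left because $\op{S}(W;\cdot)$ is linear in its argument and, by \cref{cor:cpmon}, completely positive (hence linear) in its parameter $W$; the Tucker expressions on the right because $\tucker{\cdot; \cdot, \cdot, \cdot}$ is linear in each matrix slot while $P\mapsto\vec(P)$ and $P\mapsto\svec(P)$ are linear. Since the matrices $p\trans{p}$ span every space of symmetric matrices, it suffices to establish all three identities for $P = p\trans{p}$, $X = x\trans{x}$ and $W = w\trans{w}$ and then extend by trilinearity.

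First I would prove a \emph{master identity} for such rank-one arguments. Here $W\kron X = (w\kron x)\trans{(w\kron x)}$, so $\op{S}(W;X) = \ten{V}_{(1)}(w\kron x)\trans{(w\kron x)}\trans{\ten{V}_{(1)}} = v\trans{v}$ with $v\dfn\ten{V}_{(1)}(w\kron x)$. Applying \cref{prop:kronunfold} with $X_1 = I$, $X_2 = \trans{x}$ and $X_3 = \trans{w}$ identifies $v = \tucker{\ten{V}; I, x, w}$, whence $\tr[P\op{S}(W;X)] = \trans{v}Pv = (\trans{p}v)^2 = (\tucker{\ten{V}; p, x, w})^2$. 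The crucial observation is that this scalar is fully symmetric in the three roles played by $p$, $x$ and $w$.

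Part (i) then follows by running the same collapse through the mode-$2$ and mode-$3$ matricizations: \cref{prop:kronunfold}, with the appropriate row vectors substituted, gives $\ten{V}_{(2)}(w\kron p) = \tucker{\ten{V}; p, I, w}$ and $\ten{V}_{(3)}(x\kron p) = \tucker{\ten{V}; p, x, I}$, so all three traces equal $(\tucker{\ten{V}; p, x, w})^2$. The subtle point, which I expect to be the main obstacle, is that the order of the Kronecker factors is not free but is dictated by \cref{prop:kronunfold}, which pairs mode $n$ with $X_j\kron X_i$ for $j>i$; the three displays therefore carry $W\kron X$, $W\kron P$ and $X\kron P$ respectively, and keeping these orderings consistent across the three matricizations is the only real bookkeeping.

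For part (ii) I would use $\vec(p\trans{p}) = p\kron p$ so that the right-hand side becomes $\tucker{\ten{V}\kron\ten{V}; p\kron p, x\kron x, w\kron w}$; the defining relation \eqref{eq:tensor-kron} of the tensor Kronecker product, together with the mixed-product property of $\kron$, factors this into $(\tucker{\ten{V}; p, x, w})^2$, matching the master identity. Finally, part (iii) reduces to part (ii): writing $\ten{V}\skron\ten{V} = \tucker{\ten{V}\kron\ten{V}; Q_{n_p}, Q_{n_x}, Q_{n_w}}$ and using associativity of the mode products, the mode-$1$ contraction against $\svec(P)$ becomes a contraction against $\trans{\svec(P)}Q_{n_p} = \trans{\vec(P)}\trans{Q_{n_p}}Q_{n_p} = \trans{\vec(P)}$, where the last equality holds because $\trans{Q_{n_p}}Q_{n_p}$ acts as the identity on vectorizations of symmetric matrices — exactly the fact already invoked in the proof of \cref{lem:tensor-kron}. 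The same replacement in modes $2$ and $3$ turns the $\svec$ contraction into the $\vec$ contraction of part (ii), completing the argument.
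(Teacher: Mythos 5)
Your proof is correct and follows essentially the same route as the paper's: reduce to rank-one $P = p\trans{p}$, $X = x\trans{x}$, $W = w\trans{w}$ (the paper does this via eigenvalue decompositions, you via spanning plus trilinearity), establish the master identity $\tr[P\,\op{S}(W;X)] = \tucker{\ten{V}; p, x, w}^2$ through \cref{prop:kronunfold}, then obtain (i) by unfolding along modes $2$ and $3$, (ii) from the Kronecker-tensor identity (the paper routes this through \cref{lem:tensor-kron}, you through \eqref{eq:tensor-kron} and the mixed-product property, which is the same computation), and (iii) from the fact that $\trans{Q}_{d} Q_{d}$ acts as the identity on vectorizations of symmetric matrices. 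One remark: your bookkeeping is actually more faithful to \cref{prop:kronunfold} than the proposition itself, since the mode-$3$ unfolding forces the ordering $X \kron P$ (as you write), so the $P \kron X$ appearing in the third display of (i) is a typo in the statement --- which the paper's proof, never writing out the mode-$3$ formula explicitly, does not surface --- rather than something your argument misses.
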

\begin{proof}
    By the eigenvalue decomposition, $P = \sum_{i=1}^{n_p} \alpha_i p_i \trans{p_i}$,
    $X = \sum_{j=1}^{n_x} \beta_j x_j \trans{x_j}$ and $W = \sum_{k=1}^{n_w} \gamma_k w_k \trans{w_k}$. 
    
    Writing out $\tr[P\op{S}(W; X)]$ with \cref{lem:tencp} gives
    \begin{align*}
        &\ssum_{i,j,k} \alpha_i \beta_j \gamma_k \tr[p_i \trans{p_i} \ten{V}_{(1)} (w_k \trans{w_k} \kron x_j \trans{x_j}) \trans{\ten{V}_{(1)}}] \\
        &\qquad = \ssum_{i,j,k} \alpha_i \beta_j \gamma_k [(\trans{p_i} \ten{V}_{(1)} (w_k \kron x_j))^2],
    \end{align*}
    where we used \cite[Thm.~E.1.3]{DeKlerk2002} for $(w_k \trans{w_k} \kron x_j \trans{x_j}) = (w_k \kron x_j) \trans{(w_k \kron x_j)}$. 
    We show how the expression in square brackets can be rewritten. This, through reversal of the
    steps above enables proving the results.

    By \cref{prop:kronunfold} $(\trans{p} \ten{V}_{(1)} (w \kron x))^2 = \tucker{\ten{V}; p, x, w}^2$. 
    Unfolding along $n = 2$ and $3$ shows the other equalities in \cref{prop:cpunfold:a}. 
    Moreover since $\tucker{\ten{V}; p, x, w}^2 = \tucker{\ten{V}; p, x, w} \kron \tucker{\ten{V}; p, x, w}$
    we can apply \cref{lem:tensor-kron} to analogously get \cref{prop:cpunfold:b}. Finally note that 
    $\trans{Q}_{q_1} Q_{n_p} \vec{(P)} = \trans{Q}_{n_p} \svec{(P)}$ since $P \in \sym{n_p}$ 
    and similarly for $X$ and $W$. From \cite[\S2.5]{Kolda2009} $\ten{T} \btimes{1} \trans{Q} x = (\ten{T} \ttimes{1} Q) \btimes{1} x$,
    which combined with the definition of $\ten{V} \skron \ten{V}$ shows \cref{prop:cpunfold:c}.
\end{proof}

\paragraph{Proof of \cref{cor:cpadj}}
We can directly apply \cref{prop:cpunfold:b} to show the expression for $\adj{\op{S}}(W; X)$. For $\opm{S}$ 
note that $\tr[P \op{S}(W; X)] = \trans{\svec}(P) \opm{S} \svec(X)$. 
We can rewrite the Tucker operator in \cref{prop:cpunfold:c} as
\begin{align*}
    &\tucker{\ten{V} \skron \ten{V}; \svec(P), \svec(X), \svec(W)} \\
    &\qquad = (((\ten{V} \skron \ten{V}) \btimes{3} \svec(W)) \btimes{1} \svec(P)) \svec(X) \\
    &\qquad = \trans{\svec(P)} ((\ten{V} \skron \ten{V}) \btimes{3} \svec(W)) \svec(X).
\end{align*}
Since this holds for all $P \in \sym{m}$, $X \in \sym{n}$ we have shown $(\ten{V} \skron \ten{V}) \btimes{3} \svec(W) = \opm{S}$.
\end{arxiv}

\section{Linear Quadratic Regulation} \label{app:lqr}
This section gives proofs related to LQR. We begin by introducing some auxiliary operators and their adjoints:
\begin{lemma} \label{lem:opadj}
    Let $\op{E}$ be as in \cref{eq:dynsm_}, ${\Pi}_K(X) = [I; K] X [I, \trans{K}]$,
     $\op{E}_K(X) = \op{E}(\Pi_K(X))$. \begin{arxiv}Consider the Lyapunov operator $\op{L}(Z) = X - \op{E}(Z)$ and $\op{L}_K(X) = X - \op{E}_K(X)$. 
    Here $X$ is the top-left $n_x \times n_x$ block of $Z$.

    \end{arxiv}Then we have the adjoints:
    \begin{enumerate}
        \item $\adj{\Pi}_K(H) = [I, \trans{K}] H [I; K]$;
        \item $ \adj{\op{E}}(P) = \begin{bmatrix}
                    \adj{\op{F}}(P) & \trans{(\adj{\op{H}})}(P) \\ \adj{\op{H}}(P) & \adj{\op{G}}(P)
                \end{bmatrix}$;
        \item $\adj{\op{E}}_K(P) = \adj{\Pi}_K(\adj{\op{E}}(P))$;
        \ilarxiv{\item $ \adj{\op{L}}(P) = \blkdiag(P, 0) - \adj{\op{E}}(P)$;}
        \ilarxiv{\item $\adj{\op{L}}_K(P) = \adj{\Pi}_K(\adj{\op{L}}(P))$.}
    \end{enumerate}
    
    The elementary adjoints are $\adj{\op{F}}(P) = \ten{A}_{(2)} (\smoment \kron P) \trans{\ten{A}_{(2)}}$, 
    $\adj{\op{G}}(P) = \ten{B}_{(2)} (\smoment \kron P) \trans{\ten{B}_{(2)}}$ and $\adj{\op{H}}(P) = \ten{B}_{(2)}(\smoment \otimes P) \trans{\ten{A}_{(2)}}$.
\end{lemma}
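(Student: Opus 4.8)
The plan is to verify each adjoint directly from the defining relation $\tr[\op{S}(X)Y] = \tr[X\adj{\op{S}}(Y)]$, reducing every case either to a cyclic-trace manipulation or to the composition rule $\adj{(\op{S}_1 \circ \op{S}_2)} = \adj{\op{S}_2} \circ \adj{\op{S}_1}$ (which is itself immediate from the defining relation). I would begin with the closed-loop map $\Pi_K$: by cyclicity of the trace, $\tr[\Pi_K(X) H] = \tr[[I;K] X [I, \trans{K}] H] = \tr[X \, [I, \trans{K}] H [I; K]]$, so item (i) holds with $\adj{\Pi}_K(H) = [I, \trans{K}] H [I; K]$.

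For the block form of $\adj{\op{E}}$, the starting point is \cref{cor:cpadj}, which, since $\op{E}$ is the CP map with mode tensor $\ten{M}$ as in \eqref{eq:dynsm_}, yields $\adj{\op{E}}(P) = \ten{M}_{(2)} (W \kron P) \trans{\ten{M}}_{(2)}$. The crucial observation is that the $2$-mode matricization inherits the partition of the augmented state $z = (x, u)$ into its $n_x$ and $n_u$ components: writing the second mode in these two blocks gives $\ten{M}_{(2)} = [\ten{A}_{(2)}; \ten{B}_{(2)}]$ with $\ten{A}, \ten{B}$ as in \eqref{eq:dynten}. Substituting this and expanding the resulting $2\times 2$ block product reproduces exactly the three elementary adjoints stated in the lemma on the diagonal and lower-left blocks, while the upper-right block equals $\ten{A}_{(2)}(W \kron P)\trans{\ten{B}}_{(2)}$, which is recognised as $\trans{(\adj{\op{H}})}(P)$ using symmetry of $W \kron P$. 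This block bookkeeping is the step I expect to require the most care, since it hinges on correctly tracking how the matricization index splits into the $\ten{A}$ and $\ten{B}$ parts.

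The remaining items follow structurally. Item (iii) is the composition rule applied to $\op{E}_K = \op{E} \circ \Pi_K$. For item (iv), I would write $\op{L}(Z) = X - \op{E}(Z)$ and note that the block-extraction $Z \mapsto X$ satisfies $\tr[X Y] = \tr[Z \diag(Y, 0)]$ for every $Y \in \sym{n_x}$, so its adjoint is $Y \mapsto \diag(Y, 0)$; linearity of the adjoint then gives $\adj{\op{L}}(P) = \diag(P, 0) - \adj{\op{E}}(P)$. Finally, for item (v) the key remark is that the top-left block of $\Pi_K(X)$ is $I X I = X$, whence $\op{L}_K = \op{L} \circ \Pi_K$, and the composition rule yields $\adj{\op{L}}_K(P) = \adj{\Pi}_K(\adj{\op{L}}(P))$.
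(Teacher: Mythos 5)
Your proof is correct and takes essentially the same approach as the paper, whose entire proof is the remark that only elementary algebra, \cref{lem:tencp} and \cref{cor:cpadj} are used. Your write-up simply fills in exactly those details: trace cyclicity for $\adj{\Pi}_K$, the partition $\ten{M}_{(2)} = [\ten{A}_{(2)}; \ten{B}_{(2)}]$ combined with \cref{cor:cpadj} for the block form of $\adj{\op{E}}$, and the composition/linearity rules for the remaining items.
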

\begin{proof}
    Only elementary algebra, \cref{lem:tencp} and \cref{cor:cpadj} are used.
\end{proof}

\begin{lemma} \label{lem:z-partition}
    Any $Z \sgeq 0$ can be partitioned as:
    \begin{equation*}
        \begin{bmatrix}
            X & X \trans{K} \\ K X & K X \trans{K} + \Delta            
        \end{bmatrix}
    \end{equation*}
    for $\Delta \sgeq 0$ and $X \sgeq 0$.
\end{lemma}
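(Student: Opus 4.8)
The plan is to partition $Z$ according to its block structure and recognise the three required pieces $X$, $K$, and $\Delta$ as, respectively, the leading block, a factor obtained from a pseudo-inverse, and a generalized Schur complement. First I would write $Z = \left[\begin{smallmatrix} X & V \\ \trans{V} & U \end{smallmatrix}\right]$ with $X \in \sym{n_x}$, $V \in \Re^{n_x \times n_u}$ and $U \in \sym{n_u}$. Since $X$ is a principal submatrix of $Z \sgeq 0$, we immediately get $X \sgeq 0$. The claim then reduces to producing a $K \in \Re^{n_u \times n_x}$ with $V = X\trans{K}$ and showing that $\Delta \dfn U - KX\trans{K} \sgeq 0$.

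The key step is the existence of $K$, which hinges on the range inclusion $\mathrm{range}(V) \subseteq \mathrm{range}(X)$. To see this, I would take any $x \in \ker X$; then $\trans{(x,0)} Z (x,0) = \trans{x} X x = 0$, and since $Z \sgeq 0$ a vector annihilating the associated quadratic form lies in $\ker Z$, so $Z(x,0) = 0$ and in particular $\trans{V} x = 0$. Hence $\ker X \subseteq \ker \trans{V}$, which is equivalent to $\mathrm{range}(V) \subseteq \mathrm{range}(X)$. Setting $K \dfn \trans{V}\pinv{X}$ (using that $\pinv{X}$ is symmetric because $X \in \sym{n_x}$) then gives $X\trans{K} = X\pinv{X}V = V$, since $X\pinv{X}$ is the orthogonal projector onto $\mathrm{range}(X)$ and $V$ already lies in that subspace.

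Finally I would verify $\Delta \sgeq 0$. Using $\pinv{X} X \pinv{X} = \pinv{X}$, we get $KX\trans{K} = \trans{V}\pinv{X}X\pinv{X}V = \trans{V}\pinv{X}V$, so that $\Delta = U - \trans{V}\pinv{X}V$ is exactly the generalized Schur complement of $X$ in $Z$. Positivity of $Z$ together with the range condition just established then yields $\Delta \sgeq 0$ by the standard Schur-complement characterisation of positive semidefiniteness for block matrices.

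The main obstacle is the careful handling of the pseudo-inverse in the rank-deficient case --- specifically justifying the range inclusion and the identity $X\pinv{X}V = V$ --- rather than any substantial computation. When $X \sgt 0$ the pseudo-inverse collapses to the ordinary inverse, $K = \trans{V}X^{-1}$, and the whole argument reduces to the classical Schur complement, so the rank-deficient bookkeeping is the only delicate point.
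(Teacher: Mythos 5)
Your proof is correct and takes essentially the same route as the paper: the paper simply cites the generalized Schur-complement characterization of $Z \sgeq 0$ from \cite[\S{}A.5.5]{Boyd2004} (namely $X \sgeq 0$, $\img{V} \subseteq \img{X}$, and $U - \trans{V}\pinv{X}V \sgeq 0$) and reads off $K$ and $\Delta$, while you re-derive the range inclusion via the kernel argument and the explicit choice $K = \trans{V}\pinv{X}$ from first principles before invoking the same Schur-complement fact for $\Delta \sgeq 0$. The difference is only self-containedness, not substance.
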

\begin{proof}
    Let $Z = [X, V; \trans{V}, U]$. By \cite[\S{}A.5.5]{Boyd2004} $Z \sgeq 0$ iff
    \begin{equation*}
        X \sgeq 0, \quad \img{V} \subseteq \img{X}, \quad U - \trans{V} X^\dagger V \sgeq 0. 
    \end{equation*}
    So there exists a $K$ s.t. $V = X \trans{K}$ and $\Delta = U - \trans{V} \pinv{X} V = U -\trans{K} X K \sgeq 0$
\end{proof}

\paragraph*{Proof of \cref{thm:cpmulteq}}
We use \cref{asm:model} and \cref{lem:model-eq} to consider \eqref{eq:dynten} instead of \eqref{eq:dyn} without loss of generality. 
We start with a feasible sequence for \cref{eq:slqr} and 
construct an (equivalent) feasible sequence for \cref{eq:lqrcp}. Let $H = \blkdiag(Q, R)$, $Z_t = \E[z_t \trans{z_t}] \sgeq 0$ with $z_t = (x_t, u_t)$ 
and $x_t, u_t$ satisfying \eqref{eq:dynten}. 
Note that, by \cref{eq:dynsm_}, any sequence $Z_t$ constructed as such satisfies the constraints of \eqref{eq:lqrcp}.
Then
\begin{equation} \label{eq:cost-eq-cp-slqr}
    \E\left[ \ssum_{t=0}^\infty \trans{x}_t Q x_t + \trans{u}_t R u_t \right] = \ssum_{t=0}^\infty \tr[H Z_t]. 
\end{equation}
So the cost of $(x_t, u_t)$ for \cref{eq:slqr} equals that of the constructed $Z_t$ in \cref{eq:lqrcp}. 
So $\mathrm{val}\eqref{eq:lqrcp} \leq \mathrm{val}\eqref{eq:slqr}$. 

Next, we argue that for any sequence $Z_t$ 
feasible for \cref{eq:lqrcp}, we can construct a sequence $z_t = (x_t, u_t)$ feasible for \cref{eq:slqr}. 
Note that for any $X \sgeq 0$ we can easily construct a random vector $x$ with $\E[x \trans{x}] = X$. 
So let $x_0$ be such a vector for $X_0$. Then, by \cref{lem:z-partition}, $Z_0$ can be partitioned as 
$[X_0, X_0\trans{K}_0; K_0 X_0, K_0 X_0 \trans{K}_0 + \Delta_0]$ with $\Delta_0 \sgeq 0$ for 
which we pick a random vector $\delta_0$. So if we take $u_0 = K_0 x_0 + \delta_0$ and $z_0 = (x_0, u_0)$ 
then $\E[z_0 \trans{z_0}] = Z_0$. Repeating the same argument starting at $x_1$ and continuing for all time steps 
allows us to construct a feasible trajectory for \cref{eq:slqr}. Again 
by $\E[z_t \trans{z_t}] = Z_t$ and \cref{eq:cost-eq-cp-slqr} the cost for 
both trajectories is the same. Thus $\mathrm{val}\eqref{eq:lqrcp} \geq \mathrm{val}\eqref{eq:slqr}$. \qed{}

\begin{arxiv}
We continue with proving auxiliary results for \cref{thm:cplqr}. 
We first consider the finite horizon version of \cref{eq:lqrcp}:
\begin{equation} \label{eq:lqr-finite}
    \begin{alignedat}{2}
        &\minimize_{Z_t} &\quad& \sum_{t=0}^{N-1} \tr[Z_t H] + \tr[P X_N] \\
                             &\stt && Z_t = \begin{bmatrix}
                                 X_t & V_t \\ \trans{V_t} & U_t
                             \end{bmatrix} \sgeq 0, \\&&&X_{t+1} = \op{E}(Z_t), \quad \forall t \in \N_{0:N-1}. 
    \end{alignedat}
\end{equation}
We assume $H = \blkdiag(Q, R) \sgt 0$ and denote the optimal value as $\op{J}^N(P; X_0)$ and $\op{J}^{\star}(X_0) \dfn \lim_{N \to \infty} \op{J}^{N}(0; X_0)$.

We prove the fundamental property linking $\op{R}$ to \eqref{eq:lqr-finite}:
\begin{proposition} \label{prop:lqr-finite}
    Consider $\op{J}^N$ and $\op{R}$ defined in \cref{sec:lqr}. Let
    \begin{equation*}
        P_k \dfn \op{R}^{k}(P), \, K_k = -(R + \adj{\op{G}}(P_k))^{-1} \adj{\op{H}}(P_k), \, k \in \N_{0:N}.
    \end{equation*}
    Then $Z_t = \Pi_{K_{N-t-1}}(X_t)$ for $t \in \N_{0:N-1}$ achieves the minimum:
    \begin{equation*}
        \op{J}^N(P; X_0) = \tr[P_N X_0].
    \end{equation*}
\end{proposition}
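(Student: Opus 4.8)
**The plan is to prove Proposition~\ref{prop:lqr-finite} by backward induction on the horizon, establishing the dynamic programming recursion and verifying that each Bellman step is solved exactly by the Riccati operator $\op{R}$ together with the gain $K_k$.**

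First I would set up the value-iteration (dynamic programming) backbone. Define the cost-to-go functions $\op{V}_k(X)$ for the tail problem on the last $k$ stages, so that $\op{V}_0(X) = \tr[PX]$ and, by the principle of optimality applied to \eqref{eq:lqr-finite},
\begin{equation*}
    \op{V}_{k+1}(X) = \min_{Z \sgeq 0} \{ \tr[ZH] + \op{V}_k(\op{E}(Z)) : X \text{ is the top-left block of } Z \}.
\end{equation*}
The claim I want to prove by induction is that $\op{V}_k(X) = \tr[P_k X]$ with $P_k = \op{R}^k(P)$, and that the minimizing $Z$ at stage $k+1$ is $Z = \Pi_{K_k}(X)$. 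The base case $k=0$ is immediate from the definition of $\op{J}^N(P;X_0)$ with $X_N$ incurring cost $\tr[PX_N]$. Matching the time index $t = N - 1 - k$ then yields exactly the statement $Z_t = \Pi_{K_{N-t-1}}(X_t)$ and $\op{J}^N(P;X_0) = \tr[P_N X_0]$.

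The core of the argument is the single Bellman step: assuming $\op{V}_k(X) = \tr[P_k X]$, I would show that minimizing $\tr[ZH] + \tr[P_k \op{E}(Z)]$ over $Z \sgeq 0$ with fixed top-left block $X$ gives $\tr[P_{k+1}X]$ with $P_{k+1} = \op{R}(P_k)$. Here I would use \cref{lem:z-partition} to parametrize the feasible $Z$ as $Z = \Pi_K(X) + \diag(0, \Delta)$ with $\Delta \sgeq 0$ and $K$ free; since $H \sgt 0$ and $\op{E}$ is CP (hence $\tr[P_k \op{E}(\diag(0,\Delta))] \geq 0$), the optimal choice is $\Delta = 0$, reducing the problem to minimizing over $K$ alone. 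Expanding $\tr[\Pi_K(X) H] + \tr[P_k \op{E}(\Pi_K(X))]$ using the adjoint blocks $\adj{\op{F}}, \adj{\op{G}}, \adj{\op{H}}$ from \cref{lem:opadj}, the objective becomes $\tr[X(Q + \adj{\op{F}}(P_k))] + \tr[\trans{K}(R + \adj{\op{G}}(P_k))KX] + 2\tr[\trans{K}\adj{\op{H}}(P_k)X]$, a quadratic in $K$. Completing the square in $K$ (using $R + \adj{\op{G}}(P_k) \sgt 0$, which holds since $R \sgt 0$ and $\adj{\op{G}}(P_k)$ is positive by complete positivity) yields the unique minimizer $K_k = -(R + \adj{\op{G}}(P_k))^{-1}\adj{\op{H}}(P_k)$ and the minimum value $\tr[\op{R}(P_k) X] = \tr[P_{k+1}X]$, matching \eqref{eq:ric-def}.

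The main obstacle I anticipate is the careful handling of the $\Delta \sgeq 0$ slack term and the interchange between ``minimize over $Z$ with fixed top-left block'' and ``minimize over the gain $K$.'' One must argue that the extra degrees of freedom in $Z$ beyond $\Pi_K(X)$ can only increase the cost (so they are optimally set to zero), and that the completion of the square is valid as an identity over symmetric matrices rather than scalars --- this requires the linearity and cyclicity of the trace together with the adjoint identities of \cref{lem:opadj}, and the positive-definiteness of $R + \adj{\op{G}}(P_k)$ to guarantee the quadratic form in $K$ is strictly convex and hence has a unique minimizer. Once this single-step lemma is secured, the induction closes routinely and the indexing $t \mapsto N-1-t$ gives the stated policy.
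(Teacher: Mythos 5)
Your proposal is correct and follows essentially the same route as the paper: backward dynamic programming, the partition of $Z$ via \cref{lem:z-partition} with the slack $\Delta$ eliminated using $R \sgt 0$, and minimization over the gain $K$ after expanding the cost with the adjoint blocks of \cref{lem:opadj}. The only cosmetic difference is that you solve the inner quadratic in $K$ by completing the square, whereas the paper sets the matrix gradient to zero and invokes first-order optimality conditions; both yield $K_k = -(R + \adj{\op{G}}(P_k))^{-1}\adj{\op{H}}(P_k)$ and the Riccati update $\tr[P_{k+1}X] = \tr[\op{R}(P_k)X]$.
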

\begin{proof}
    The result follows from \emph{dynamic programming (DP)} \cite[\S4.1]{Bertsekas2005V1}. Starting from $\op{J}^0(P; X_0) = \tr[P_0X_0]$, with $P_0 = P$, 
    DP constructs $\op{J}^k(P; X_0) = \tr[P_k X_0]$ using:
    \begin{equation} \label{eq:bellman}
        \tr[P_{k+1} X] = \min_{Z \sgeq 0} \{
            \tr[Z H] + \tr[P_{k} \op{E}(Z)]\},
    \end{equation}
    Note that $\tr[P_{k} \op{E}(Z)] = \tr[\adj{\op{E}}(P_k) Z]$ by definition of the adjoint.
    We use \cref{lem:opadj} to write $\adj{\op{E}}$ in terms of $\adj{\op{F}}$, $\adj{\op{G}}$ and $\adj{\op{H}}$. 
    Introducing $X, K$ and $\Delta$ to partition $Z$ as in \cref{lem:z-partition} allows rewriting the cost of \eqref{eq:bellman} (omitting constant terms) as:
    \begin{align*}
        &\tr[\trans{(\adj{\op{H}}(P_{k}))} K X] + \tr[\adj{\op{H}}(P_k) X \trans{K}]\\
        &\quad + \tr[(R + \adj{\op{G}}(P_{k}))K X \trans{K}] + \tr[(R + \adj{\op{G}}(P_k)) \Delta],
    \end{align*}
    which is minimized uniquely at $\Delta = 0$, by noting that $R \sgt 0$ in conjunction with
    \cref{lem:trivialinequality}.
    We can evaluate the gradient of the remaining terms with respect to $K$ using the identities in \cite[Table.~9.4]{Magnus2019}. 
    This gives the following first-order optimality conditions:
    \begin{equation} \label{eq:first-order}
        2 \adj{\op{H}}(P_k) X + 2 (\adj{\op{G}}(P_k) + R) K X = 0.
    \end{equation}
    \update*{This should hold for all $X \in \psd{n_x}$}. Since $R \sgt 0$ this is the case iff
    \begin{equation*}
        K = -(R + \adj{\op{G}}(P_k))^{-1}\adj{\op{H}}(P_k) = K_k.
    \end{equation*}
    For these values of $K$ and $\Delta$ we have $Z = \Pi_{K_k}(X)$, 
    with $\Pi_{K_k}$ as in \cref{lem:opadj}. Plugging into \eqref{eq:bellman}:
    \begin{align}
        &\tr[P_{k+1} X] = \tr[\Pi_{K_k}(X) H] + \tr[P_k \op{E}(\Pi_{K_k}(X))] \nonumber \\
                       &\quad = \tr[X \adj{\Pi}_{K_k}(H + \adj{\op{E}}(P_k))] = \tr[X \op{R}(P_k)], \label{eq:ric-alt}
    \end{align}
    where the second equality follows from linearity and adjoints, while the third follows from \cref{lem:opadj} 
    and cancellation of the inverses in $K_k$. Iterating over $k$, repeatedly 
    computing the optimal policy $K_k$ by the same procedure as usual in dynamic programming (cf. \cite[Prop.~1.3.1]{Bertsekas2005V1}), 
    completes the proof. Note that $k$ counts backwards in time, hence $K_{N-t-1}$ in $Z_t$. 
\end{proof}
The steps in the proof of \cref{prop:lqr-finite} can be used to prove some additional properties of $\op{R}$:
\begin{corollary} \label{cor:ric-prop}
    For all $P \sgeq 0$, $K' \neq -(R + \adj{\op{G}}(P))^{-1} \adj{\op{H}}(P)$ and $H = \blkdiag(Q, R) \sgt 0$:
    \begin{enumerate}
        \item \label{cor:ric-prop:a} $\op{R}(P') \sleq \op{R}(P)$ if $P' \sleq P$;
        \item \label{cor:ric-prop:b} $\op{R}(P) \nsleq \update*{\adj{\Pi}_{K'}(H + \adj{\op{E}}(P))}$.
    \end{enumerate}
\end{corollary}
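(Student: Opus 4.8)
The plan is to reduce both claims to a single completion-of-squares identity. Writing $K_P \dfn -(R + \adj{\op{G}}(P))^{-1}\adj{\op{H}}(P)$ for the minimizing gain at $P$, recall from \eqref{eq:ric-alt} in the proof of \cref{prop:lqr-finite} that $\op{R}(P) = \adj{\Pi}_{K_P}(H + \adj{\op{E}}(P))$, where $\adj{\Pi}_{K}(M) = [I, \trans{K}] M [I; K]$ (the subscript notation being identical to the parameter notation $\adj{\Pi}(K; M)$ of the statement). The first step is to establish, for every $K'$,
\begin{equation} \label{eq:ric-cos}
    \adj{\Pi}_{K'}(H + \adj{\op{E}}(P)) = \op{R}(P) + \trans{(K' - K_P)}(R + \adj{\op{G}}(P))(K' - K_P).
\end{equation}

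I would prove \eqref{eq:ric-cos} by expanding the left-hand side via \cref{lem:opadj}: since $H + \adj{\op{E}}(P)$ has blocks $Q + \adj{\op{F}}(P)$, $\trans{(\adj{\op{H}})}(P)$, $\adj{\op{H}}(P)$, $R + \adj{\op{G}}(P)$, the congruence $[I,\trans{K'}](\cdot)[I;K']$ yields $(Q + \adj{\op{F}}(P)) + \trans{(\adj{\op{H}})}(P)K' + \trans{K'}\adj{\op{H}}(P) + \trans{K'}(R + \adj{\op{G}}(P))K'$. Completing the square in $K'$ about $K_P$ and substituting the definition \eqref{eq:ric-def} of $\op{R}$ gives \eqref{eq:ric-cos}; this is exactly the first-order computation \eqref{eq:first-order} recast in matrix form, hence routine.

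Claim \cref{cor:ric-prop:b} is then immediate: since $R + \adj{\op{G}}(P) \sgt 0$ and $K' \neq K_P$, the matrix $K' - K_P$ is nonzero, so $\trans{(K' - K_P)}(R + \adj{\op{G}}(P))(K' - K_P)$ is a nonzero positive semidefinite matrix. Thus $\adj{\Pi}_{K'}(H + \adj{\op{E}}(P)) - \op{R}(P) \nsgeq 0$, which is precisely $\op{R}(P) \nsleq \adj{\Pi}(K'; H + \adj{\op{E}}(P))$. For claim \cref{cor:ric-prop:a} I would read \eqref{eq:ric-cos} as a variational bound: the quadratic term is positive semidefinite for \emph{every} $K$, so $\adj{\Pi}_{K}(H + \adj{\op{E}}(P)) \sgeq \op{R}(P)$ with equality at $K = K_P$. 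Given $0 \sleq P' \sleq P$, I would then feed the minimizing gain $K_P$ of the larger argument into the problem at $P'$:
\begin{align*}
    \op{R}(P') &\sleq \adj{\Pi}_{K_P}(H + \adj{\op{E}}(P')) \\
               &\sleq \adj{\Pi}_{K_P}(H + \adj{\op{E}}(P)) = \op{R}(P),
\end{align*}
where the first inequality is the variational bound applied at $P'$ (with $K_P$ merely feasible there), the second uses that $\adj{\op{E}}$ is completely positive and hence monotone, so $P' \sleq P$ gives $\adj{\op{E}}(P') \sleq \adj{\op{E}}(P)$ (cf. \cref{cor:cpadj}), together with monotonicity of the congruence $\adj{\Pi}_{K_P}(\cdot)$, and the final equality is \eqref{eq:ric-alt} at $P$.

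The computations are entirely routine; the only genuine subtlety is the choice of gain in \cref{cor:ric-prop:a}: one must evaluate the problem at the smaller argument $P'$ using the optimal gain $K_P$ of the larger argument (optimal for $P$ but only feasible for $P'$), so that the variational bound opens in the correct direction — feeding $K_{P'}$ instead would not close the chain. A minor point to flag is that $\op{R}(P')$ must be well defined, so I assume $P' \sgeq 0$ (ensuring $R + \adj{\op{G}}(P') \sgt 0$), after which both the completion of squares and the monotonicity argument apply verbatim.
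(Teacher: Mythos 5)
Your proof is correct, but it takes a genuinely different route from the paper. The paper proves \cref{cor:ric-prop:a} by invoking monotonicity of the Bellman operator: since $\tr[P\op{E}(Z)] \geq \tr[P'\op{E}(Z)]$ for every feasible $Z \sgeq 0$, the minimum in \eqref{eq:bellman} is larger for $P$ than for $P'$, and the matrix inequality is then extracted by testing against $X = x\trans{x}$; it proves \cref{cor:ric-prop:b} by appealing to uniqueness of the minimizer of \eqref{eq:bellman}, enforced by the first-order condition \eqref{eq:first-order} and $R \sgt 0$. You instead establish the explicit matrix-level identity
\begin{equation*}
    \adj{\Pi}_{K'}(H + \adj{\op{E}}(P)) = \op{R}(P) + \trans{(K' - K_P)}(R + \adj{\op{G}}(P))(K' - K_P),
\end{equation*}
which is the completion of squares implicit in \eqref{eq:ric-alt} but never written out in the paper, and derive both claims from it: \cref{cor:ric-prop:b} becomes immediate because the residual term is a nonzero psd matrix when $K' \neq K_P$, and \cref{cor:ric-prop:a} follows by evaluating the suboptimal gain $K_P$ at the smaller argument $P'$ and using monotonicity of the CP map $\adj{\op{E}}$ together with the congruence. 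Your version buys something concrete: it stays entirely at the level of matrix inequalities (no detour through traces and rank-one test matrices), it makes the ``$\nsleq$'' conclusion of \cref{cor:ric-prop:b} transparent — the gap is exactly the quadratic residual — whereas the paper's uniqueness argument only yields strict trace inequalities and needs a small limiting argument to recover the psd ordering; and your explicit flagging that $P' \sgeq 0$ is needed for $\op{R}(P')$ to be well defined is a hypothesis the paper's proof also uses but leaves tacit. What the paper's route buys is brevity and reuse of the dynamic-programming machinery already set up for \cref{prop:lqr-finite}.
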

\begin{proof}
    Result (i) is simply monotonicity of the Bellman operator \cite[Lem.~1.1.1]{Bertsekas2005V2}.
    Specifically, by observing \eqref{eq:bellman} it is clear that the cost is larger for $P$ than for $P'$ 
    since $\tr[P \op{E}(Z)] \geq \tr[P' \op{E}(Z)]$ for all $Z$. Hence the minimum is also larger. 
    Moreover using \cref{eq:ric-alt} gives $\tr[X \op{R}(P)] \geq \tr[X \op{R}(P')]$. Taking $X = x\trans{x}$ for any $x$
    gives $\trans{x} \op{R}(P) x \geq \trans{x} \op{R}(P') x$ thus showing (i). 
    Result (ii) follows from $H = \blkdiag(Q, R) \sgt 0$, causing the minimizer we computed for \eqref{eq:bellman} 
    to be unique since it should satisfy \cref{eq:first-order}. \update*{So we can claim, using \cref{eq:ric-alt}, 
    \begin{equation*}
        \exists X \in \psd{n_x} \colon \tr[X \op{R}(P)] < \tr[X \adj{\Pi}_{K'}(H + \adj{\op{E}}(P))].
    \end{equation*}
    We take the rank one decomposition $X = \sum_{i=1}^{r} x_i \trans{x_i}$. Then $\sum_{i=1}^{r} \trans{x_i} \op{R}(P) x_i < \sum_{i=1}^{r}\trans{x_i} \adj{\Pi}_{K'}(H + \adj{\op{E}}(P)) x_i$. 
    So we conclude that $\op{R}(P) \nsleq \adj{\Pi}_{K'}(H + \adj{\op{E}}(P))$. 
    }
\end{proof}

\begin{prepupdate}
Next, we study the fixed point of the Riccati equation.
\begin{proposition} \label{prop:lqr-infinite}
    Assume $H = \blkdiag(Q, R) \sgt 0$. Then, $\exists K \colon \rho(\op{E}_K) < 1$ implies
    \begin{enumerate}
        \item For all $P \sgeq 0$, $\lim_{t \to \infty} \op{R}^t(P) = P_{\star} \sgt 0$;
        \item $P_{\star}$ uniquely solves $\op{R}(P_{\star}) = P_{\star}$;
        \item For $K_{\star} = -(R + \adj{\op{G}}(P_{\star}))^{-1} \adj{\op{H}}(P_{\star})$,  $\rho(\op{E}(K_{\star})) < 1$.
    \end{enumerate}
\end{proposition}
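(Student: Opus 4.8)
The plan is to analyze the iteration started at $P = 0$ first and then bootstrap to an arbitrary $P \sgeq 0$. First I would note that $\op{R}(0) = Q \sgeq 0 = \op{R}^0(0)$ directly from \eqref{eq:ric-def}, and invoke monotonicity of the Riccati operator (\cref{cor:ric-prop}) to conclude inductively that $t \mapsto \op{R}^t(0)$ is nondecreasing in the Loewner order. For an upper bound I would use the stabilizing gain $K$ from the hypothesis $\rho(\op{E}_K) < 1$: by \cref{prop:lyapcp} there is a $P_K \sgeq 0$ solving the corresponding closed-loop Lyapunov equation, and since $\tr[\op{R}^t(0) X_0]$ is the optimal finite-horizon value $\op{J}^t(0; X_0)$ (\cref{prop:lqr-finite}), it is dominated by the suboptimal cost of the stationary policy $K$, which is at most $\tr[P_K X_0]$. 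Hence $\op{R}^t(0) \sleq P_K$ for every $t$, and a monotone sequence in $\sym{n_x}_{+}$ bounded above converges, so $\op{R}^t(0) \to P_\star$ for some $P_\star \sgeq 0$.

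Next I would promote $P_\star$ to a fixed point and establish its two defining properties. Since $R \sgt 0$ keeps $R + \adj{\op{G}}(P) \sgt 0$ invertible on $\{P \sgeq 0\}$, the operator $\op{R}$ is continuous there, and passing to the limit in $\op{R}^{t+1}(0) = \op{R}(\op{R}^t(0))$ yields $\op{R}(P_\star) = P_\star$. Writing the fixed point in the closed-loop form \eqref{eq:ric-alt}, namely $P_\star = \adj{\Pi}_{K_\star}(H) + \adj{\op{E}}_{K_\star}(P_\star)$ with $K_\star$ the associated gain and $\adj{\op{E}}_{K_\star} = \adj{\Pi}_{K_\star} \adj{\op{E}}$ (\cref{lem:opadj}), I would rearrange to the Lyapunov equation $P_\star - \adj{\op{E}}_{K_\star}(P_\star) = Q + \trans{K_\star} R K_\star \sgt 0$. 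Because $P_\star \sgeq 0$ solves this with a positive-definite right-hand side, \cref{prop:lyapcp} yields both $\rho(\op{E}_{K_\star}) < 1$, which is claim (ii), and in return that the solution is positive definite, so $P_\star \sgt 0$.

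Finally I would prove convergence for an arbitrary $P \sgeq 0$, which simultaneously delivers uniqueness. The lower bound is immediate from monotonicity: $\op{R}^t(P) \sgeq \op{R}^t(0) \to P_\star$. For the upper bound I would again bound $\op{J}^t(P; X_0) = \tr[\op{R}^t(P) X_0]$ by the cost of running the now-stabilizing stationary controller $K_\star$ over the horizon; since $\rho(\op{E}_{K_\star}) < 1$, the terminal penalty $\tr[P\, \op{E}_{K_\star}^t(X_0)] \to 0$ while the accumulated stage cost tends to $\tr[P_\star X_0]$, giving $\limsup_t \tr[\op{R}^t(P) X_0] \leq \tr[P_\star X_0]$ for all $X_0 \sgeq 0$. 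The two bounds force $\op{R}^t(P) \to P_\star$, and uniqueness follows at once: any $P' \sgeq 0$ with $\op{R}(P') = P'$ satisfies $P' = \op{R}^t(P') \to P_\star$, hence $P' = P_\star$. I expect this last upper bound to be the main obstacle, since it is where the stabilizing property of $K_\star$ must be used to ensure the terminal term vanishes and the finite-horizon values do not overshoot $P_\star$ — precisely the mechanism that excludes any larger positive semidefinite solution of the Riccati equation.
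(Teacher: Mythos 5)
Your proof is correct, and its first two stages (monotone value iteration from $0$ bounded above via the stabilizing gain $K$, then reading the fixed point as a Lyapunov equation and invoking \cref{prop:lyapcp} to get both $\rho(\op{E}_{K_\star}) < 1$ and $P_\star \sgt 0$) coincide with the paper's argument. Where you genuinely diverge is the last stage, the convergence of $\op{R}^t(P)$ for arbitrary $P \sgeq 0$ and hence uniqueness. The paper sandwiches $\op{J}^N(P; X_0)$ between $\op{J}^N(0; X_0)$ and $\op{J}^N(0; X_0) + \tr[P X_N^0]$, where $X_t^0$ is the trajectory generated by the \emph{time-varying optimal gains of the zero-terminal-cost problem}; it then has to argue indirectly that finiteness of $\lim_N \op{J}^N(0; X_0)$ together with $H \sgt 0$ forces $X_N^0 \to 0$, so that the terminal term vanishes. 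You instead upper-bound $\op{J}^t(P; X_0)$ by the cost of the \emph{stationary} policy $K_\star$, whose stability you have already established; then the terminal term $\tr[P\,\op{E}_{K_\star}^t(X_0)]$ vanishes by geometric decay (\cref{lem:stabcp}) and the accumulated stage cost converges to $\tr[P_\star X_0]$ directly from the trace identity in \cref{prop:lyapcp}, with the lower bound $\op{R}^t(P) \sgeq \op{R}^t(0)$ supplied by \cref{cor:ric-prop} as in the paper. Your variant is cleaner: it avoids the paper's somewhat delicate ``finite limiting cost implies the optimal trajectory tends to zero'' step (which is subtle because the gains $K^0_{N-t-1}$, and hence the trajectory, change with the horizon $N$), and it makes explicit where the stabilizing property of $K_\star$ is the mechanism excluding any other psd fixed point. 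You are also slightly more careful than the paper on two small points: verifying continuity of $\op{R}$ on the psd cone (needed to pass from monotone convergence to the fixed-point identity) and exhibiting the explicit upper bound $P_K$ from the Lyapunov equation of the hypothesized stabilizing $K$.
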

\begin{proof}
    The proof follows that of \cite[Prop.~4.4.1]{Bertsekas2005V1}. 

    \paragraph*{\emph{(i)} and $P = 0$} Let $K$ be a controller such that $\rho(\op{E}_K) < 1$. 
    We will prove that $\lim_{k \to \infty} \op{R}^k(P) = P_{\star}$ is well defined by applying \cref{lem:monotone-convergence} for which we should show 
    for all $X$ that 
    \begin{align*}
        \tr[\op{R}^k(0) X] &\leq \tr[\op{R}^{k+1}(0) X], \, \forall k \in \N \text{ and } \\
        \lim_{k \to \infty} \tr[\op{R}^k(0) X] &< +\infty.
    \end{align*}
    The first inequality follows by induction. To start the induction note that $\op{R}(0) = \adj{\Pi}_{K_{0}}(H + \adj{\op{E}}(0)) = \adj{\Pi}_{K_{0}}(H) \sgt 0$,
    where the first equality follows from \cref{eq:ric-alt}. Then assume $\op{R}^{k+1}(0) \sgeq \op{R}^{k}(0)$. Taking $\op{R}$ on both sides and then
    using \cref{cor:ric-prop:a} then proves the induction step. To show that $\lim_{k \to \infty} \tr[\op{R}^{k}(0) X] < +\infty$ note that 
    $\tr[\op{R}^{k}(0) X] = \op{J}^k(0; X)$, which is upper bounded by $\sum_{t=0}^{k} \tr[\op{E}_K^t(X) \adj{\Pi}_K(H)]$. 
    By \cref{prop:lyapcp}, the limit of this sum for $k \to \infty$ is bounded. Therefore $\lim_{k \to \infty} \tr[\op{R}^{k}(0) X] < +\infty$. 
    From \cref{lem:monotone-convergence} we thus get $\lim_{k \to \infty} \op{R}^k(P) = P_{\star}$. Finally $P_\star \sgt 0$ 
    follows from $\op{R}(0) \sgt 0$.

    \paragraph*{\emph{(ii)}} Clearly $P_{\infty}$ derived in the previous step satisfies $\op{R}(P_{\infty}) = P_{\infty}$. 
    This equality can be rewritten using \cref{eq:ric-alt} as:
    \begin{equation} \label{eq:ric-lyap}
        P_\infty - \adj{\op{E}_{K_\star}}(P_{\infty}) = \adj{\op{E}_{K_\star}}(H) \sgt 0. 
    \end{equation}
    This is a Lyapunov equation. Therefore, by \cref{prop:lyapcp}, $P_\infty$ is the unique solution to $\op{R}(P_{\infty}) = P_{\infty}$. 

    \paragraph*{\emph{(iii)}} Stability follows directly from \cref{eq:ric-lyap} and \cref{prop:lyapcp}. 

    \paragraph*{\emph{(i)} for $P \neq 0$} From the definition of $\op{J}^N$ we get 
    \begin{align} \label{eq:squeezed-costs}
        &\op{J}^N(0; X_0) \leq \op{J}^N(P; X_0) \nonumber \\
                         &\quad\leq \sum_{t=0}^{N-1} \tr[\op{E}_{K_\star}^t(X) \adj{\Pi}_{K_\star}(H)] + \tr[\op{E}_{K_\star}^N(X_0) P], 
    \end{align}
    where the first inequality follows from $P \nsgeq 0$, so the cost that is minimized in $\op{J}^N(P; X_0)$ is larger than 
    the one of $\op{J}^N(0; X_0)$. The second inequality follows from the fact that $Z_t = \Pi_{K_{\star}}(X_t) = \Pi_{K_\star}(\op{E}_{K_\star}^t(X))$ is feasible 
    for the problem in $\op{J}^N(P; X_0)$. From \cref{eq:ric-lyap} and \cref{prop:lyapcp} we get that the right-hand side of \cref{eq:squeezed-costs} converges to $\tr[X_0 P_{\star}]$. 
    This holds trivially for the sum. For the final term we use the fact that $\rho(\op{E}_{K_\star}) < 1$. 
    Similarly, from \cref{prop:lqr-finite}, $\lim_{N\to \infty} \op{J}^N(0; X_0) = \tr[X \lim_{t\to\infty} \op{R}^t(0)] = \tr[X_0 P_\star]$
    and $\lim_{N\to \infty} \op{J}^N(P; X_0) = \tr[X \lim_{t\to\infty} \op{R}^t(P)]$. Thus, by the sandwich theorem, $\tr[X (\op{R}^\infty(P) - \op{R}^\infty(0))] = 0$ for 
    any $X \in \sym{d}$. Note that $\op{R}^\infty(P) \sgeq \op{R}^\infty(0)$ since $P \sgeq 0$ and by repeated application of \cref{cor:ric-prop:a}. 
    We proceed by contradiction, assuming that $\op{R}^\infty(P) \nsgeq \op{R}^\infty(0)$. Then, taking $X \sgt 0$, \cref{lem:trivialinequality} implies $\tr[X (\op{R}^\infty(P) - \op{R}^\infty(0))] > 0$. 
    This is a contradiction from what we showed earlier. Thus $\op{R}^\infty(P) = \op{R}^\infty(0)$.
\end{proof}
\end{prepupdate}

We are now ready to prove the main LQR result:
\paragraph*{Proof of \cref{thm:cplqr}}
Stability follows directly from \cref{prop:lqr-infinite}.
Taking $N \to \infty$ in \cref{prop:lqr-finite} shows (ii). Using \eqref{eq:ric-lyap}
and \cref{prop:lyapcp} gives $\tr[P_{\star} X_0] = \tr[H \ssum_{t=0}^{\infty} \op{E}^t_{K_{\star}}(X_0)]$.
That is $Z_t = \Pi_{K_{\star}}(X_t)$ achieves the optimal cost. 
\qed{}

We conclude this section with the derivation of the SDP reformulations of the Riccati equation \eqref{eq:ric-def}.

\begin{prepupdate}
\paragraph{Proof of \cref{thm:ric-sdp-primal}}
We follow \cite{Balakrishnan2003}. 
Introducing $\op{L}(Z) = X - \op{E}(Z)$ as in \cref{lem:opadj} and its adjoint 
$\adj{\op{L}}$. It becomes clear from \cref{lem:opadj} that the SDP in \cref{thm:ric-sdp-primal} is equivalent to 
\begin{equation*}
    \minimize \, \{\tr[P (-X_0)] \colon \adj{\op{L}}(P) \sleq H, P \sgeq 0\}. 
\end{equation*}
The Lagrangian dual of this SDP is \cite[Eq.~9]{Balakrishnan2003}
\begin{equation} \label{eq:dual}
    \begin{alignedat}{2}
        &\maximize_{Z} & \quad & - \tr[H Z] \\
        &\stt && Z_{22} - \op{L}(Z_{11}) = - X_0 \\
        &&& Z = \begin{bmatrix}
            Z_{11} & Z_{12} \\ Z_{21} & Z_{22}
        \end{bmatrix} \sgeq 0.
    \end{alignedat}
\end{equation}

The proof proceeds as follows: first strong duality is established; then feasibility is examined, proving \emph{(i)}; next optimality conditions are given for $X_0 \sgeq 0$; and 
finally the uniqueness of the solution is established for $X_0 \sgt 0$, proving \emph{(ii)}. 

\paragraph*{Strong duality} Note that (i) $\exists Z \colon Z \nsgeq 0$, $Z_{22} - \op{L}(Z_11) = 0$ and $\tr[HZ_{11}] \sleq 0$,
with $Z$ partitioned as in \cref{eq:dual}; and (ii) $\exists P \colon P \sgt 0$ and $\adj{\op{L}}(P) \slt H$ are strong alternatives \cite[Thm.~1]{Balakrishnan2003}. 
We can leverage \cref{lem:trivialinequality} and $H \sgt 0$ to show that (i) holds only if $Z_{11} = 0$. Since $\op{L}$ is linear, this implies $Z_{22} = 0$. 
Therefore $Z = 0$ as well, which means that (i) can never hold. Therefore (ii) must hold and the primal problem is strictly feasible. Strong duality then follows by \cite[Thm.~4]{Balakrishnan2003}.

\paragraph*{Feasibility} Note that \cref{eq:dual} is feasible iff there is some $Z_{11} \sgeq 0$ such that 
$\op{L}(Z_{11}) \sgeq X_0$. When the dynamics are stabilizable, then there is some $K$ such that $\rho(\op{E}_K) = \rho(\adj{\op{E}}_K) < 1$. 
Hence there exists an $X \sgt 0$ such that $\op{L}(\Pi_K(X)) = X - \op{E}_{K}(X) = X_0$ by \cref{prop:lyapcp}, implying feasibility of $Z_{11} = \Pi_K(X)$. 
Therefore, by \cite[\S{}II.B]{Balakrishnan2003}, the primal problem is bounded.

Conversely by \cref{prop:lyapcp}, there is no $K, X$ such that $\op{L}(\Pi_K(X)) = X - \op{E}_{K}(X) = X_0 + Z_{22}$ for any $Z_{22} \sgeq 0$. Since, by \cref{lem:z-partition}, any $Z_{11} \sgeq 0$ 
can be written as $Z_{11} = \Pi_K(X) + \blkdiag(\Delta, 0)$ for $\Delta \sgeq 0$ and $\op{L}(\Pi_K(X) + \blkdiag(\Delta, 0)) = \op{L}(\Pi_K(X)) - \op{E}(\blkdiag(0, \Delta)) \sleq \op{L}(\Pi_K(X))$. Thus there is no $Z_{11}$
such that $\op{L}(Z_{11}) = X_0 + Z_{22}$ and \cref{eq:dual} is infeasible. This implies the primal problem is unbounded by \cite[\S{}II.B]{Balakrishnan2003}.

\paragraph*{Optimality} From earlier, stabilizability results in feasibility of the dual. 
Note that $Z_{22} = \op{L}(Z_{11}) - X_0$ when $Z$ is feasible. So we only consider constraints on
$Z_{11}$. By \cref{lem:z-partition}, we can partition $Z_{11} \sgeq 0$ as
$Z_{11} = \Pi_K(X) + \blkdiag(0, \Delta)$
with $X \sgeq 0$ and $\Delta \sgeq 0$. By complementary slack \cite[\S{}II.C]{Balakrishnan2003}:
\begin{align*}
    &\begin{bmatrix}
        I \\ K
    \end{bmatrix} X \begin{bmatrix}
        I & \trans{K}
    \end{bmatrix} \begin{bmatrix}
        Q - P + \adj{F}(P) & \trans{(\adj{\op{H}})}(P) \\ \adj{\op{H}}(P) & R + \adj{\op{G}}(P) 
    \end{bmatrix}\\
    & \quad + (R + \adj{\op{G}}(P)) \Delta = 0
\end{align*}
for the optimum. Since $R \sgt 0$, the second term is zero iff $\Delta = 0$. Thus $Z_{11}$ solves the dual iff $Z_{11} = \Pi_K(X)$. 

A candidate solution $(Z_{11}, P) = (\Pi_K(X), P)$ with $X \sgt 0$ is then optimal iff 
\begin{equation} \label{eq:complementary-slack}
    \begin{bmatrix}
        I&K
    \end{bmatrix} \begin{bmatrix}
        Q - P + \adj{F}(P) & \trans{(\adj{\op{H}})}(P) \\ \adj{\op{H}}(P) & R + \adj{\op{G}}(P)
    \end{bmatrix} = 0,
\end{equation}
which holds iff $P = \op{R}(P)$ (i.e. $P = P_{\star}$) and $K = K_{\star}$. So $P_{\star}$ is an optimal solution to the SDP, when it is bounded (i.e. the dual problem is feasible). 

\paragraph*{Uniqueness} Assume $X_0 \sgt 0$. As shown before, $Z_{11} = \Pi_K(X)$ for some $X$. From feasibility we require that 
$X - \op{E}_K(X) \sgeq X_0 \sgt 0$. So $X \sgt 0$ holds. Therefore any pair of solutions $(Z_{11}, P)$ with $Z_{11} = \Pi_K(X)$ must satisfy \cref{eq:complementary-slack}
and, by earlier arguments $P = \op{R}(P)$ and $K = K_{\star}$.
\qed{}
\end{prepupdate}

Next we prove the properties of \eqref{eq:sdp-stab}.

\paragraph*{Proof of \cref{thm:sdp-stab}} 
Note that, by $H \sgt 0$, $\adj{\Pi}_K(H) = Q + \trans{K} R K \sgt 0$. Hence any feasible pair $(P, K)$ satisfies a Lyapunov
equation for some $Q' \sgeq \adj{\Pi}_K(H)$. From \cref{lem:stabcp} stability then follows and
\begin{align*}
    \forall X \colon \tr[PX]    &= \tr[\ssum_{t=0}^{\infty} \op{E}^t_K(X) Q']  \\
                                &\geq \tr[\ssum_{t=0}^{\infty} \op{E}^t_K(X) \adj{\Pi}_K(H)] \geq \op{J}^\star(X_0).
\end{align*}

From \eqref{eq:ric-lyap}, $(P_{\star}, K_{\star})$ are feasible for \eqref{eq:sdp-stab} and satisfy $\tr[P_{\star} X_0] = \op{J}^{\star}(X_0)$
by \cref{prop:lqr-infinite}. Therefore this pair achieves the lower bound and is optimal. 

Assume $X_0 \sgt 0$ and that there is a feasible $(P', K') \neq (P_{\star}, K_{\star}) \colon \tr[P' X_0] = \tr[P_{\star} X_0]$. 
By feasiblity and \cref{cor:ric-prop:b}: $P' \sgeq \adj{\Pi}_{K'}(H + \adj{\op{E}}(P')) \sgeq \op{R}(P')$. So,
by repeatedly applying \cref{cor:ric-prop:a} we have $P' \sgeq \op{R}(P') \sgeq \op{R}^2(P) \sgeq \dots$.
By induction we then have $P' \sgeq \lim_{k \to \infty} \op{R}^k(P') = P_{\star}$. Hence, by $X_0 \sgt 0$,
$P' \neq P_{\star}$ and \cref{lem:trivialinequality} we have $\tr[P' X_0] > \tr[P_{\star} X_0]$ which is a contradiction
since we assumed $\tr[P'X_0] = \tr[P_\star X_0]$ before. 
Thus $P' = \adj{\Pi}_{K'}(H + \adj{\op{E}}(P')) = \op{R}(P')$, which by \cref{cor:ric-prop:b} and 
\cref{prop:lqr-infinite} holds iff $(P', K') = (P_{\star}, K_{\star})$.

\end{arxiv}

\section{Vector and matrix concentration}
\begin{lemma} \label{lem:mathfd}
    For some $\gamma \in \Re^{\nsample}$ and a random, independent sequence $\{X_i\}_{i\in \N_{1:\nsample}} \subset \sym{d}$ with $\nrm{X_i}_2 \leq [\gamma]_i$ a.s. 
    and $\E[X_i] = 0$. 
    Then, for $\beta \geq 0$, $Y = \ssum_{i=1}^{\nsample} X_i$, 
    \begin{align*}
        \prob\left[\lambda_{\update*{\mathrm{max}}}{\left( Y \right)} \geq \beta\right] &\leq \delta; &
        \prob\left[\lambda_{\update*{\mathrm{min}}}{\left( Y \right)} \leq -\beta\right] &\leq \delta,
    \end{align*}
    for $\delta = d \exp({-\beta^2}/{2\nrm{\gamma}_2^2 })$. 
    Moreover, $\prob\left[\nrm{Y}_2 \geq \beta\right] \leq 2\delta$.
\end{lemma}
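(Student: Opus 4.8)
The plan is to use the matrix Laplace-transform method together with a sharp matrix Hoeffding lemma; the three claims then all reduce to a single one-sided bound on $\lambda_1$. First I would treat the largest eigenvalue. By the matrix Chernoff master bound, for any $\theta > 0$,
\[
    \prob[\lambda_1(Y) \geq \beta] \leq e^{-\theta\beta}\, \E\tr\exp(\theta Y).
\]
Since the $X_i$ are independent and self-adjoint, Lieb's theorem yields subadditivity of the matrix cumulant generating function,
\[
    \E\tr\exp\left(\theta\ssum_{i=1}^{\nsample} X_i\right) \leq \tr\exp\left(\ssum_{i=1}^{\nsample} \log\E\,e^{\theta X_i}\right).
\]

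Second I would bound each summand. The hypothesis $\nrm{X_i}_2 \leq [\gamma]_i$ is equivalent to $-[\gamma]_i I \sleq X_i \sleq [\gamma]_i I$, so the spectrum of $X_i$ lies in $[-[\gamma]_i, [\gamma]_i]$. Convexity of $x \mapsto e^{\theta x}$ gives the chord bound $e^{\theta x} \leq \cosh(\theta [\gamma]_i) + \tfrac{\sinh(\theta[\gamma]_i)}{[\gamma]_i}\,x$ on that interval (the case $[\gamma]_i = 0$ being trivial), and since the dominating function is affine the transfer rule for matrix functions gives $e^{\theta X_i} \sleq \cosh(\theta[\gamma]_i) I + \tfrac{\sinh(\theta[\gamma]_i)}{[\gamma]_i}X_i$. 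Taking expectations, using $\E[X_i] = 0$ and $\cosh(y) \leq e^{y^2/2}$, yields $\E\,e^{\theta X_i} \sleq e^{\theta^2[\gamma]_i^2/2}I$. Operator monotonicity of the logarithm then gives $\log\E\,e^{\theta X_i} \sleq (\theta^2[\gamma]_i^2/2)I$, so $\ssum_i \log\E\,e^{\theta X_i} \sleq (\theta^2\nrm{\gamma}_2^2/2)I$, and monotonicity of the trace exponential bounds the right-hand side of the Lieb inequality by $\tr\exp((\theta^2\nrm{\gamma}_2^2/2)I) = d\,e^{\theta^2\nrm{\gamma}_2^2/2}$.

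Combining the two displays, $\prob[\lambda_1(Y)\geq\beta] \leq d\exp(-\theta\beta + \theta^2\nrm{\gamma}_2^2/2)$, and minimizing the exponent over $\theta > 0$ at $\theta = \beta/\nrm{\gamma}_2^2$ gives exactly $\delta = d\exp(-\beta^2/(2\nrm{\gamma}_2^2))$. For the smallest eigenvalue I would apply the result verbatim to $-Y = \ssum_i(-X_i)$, which satisfies the same hypotheses ($\E[-X_i]=0$, $\nrm{-X_i}_2 \leq [\gamma]_i$), and use $\lambda_1(-Y) = -\lambda_d(Y)$ to obtain $\prob[\lambda_d(Y)\leq-\beta] \leq \delta$. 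Finally, because $\nrm{Y}_2 \geq \beta$ forces $\lambda_1(Y)\geq\beta$ or $\lambda_d(Y)\leq-\beta$, a union bound gives $\prob[\nrm{Y}_2\geq\beta]\leq2\delta$.

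I expect the only delicate step to be the matrix Hoeffding lemma $\E\,e^{\theta X_i}\sleq e^{\theta^2[\gamma]_i^2/2}I$: one must invoke the transfer rule correctly (the affine chord dominates $e^{\theta x}$ on the whole spectrum of $X_i$, and affine maps respect the operator order so the expectation kills the linear term), and one must be careful that it is Lieb's concavity theorem, not naive submultiplicativity, that licenses the subadditivity of the cumulant generating function. The optimization over $\theta$ and the reductions to the lower tail and to the spectral norm are routine. Alternatively, the whole statement is the standard matrix Hoeffding inequality and may be cited directly from \cite{Tropp2015}.
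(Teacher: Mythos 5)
Your proof is correct and follows essentially the same route as the paper: the chord/convexity bound on the matrix MGF giving $\E\,e^{\theta X_i} \sleq \cosh(\theta[\gamma]_i)I \sleq e^{\theta^2[\gamma]_i^2/2}I$, followed by the master tail bound, optimization over $\theta$, and a union bound for the spectral norm. The only cosmetic difference is that you unpack Tropp's master bound (Chernoff plus Lieb subadditivity) and reduce the lower tail to $-Y$, whereas the paper cites \cite[Thm.~3.6.1]{Tropp2015} directly and uses its $\theta<0$ branch.
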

\begin{proof}
    We extend the proof of \cite[Lem~II.2]{Coppens2021}. Let $X_i = V_i \Lambda_i \trans{V_i}$ be the eigenvalue decomposition. 
    Hence
    \begin{align*}
        &\E[\exp(\theta X_i)] = \E[V_i \exp(\theta \Lambda_i) \trans{V_i}] \\
        &\quad \sleq \E\left[\frac{[\gamma]_i I + X_i}{2[\gamma]_i} e^{\theta [\gamma]_i} + \frac{[\gamma]_i I - X_i}{2[\gamma]_i}  e^{-\theta [\gamma]_i}\right]
    \end{align*}
    by convexity. Also from $\E[X_i] = 0$ we have, $\E[\exp(\theta X_i)] \sleq \cosh(\theta [\gamma]_i) I \sleq \exp(\theta^2 [\gamma]_i^2/2) I$. 

    Applying \cite[Thm.~3.6.1]{Tropp2015} results in
    \begin{align*}
        \prob[\lambda_{\update*{\mathrm{max}}}(Y) \geq \beta] &\leq \inf_{\theta > 0} d \exp(\theta \nrm{\gamma}_2^2 /2 - \theta \beta) \\
        \prob[\lambda_{\update*{\mathrm{min}}}(Y) \leq -\beta] &\leq \inf_{\theta < 0} d \exp(\theta \nrm{\gamma}_2^2 /2 - \theta \beta).
    \end{align*}
    Minimizing over $\theta$ gives the required result for $\lambda_1$ and $\lambda_d$. Taking a union bound gives the spectral norm bound. 
\end{proof}

\begin{lemma} \label{lem:vechfd}
    Let  $\gamma \in \Re^{\nsample}$ and consider a random, independent sequence $\{x_i\}_{i \in \N_{1:\nsample}} \subset \Re^d$
    with $\nrm{x_i} \leq [\gamma]_i$ a.s. Then, for $\beta \geq 0$, $y = \ssum_{i=1}^N (x_i - \E[x_i])$,
    \begin{equation*}
        \prob[\nrm{y} \geq 2 \nrm{\gamma}_2 + \beta] \leq \delta,
    \end{equation*}
    for $\delta = \exp(-\beta^2/2 \nrm{\gamma}_2^2)$. 
\end{lemma}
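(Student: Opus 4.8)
The plan is to treat the scalar quantity $\nrm{y}$ as a function of the independent inputs $x_1, \dots, x_N$ and apply a bounded-differences (McDiarmid) concentration inequality, after first controlling its expectation. This mirrors the strategy of \cref{lem:mathfd}, but replaces the matrix MGF machinery with a Lipschitz/bounded-differences argument suited to the vector norm.

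First I would set $f(x_1, \dots, x_N) \dfn \nrm{\ssum_{i=1}^N (x_i - \E[x_i])}$ and verify the bounded-differences property. Holding all coordinates but the $i$-th fixed and replacing $x_i$ by any other admissible value $x_i'$ (i.e. with $\nrm{x_i'} \leq [\gamma]_i$), the reverse triangle inequality shows that $f$ changes by at most $\nrm{x_i - x_i'} \leq \nrm{x_i} + \nrm{x_i'} \leq 2[\gamma]_i$. Hence the bounded-difference constants are $c_i = 2[\gamma]_i$, giving $\ssum_i c_i^2 = 4\nrm{\gamma}_2^2$.

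Next I would bound the mean. By Jensen's inequality $\E[f] \leq \sqrt{\E[f^2]} = \sqrt{\E[\nrm{y}^2]}$, and expanding $\nrm{y}^2$ while using independence to annihilate the cross terms yields $\E[\nrm{y}^2] = \ssum_i \E[\nrm{x_i - \E[x_i]}^2] \leq \ssum_i \E[\nrm{x_i}^2] \leq \ssum_i [\gamma]_i^2 = \nrm{\gamma}_2^2$, where the middle step uses $\E[\nrm{x_i - \E[x_i]}^2] = \E[\nrm{x_i}^2] - \nrm{\E[x_i]}^2 \leq \E[\nrm{x_i}^2]$. Thus $\E[f] \leq \nrm{\gamma}_2 \leq 2\nrm{\gamma}_2$, and the slack in the stated factor $2$ is harmless.

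Finally, applying McDiarmid's inequality with the constants $c_i$ gives $\prob[f \geq \E[f] + \beta] \leq \exp(-2\beta^2/\ssum_i c_i^2) = \exp(-\beta^2/2\nrm{\gamma}_2^2) = \delta$, and combining with $\E[f] \leq 2\nrm{\gamma}_2$ yields $\prob[\nrm{y} \geq 2\nrm{\gamma}_2 + \beta] \leq \delta$ as claimed. The only real subtlety is the availability of McDiarmid's inequality in the present framework; if it cannot simply be cited alongside \cref{lem:mathfd}, I would reconstruct it via the Doob martingale $M_k = \E[f \mid x_1, \dots, x_k]$, whose increments are bounded by $c_k$, and invoke the Azuma--Hoeffding inequality. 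This martingale step is where the sum $\ssum_i c_i^2$ in the exponent — and hence the exact constant in $\delta$ — arises, so it is the point that needs the most care.
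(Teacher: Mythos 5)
Your proof is correct, and it reaches the stated bound along a route that differs from the paper's in one substantive place. The concentration step is essentially the same in both arguments: the paper follows the Ledoux--Talagrand technique, building the Doob martingale of $\nrm{y}_2$ by hand, bounding each martingale increment in an interval of length $2[\gamma]_i$ via the reverse triangle inequality, and invoking a (conditional) Hoeffding bound --- which is precisely the Azuma--Hoeffding reconstruction you flag as your fallback, so citing McDiarmid with constants $c_i = 2[\gamma]_i$ is the same mechanism under a different name and yields the identical exponent $\exp(-\beta^2/2\nrm{\gamma}_2^2)$. Where you genuinely diverge is the bound on $\E[\nrm{y}_2]$: the paper introduces an independent copy of the sample, symmetrizes with Rademacher variables, and applies Jensen twice to obtain $\E[\nrm{y}_2] \leq 2\nrm{\gamma}_2$, whereas you bypass symmetrization entirely by writing $\E[\nrm{y}_2] \leq \sqrt{\E[\nrm{y}_2^2]}$ and using independence to kill the cross terms, giving $\E[\nrm{y}_2] \leq \nrm{\gamma}_2$. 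Your argument is both more elementary and tighter by a factor of two; it shows the lemma actually holds with $\nrm{\gamma}_2 + \beta$ in place of $2\nrm{\gamma}_2 + \beta$, so the factor $2$ in the statement is an artifact of the symmetrization step rather than a necessity. The only point requiring care in your version, as you note, is that McDiarmid's bounded-differences property must hold over the (almost sure) support $\nrm{x_i'}_2 \leq [\gamma]_i$, which is exactly what your reverse-triangle-inequality computation verifies.
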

\begin{proof}
    We use the techniques of \cite[\S6]{Ledoux1991}. Introducing a filtration, let $\F_i$ the $\sigma$-algebra generated by $\{x_i\}_{i\in \N_{1:\nsample}}$ with $\F_0$ the trivial algebra.
    Let $\E_i[x] = \E[x \mid \F_i]$, $\Delta \E_i[x] = \E_i[x] - \E_{i-1}[x]$ and $d_i = \Delta E_{i}[\nrm{y}_2]$. 
    Then, by telescoping over $i$, we have $\ssum_{i=1}^{\nsample} d_i = \nrm{y} - \E[\nrm{y}]$. Noting that $y - x_i$ 
    does not depend on $x_i$ and letting $\delta_i = \nrm{y}_2 - \nrm{y - x_i}_2$ gives
    \begin{equation*}
        \Delta \E_i[\delta_i] = d_i - \E_{i}[\nrm{y - x_i}_2] + \E_{i-1}[\nrm{y - x_i}_2] = d_i. 
    \end{equation*}
    Then, by the triangle inequality, $|\delta_i| = |\nrm{y}_2 - \nrm{y - x_i}_2| \leq \nrm{y - y + x_i}_2 = [\gamma]_i$. 
    Letting $e_i \dfn \E_{i-1}[\delta_i]$ then gives $|d_i - e_i| = |\E_{i} \delta_i| \leq [\gamma]_i$. 

    Hence we have constructed a sequence of zero-mean, independent random variables $\{d_i\}_{i\in\N_{1:\nsample}}$ that take their value a.s. in $[-[\gamma]_i + e_i, [\gamma]_i + e_i]$. 
    Applying a classical Hoeffding bound \cite[Thm.~2.8]{Boucheron2013} gives
    \begin{equation} \label{eq:meanbnd1}
        \prob[\ssum_{i=1}^{\nsample} d_i \geq \beta] \leq \exp(-\beta^2/2\ssum_{i=1}^\nsample \nrm{\gamma}_2^2),
    \end{equation}
    where, as shown before, $\ssum_{i=1}^N d_i = \nrm{y}_2 - \E[\nrm{y}_2]$. We introduce a sequence of independent random vectors $\{v_i\}_{i \in \N_{1:\nsample}}$
    which is identically distributed to $\{x_i\}_{i\in \N_{1:\nsample}}$. Let $\E_{v}[x]$ denote the expectation conditioned on $\{x_i\}_{i\in \N_{1:\nsample}}$. 
    Then, by Jensen's inequality, we can bound $\E[\nrm{y}_2]$ as
    \begin{align*}
        \E[\nrm{\ssum_{i=1}^{\nsample} x_i - \E[x_i]}_2] &= \E[\nrm{\E_{v}[\ssum_{i=1}^\nsample x_i - v_i]}_2] \\
                                                       &\leq \E[\nrm{\ssum_{i=1}^\nsample (x_i - v_i)}_2].
    \end{align*}
    Using a classical symmetrization argument, introducing independent Rademacher random variables $\{\epsilon_i\}_{i\in\N_{1:\nsample}}$,
    gives $\E[\nrm{\ssum_{i=1}^\nsample (x_i - v_i)}_2] \leq 2\E[\nrm{\ssum_{i=1}^{\nsample} x_i \epsilon_i}_2]$. Applying Jensen once more,
    followed by a triangle inequality gives
    \begin{align*}
        \E[\nrm{y}_2]   &= 2\E[\nrm{\ssum_{i=1}^{\nsample} x_i \epsilon_i}_2] \leq 2\sqrt{\E[\nrm{\ssum_{i=1}^\nsample x_i \epsilon_i}_2^2]} \\
                        &\leq 2\sqrt{\E[\ssum_{i=1}^{\nsample} \nrm{x_i}_2^2]} \leq 2 \nrm{\gamma}_2.
    \end{align*}
    Plugging into \eqref{eq:meanbnd1} concludes the proof. 
\end{proof}

\begin{arxiv}
\section{Rank of random matrices} \label{app:rank}
We introduce a fundamental lemma, regarding the span of random i.i.d. vectors \cite[Thm.~3.3]{Eaton1973}:
\begin{lemma} \label{lem:rank-condition}
    Given a sequence of i.i.d. copies $\{z_1, \dots, z_N\}$ of a random vector $z \colon \Omega \to \Re^d$ with $N \geq d$, the following statements are equivalent:
    \begin{enumerateq}
        \item $\forall w \in \Re^d$, $w \neq 0$, $\prob[\trans{w} z = 0] = 0$;
        \item $\prob[\rk([z_1, \dots, z_N]) = d] = 1$.
    \end{enumerateq}
    Moreover if either (a) or (b) holds, then $\E[z \trans{z}] \sgt 0$.
\end{lemma}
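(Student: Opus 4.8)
The plan is to prove the equivalence by establishing the two implications separately and then to derive the ``moreover'' claim from condition (a). For (a) $\Rightarrow$ (b), I would build the span of the samples incrementally and argue that, with probability one, the first $d$ samples already span $\Re^d$. Concretely, set $V_k \dfn \mathrm{span}\{z_1, \dots, z_k\}$ and show by induction that $\dim V_k = k$ almost surely for $k \leq d$. The inductive step rests on the observation that whenever $\dim V_{k} = k < d$, the subspace $V_{k}$ is contained in \emph{some} hyperplane $\{x \colon \trans{w}x = 0\}$ with $w \neq 0$ determined by $z_1, \dots, z_k$. Since $z_{k+1}$ is independent of $z_1, \dots, z_k$, conditioning on the latter and invoking (a) gives $\prob[z_{k+1} \in V_{k} \mid z_1, \dots, z_k] = 0$, so $\dim V_{k+1} = k+1$ almost surely. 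Iterating to $k = d$ yields $\rk([z_1, \dots, z_d]) = d$ a.s., and appending the remaining $N-d$ columns cannot lower the rank, establishing (b).

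For the converse I would argue the contrapositive $\neg(\text{a}) \Rightarrow \neg(\text{b})$, which is elementary. If (a) fails there is a \emph{fixed} $w \neq 0$ with $p \dfn \prob[\trans{w}z = 0] > 0$. By independence, the event that all samples lie in the hyperplane $\{x \colon \trans{w}x = 0\}$ has probability $p^N > 0$, and on this event every column of $[z_1, \dots, z_N]$ is orthogonal to $w$, forcing $\rk([z_1,\dots,z_N]) \leq d-1$. Hence $\prob[\rk([z_1,\dots,z_N]) = d] \leq 1 - p^N < 1$, contradicting (b).

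Finally, assuming (a) holds, to show $\E[z\trans{z}] \sgt 0$ I would suppose for contradiction that the (necessarily positive semidefinite) second-moment matrix is singular. Then there is $w \neq 0$ in its kernel, so $\trans{w} \E[z\trans{z}] w = \E[(\trans{w}z)^2] = 0$; since $(\trans{w}z)^2 \geq 0$, this forces $\trans{w}z = 0$ almost surely, i.e. $\prob[\trans{w}z = 0] = 1 > 0$, directly contradicting (a). Thus $\E[z\trans z] \sgt 0$.

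The main obstacle is the measure-theoretic bookkeeping in the inductive step of (a) $\Rightarrow$ (b): the hyperplane normal $w$ is a measurable function of $z_1, \dots, z_k$ rather than a fixed vector, so (a) cannot be applied verbatim. I would handle this by integrating over the joint law of $(z_1,\dots,z_k)$ via Fubini--Tonelli, applying (a) to the independent copy $z_{k+1}$ for each fixed realization of the conditioning variables (for which $w$ is then a constant), and invoking a measurable selection $w = w(z_1,\dots,z_k)$ of a normal to any hyperplane containing $V_k$. With this care the conditional-probability statement is justified rigorously and the induction goes through.
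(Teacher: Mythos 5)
Your proposal is correct and takes essentially the same route as the paper's proof: an inductive/conditioning argument on the previously drawn samples for (a) $\Rightarrow$ (b), the fixed-$w$ independence argument giving probability $p^N > 0$ for the contrapositive of (b) $\Rightarrow$ (a), and the identity $\trans{w}\E[z\trans{z}]w = \E[(\trans{w}z)^2]$ for the positive-definiteness claim. If anything you are more careful than the paper, which simply asserts that $\prob[\trans{w} z_d \neq 0 \mid \rk(Z_{d-1}) = d-1] = 1$ "since (a) holds" without addressing that $w$ there is a measurable function of the conditioning variables rather than a fixed vector — exactly the Fubini/measurable-selection point you identify and resolve.
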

\begin{proof}
    The proof of \cite[Thm.~3.3]{Eaton1973} stands in a more general setting then necessary. We provide a simplified proof 
    for completeness. Without loss of generality we assume $N = d$. Let $Z_k \dfn [z_1, \dots, z_k]$ and $\set{B}[k]$ denote the 
    event $\rk[Z_k] = k$. 
    \paragraph*{(a) $\Leftarrow$ (b)} 
    We proceed by contradiction, so assume $\exists w \neq 0\colon \prob[\trans{w} z = 0] > 0$
    and that (b) holds (i.e. $\prob[\set{B}[d]] = 1$). Note that $\prob[\neg \set{B}[d]]=
    \prob[\exists v \neq 0 \colon \trans{v} Z_d = 0]$ (i.e., the kernel of $\trans{Z_d}$ is nontrivial). 
    We have,
    \begin{align*}
        &\prob[\exists v \neq 0 \colon \trans{v} Z_d = 0] \\
            &\quad \geq \prob[\trans{w} Z_d = 0]  = \Pi_{i=1}^d \prob[\trans{w}z_i = 0] > 0,
    \end{align*}
    where the first inequality follows by fixing $v = w$, the equality follows by independence and the final inequality follows by 
    the assumption that (a) does not hold. So $\prob[\set{B}[d]] \neq 1$ and we have shown (a) $\Leftarrow$ (b) by contradiction.
    \paragraph*{(a) $\Rightarrow$ (b)}
    We can decompose $\prob[\set{B}[d]]$ into
    \begin{align*}
        \prob[z_d \notin \img(Z_{d-1}) \mid \set{B}[d-1]] \cdot \prob[\set{B}[d-1]]
    \end{align*}
    We know that $\exists w \in \ker(\trans{Z}_{d-1})/\{0\}$ since $Z_{d-1}$ is not full rank. Hence the first factor is equal to 
    $\prob[\trans{w} z_d \neq 0 \mid \rk(Z_{d-1}) = d-1]$, where $w$ depends on $Z_{d-1}$. This factor equals $1$ since (a) holds\todoright{if this is iff. we can shorten the proof}. 
    We can repeatedly apply the same decomposition to the second factor until we arrive at $\prob[\rk(z_1) = 1]$,
    which equals to $1$ since (a) also implies $\prob[z_1 = 0] = 0$. Hence $\prob[\rk(Z_d) = d] = 1$ so (b) holds.

    Finally note that $\forall w$, $\trans{w} \E[z \trans{z}] w = \E{(\trans{w} z)^2}$. Since $\prob[\trans{w} z = 0] = 0$ by (a),
    $\E[(\trans{w} z )^2] > 0$. So $\E[z \trans{z}] \sgt 0$, completing the proof. 
\end{proof}

We can consider some examples of distributions satisfying (a). A clear sufficient condition is that the density of $z$ 
is dominated by the Lebesgue measure. 
Another example are uniform distributions over spheres. Meanwhile, any atomic measure will not satisfy (a). 


\begin{corollary} \label{cor:rank-kron-condition}
    With the setup of \cref{lem:rank-condition}, assume that $z$ is dominated by 
    the Lebesgue measure. Then, if $N \geq \sd{d}$,
    \begin{equation*}
        \prob[\rk([z_1 \skron z_1, \dots, z_N \skron z_N]) = \sd{d}] = 1
    \end{equation*}
    and $\E[( z \trans{z}) \skron ( z \trans{z})] \sgt 0$.
\end{corollary}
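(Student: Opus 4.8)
The plan is to reduce the statement to a direct application of \cref{lem:rank-condition} to the transformed random vector $\tilde{z} \dfn z \skron z \in \Re^{\sd{d}}$. Recall that for a vector the symmetrized Kronecker product collapses to $z \skron z = \svec(z \trans{z})$ (this is exactly the identity $\E[w \skron w] = \svec(\E[w \trans{w}])$ used when setting up the least-squares estimator). With this identification, the matrix $[z_1 \skron z_1, \dots, z_N \skron z_N]$ is precisely the data matrix $[\tilde{z}_1, \dots, \tilde{z}_N]$ built from i.i.d.\ copies of $\tilde{z}$, and the two assertions we must prove become statement (b) and the final positive-definiteness claim of \cref{lem:rank-condition}, with $d$ replaced by $\sd{d}$ and $z$ replaced by $\tilde{z}$ (note $N \geq \sd{d}$ is assumed). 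So it suffices to verify hypothesis (a) of that lemma for $\tilde{z}$.

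Before that, I would record the algebraic identity that makes the positive-definiteness conclusion line up with the desired one, namely
\begin{equation*}
    (z \trans{z}) \skron (z \trans{z}) = (z \skron z)(z \skron z)^{\mathsf T} = \tilde{z} \trans{\tilde{z}}.
\end{equation*}
This follows by unwinding \cref{def:skron}: $(z\trans z)\skron(z\trans z) = Q_d\big((z\trans z)\kron(z\trans z)\big)\trans{Q_d}$, then using the mixed-product property $(z\trans z)\kron(z\trans z) = (z\kron z)(z \kron z)^{\mathsf T}$ together with $\vec(z\trans z) = z \kron z$ and $\svec(X) = Q_d\vec(X)$, so that $Q_d(z\kron z) = \svec(z\trans z) = \tilde z$. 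Hence $\E[(z\trans z)\skron(z\trans z)] = \E[\tilde z\trans{\tilde z}]$, and the final claim drops out of \cref{lem:rank-condition} once (a) is established.

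The main step is verifying (a): for every $w \in \Re^{\sd{d}}$ with $w \neq 0$ we need $\prob[\trans{w}\tilde{z} = 0] = 0$. Writing $W \dfn \unsvec(w) \in \sym{d}$ and using that $\svec$ preserves inner products ($\trans{\svec(A)}\svec(B) = \tr[AB]$), we get $\trans{w}\tilde{z} = \trans{w}\svec(z\trans z) = \tr[W z \trans{z}] = \trans{z} W z$. Thus (a) is equivalent to: for every nonzero $W \in \sym{d}$, $\prob[\trans{z} W z = 0] = 0$. The quadratic form $z \mapsto \trans{z} W z = \sum_{i,j}[W]_{ij}[z]_i[z]_j$ is a nonzero polynomial in the entries of $z$ precisely because a symmetric matrix is determined by its quadratic form (so $W \neq 0$ rules out the identically-zero polynomial). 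Its zero set is an algebraic variety of Lebesgue measure zero, and since the law of $z$ is dominated by the Lebesgue measure (\cref{asm:data}), this event has probability zero.

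The crux — and the only non-bookkeeping ingredient — is the measure-theoretic fact that the zero set of a nonzero polynomial on $\Re^{d}$ is Lebesgue-null, combined with absolute continuity of the law of $z$; everything else is routine manipulation of the $\svec$/$\skron$ identities. With (a) in hand, \cref{lem:rank-condition} applied to $\tilde z$ yields both $\prob[\rk([\tilde z_1,\dots,\tilde z_N]) = \sd{d}] = 1$ and $\E[\tilde z\trans{\tilde z}] = \E[(z\trans z)\skron(z\trans z)] \sgt 0$, which is exactly the claim.
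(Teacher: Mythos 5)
Your proof is correct and follows essentially the same route as the paper's: both apply \cref{lem:rank-condition} to the transformed vector $z \skron z$, verify its hypothesis (a) by rewriting $\trans{w}(z \skron z) = \trans{z}\,\unsvec(w)\,z$ as a not-identically-zero quadratic polynomial whose zero set is Lebesgue-null, and obtain the moment positivity from the identity $(z\trans{z}) \skron (z\trans{z}) = (z \skron z)\trans{(z \skron z)}$. If anything, you spell out the second claim more explicitly than the paper, which only remarks that ``similar reasoning'' applies.
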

\begin{proof}
    Applying \cref{lem:rank-condition} to $z \kron z$ instead of $z$ gives:
    \begin{equation} \label{eq:exact-kron-rank}
        \prob[\trans{w}(z \skron z) = 0] = 0, \quad \forall w \in \Re^{\sd{d}}, w \neq 0.
    \end{equation}
    The condition $\trans{w}(z \skron z) = 0 \Leftrightarrow \trans{z} W z$ describes the roots of a quadratic polynomial,
    with $W = \unsvec(w) \in \sym{d}$. It is well known\todo{cite} that 
    such a set is of Lebesgue measure zero, if the polynomial is not identically zero. 
    Since $W \in \sym{d}$, $\trans{z}W z = 0$ for all $z$ iff $W = 0$. Hence, by our domination 
    assumption, $\prob[\trans{w}(z \skron z) = 0] = 0$ follows. Applying \cref{lem:rank-condition} and using $(z \kron z) \trans{(z \kron z)} = (z\trans{z}) \kron (z \trans{z})$
    completes the proof. We can apply similar reasoning to prove the second result. 
\end{proof}
\begin{remark}
    We could have directly used \eqref{eq:exact-kron-rank} as the assumption on $z$ to get a tight result. We 
    avoided doing so, since \eqref{eq:exact-kron-rank} is more difficult to verify in practice. 
\end{remark}
\end{arxiv}

\section{Identification model} \label{app:identification}
We begin by showing some auxiliary results. 
\begin{lemma} \label{lem:kernel-zn}
    For $\opm{Z}_N$ as in \eqref{eq:zmatdef} and $N \geq \sd{n_z}$, 
    \begin{equation*}
        \pinv{\opm{Z}_N} \opm{Z}_N = \pinv{(\trans{\ten{W}_{(3)}})} \trans{\ten{W}_{(3)}},
    \end{equation*}
    with probability one over data $(z_i)_{i=1}^N$ satisfying \cref{asm:data}. 
\end{lemma}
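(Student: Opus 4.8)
The plan is to recognize both sides of the claimed identity as orthogonal projectors and to show they project onto the same subspace. Recall that for any matrix $A$ the product $\pinv{A}A$ is the orthogonal projector onto $\img{\trans{A}}$, and that an orthogonal projector is uniquely determined by its range. Applying this with $A = \opm{Z}_N$ shows the left-hand side is the orthogonal projector onto $\img{\trans{\opm{Z}_N}}$, while applying it with $A = \trans{\ten{W}_{(3)}}$ shows the right-hand side is the orthogonal projector onto $\img{\ten{W}_{(3)}}$. Hence it suffices to prove that, almost surely, $\img{\trans{\opm{Z}_N}} = \img{\ten{W}_{(3)}}$ as subspaces of $\Re^{\sd{n_w}}$.

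First I would establish a deterministic factorization of $\opm{Z}_N$ through $\ten{W}_{(3)}$. Writing $s_i \dfn z_i \skron z_i = \svec(z_i \trans{z_i}) \in \Re^{\sd{n_z}}$, each block $\ten{W} \btimes{2} s_i$ of $\opm{Z}_N$ is the order-two tensor obtained from $\tucker{\ten{W}; I_{\sd{n_x}}, \trans{s_i}, I_{\sd{n_w}}}$ by removing its singleton second mode. Matricizing this $\sd{n_x}\times 1 \times \sd{n_w}$ tensor along mode $3$ (which produces a transpose) and invoking \cref{prop:kronunfold} gives the identity $\trans{(\ten{W} \btimes{2} s_i)} = \ten{W}_{(3)}(s_i \kron I_{\sd{n_x}})$. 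Stacking over $i$ then yields $\trans{\opm{Z}_N} = \ten{W}_{(3)}\,[\,s_1 \kron I_{\sd{n_x}}, \dots, s_N \kron I_{\sd{n_x}}\,]$. In particular $\img{\trans{\opm{Z}_N}} \subseteq \img{\ten{W}_{(3)}}$ for every realization of the data.

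For the reverse inclusion I would use the richness of the regressors. The columns of $s_i \kron I_{\sd{n_x}}$ are exactly the vectors $s_i \kron e_d$ with $d \in \N_{1:\sd{n_x}}$, so the column span of $[\,s_1 \kron I_{\sd{n_x}}, \dots, s_N \kron I_{\sd{n_x}}\,]$ equals $\mathrm{span}\{s_i \kron e_d : i \in \N_{1:N},\, d \in \N_{1:\sd{n_x}}\}$. By \cref{cor:rank-kron-condition}, applied to $z$ (which is dominated by the Lebesgue measure under \cref{asm:data}) with $N \geq \sd{n_z}$, the family $\{s_i\}_{i=1}^N$ spans $\Re^{\sd{n_z}}$ with probability one; since $\{e_d\}$ spans $\Re^{\sd{n_x}}$, the products $s_i \kron e_d$ then span the full space $\Re^{\sd{n_x}\sd{n_z}}$ almost surely, i.e. the stacked matrix has full row rank $\sd{n_x}\sd{n_z}$. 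On this probability-one event the factorization gives $\img{\trans{\opm{Z}_N}} = \ten{W}_{(3)}\,\Re^{\sd{n_x}\sd{n_z}} = \img{\ten{W}_{(3)}}$, so the two orthogonal projectors coincide, which is the assertion.

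The routine probabilistic content is entirely absorbed into \cref{cor:rank-kron-condition}, so the hard part is not the randomness but the matricization identity $\trans{(\ten{W} \btimes{2} s_i)} = \ten{W}_{(3)}(s_i \kron I_{\sd{n_x}})$: one must track the index bookkeeping of the mode-$3$ unfolding and the ordering of the Kronecker factors carefully, since the $\svec$/$\vec$ convention lands the regressor on $s_i \kron I_{\sd{n_x}}$ rather than $I_{\sd{n_x}} \kron s_i$.
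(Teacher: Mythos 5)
Your proof is correct, and it reaches the conclusion by a somewhat different mechanism than the paper, so a comparison is worthwhile.

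Both arguments share the same skeleton: the factorization of the stacked regressor through $\trans{\ten{W}_{(3)}}$ (the paper writes $\opm{Z}_N = (Z_N \kron I_{\sd{n_x}}) \trans{\ten{W}_{(3)}}$ with $Z_N = \trans{[z_1 \skron z_1, \dots, z_N \skron z_N]}$ in \eqref{eq:zmat-stacked}; your identity $\trans{\opm{Z}_N} = \ten{W}_{(3)}[s_1 \kron I_{\sd{n_x}}, \dots, s_N \kron I_{\sd{n_x}}]$ is exactly its transpose, and your Kronecker ordering $s_i \kron I_{\sd{n_x}}$ agrees with the paper's), and the same probabilistic input, namely that under \cref{asm:data} and $N \geq \sd{n_z}$ the vectors $z_i \skron z_i$ span $\Re^{\sd{n_z}}$ with probability one via \cref{cor:rank-kron-condition}. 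Where you diverge is the final step. The paper concludes with an algebraic Moore--Penrose identity in the style of Greville: for conformable $U$, $V$ with $\pinv{U}U = I$ one has $\pinv{(UV)}(UV) = \pinv{V}V$, proved by a chain of manipulations using $\pinv{A}A\trans{A} = \trans{A}$, applied with $U = Z_N \kron I$ and $V = \trans{\ten{W}_{(3)}}$. You instead observe that $\pinv{A}A$ is the orthogonal projector onto $\img{\trans{A}}$, that an orthogonal projector is determined by its range, and that on the full-rank event $\img{\trans{\opm{Z}_N}} = \ten{W}_{(3)}\Re^{\sd{n_x}\sd{n_z}} = \img{\ten{W}_{(3)}}$ because the stacked matrix $[s_1 \kron I_{\sd{n_x}}, \dots, s_N \kron I_{\sd{n_x}}]$ is surjective. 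The two endings carry identical mathematical content --- the Greville identity is precisely the statement that $UV$ and $V$ have the same row space when $U$ is injective --- but yours is the more conceptual and self-contained route (only standard projector facts, no pseudoinverse algebra), whereas the paper's packages the conclusion as a reusable identity about pseudoinverses of products. Either version is a complete proof of the lemma.
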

\begin{proof}
    We can rewrite \eqref{eq:zmatdef} as
    \begin{equation} \label{eq:zmat-stacked}
        \opm{Z}_N = (\trans{[z_1 \skron z_1, \dots, z_N \skron z_N]} \kron I_{\sd{n_x}}) \trans{\ten{W}_{(3)}},
    \end{equation}
    by $[\ten{T} \btimes{2} x; \ten{T} \btimes{2} y] = \ten{T} \ttimes{2} \trans{[x, y]}$ (cf. \cref{eq:modendef}) and \cref{prop:kronunfold}.

    Let $Z_N \dfn \trans{[z_1 \skron z_1, \dots, z_N \skron z_N]}$. Then by $N \geq \sd{n_z}$\ilpub{ and }\ilarxiv{, }
    \cref{asm:data} \ilarxiv{and \cref{cor:rank-kron-condition}}, $\rk(Z_N) = \sd{n_z}$ w.p. $1$.
    Hence $Z_N \kron I$ is also left-invertible \cite[Thm.~4.2.15]{Horn1991} w.p. $1$. 

    \shorten*{For two matrices $U$, $V$ of conformable dimensions, with $\pinv{U} U = I$, we mimic
    the proof of \cite[Thm.~1]{Greville1966}.}
    \begin{align*}
        \pinv{V} V  &= \pinv{V} \pinv{U} U V \labelrel={step:moorp} \pinv{V} \pinv{U} (UV) \trans{(UV)} \pinv{(\trans{(UV)})} \\
                    &= \pinv{V} V \trans{V} \trans{U} \pinv{(\trans{(UV)})} \\
                    &\labelrel={step:moorpb}  \trans{V} \trans{U} \pinv{(\trans{(UV)})} = \pinv{(UV)} (UV),
    \end{align*}
    where both \ref{step:moorp} and \ref{step:moorpb} use $\pinv{A} A \trans{A} = \trans{A}$ (cf. \cite[Eq.~2]{Greville1966}).

    Applying this to $\opm{Z}_N$ then gives the required result. 
\end{proof}

\begin{lemma} \label{lem:kernel-sm}
    Take $\svec(\widetilde{W}) = (I - \pinv{(\trans{\ten{W}_{(3)}})} \trans{\ten{W}_{(3)}}) \svec(W)$ for any $W \in \sym{n_w}$.
    Consider $\op{E}$ as in \cref{eq:dynsm_}. Then
    \begin{equation*}
        \op{E}({W} + \widetilde{W}; Z) = \op{E}({W}; Z),
    \end{equation*}
    for all $W \in \sym{n_w}$ and $Z \in \sym{n_z}$. 
\end{lemma}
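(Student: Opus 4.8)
The plan is to exploit that $\op{E}(\,\cdot\,;Z)$ is linear in its parameter, so that it suffices to show $\tilde{W}$ produces the trivial operator. By linearity of $\op{E}$ in $W$ (cf. the discussion below \cref{eq:dynsm_} and \cref{cor:cpmon}), $\op{E}(W+\tilde{W};Z)=\op{E}(W;Z)+\op{E}(\tilde{W};Z)$, so the claim reduces to $\op{E}(\tilde{W};Z)=0$ for all $Z$. Since $\op{E}(\tilde{W};Z)=\unsvec(\opm{E}(\tilde{W})\svec(Z))$, this holds for every $Z$ precisely when the matrix $\opm{E}(\tilde{W})$ vanishes, so it is enough to prove $\opm{E}(\tilde{W})=0$.

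Next I would rewrite the matrix of $\op{E}$ in a form that exposes $\trans{\ten{W}_{(3)}}$. By \cref{cor:cpadj} with mode tensor $\ten{M}$, the matrix is $\opm{E}(W)=\ten{W}\btimes{3}\svec(W)$ with $\ten{W}=\ten{M}\skron\ten{M}\in\Re^{\sd{n_x}\times\sd{n_z}\times\sd{n_w}}$. Using the definition of the mode-$3$ vector product, $\ten{W}\btimes{3}x=\sum_j [x]_j[\ten{W}]_{::j}$, together with the definition of the $3$-mode matricization \eqref{eq:1modeflat}, whose rows are the vectorized slices $\trans{\vec([\ten{W}]_{::j})}$, I would establish the identity $\vec(\ten{W}\btimes{3}x)=\trans{\ten{W}_{(3)}}x$. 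Applying this with $x=\svec(W)$ gives $\vec(\opm{E}(W))=\trans{\ten{W}_{(3)}}\svec(W)$.

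Finally, substituting the definition of $\tilde{W}$ and writing $M \dfn \trans{\ten{W}_{(3)}}$, I compute
\[
    \vec(\opm{E}(\tilde{W})) = M\svec(\tilde{W}) = M\bigl(I-\pinv{M}M\bigr)\svec(W) = (M-M\pinv{M}M)\svec(W)=0,
\]
where the last equality is the Moore--Penrose identity $M\pinv{M}M=M$. Hence $\opm{E}(\tilde{W})=0$, and by the reduction above $\op{E}(W+\tilde{W};Z)=\op{E}(W;Z)$ for all $W\in\sym{n_w}$ and $Z\in\sym{n_z}$.

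The computations are routine once the representation is in place; the only step requiring genuine care is the bookkeeping that converts the parameter-side kernel $(I-\pinv{M}M)$ into the vanishing of the dynamics, namely establishing $\vec(\ten{W}\btimes{3}x)=\trans{\ten{W}_{(3)}}x$ and keeping the $\svec$/$\vec$ dimensions of the symmetrized tensor $\ten{W}$ consistent throughout.
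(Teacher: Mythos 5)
Your proof is correct and takes essentially the same route as the paper's: both reduce by linearity to showing $\op{E}(\tilde{W};\cdot)=0$, recognize that the parameter dependence factors through $\trans{\ten{W}_{(3)}}\svec(\cdot)$, and annihilate that term with the Moore--Penrose identity $M\pinv{M}M=M$. The only cosmetic difference is that you work directly with the operator matrix from \cref{cor:cpadj} (via the identity $\vec(\ten{W}\btimes{3}x)=\trans{\ten{W}_{(3)}}x$), whereas the paper pairs $\op{E}(\tilde{W};Z)$ against an arbitrary $P$ via \cref{prop:cpunfold} and then argues that vanishing of the trace for all $P$ forces the operator to vanish.
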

\begin{proof}
    From \ilarxiv{\cref{prop:cpunfold:c}}\ilpub{\cite[Prop.~A.1(iii)]{Arxiv}} and \cref{prop:kronunfold} we have
    \begin{align*}
        \tr[P \op{E}(\widetilde{W}; Z)] &= \tucker{\ten{M} \skron \ten{M}; \svec(P), \svec(Z), \svec(\widetilde{W})} \\
                            &= (\svec(P) \kron \svec(Z)) \trans{\ten{W}_{(3)}} \svec(\widetilde{W}),
    \end{align*}
    for all $P \in \sym{n_x}$ and $Z \in \sym{n_z}$. By assumption, and $A \pinv{A} A = A$,
    \begin{equation*}
        \trans{\ten{W}_{(3)}} \svec(\widetilde{W}) = \trans{\ten{W}_{(3)}}(I - \pinv{(\trans{\ten{W}_{(3)}})} \trans{\ten{W}_{(3)}}) \svec(W) = 0.
    \end{equation*}
    Therefore $\tr[P \op{E}(\widetilde{W}; Z)]  = 0$ for all $P$ and $Z$, which is only possible if $\op{E}(\widetilde{W}; Z) = 0$
    for all $Z \in \sym{n_z}$ (otherwise we can take $P = \unvec(e_i)$, with $e_i$ the canonical basis vector
    and $i$ the index of the nonzero element in $\vec(\op{E}(\widetilde{W}; Z))$).
\end{proof}

Next, we derive properties of $\op{W}$ and $\op{H}_i$ defined in \eqref{eq:hopmat}.
\begin{lemma} \label{lem:wop}
    Let $\op{W}(Z) \dfn \ten{W}_{(3)} ( (Z \skron Z) \kron I_{\sd{n_x}}) \trans{\ten{W}_{(3)}}$.
    Then
    \begin{enumerate}
        \item \label{lem:wop:a} $\op{W}(z \trans{z}) = \trans{(\ten{W} \btimes{2} (z \skron z))} (\ten{W} \btimes{2} (z \skron z))$;
        \item \label{lem:wop:b} $\ssum_{i=1}^N \op{W}(z_i \trans{z_i}) = \trans{\opm{Z}_N} \opm{Z}_N$;
        \item \label{lem:wop:c} $\nrm{\op{W}}_2 = \sup_{z} \{\nrm{\op{W}(z \trans{z})}_2 \colon \nrm{z}_2 \leq 1\} \leq \nrm{\ten{W}}_2$,
    \end{enumerate}
    with $\opm{Z}_N$ as in \cref{eq:zmatdef}.
\end{lemma}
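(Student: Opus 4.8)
The plan is to prove part \ref{lem:wop:a} directly, and then obtain \ref{lem:wop:b} and \ref{lem:wop:c} as short consequences; the only technical content sits in \ref{lem:wop:a}, so I treat it first. The starting observation is that for a rank-one argument $Z = z\trans{z}$ the symmetrized Kronecker square collapses: by \cref{def:skron} and the mixed-product rule,
\[
  (z\trans{z}) \skron (z\trans{z}) = Q_{n_z}\big((z\trans{z}) \kron (z\trans{z})\big)\trans{Q_{n_z}} = Q_{n_z}(z \kron z)\trans{(z \kron z)}\trans{Q_{n_z}} = (z \skron z)\trans{(z \skron z)},
\]
where I use $Q_{n_z}(z\kron z) = Q_{n_z}\vec(z\trans{z}) = \svec(z\trans{z}) = z \skron z$ (recall $z\skron z$ abbreviates $\svec(z\trans{z})$). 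Substituting this into the definition of $\op{W}$ and applying the mixed-product rule once more gives
\[
  \op{W}(z\trans{z}) = \ten{W}_{(3)}\big((z\skron z)\kron I_{\sd{n_x}}\big)\big(\trans{(z\skron z)}\kron I_{\sd{n_x}}\big)\trans{\ten{W}_{(3)}}.
\]

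The crux is then the matricization identity $\ten{W}_{(3)}\big((z\skron z)\kron I_{\sd{n_x}}\big) = \trans{(\ten{W}\btimes{2}(z\skron z))}$. I would extract this from \cref{prop:kronunfold}: set $\ten{Y} \dfn \ten{W}\ttimes{2}\trans{(z\skron z)}$, a tensor with a singleton second mode which equals the matrix $\ten{W}\btimes{2}(z\skron z)$ after the order reduction of \cref{def:tucker}. Applying \eqref{eq:unfoldtucker} with $n=3$ (and $X_1 = I_{\sd{n_x}}$, $X_2 = \trans{(z\skron z)}$, $X_3 = I_{\sd{n_w}}$) yields $\ten{Y}_{(3)} = \ten{W}_{(3)}\trans{(\trans{(z\skron z)}\kron I_{\sd{n_x}})} = \ten{W}_{(3)}\big((z\skron z)\kron I_{\sd{n_x}}\big)$, while the column-stacking convention of \eqref{eq:1modeflat} identifies $\ten{Y}_{(3)}$ with $\trans{(\ten{W}\btimes{2}(z\skron z))}$. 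Feeding this identity and its transpose into the previous display gives $\op{W}(z\trans{z}) = \trans{M_z}M_z$ with $M_z \dfn \ten{W}\btimes{2}(z\skron z)$, which is \ref{lem:wop:a}.

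Part \ref{lem:wop:b} then follows by stacking: by \eqref{eq:zmatdef} the matrix $\opm{Z}_N$ is the vertical concatenation of the blocks $M_{z_i} = \ten{W}\btimes{2}(z_i\skron z_i)$, so $\trans{\opm{Z}_N}\opm{Z}_N = \ssum_{i=1}^N \trans{M_{z_i}}M_{z_i} = \ssum_{i=1}^N \op{W}(z_i\trans{z_i})$. For part \ref{lem:wop:c}, \ref{lem:wop:a} gives $\nrm{\op{W}(z\trans{z})}_2 = \nrm{\trans{M_z}M_z}_2 = \nrm{M_z}_2^2 = \nrm{\ten{W}\btimes{2}(z\skron z)}_2^2$; I would bound the mode-$2$ contraction by the tensor spectral norm, $\nrm{\ten{W}\btimes{2}(z\skron z)}_2 \leq \nrm{\ten{W}}_2\nrm{z\skron z}_2$, and note $\nrm{z\skron z}_2 = \nrm{\svec(z\trans{z})}_2 = \nrm{z\trans{z}}_F = \nrm{z}_2^2 \leq 1$ whenever $\nrm{z}_2\leq1$. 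Taking the supremum over such $z$ yields the claimed bound (with $\nrm{\ten{W}}_2$ entering squared, which is the power that propagates into \cref{lem:moment-sample-complexity}).

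The main obstacle is the matricization identity inside \ref{lem:wop:a}, namely matching the column-stacking convention of the mode-$3$ matricization \eqref{eq:1modeflat} against the order reduction hidden in the Tucker/mode-vector product of \cref{def:tucker}. This is precisely where a misplaced transpose or a swapped Kronecker factor would slip in, so I would verify it either by the \cref{prop:kronunfold} route above or, as a cross-check, by a short index computation comparing $(\ten{W}\btimes{2}(z\skron z))_{ik} = \sum_j [z\skron z]_j[\ten{W}]_{ijk}$ with the corresponding entry of $\ten{W}_{(3)}\big((z\skron z)\kron I_{\sd{n_x}}\big)$. Once the identity is fixed, the remaining mixed-product algebra and parts \ref{lem:wop:b}--\ref{lem:wop:c} are routine.
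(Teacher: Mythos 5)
Your proof is correct and takes essentially the same route as the paper's: part~(i) via \cref{prop:kronunfold} combined with the mixed-product rule and the rank-one identity $(z\trans{z})\skron(z\trans{z}) = (z\skron z)\trans{(z\skron z)}$ (you merely run the computation from $\op{W}(z\trans{z})$ toward $\trans{M_z}M_z$ instead of the reverse), part~(ii) by stacking, and part~(iii) by reducing to the tensor spectral norm through $\nrm{z\skron z}_2 = \nrm{z}_2^2 \leq 1$. Your remark that the bound comes out as $\nrm{\ten{W}}_2^2$ is also consistent with the paper's own proof and with how the bound is invoked in \cref{lem:moment-sample-complexity}.
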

\begin{proof}
    The first result (i) follows by using \cref{prop:kronunfold} to write
    \begin{align*}
        &\trans{(\ten{W} \btimes{2} (z \skron z))} (\ten{W} \btimes{2} (z \skron z)) \\
        &\quad = \ten{W}_{(3)} [(z \skron z) \kron I] [\trans{(z \skron z)} \kron I] \trans{\ten{W}}_{(3)}.
    \end{align*}
    Using $(A \kron X)(B \kron Y) = AB \kron XY$ \cite[Thm.~E.1.3]{DeKlerk2002} and \cite[Lem.~E.1.2]{DeKlerk2002} to argue
    $(z \skron z) \trans{(z \skron z)} = (z \trans{z} \skron z\trans{z})$,
    proves (i). We can show (ii) by using \eqref{eq:zmat-stacked} and applying similar tricks. Finally (iii)
    is shown by using (i) to argue
    \begin{align}
        \nrm{\op{W}}_2  &= \sup_{z \in \ball{}} \{\nrm{(\ten{W} \btimes{2} (z \skron z))}_2^2\} \nonumber \\
                        &= \sup_{x, w, z \in \ball{}} \{ (\trans{x} (\ten{W} \btimes{2} (z \skron z)) w)^2\}, \label{eq:vartnorm} 
    \end{align}
    where we used the variational representation of the spectral norm for the second equality
    and with $x \in \Re^{\sd{n_x}}$, $w \in \Re^{\sd{n_w}}$ and $\ball{}$ the unit Euclidean ball of generic dimension. The squared quantity
    can be rewritten by noting 
    \begin{align*}
        \trans{x} (\ten{W} \btimes{2} (z \skron z)) w = \tucker{\ten{W}; x, z \skron z, w}.
    \end{align*}
    Noting that $\trans{(z \skron z)} (z \skron z) = \nrm{z}_2^4$ implies that we can relax \cref{eq:vartnorm} to take 
    the supremum over $z' \in \ball{\sd{n_z}}$ instead of over $z \skron z$. So
    \begin{equation*}
        \nrm{\op{W}}_2 \leq \left(\sup_{x, w, z' \in \ball{}} \{ \tucker{\ten{W}; x, z', w}\}\right)^2,
    \end{equation*}
    which, plugging in the definition of the tensor spectral norm \cite{Chen2020a}, implies (iii). 
\end{proof}

A direct consequence of \cref{lem:wop} is that
\begin{corollary} \label{cor:error-model-wop}
    Given $\opm{Z}_N$, $E_N$ as in \cref{eq:zmatdef} and \eqref{eq:ematdef} respectively and $\op{W}$ 
    as in \cref{lem:wop}. Then
    \begin{align*}
        \pinv{\opm{Z}_N} E_N &= \pinv{[\ssum_{j=1}^N \op{W}(z_j \trans{z_j})]} ( \ssum_{i=1}^N \op{W}(z_i \trans{z_i})  \eta_i ) \\
                             &= \svec(\ssum_{i=1}^N \op{H}_i(w_i \trans{w_i} - W)),
    \end{align*}
    with $\op{H}_i$ the linear operator with matrix $\opm{H}_i$ as in \cref{eq:hopmat}. 
\end{corollary}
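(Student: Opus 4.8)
The plan is to reduce the pseudo-inverse to a symmetric Gram factor and then to recognise every remaining piece as an instance of $\op{W}$. First I would invoke the elementary identity $\pinv{\opm{Z}_N} = \pinv{(\trans{\opm{Z}_N} \opm{Z}_N)} \trans{\opm{Z}_N}$, which holds for an arbitrary matrix (e.g.\ via the SVD, with no rank assumption on $\opm{Z}_N$). This shifts the burden onto the Gram matrix $\trans{\opm{Z}_N} \opm{Z}_N$ and the vector $\trans{\opm{Z}_N} E_N$. By \cref{lem:wop:b} the Gram matrix equals $\ssum_{j=1}^N \op{W}(z_j \trans{z_j})$, which is precisely the factor inverted in \eqref{eq:hopmat}.

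The core computation is the block bookkeeping for $\trans{\opm{Z}_N} E_N$. Writing $M_i \dfn \ten{W} \btimes{2}(z_i \skron z_i)$, the stacking in \eqref{eq:zmatdef} reads $\opm{Z}_N = [M_1; \dots; M_N]$, while the Tucker expansion carried out just below \eqref{eq:smoment-model} identifies the $i$-th block of $E_N$ in \eqref{eq:ematdef} as $\tucker{\ten{W}; z_i \skron z_i, \eta_i} = M_i \eta_i$. Hence $\trans{\opm{Z}_N} E_N = \ssum_{i=1}^N \trans{M_i} M_i \eta_i$, and \cref{lem:wop:a} supplies $\trans{M_i} M_i = \op{W}(z_i \trans{z_i})$, so $\trans{\opm{Z}_N} E_N = \ssum_{i=1}^N \op{W}(z_i \trans{z_i}) \eta_i$. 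Combining with the first paragraph gives the first claimed equality.

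For the second equality I would substitute $\eta_i = w_i \skron w_i - \svec(W_\star) = \svec(w_i \trans{w_i} - W_\star)$ and read off the matrix $\opm{H}_i$ of \eqref{eq:hopmat} in each summand, so that the right-hand side becomes $\ssum_{i=1}^N \opm{H}_i \svec(w_i \trans{w_i} - W_\star)$. Since $\op{H}_i$ is the operator with matrix $\opm{H}_i$, it satisfies $\op{H}_i(X) = \unsvec(\opm{H}_i \svec(X))$, equivalently $\opm{H}_i \svec(X) = \svec(\op{H}_i(X))$; applying this with $X = w_i \trans{w_i} - W_\star$ and pulling the linear map $\svec$ out of the finite sum yields $\svec(\ssum_{i=1}^N \op{H}_i(w_i \trans{w_i} - W_\star))$, the asserted expression.

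The only genuinely delicate point is the block bookkeeping: one must verify that the \emph{same} factor $M_i$ appears in the $i$-th block of both $\opm{Z}_N$ and $E_N$, i.e.\ that $\tucker{\ten{W}; z_i \skron z_i, \eta_i}$ is exactly $M_i$ contracted against $\eta_i$ and not some reshuffled contraction. Once this is in place the remainder is a direct translation through \cref{lem:wop} together with the unconditional pseudo-inverse identity and the operator/matrix correspondence, so no persistency or full-rank hypothesis on the data is required at this step.
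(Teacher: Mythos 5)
Your proposal is correct and follows essentially the same route as the paper's proof: the pseudo-inverse identity $\pinv{\opm{Z}_N}E_N = \pinv{(\trans{\opm{Z}_N}\opm{Z}_N)}\trans{\opm{Z}_N}E_N$, then \cref{lem:wop:b} for the Gram matrix, \cref{lem:wop:a} for $\trans{\opm{Z}_N}E_N = \ssum_{i=1}^N \op{W}(z_i\trans{z_i})\eta_i$, and finally the definition of $\opm{H}_i$ together with $\eta_i = \svec(w_i\trans{w_i} - W_\star)$. Your explicit block bookkeeping with $M_i = \ten{W}\btimes{2}(z_i \skron z_i)$ and the remark that no rank assumption is needed merely spell out details the paper leaves implicit.
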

\begin{proof}
    Note that
    \begin{equation*}
        \pinv{\opm{Z}_N}E_N = \pinv{(\trans{\opm{Z}_N} \opm{Z}_N)} \trans{\opm{Z}_N}E_N.
    \end{equation*}
    Then, the first equality follows by application of \cref{lem:wop:b} to replace $(\trans{\opm{Z}_N} \opm{Z}_N)$
    and \cref{lem:wop:a} to show that $\trans{\opm{Z}_N} E_N = \sum_{i=1}^{N} \op{W}(z_i \trans{z_i}) \eta_i$. 
    The second equality follows by definition of $\opm{H}_i$ and $\eta_i = \svec(w_i \trans{w_i} - W)$. 
\end{proof}

We are now ready to prove the data-driven bound.
\paragraph{Proof of \cref{thm:error-bound-full}} 
By the classical LS error equation \cref{eq:error-model-full}:
\begin{equation*}
    \svec(\smomenth - \smoment) = (I - \pinv{\opm{Z}_N} \opm{Z}_N) \svec(\smoment) + \pinv{\opm{Z}}_N E_N,
\end{equation*}
where we need \cref{asm:model} to imply \eqref{eq:measurement-model} holds using \cref{lem:model-eq}. 
Note that, by \cref{cor:error-model-wop} with $\op{H}_i$ as in \cref{eq:hopmat}:
\begin{equation*}
    \unsvec(\pinv{\opm{Z}}_N E_N) = \ssum_{i=1}^N \op{H}_i(w_i \trans{w_i} - W).
\end{equation*}

Note that $-r_w^2 I \sleq -W \sleq w_i \trans{w_i} - W \sleq  w_i \trans{w_i} \sleq r_w^2 I$. 
Hence, by definition of $\nrm{\op{H}_i}_2$, the spectral norm of $X_i \dfn \op{H}_i(w_i \trans{w_i} - W)$
is bounded as $\nrm{X_i}_2 \leq \nrm{\op{H}_i}_2 r_w^2 \dfn [\gamma]_i$. The matrices $X_i$ 
are therefore bounded and i.i.d. (by \cref{asm:data}). Applying \cref{lem:mathfd}
and solving for $\beta$ shows
\begin{equation} \label{eq:stoch-nrm-bnd}
    \prob[\nrm{\ssum_{i=1}^N \op{H}_i(w_i \trans{w_i} - W)}_2 \leq \beta] \geq 1 - \delta.
\end{equation}

We have by \cref{asm:data} and \cref{lem:kernel-zn} that 
\begin{align*}
  (I - \pinv{\opm{Z}_N} \opm{Z}_N) \svec(\smoment) \nonumber = (I - \pinv{(\trans{\ten{W}_{(3)}})} \trans{\ten{W}_{(3)}}) \svec(W).
\end{align*}
Applying \cref{lem:kernel-sm} thus implies:
\begin{align*}
    \op{E}(\smomenth - \smoment; Z) &= \op{E}(\unsvec(\pinv{\opm{Z}}_N E_N); Z).
\end{align*}
The bound on $\nrm{\unsvec(\pinv{\opm{Z}}_N E_N)}_2 = \nrm{\ssum_{i=1}^N \op{H}_i(w_i \trans{w_i} - W)}_2$ in \eqref{eq:stoch-nrm-bnd} therefore proves the result. 
\qed{}


We can also derive the sample complexity for the simplified setting where $\nrm{z}_2 \leq r_z$. 

\paragraph{Proof of \cref{lem:moment-sample-complexity}}
    To deal with non-invertibility issues, we introduce $U \in \Re^{n_w \times d_{W}}$, a unitary matrix whose columns span $\img(\ten{W}_{(3)})$.
    For all $i \in \N_{1:\nsample}$, we consider the i.i.d. random matrices $Y_i = \trans{U} \op{W}(z_i \trans{z_i}) U$
    and $Y = \ssum_{i=1}^{\nsample} Y_i$. 
    
    By \cref{asm:data} and \cref{lem:kernel-zn}: $\img(\opm{Z}_N) = \img(\ten{W}_{(3)})$. 
    So $\lambda_{\update*{\mathrm{min}}}(Y) = \lambda_{\update*{\mathrm{min}}}(\trans{U} \trans{\opm{Z}_N} \opm{Z}_{N} U) = \lambda_{\update*{\mathrm{min}}}(\trans{\opm{Z}_N} \opm{Z}_{N})$. 
    Moreover $Y \sgt 0$. Similarly $\E[\op{W}(z \trans{z})] = \ten{W}_{(3)} (\E[(z \skron z) \trans{(z \skron z)}] \kron I) \trans{\ten{W}_{(3)}}$,
    with $\E[(z \skron z) \trans{(z \skron z)}] \sgt 0$ by \cref{asm:data}\ilarxiv{ and \cref{cor:rank-kron-condition}}. So $\lambda_{\update*{\mathrm{min}}}(\E[Y_i]) = \lambda_{\update*{\mathrm{min}}}(\E[\op{W}(z \trans{z})])$.
    
    Moreover, since $U$ is unitary, $\nrm{{Y}_i}_2 \leq r_z^4 \nrm{\op{W}}_2 \leq r_z^4 \nrm{\ten{W}}_2^2$ where we use 
    \cref{lem:wop:c} for the second inequality.

    Therefore the matrices $Y_i$ are i.i.d. and bounded as $\nrm{{Y}_i}_2 \leq r_z^4 \nrm{\ten{W}}_2^2$
    with $\lambda_{\update*{\mathrm{min}}}( \E[Y_i] ) = \lambda_{\update*{\mathrm{min}}}(\E[\op{W}(z \trans{z})]) = \gamma_W r^4_z$. 
    Therefore, \update*{\cref{lem:mathfd}} can be applied to show
    \begin{equation*}
        \prob[\lambda_{\update*{\mathrm{min}}}(Y - \E[Y]) \geq -\beta] \geq 1-\delta,
    \end{equation*}
    with $\beta = r_z^2 \nrm{\ten{W}}_2^2 \sqrt{N} \tau_W$ and $\tau_W =  \sqrt{2 \log(2d_W /\delta)}$. Hence, 
    \begin{equation*}
        \lambda_{\update*{\mathrm{min}}}(Y) \geq \lambda_{\update*{\mathrm{min}}}(\E[Y]) - \beta = N \gamma_W r^4_z - \beta.
    \end{equation*}
    So, since $\nrm{\pinv{Y}}_2 = \lambda_1(\pinv{Y}) = (\lambda_{\update*{\mathrm{min}}}(Y))^{-1}$,
    \begin{equation*}
        \nrm{\pinv{Y}}_2 \leq \frac{r_z^{-4}}{N\gamma_W -  \sqrt{N} \nrm{\ten{W}}_2^2 \tau_W}.
    \end{equation*}

    Using \cref{lem:sopbnd} we have 
    \begin{equation*}
        \zeta_W^2 = \ssum_{i=1}^N \nrm{\op{H}_i}^2_2 \leq N \max_i \nrm{\op{H}_i}^2_2,
    \end{equation*}
    where 
    \begin{align*}
        \nrm{\op{H}_i}_2^2  &\leq n_w \nrm{\pinv{Y} Y_i}_2^2 \nonumber \\
                            &\leq n_w \nrm{\pinv{Y}}_2^2 \nrm{Y_i}_2^2 \leq n_w (\nrm{\pinv{Y}}_2 r_z^4 \nrm{\ten{W}}_2^2)^2.
    \end{align*}
    Therefore $\nrm{\op{H}_i}_2^2 \leq \update*{n_w} (\nrm{\ten{W}}_2^{\update*{2}} / (N\gamma_W -  \sqrt{N} \nrm{\ten{W}}_2^2 \tau_W))^2$. 
    Multiplying by $N$ then gives us the claimed bound for $\update*{\zeta_W}$. 
    \qed{}

\begin{arxiv}
We continue with the proofs of \cref{sec:mean-id}. 
\paragraph*{Proof of \cref{lem:mean-dd}} By the classical LS error equation:
\begin{equation} \label{eq:err-model-mean-full}
    {\mean}_\star - \hat{{\mean}} = (I - \pinv{(\opm{Z}_N^\mu)} \opm{Z}_N^\mu)  {\mean}_\star + \pinv{(\opm{Z}_N^\mu)} E^\mu_N, 
\end{equation}
with $E^\mu_N = [\stucker{\widetilde{\ten{M}}; z_1, \epsilon_1}, \dots, \stucker{\widetilde{\ten{M}}; z_N, \epsilon_N}]$ and $\epsilon_i$
i.i.d. zero-mean random vectors. Equation \cref{eq:err-model-mean-full}
holds by \cref{asm:model-structured} and \cref{lem:model-eq-structured}. We can generalize \cref{lem:kernel-zn}, which requires \cref{asm:data}, to $\opm{Z}_N^\mu$ 
to show $\pinv{(\opm{Z}_N^\mu)} \opm{Z}_N^\mu = \pinv{(\trans{\widetilde{\ten{M}}}_{(3)})} \trans{\widetilde{\ten{M}}}_{(3)}$,
which equals the identity by \cref{asm:model} as $\widetilde{\ten{M}}_{(3)}$ is full rank. Expanding the second term of \cref{eq:err-model-mean-full},
inserting the definition of $G_i$ and $\op{M}$ gives \eqref{eq:error-model-mean}:
\begin{equation*}
    {\mean}_\star - \hat{{\mean}} = \ssum_{i=1}^N G_i \epsilon_i = \ssum_{i=1}^N \E[G_i \widetilde{w}_i] - G_i \widetilde{w}_i.
\end{equation*}
Note that $G_i \widetilde{w}_i$ is a sequence of i.i.d. random vectors with $\nrm{G_i \widetilde{w}_i}_2 \leq \nrm{G_i}_2 r_w$,
by the definition of the spectral norm. Hence \cref{lem:vechfd} is applicable. The final result is then 
recovered by solving for $\beta$. \qed{}

\paragraph*{Proof of \cref{thm:structured-ambigutiy}}
Again, let $\mu_\star$ and $\widetilde{W}_\star$ denote the true value of $\E[\widetilde{w}]$ and $\E[\widetilde{w} \trans{\widetilde{w}}]$ respectively
and $\meanh$ and $\hat{\widetilde{W}}$ their estimates.
Defining $\hat{\Sigma} \dfn \hat{\widetilde{W}} - \meanh \trans{\meanh}$ and $\Sigma_\star \dfn \widetilde{W}_\star - \mean_\star \trans{\mean_\star}$, 
allows us to expand $\op{E}_{\star}$ as:
\begin{align*}
    \op{E}_\star(Z) &= \op{E}\left( \begin{bmatrix}
        1 & \trans{\mu_\star} \\ \mu_\star & \Sigma_\star + \mean_\star \trans{\mean_\star}
    \end{bmatrix}, Z \right) \\
    &= \widetilde{\op{E}}(\Sigma_\star + \mean_\star \trans{\mean_\star}, Z)  
                + \op{E}\left( \begin{bmatrix} 
                    1 & \mu_\star \\ \trans{\mu_\star} & 0 
                \end{bmatrix}, Z \right),
\end{align*}
by linearity
and where $\widetilde{\op{E}}(\widetilde{W}; Z) \dfn \widetilde{\ten{M}}_{(3)} (\widetilde{W} \kron Z) \trans{\widetilde{\ten{M}}_{(3)}}$. 
Clearly $\widetilde{\op{E}}(\Sigma_\star + \mean_\star \trans{\mean_\star}; Z) = \widetilde{\op{E}}(\widetilde{W}_\star; Z)$. 

We reconsider \cref{eq:error-model-full}:
\begin{equation*}
    \svec(\widetilde{\smoment}_\star - \hat{\widetilde{W}}) = (I - \pinv{\opm{Z}_N} \opm{Z}_N) \svec(\widetilde{\smoment}_\star) + \pinv{\opm{Z}}_N E_N,
\end{equation*}
and label the first term $\Delta W_b$ and the second $\Delta W_\eta$. Then
\begin{equation*}
    \widetilde{\op{E}}(\widetilde{W}_\star; Z) = \widetilde{\op{E}}(\hat{\widetilde{W}}; Z) + \widetilde{\op{E}}(\Delta W_\eta; Z) + \widetilde{\op{E}}(\Delta W_b; Z).
\end{equation*}
By \cref{lem:kernel-sm} the final term is zero. So without loss of generality we assume $\Delta W_b = 0$.

So it is sufficient to show that 
\begin{equation} \label{eq:sigma-expand}
    \nrm{\Sigma_\star - \hat{\Sigma}}_2 = \nrm{\Delta \smoment_\eta - (\mean_\star \trans{\mean_\star} - \meanh \trans{\meanh}) }_2 \leq \beta_\Sigma
\end{equation}
and $\nrm{\mean_\star - \meanh}_2 \leq \beta_\mu$ with probability at least $1 - \delta_\mu - \delta_W$. The second 
bound follows directly by \cref{lem:mean-dd} (which claims (i) $\prob[\nrm{\mean_\star - \meanh}_2 \leq \beta_\mu] \geq 1 - \delta_\mu$), 
while we have (ii) $\prob[\nrm{\Delta W_\eta}_2 \leq \beta_W] \geq 1 - \delta_W$ by \cref{eq:stoch-nrm-bnd} in the proof of \cref{thm:error-bound-full}.
A union bound shows that both (i) and (ii) hold w.p. at least $ 1 - \delta_\mu - \delta_W$. 

We have, by a triangle inequality and \cref{eq:sigma-expand} that 
\begin{align*}
    \nrm{\Sigma_\star - \hat{\Sigma}}_2 &\leq \nrm{\Delta\smoment_\eta}_2 + \nrm{\mean_\star \trans{\mean_\star} - \meanh \trans{\meanh} }_2 \\
     &\leq \delta_W + \nrm{\mean_\star \trans{\mean_\star} - \meanh \trans{\meanh} }_2.
\end{align*}
We can rewrite $\mean_\star \trans{\mean_\star} - \meanh \trans{\meanh}$ as 
\begin{equation*}
    (\mean_\star - \meanh) \trans{(\mean_\star - \meanh)} + (\mean_\star - \meanh) \trans{\meanh} + \meanh \trans{(\mean_\star - \meanh)}.
\end{equation*}
So, by another triangle inequality 
\begin{equation*}
    \nrm{\Sigma_\star - \hat{\Sigma}}_2 \leq \beta_W + \beta_\mu (\beta_\mu + 2 \nrm{\meanh}_2) = \beta_\Sigma. 
\end{equation*}
We have therefore shown the claimed result. \qed{}
\end{arxiv}

\begin{arxiv}
\section{Sample Complexity} \label{app:sample-complexity}
\begin{prepupdate}
    We adjust the results of \cite{Coppens2020-TR} to our setting, proving \cref{thm:sample-complexity}. 
    This analysis proceeds in three steps, the first two of which are related to \cref{thm:drlqrcp} and the third evaluates 
    the closed-loop cost of $\bar{K}$. To be more specific, \cref{thm:drlqrcp} first finds $\bar{P}$ such that $\op{R}(\bar{W}; \bar{P}) = \bar{P}$
    and then computes $\bar{K} = -(R + \adj{\op{G}}(\bar{P}))^{-1} \adj{\op{H}}(\bar{P})$. Here $(P_\star, K_{\star})$ are the solutions to the nominal problem in \cref{thm:cplqr}. 
    We then compute the following bounds: 
        \emph{(i)} $\nrm{\bar{P} - P_{\star}}_2 \leq \epsilon_{P}$; 
        \emph{(ii)} $\nrm{\bar{K} - K_{\star}}_2 \leq \epsilon_K$;
        and \emph{(iii)} $\tr[\sum_{t=0}^{\infty} Z_t H] - \mathrm{Val}(\eqref{eq:lqrcp})$. 

    For simplicity we will ignore the bias estimation bias discussed in \cref{sec:error-analysis}. This is without 
    loss of generality due to \cref{asm:model}. Under this simplification the Riccati equations and optimal controllers of both the estimated system 
    (using $\bar{W}$) and the true system have the same structure, only differing in their value of $W$.

    The estimation error is then fully characterized by \cref{lem:moment-sample-complexity} and \cref{thm:error-bound-full}:
    \begin{equation} \label{eq:beta-w-def}
        \nrm{W_{\star} - \hat{W}}_2 \leq r_w^2 \frac{\sqrt{n_w} \nrm{\ten{W}}_2^2}{\sqrt{N} \gamma_W - \nrm{\ten{W}}_2^2 \tau_w} \sqrt{2 \log(2n_w / \delta)},
    \end{equation}
    with probability at least $1 - \delta$. 
    We label the right-hand side $\beta_W$. Note that $\bar{W} \dfn \hat{W} + \beta_W I_{n_w}$. Therefore $\nrm{\bar{W} - W_\star}_2 \leq 2 \beta_W$. 

    Throughout this section we heavily rely on the arguments in \cite{Coppens2020-TR}. Whenever steps therein are repeated, 
    more specific parts are referenced to aid the reader.
        
    \paragraph*{Riccati Perturbation Analysis}
    To bound $\nrm{\bar{P} - P_{\star}}$ we need to evaluate the effect of a change in $W$ on the solution to the Riccati equation $\op{R}(P) = P$, with $\op{R}$ as in \cref{eq:ric-def}.
    The dependency on $W$ is made explicit through \cref{rem:parameter}. We then find $\Delta P$ such that
    \begin{equation} \label{eq:ric-perturbed}
        \op{R}(W_\star + \Delta W; P_\star + \Delta P) = P_\star + \Delta P, \, \op{R}(W_\star; P_\star) = P_\star. 
    \end{equation}
    Here $W_{\star}$ is the true second moment $\E[w \trans{w}]$ as in \cref{sec:construction} and $\Delta W$
    is the difference between $W_{\star}$ and its data-driven estimate, i.e. $\bar{W}$ in \eqref{thm:error-bound-full} (or $\hat{W}$ in the certainty equivalent setting). 
    
    We can then state the error bound.
    \begin{lemma} \label{lem:riccati-bound}
        Given \cref{eq:beta-w-def}, where the right-hand side is labeled $\beta_W$, then $\nrm{\bar{P} - P_{\star}}$ is upper bounded by:
        \begin{equation*}
            \frac{
                \nrm{(\adj{\op{L}_\star})^{-1}}_2 -  \beta_W c_1 - \sqrt{ (\nrm{(\adj{\op{L}_\star})^{-1}}_2 - \beta_W c_1)^2 - 4 \beta_W c_1 c_2}
            }{
                2 c_2
            },
        \end{equation*}
        for $c_2 = \nrm{\ten{A}_{\star, (2)}}_2^2 \nrm{\ten{B}_{(2)}}_2^2 \nrm{W_{\star}}_2^2 / \lambda_{\mathrm{min}}(R)$ and $c_1 = 2\nrm{\ten{A}_{(2)}}^2$ and
        where we additionally assume that $\beta_W$ is sufficiently small such that the bound is non-negative and 
        smaller than $\lambda_{\mathrm{min}}({P_\star})$. 
    \end{lemma}
    \begin{proof}
        The perturbed Riccati equation \cref{eq:ric-perturbed} can be written as a fixed-point equation whenever both the original system 
        and the perturbed system are stabilizable: 
        \begin{equation} \label{eq:fixed-point-eq}
            \Phi(\Delta P) \dfn - (\adj{\op{L}_\star})^{-1}\left( \op{R}_{0}(\Delta P) + \op{R}_{\Delta}(\Delta P) \right) = \Delta P,
        \end{equation}
        with $\adj{\op{L}}_{K_{\star}}$ as in \cref{lem:opadj} and its inverse existing due to the stabilizability assumption and \cref{thm:cplqr}, $K_\star$ the optimal controller of \cref{thm:cplqr} and
        \begin{align*}
            \op{R}_0(\Delta P) &\dfn (P_\star + \Delta P) - \op{R}(W_{\star}; P_\star + \Delta P) - \adj{\op{L}}_{K_{\star}}(\Delta P), \\
            \op{R}_\Delta(\Delta P) &\dfn \op{R}(W_{\star}; P_\star + \Delta P) - \op{R}(W_{\star} + \Delta W; P_\star + \Delta P).
        \end{align*}

        Following the steps of \cite[App.~A]{Coppens2020-TR} (involving several application of the matrix inversion lemma) 
        we claim (cf. \cite[Eq.~32]{Coppens2020-TR} and \cite[Eq.~34]{Coppens2020-TR} respectively):
        \begin{align*}
            \nrm{\op{R}_0(\Delta P)}_2 &\leq \nrm{\ten{A}_{\star, (2)}}_2^2 \nrm{\ten{B}_{(2)}}_2^2 \nrm{W_{\star}}_2^2 \nrm{\Delta P}_2^2 / \lambda_{n_u}(R), \\
            \nrm{\op{R}_{\Delta}(\Delta P)}_2 &\leq \ten{A}_{(2)} Y^{-\top} (\Delta W \otimes (P_\star + \Delta P)) Y^{-1} \trans{\ten{A}}_{(2)}.
        \end{align*}
        where $\ten{A}_{\star} = \ten{A} + \ten{B} \ttimes{2} \trans{K_\star}$ the closed-loop model tensor and $Y \dfn I + \ten{B}_{(2)} R^{-1} \trans{\ten{B}_{(2)}} \left( W_{\star} \otimes (P_\star + \Delta P) \right)$. 
        The second expression needs to be simplified further. We argued earlier that $\nrm{\Delta W}_2 = \nrm{\bar{W} - W_{\star}}_2 \leq 2\beta_W$ with high probability. Specifically $0 \sleq \bar{W} \sleq W_{\star} + 2\beta_W I_{n_w}$. 
        Thus $\nrm{\op{R}_{\Delta}(\Delta P)}_2 \leq 2 \beta_W \ten{A}_{(2)} Y^{-\top} (I_{n_w} \otimes (P + \Delta P)) Y^{-1} \trans{\ten{A}}_{(2)}$. 
        Using \cite[Lem.~A.1(iii)]{Coppens2020-TR} (and the reasoning below \cite[Eq.~34]{Coppens2020-TR}) 
        we conclude \[\nrm{\op{R}_{\Delta}(\Delta P)}_2 \leq 2 \beta_W \nrm{\ten{A}_{(2)}}^2 (\nrm{P_\star}_2 + \nrm{\Delta P}_2).\]

        Putting everything together we have shown that $\Phi$ in \cref{eq:fixed-point-eq} is bounded as:
        \begin{equation*}
            \nrm{(\adj{\op{L}_\star})^{-1}}_2 \nrm{\Phi(\Delta P)}_2 \leq c_2 \nrm{\Delta P}_2^2 + \beta_W c_1 \nrm{\Delta P}_2 + \beta_W c_0,
        \end{equation*}
        with $c_2 = \nrm{\ten{A}_{\star, (2)}}_2^2 \nrm{\ten{B}_{(2)}}_2^2 \nrm{W_{\star}}_2^2 / \lambda_{\mathrm{min}}(R)$, $c_1 = c_0 = 2\nrm{\ten{A}_{(2)}}^2$. 
        This is analogous to the result of \cite[Lem.~V.1]{Coppens2020-TR} (symmetry also follows from analogous arguments to the ones in \cite[App.~A]{Coppens2020-TR}). Therefore the Brouwer Fixed-Point 
        Theorem can still be applied as in \cite[Sec.~V]{Coppens2020-TR} to bound $\nrm{\Delta P}_2$. This gives us the bound stated in the lemma. 
    \end{proof}

    \paragraph*{Controller Perturbation}
    We next bound $\nrm{\bar{K} - K_\star}_2$ by following a procedure similar to the one of \cite[\S{}VI.]{Coppens2020-TR}. 

    \begin{lemma}\label{lem:controller-bound}
        Given \cref{eq:beta-w-def}, where the right-hand side is labeled $\beta_W$, and letting $\Delta P = \bar{P} - P_{\star}$ then 
        \begin{equation*}
            \nrm{\bar{K} - K_\star}_2 \leq \frac{\beta_W \nrm{\ten{A}_{\star, (2)}}^2_2}{\lambda_{\mathrm{min}}(R)} \left( 2 \nrm{\Delta P}_2 + \nrm{P_\star} \right).
        \end{equation*}
    \end{lemma}
    \begin{proof}
        Take $x \in \Re^{n_x}$ with $\nrm{x}_2 = 1$. By bounding $\nrm{(\bar{K} - K_\star) x}_2$ for all such $x$ 
        the required upper bound is proven. Let $\bar{u} \dfn \bar{K} x$ and $u_\star \dfn K_\star x$, then setting the gradient to zero and 
        referring back to \cref{thm:drlqrcp} and \cref{thm:cplqr} respectively shows that $\bar{u} = \argmin_u \, \bar{f}(u)$ and $u_{\star} = \argmin_u \, f_{\star}(u)$ with 
        \begin{align*}
            \bar{f}(K_\star x) &= \trans{x} \trans{K_\star} R K_\star x + \trans{x} \ten{A}_{\star, (2)} \left( \bar{W} \otimes \bar{P} \right) \trans{\ten{A}_{\star, (2)}} x, \\
            {f}_\star(K_\star x) &= \trans{x} \trans{K_\star} R K_\star x + \trans{x} \ten{A}_{\star, (2)} \left( W_{\star} \otimes P_\star \right) \trans{\ten{A}_{\star, (2)}} x,
        \end{align*}
        with $\ten{A}_{\star} = \ten{A} + \ten{B} \ttimes{2} \trans{K_\star}$ the closed-loop model tensor.
        Both functions are $\lambda_{\mathrm{min}}(R)$-strongly convex. Thus we can apply \cite[Lem.~1]{Mania2019} to show 
        \begin{align*}
            &\nrm{(\bar{K} - K_\star) x}_2 \leq \frac{1}{\lambda_{\mathrm{min}}(R)} \nrm{\nabla \bar{f}(K_{\star} x) - \nabla f_{\star}(K_\star x)}_2 \\
                                          &\quad \leq \frac{\nrm{\ten{A}_{\star, (2)}}^2_2}{\lambda_{\mathrm{min}}(R)}  \nrm{ \bar{W} \otimes \bar{P} -  W_{\star} \otimes P_\star }_2 \nrm{x}_2.
        \end{align*}
        Note that $\nrm{x}_2 = 1$ by construction and $\bar{W} \otimes \bar{P} -  W_{\star} \otimes P_\star = W_\star \otimes \Delta P + \Delta W \otimes (P_\star + \Delta P)$ 
        using $\bar{P} = P_{\star} + \Delta P$ and $\bar{W} = W_{\star} + \Delta W$. 
        Thus, after applying the triangle inequality twice and using the fact that $\nrm{A \otimes B}_2 \leq \nrm{A}_2 \nrm{B}_2$ for matrices $A$ and $B$, 
        \begin{equation*}
            \nrm{(\bar{K} - K_\star) x}_2 \leq \frac{\beta_W \nrm{\ten{A}_{\star, (2)}}^2_2}{\lambda_{\mathrm{min}}(R)}  \left( 2 \nrm{\Delta P}_2 + \nrm{P_\star} \right),
        \end{equation*}
        where we used $\nrm{\Delta W}_2 \leq \beta_W$. 
    \end{proof}

    \paragraph*{Suboptimality}
    Given the controller perturbation from the previous part, the sub-optimality of that controller follows directly from \cite[Thm.~III.3]{Coppens2020-TR}.
    \begin{lemma} \label{lem:suboptimality-bound}
        The suboptimality is bounded as:
        \begin{align*}
            &\E\left[ \sum_{t=0}^\infty \trans{x_t} Q x_t + u_t R u_t \right] - \mathrm{Val}\eqref{eq:slqr}  \\
            &\,\, \leq\min(n_x, n_u) \epsilon_K^2 \nrm{x_0}_2^2 \frac{\nrm{\ten{B}_{(2)}}_2^2 \nrm{P_{\star}}_2 + \nrm{R}_2}{\nrm{\op{L}_{\star}^{-1}}_2} \\
            &\,\, \hphantom{\leq}\boldsymbol{\cdot}  \left[ 
                1 + \tfrac{2 \nrm{\ten{B}_{(1)}}_2 \nrm{\ten{A}_{(1)}}_2 \nrm{W_{\star}}_2 \epsilon_K + \nrm{\ten{B}_{(1)}}^2_2 \nrm{W_{\star}}_2 \epsilon_K^2}{\nrm{\op{L}_{\star}^{-1}}_2 - 2 \nrm{\ten{B}_{(1)}}_2 \nrm{\ten{A}_{(1)}}_2 \nrm{W_{\star}}_2 \epsilon_K - \nrm{\ten{B}_{(1)}}^2_2 \nrm{W_{\star}}_2 \epsilon_K^2}
             \right],
        \end{align*}
        where $x_{t+1} = A(v_t) x_t + B(v_t) u_t$ as in \cref{eq:dyn}, $u_t = \bar{K} x_t$. So the first term 
        is the cost achieved when applying $\bar{K}$ to the true multiplicative noise dynamics \cref{eq:dyn}. Moreover $\epsilon_K = \nrm{K_{\star} - \bar{K}}_2$,
        which should be sufficiently small such that the fraction in square brackets is positive. 
    \end{lemma}

    \paragraph*{Asymptotic Behavior}
    We can study the asymptotic behavior by computing
    \begin{equation} \label{eq:asymptotic-complexity}
        \lim_{N \to \infty} N \cdot \left( \E\left[ \sum_{t=0}^\infty \trans{x_t} Q x_t + u_t R u_t \right] - \mathrm{Val}\eqref{eq:slqr} \right).
    \end{equation}
    Applying in \cref{lem:suboptimality-bound} gives that \cref{eq:asymptotic-complexity} equals 
    \begin{equation*}
        \min(n_x, n_u) \left(\lim_{N \to \infty} \sqrt{N} \epsilon_K\right)^2 \nrm{x_0}^2_2 \tfrac{\nrm{\ten{B}_{(2)}}_2^2 \nrm{P_{\star}}_2 + \nrm{R}_2}{\nrm{\op{L}_{\star}^{-1}}_2}.
    \end{equation*}
    Applying \cref{lem:controller-bound}
    \begin{equation*}
        \lim_{N \to \infty} \sqrt{N} \epsilon_K = \tfrac{\nrm{\ten{A}_{\star, (2)}}_2^2 \nrm{P_\star}_2}{\lambda_{\mathrm{min}}(R)} \left(\lim_{N \to \infty} \sqrt{N} \beta_W\right),
    \end{equation*}
    where we used \cref{lem:riccati-bound} and \cref{eq:beta-w-def} to argue that $\lim_{N \to \infty} \sqrt{N} \beta_W \nrm{\Delta P}_2 = 0$. 
    Finally, using \cref{eq:beta-w-def} we get 
    \begin{equation*}
        \lim_{N \to \infty} \sqrt{N} \beta_W = r_w^2 \sqrt{2 n_w\log(2n_w/\delta)}\tfrac{\nrm{\ten{W}}_2^2}{\gamma_W}. 
    \end{equation*}
    Note that $\adj{\op{L}}_{\star}(P_{\star}) = Q$. Therefore $\nrm{P_{\star}}_2 \leq \nrm{(\adj{\op{L}}_{\star})^{-1}}_2 \nrm{Q}_2$.

    Putting everything together gives that \cref{eq:asymptotic-complexity} equals
    \begin{align*}
        &2 \min(n_x, n_u) n_w \log(2n_w/\delta) \nrm{x_0}_2^2 r_w^2 \\ 
        & \boldsymbol{\cdot}\tfrac{ \nrm{\ten{W}}_2^4}{\gamma_W^2} \tfrac{\nrm{(\adj{\op{L}}_{\star})^{-1}}_2^2}{\nrm{(\op{L}_{\star})^{-1}}_2} \tfrac{ \nrm{\ten{A}_{\star, (2)}}_2^4 \nrm{Q}_2^2 \left( \nrm{\ten{B}_{(2)}}_2^2 \nrm{(\adj{\op{L}}_{\star})^{-1}}_2 \nrm{Q}_2 + \nrm{R}_2 \right)}{\lambda_{\mathrm{min}}^2(R)}.
    \end{align*}
    In the model--free setting -- when no knowledge is available on the modes -- the value of $n_w = n_x (n_x + n_u)$. Thus, the overall dimensional dependency 
    is 
    \begin{equation*}
        \min(n_x, n_u) n_x (n_x + n_u) \log(n_x (n_x + n_u)) \approx n_x^2 (n_x + n_u),
    \end{equation*}
    for the usual setting where $n_x \geq n_u$. In the additive noise, certainty equivalent setting \cite{Mania2019} establish a bound in the certainty equivalent with $\sqrt{(n_x + n_u)^3}$.
    This dimensional dependency is further improved to $\sqrt{n_u n_x^2}$ by \cite{Simchowitz2020}, which also establishes a lower bound with matching dimensional dependency. 
    Our analysis therefore implies that learning multiplicative noise is more difficult to learn with respect to the dimensions. However a lower bound is needed to confirm this with certainty. 

    Also of note are the norms of Lyapunov operators $\nrm{(\adj{\op{L}}_{\star})^{-1}}_2$ and $\nrm{(\op{L}_{\star})^{-1}}_2$. These are CP, so \cref{lem:sopbnd-kron} is applicable. 
    Therefore 
    \begin{align*}
        \nrm{(\op{L}_{\star})^{-1}}_2 &= \nrm{(\op{L}_{\star})^{-1}(I_{n_x})}_2 \\
            &= \sup_{x} \left\{ \tr\left[ \sum_{t=0}^{\infty}(\adj{\op{E}_{\star}})^t(x \trans{x}) \right] \colon \nrm{x}_2 \leq 1 \right\}, \\
            &= \sup_{\adj{x}_0} \left\{\sum_{t=0}^{\infty} \E[\trans{(\adj{x}_t)} \adj{x}_t ]\colon \nrm{\adj{x}_0}_2 \leq 1 \right\}
    \end{align*}
    where the second equality follows from \cref{prop:lyapcp} and the variational definition of a spectral norm.
    The final equality follows by taking $\adj{x}_{t+1} = \trans{(A(w_t) + B(w_t) K_{\star})} \adj{x}_t$, the adjoint 
    multiplicative noise dynamics with second moment dynamics described by $\adj{\op{E}_{\star}}$. 
    Similarly $\nrm{(\adj{\op{L}_{\star}})^{-1}}_2 = \sup_{x_0} \left\{\sum_{t=0}^{\infty} \E[x_t \trans{x_t}]\colon \nrm{x_0}_2 \leq 1 \right\}$,
    with ${x}_{t+1} = (A(w_t) + B(w_t) K_{\star}) {x}_t$, the 
    multiplicative noise dynamics with second moment dynamics described by $\op{E}_{\star}$. For linear systems with additive noise, these values can be related 
    to the $\op{H}_{\infty}$ norm and the operator defined in \cite[Eq.~3]{Mania2019}.
\end{prepupdate}
\end{arxiv}

\begin{arxiv}
\section{Semidefinite programming}

We will require the following technical lemmas:
\begin{lemma} \label{lem:trivialinequality}
    For some $P \in \sym{d}$ then 
    \begin{equation*}
        \tr[P X] > 0, \, \forall X \nsgeq 0 \quad \Leftrightarrow \quad P \sgt 0.
    \end{equation*}
\end{lemma}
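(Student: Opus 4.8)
The plan is to prove the two implications separately, in each case reducing the trace inequality to a scalar quadratic-form statement.

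For the forward direction ($\Rightarrow$), I would test the hypothesis against rank-one matrices. Given any nonzero $x \in \Re^d$, the matrix $X = x\trans{x}$ satisfies $X \nsgeq 0$, so the assumption yields
\begin{equation*}
    0 < \tr[P\, x\trans{x}] = \trans{x} P x.
\end{equation*}
Since this holds for every nonzero $x$, the definition of positive definiteness gives $P \sgt 0$ immediately.

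For the reverse direction ($\Leftarrow$), I would diagonalize the argument. Assume $P \sgt 0$ and take any $X \nsgeq 0$, i.e. $X \sgeq 0$ with $X \neq 0$. Writing the spectral decomposition $X = \ssum_{i=1}^d \lambda_i v_i \trans{v_i}$ with eigenvalues $\lambda_i \geq 0$ and orthonormal $v_i$, linearity of the trace gives $\tr[PX] = \ssum_{i=1}^d \lambda_i \trans{v_i} P v_i$. Each summand is nonnegative because $P \sgt 0$ and $\lambda_i \geq 0$, and since $X \neq 0$ at least one $\lambda_i$ is strictly positive, whose corresponding term $\lambda_i \trans{v_i} P v_i$ is strictly positive (as $v_i \neq 0$). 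Hence $\tr[PX] > 0$. (An equivalent route is to note $\tr[PX] = \tr[X^{1/2} P X^{1/2}]$, observe $X^{1/2} P X^{1/2} \sgeq 0$, and use that a positive semidefinite matrix with zero trace must vanish, which together with $P \sgt 0$ would force $X = 0$.)

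There is no genuine obstacle here; the statement is elementary. The only point requiring a little care is upgrading nonnegativity to strict positivity in the reverse direction, which is exactly where the nonzero condition $X \neq 0$ (encoded by $\nsgeq$) is used to guarantee a surviving strictly positive term.
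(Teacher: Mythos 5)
Your proposal is correct and follows essentially the same approach as the paper: both directions rest on rank-one matrices, with the forward implication tested against $X = x\trans{x}$ (you argue directly where the paper argues by contradiction) and the reverse implication obtained from a rank-one decomposition of $X$ (your spectral decomposition is the paper's $\ssum_i x_i \trans{x_i}$ with the eigenvalues absorbed into the vectors). The differences are purely presentational.
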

\begin{proof}
        ($\Rightarrow$) Assume $P$ is not positive definite, which implies $\exists x \colon \trans{x} P x \leq 0$.
        So $X = x \trans{x} \nsgeq 0$ gives a contradiction.
        
        ($\Leftarrow$) Any $X \nsgeq 0$ can be written as $\sum_{i=1}^r x_i \trans{x_i}$ for some $r > 0$ and nonzero $x_i$. 
        Then by the cyclic property of the trace $\tr[P X] = \sum_{i=1}^r \trans{x_i} P x_i > 0$ by $P \sgt 0$. 
\end{proof}
\begin{prepupdate}%
\begin{lemma} \label{lem:monotone-convergence}
    Consider a sequence $\{P_t\}_{t =0}^{\infty} \in \sym{d}$ such that 
    for any $X \in \sym{d}$,
    \begin{equation*}
        \tr[X P_t] \leq \tr[X P_{t+1}], \, \forall t \in \N \text{ and } \lim_{t \to \infty} \tr[XP_t] < +\infty
    \end{equation*}
    Then $P_{\infty} = \lim_{t \to \infty} P_t$ is well defined. 
\end{lemma}
\begin{proof}
    This lemma is used in the proof of \cite[Prop.~4.4.1]{Bertsekas2005V1}. Consider an
    orthonormal basis $\{X_i\}_{i=1}^{\sd{d}}$ for $\sym{d}$. Then 
    \begin{equation*}
        \lim_{t \to \infty} \tr[X_i P_t] =  \tr[X_i \lim_{t \to \infty} P_t] =  \tr[X_i P_{\infty}]
    \end{equation*}
    is well defined by the monotone convergence theorem (since $\tr[X_i P_t] \leq \tr[X_i P_{t+1}]$ and $\tr[X_i P_t] < +\infty$
    for all $t \in \N$). Completing the same argument for every $i$ gives us a linear system of equations 
    with $P_{\infty}$ the unique solution.
\end{proof}


\end{prepupdate}

We extend the result of \cite[Thm.~6.2.1, 6.2.2]{Ben-Tal2000}.
This result was previously integrated in \cite[Thm.~10]{Coppens2019}. 
\begin{lemma} \label{lem:rob-sdp}
    Consider the parametric family of matrices,
    \begin{equation*}
        \set{U}[\rho] = \left\{ 
            T + \rho \ssum_{i=1}^d [\theta]_i (\trans{L} F_i R + \trans{R} \trans{F}_i L) \colon \theta \in \ball{d},
         \right\}
    \end{equation*}
    with $T \in \sym{\ell}$, $F_i \in \Re^{m \times n}$. Consider the predicate
    \begin{subequations} \label{eq:pred}
        \begin{align}
            \exists \Lambda \in \psd{n}, \Gamma \in \psd{m} \colon &T \sgeq \trans{L} \Gamma L + \trans{R} \Lambda R, \label{eq:pred:a}\\
            &\begin{bmatrix}
                \Lambda & \rho \trans{F_1} & \dots & \rho \trans{F_d} \\ 
                \rho F_1 & \Gamma \\
                \vdots && \ddots \\
                \rho F_d &&& \Gamma
            \end{bmatrix} \sgeq 0. \label{eq:pred:b}
        \end{align}
    \end{subequations}
    Then we have the following implication:\todo{rank assumption on $R$ and $L$?}
    \begin{equation*}
        \text{if \eqref{eq:pred} holds then $\set{U}[\rho] \sgeq 0$;}
    \end{equation*}
\end{lemma}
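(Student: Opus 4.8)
The plan is to verify $\set{U}[\rho] \sgeq 0$ pointwise: a symmetric matrix is positive semidefinite iff its quadratic form is nonnegative on every vector, so it suffices to show that for each $x \in \Re^{\ell}$ and each $\theta \in \ball{d}$,
\[
    \trans{x} T x + 2\rho \sum_{i=1}^d [\theta]_i \trans{x} \trans{L} F_i R x \geq 0.
\]
First I would introduce $u \dfn Lx$, $v \dfn Rx$ and the vector $a \in \Re^d$ with $[a]_i \dfn \trans{u} F_i v = \trans{x}\trans{L}F_i R x$. Minimizing the cross term over $\theta \in \ball{d}$ by Cauchy--Schwarz (the minimizer being $\theta = -a/\nrm{a}_2$) reduces the claim to the single inequality $\trans{x} T x \geq 2\rho \nrm{a}_2$, which no longer involves $\theta$.

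Next I would bound the two sides separately using the predicate. Applying \eqref{eq:pred:a} to $x$ gives $\trans{x} T x \geq \trans{u} \Gamma u + \trans{v} \Lambda v$, so it is enough to prove $\rho \nrm{a}_2 \leq \tfrac12 (\trans{u}\Gamma u + \trans{v}\Lambda v)$. To produce the factor $\nrm{a}_2$ I would feed the block linear matrix inequality \eqref{eq:pred:b} a structured test vector: taking the first block equal to $v$ and the $i$-th lower block equal to $s_i u$ for scalars $s_i$ yields
\[
    \trans{v}\Lambda v + 2\rho \sum_{i=1}^d s_i [a]_i + \Big(\sum_{i=1}^d s_i^2\Big) \trans{u}\Gamma u \geq 0.
\]
Choosing $s_i = -\lambda [a]_i / \nrm{a}_2$ then collapses this to a scalar quadratic in $\lambda$, namely $\trans{u}\Gamma u \, \lambda^2 - 2\rho\nrm{a}_2 \, \lambda + \trans{v}\Lambda v \geq 0$, which is valid for every $\lambda \in \Re$.

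Since this quadratic is nonnegative for all $\lambda$ and its leading coefficient $\trans{u}\Gamma u \geq 0$ (as $\Gamma \sgeq 0$), its discriminant must be nonpositive, giving $\rho^2 \nrm{a}_2^2 \leq (\trans{u}\Gamma u)(\trans{v}\Lambda v)$; the degenerate case $\trans{u}\Gamma u = 0$ forces $\rho\nrm{a}_2 = 0$ and is handled directly. Finally the inequality $\sqrt{(\trans{u}\Gamma u)(\trans{v}\Lambda v)} \leq \tfrac12(\trans{u}\Gamma u + \trans{v}\Lambda v)$ (arithmetic--geometric mean) chains everything together to yield $2\rho\nrm{a}_2 \leq \trans{x} T x$, as required. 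I expect the main obstacle to be guessing the right structured test vector $(v, s_1 u, \dots, s_d u)$ for \eqref{eq:pred:b} together with the subsequent substitution $s_i = -\lambda [a]_i/\nrm{a}_2$ that converts the matrix inequality into a one-dimensional quadratic; once this reduction is in place, the discriminant and arithmetic--geometric mean steps are routine.
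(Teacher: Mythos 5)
Your proof is correct. It shares the paper's outer skeleton --- Cauchy--Schwarz over $\theta \in \ball{d}$ to eliminate the uncertainty, then \eqref{eq:pred:a} combined with an arithmetic--geometric mean step to finish --- but the way you exploit the block LMI \eqref{eq:pred:b} is genuinely different. The paper first assumes, via a Cholesky-type reduction claimed to be without loss of generality, that $\Lambda$ and $\Gamma$ are invertible, rescales $u = \sqrt{2}\,\Lambda^{1/2}Rx$ and $v = \sqrt{2}\,\Gamma^{1/2}Lx$, and applies a Schur complement to \eqref{eq:pred:b} to obtain the operator bound $\rho^2 \sum_{i=1}^d \Lambda^{-1/2}\trans{F_i}\Gamma^{-1}F_i\Lambda^{-1/2} \sleq I$, from which the cross term is bounded below by $-\nrm{u}_2\nrm{v}_2$. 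You instead keep $u = Lx$, $v = Rx$ untransformed, feed \eqref{eq:pred:b} the structured test vector $(v, s_1 u, \dots, s_d u)$ with $s_i = -\lambda [a]_i/\nrm{a}_2$, and extract $\rho^2\nrm{a}_2^2 \leq (\trans{u}\Gamma u)(\trans{v}\Lambda v)$ from the discriminant of the resulting scalar quadratic in $\lambda$. This buys two things: the argument is fully elementary (no matrix square roots, inverses, or Schur complements), and it handles singular $\Lambda$ or $\Gamma$ directly, so the paper's somewhat hand-waved invertibility reduction disappears; your only degenerate cases are $\trans{u}\Gamma u = 0$, which you treat, and $a = 0$, where the target inequality $2\rho\nrm{a}_2 \leq \trans{x}Tx$ is trivial (worth one explicit sentence, since the substitution $s_i = -\lambda[a]_i/\nrm{a}_2$ is undefined there). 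What the paper's route buys in exchange is the reusable operator-norm form of the bound, which is how this result is usually stated and invoked following \cite{Ben-Tal2000}.
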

\begin{proof}
    We follow the proof of \cite[Thm.~6.2.1, Thm.~6.2.2]{Ben-Tal2000}.
    Assume \eqref{eq:pred} holds for 
    some $\Lambda, \Gamma \sgeq 0$. We assume $\Lambda = \Lambda^{1/2} \Lambda^{1/2}$ and $\Gamma = \Gamma^{1/2} \Gamma^{1/2}$ are invertible without loss of generality. 
    Otherwise, use a Cholesky decomposition to construct a right invertible substitute for $\Lambda^{1/2}$ and $\Gamma^{1/2}$
    and use these instead. 

    For $x \in \Re^{\ell}$, let $u = \sqrt{2} \Lambda^{1/2} R x$ and $v = \sqrt{2} \Gamma^{1/2} L x$. Then,
    \begin{equation*}
        2\trans{x} (\ssum_{i=1}^d [\theta]_i \trans{L} F_i R) x = \trans{v} (\ssum_{i=1}^d [\theta]_i \Gamma^{-1/2} F_i \Lambda^{-1/2}) u.
    \end{equation*}
    Therefore $\set{U}[\rho] \sgeq 0$ if
    \begin{equation} \label{eq:pred-first}
        \trans{x} A x + \rho \trans{v} (\ssum_{i=1}^d [\theta]_i \Gamma^{-1/2} F_i \Lambda^{-1/2}) u \geq 0,
    \end{equation}
    for all $x \in \Re^{\ell}$, $u \in \Re^{n}$, $v \in \Re^m$ and $\theta \in \ball{d}$. 

    We lower bound the second term of \eqref{eq:pred-first} as
    \begin{align*}
        &\rho \trans{v} (\ssum_{i=1}^d [\theta]_i \Gamma^{-1/2} F_i \Lambda^{-1/2}) u \\
        &\qquad \labelrel\geq{eq:pred-second:a} - \rho \nrm{v}_2 (\ssum_{i=1}^d |[\theta]_i| \nrm{\Gamma^{-1/2} F_i \Lambda^{-1/2} u}_2)\\
        &\qquad \labelrel\geq{eq:pred-second:b} - \rho \nrm{v}_2 \sqrt{\ssum_{i=1}^d \trans{u} \Lambda^{-1/2} \trans{F_i} \Gamma^{-1} F_i \Lambda^{-1/2}u}.
    \end{align*}
    Here \ref{eq:pred-second:a} used the triangle inequality and \ref{eq:pred-second:b} used $\trans{|\theta|} \beta \geq -\nrm{|\theta|}_2 \nrm{\beta}_2 = -\nrm{\beta}_2$
    for $[\beta]_i = \nrm{\Gamma^{-1/2} F_i \Lambda^{-1/2} u}_2$.

    Note that \eqref{eq:pred:b} implies $\rho^2 \ssum_{i=1}^d\Lambda^{-1/2} \trans{H_i} \Gamma^{-1} F_i \Lambda^{-1/2} \sleq I$,
    by invertibility of $\Lambda, \Gamma$ and a Schur complement. Combining this with \eqref{eq:pred-first} gives that $\set{U}[\rho] \sgeq 0$ holds if
    $
        \trans{x} T x \geq  \nrm{v}_2 \nrm{u}_2,
    $
    for all $x \in \Re^{\ell}$, $u \in \Re^{n}$, $v \in \Re^m$. By the inequality of arithmetic and geometric means (i.e., $(\alpha + \beta)/2 \geq \sqrt{\alpha \beta}$)
    we have the final sufficient condition for $\set{U}[\rho] \sgeq 0$:
    \begin{equation*}
        \trans{x} T x \geq (\nrm{v}_2^2 + \nrm{u}_2^2)/2 = \trans{x} (\trans{L} \Gamma L + \trans{R} \Lambda R) x,
    \end{equation*}
    which holds $\forall x \in \Re^{\ell}$ by \eqref{eq:pred:a}. 
    %
\end{proof}

\end{arxiv}


\finalpage{}  

\end{document}